\documentclass{scrartcl}
\usepackage[utf8]{inputenc}
\usepackage[T1]{fontenc}

\usepackage{amsthm}
\usepackage{amsfonts}
\usepackage{stmaryrd}
\usepackage{MnSymbol}
\usepackage{xspace}
\usepackage{tikz-cd}
\usepackage{mathrsfs}
\usepackage{mathtools}

\usepackage[french=guillemets]{csquotes}
\MakeOuterQuote{"}
\usepackage[shortlabels]{enumitem}
\setlist{nosep}
\usepackage{xcolor}
\usepackage{cite}
\usepackage[color,leftbars]{changebar}
\usepackage{microtype}

\usepackage[colorlinks=true, breaklinks=true, urlcolor= black, linkcolor= black, citecolor= black,
bookmarksopen=true,linktocpage=true,plainpages=false,pdfpagelabels]{hyperref}

\usepackage[capitalise]{cleveref}
\RemoveFromHook{label}[firstaid/cleveref]

\newtheorem{theorem}{Theorem}[section]
\newtheorem*{theorem*}{Theorem}
\newtheorem{claim}{Claim}[theorem]
\newtheorem*{claim*}{Claim}
\newtheorem{lemma}[theorem]{Lemma}
\newtheorem{corollary}[theorem]{Corollary}
\newtheorem{fact}[theorem]{Fact}
\newtheorem{proposition}[theorem]{Proposition}

\theoremstyle{definition}
\newtheorem{remark}[theorem]{Remark}
\newtheorem{definition}[theorem]{Definition}
\newtheorem{notation}[theorem]{Notation}

\newtheorem{examples}[theorem]{Examples}

\newcommand\ACVF{\ensuremath{\mathrm{ACVF}}}

\newcommand\K{\mathrm{K}}

\newcommand{\eq}[1]{#1^{\mathrm{eq}}}
\newcommand{\acl}[1][]{\mathrm{acl}\ifstrempty{#1}{}{_{#1}}}
\newcommand{\dcl}[1][]{\mathrm{dcl}\ifstrempty{#1}{}{_{#1}}}

\newcommand\RV{\mathrm{RV}}
\newcommand\rv{\mathrm{rv}}

\newcommand\Qq{\mathbb{Q}}
\newcommand\Zz{\mathbb{Z}}
\newcommand\Nn{\mathbb{N}}

\newcommand\RCVF{\mathrm{RCVF}}

\newcommand\Trop{\mathrm{Trop}}

\newcommand\rest{\upharpoonright}

\newcommand{\tp}{\operatorname{tp}}
\renewcommand\epsilon{\varepsilon}

\newcommand\cU{\mathcal{U}}

\newcommand\m{\mathfrak{m}}

\newcommand\Lring{\mathcal{L}_\mathrm{ring}}
\newcommand\Lmac{\mathcal{L}_\mathrm{Mac}}

\newcommand\res{\mathrm{res}}

\renewcommand\mid{:}

\newcommand\cO{\mathcal{O}}
\newcommand\cF{\mathcal{F}}

\newcommand{\pCF}{\mathrm{pCF}}

\newcommand\Hom{\mathrm{Hom}}

\newcommand\trop{\mathrm{trop}}
\newcommand\an{\mathrm{an}}

\newcommand\Lau[2]{{#1}(\!(t^{#2})\!)}
\newcommand\lau[2]{{#1}(\!({#2})\!)}

\newcommand\Th{\mathrm{Th}}

\newcommand\RF{\mathbf{k}}

\newcommand\ZZ{\mathbb{Z}}
\newcommand\RR{\mathbb{R}}
\newcommand\FF{\mathbb{F}}
\newcommand\CC{\mathbb{C}}
\newcommand\QQ{\mathbb{Q}}

\newcommand\rF{\mathscr{F}}
\newcommand\rK{\mathscr{K}}

\newcommand\ccO{\mathscr{O}}

\newcommand\bm{\mathrm{bm}}

\newcommand\st{\operatorname{st}}

\title{Analytification and tropicalization via hyperfields and definable types}

\author{Pablo Cubides Kovacsics\thanks{The author was supported by the research grant FAPA (INV-2023-158-3157) \emph{New methods in non-archimedean geometry and its applications}, funded by Universidad de los Andes.}
\and Mariana Vicar\'ia\thanks{The author was supported by the Alexander von Humboldt Foundation (Postdoctoral Fellowship).} \and Jinhe Ye\thanks{The author was supported by his startup fund at the University of Hong Kong.}}

\date{}

\begin{document}

\maketitle

\begin{abstract}
We study the analytification and tropicalization of algebraic varieties over valued fields via Jun's spaces of hyperfield points. Our main new objects are a family of \emph{power-tropical hyperfields} $H_n$ and their inverse limit $H$, recording at each level the class of a leading term modulo $n$th powers of the residue field. They yield an \emph{enhanced tropicalization} of the associated space $X(H)$, strictly finer than the Berkovich one exactly when the multiplicative group of the residue field is non-divisible: it specialises to the Berkovich analytification when the residue field is algebraically closed, to the real analytification of Jell--Scheiderer--Yu when it is real closed, and to a new $p$-adic analytification when it is $p$-adically closed. Over henselian valued fields of equicharacteristic $0$ we then construct a model-theoretic avatar of $X(H)$: the space $\widehat{X}_T$ of definable types orthogonal to the value group, which sharpens the geometry in the spirit of Hrushovski and Loeser and recovers their space over algebraically closed valued fields. We show that $\widehat{X}_T$ is strictly pro-definable, satisfies a model-theoretic Payne theorem, and definable compactness in such spaces satisfies a Heine--Borel type characterization. As an application we analytify $p$-adic semi-algebraic sets and prove a strong density theorem for their tropicalizations, the exact $p$-adic counterpart of the real one of Jell--Scheiderer--Yu.
\end{abstract}

\section{Introduction}\label{sec:intro}

Analytification and tropicalization associate topological and combinatorial spaces to algebraic varieties over valued fields. Berkovich analytification records valuations of regular functions, while tropicalization describes the resulting geometry through finitely many such functions. Refinements of these constructions can retain additional arithmetic information. For example, the real analytification of Jell, Scheiderer, and Yu~\cite{jell} records signs as well as valuations. This suggests a common framework in which analytification reflects the arithmetic of the residue field.

We develop such a framework through two complementary descriptions: spaces of hyperfield points and spaces of definable types orthogonal to the value group. Our guiding principle is to retain residue field arithmetic information while preserving the topological and definable structures that allow a uniform treatment of the resulting spaces. This brings together Jun's geometry over hyperfields~\cite{Jun} and the model-theoretic approach to non-Archimedean geometry developed by Hrushovski and Loeser~\cite{HruLoe}. Our main results compare the two constructions, establish strict pro-definability, and characterize definable compactness by closedness and boundedness.

Note that our treatment is purely topological: we equip the analytifications with a topology, without introducing a structure sheaf or a class of analytic functions. Accordingly, our theorems concern the underlying topological spaces and their definable counterparts. The constructions extend to arbitrary varieties by working on affine open subsets and gluing along their overlaps. For simplicity of exposition, we formulate and prove our results for affine varieties, where a single coordinate ring suffices to describe the spaces, their topologies, and their tropicalizations. It will be interesting to study the geometry induced by the associated tropical functions into our new power-tropical hyperfields, following for example,~\cite{tropical-function}.

Let $(K,v)$ be a valued field with value group $\Gamma$ and residue field $k$. We introduce hyperfields $H_n$ equipped with a topology, for $n\geq 2$, which record the valuation of an element together with the class of its leading term modulo $n$th powers. When $[k^\times:(k^\times)^n]<\infty$ $\text{for every }n\geq 2$, their inverse limit $H=\varprojlim_n H_n$ is again a hyperfield, with a natural homomorphism $\rho:K\to H$ (\Cref{prop:H-hyperfield}). Following \cite{Jun}, an affine variety $X$ over $K$ then determines the space $X(H)=\operatorname{Hom}_K(K[X],H)$, equipped with the topology of pointwise convergence.

Certain residue field information is reflected by this construction. If $k$ is algebraically closed, then $H=\mathbb T_\Gamma$ is the tropical hyperfield; when $\Gamma=\mathbb R$ and $K$ is complete, $X(H)$ is the Berkovich analytification. If $K$ is real closed with value group $\mathbb R$, we recover the real analytification of~\cite{jell} (\Cref{prop:H-reduction}). Other residue fields yield spaces that retain further arithmetic information through their inverse systems of power classes.

In the parallel, the model-theoretic analytification takes place over \emph{bounded Macintyre fields}, or \emph{bm-fields}. These are henselian valued fields of equicharacteristic zero such that $\mathrm{RV}^{\times}/(\mathrm{RV}^{\times})^n$ is finite for every $n\geq 2$ and the residue field eliminates quantifiers in Macintyre's language, after naming suitable constants. Here $\mathrm{RV}^{\times}=K^\times/(1+\mathfrak m_K)$ is the "leading term sort". For a complete theory $T$ of a bm-field and a definable set $Y$, we write $\widehat Y_T$ for the space of global definable types concentrating on $Y$ and orthogonal to the value group. Its $K$-points are the types definable over $K$.

Evaluation of regular functions gives a canonical embedding 
\[ 
\Theta_K:\widehat X_T(K)\longrightarrow X(H(K)), 
\] 
which induces the topology on $\widehat X_T(K)$. When $T$ is the theory of algebraically closed valued fields, this construction recovers the Hrushovski--Loeser stable completion as a topological space (\Cref{prop:ACVF-HL}). For $K=\Lau{k}{\RR}$, $k\in\{\mathbb C,\mathbb R,\mathbb Q_p\}$, the map $\Theta_K$ is a homeomorphism (\Cref{thm:surjective1}, \Cref{cor:homeomorphism_theta}). Thus, in these cases, the model-theoretic construction describes the entire space of hyperfield points.

This comparison allows us to study the topology of hyperfield points through definable types. As in the work of Hrushovski and Loeser, the model-theoretic space provides a setting in which definable methods can be applied to questions about the topology of Berkovich spaces. However, there is indeed a distinction between the two approaches: for $K=\Lau{\lau{\CC}{s}}{\RR}$, we exhibit a hyperfield point of the affine line that is not induced by a type over $K$ (\Cref{prop:counterexample}).

To develop the definable topology of $\widehat Y_T$, we need an extra hypothesis on the residue field which we call Hypothesis $(B)$ (see later \Cref{subsec:strict-pro}). It holds for algebraically closed, real closed, and $p$-adically closed residue fields. Under this hypothesis, $\widehat Y_T$ is strictly pro-definable for every definable set $Y$ (\Cref{prop:strict-prodefinability}). This places the model-theoretic analytification in a setting tools from model theory can be applied systematically.

Our main topological result in this part is a definable Heine--Borel theorem. Following Hrushovski and Loeser, a pro-definable topological space is \emph{definably compact} if every global definable type concentrating on it has a limit in the space. We prove that a pro-definable subset of $\widehat X_T$ is definably compact if and only if it is closed and bounded, if the bm-field has an algebraically closed, real closed, or $p$-adically closed field as its residue field, and its value group is divisible or a $\mathbb Z$-group (\Cref{thm:heine-borel-vhat}). Here a $\mathbb Z$-group means a model of an ordered abelian group elementarily equivalent to $\ZZ$, and boundedness means having a common lower bound for the coordinatewise valuations. The theorem gives a uniform characterization of definable compactness across these different arithmetic settings. Together with the comparison results, it connects compactness in the definable setting with the compactness of the topology of hyperfield analytifications introduced earlier.

The hyperfield analytification also comes with a notion of tropicalization, which gives a finitary description of these spaces. For a finite tuple $F=(f_1,\ldots,f_\ell)$, evaluation defines a map $\operatorname{trop}_F:X(H)\longrightarrow H^\ell$, with finite-level projections to $H_n^\ell$. We recover $X(H)$ as the inverse limit of its tropicalizations (\Cref{prop:Payne-XH}), extending the perspective of Payne's theorem \cite{Payne}. We obtain the corresponding result for $\widehat X_T(K)$ when $K$ is maximally complete and every type over $k(K)$ is definable (\Cref{prop:Payne}).

Forgetting the power-class information gives a continuous map from $X(H)$ to $X(\mathbb T_\Gamma)$. When $\Gamma=\mathbb R$, we prove properness of this comparison map and of the tropicalization maps associated with generators of $K[X]$ (\Cref{prop:properness-XH} and \Cref{prop:shadow-proper}). We prove this under a technical condition on the residue field (see \Cref{def:star}) which ensures that hyperaddition is closed. Again, all fields of interest satisfy this condition.

As an application, we define an analytification of $p$-adic semi-algebraic sets. The elementary embedding $\mathbb Q_p\preccurlyeq\Lau{\QQ_p}{\RR}$ in the language of rings allows us to interpret a semi-algebraic set $S\subseteq\mathbb Q_p^m$ over $K=\Lau{\QQ_p}{\RR}$. We take its analytification to be $\widehat S_T(K)$, with the distinguished $t$-adic valuation. This places the $p$-adic and real constructions within the same framework.

More generally, for a $K$-definable set $Y$, the simple points coming from $Y(K)$ are dense in $\widehat Y_T(K)$ (\Cref{prop:density-semialg}). We also prove that every finite-level tropicalization of $\widehat Y_T(K)$ is already realized by points of $Y(K)$ (\Cref{prop:strong-density}). Thus analytification introduces additional points while preserving the tropical data realized over the valued base at each finite level.

The paper is structured as follows: \Cref{sec:prelim} collects the preliminaries on valued fields. \Cref{sec:hyperfields} and \Cref{sec:hyperfield-points} develop the hyperfields, their spaces of points, and their tropicalizations. \Cref{sec:analytification} introduces the model-theoretic analytification and establishes the comparison results. \Cref{subsec:strict-pro} treats strict pro-definability, tropicalization, and density. \Cref{sec:defcompact} proves the Heine--Borel characterization of definable compactness. The appendix gives a characterization of formally $p$-adic extensions through hyperfield morphisms, which is needed for the comparison between analytification via types and hyperfield points over $p$-adically closed residue fields.

\subsection*{Acknowledgements}
The authors would like to thank Thomas Scanlon for an insightful conversation that led to the final development of the present paper. 
\subsection*{AI Disclaimer} 
This project was for a large part developed before LLMs started giving significant contributions to mathematics. More precisely, Sections $2$,$3$,$4$,$5$,$6$ and a preliminary version of Section $7$ were developed by the authors without the use of $\mathrm{AI}$. The major interaction we had with LLMs helped us to finish Section $7$ and \Cref{prop:counterexample}. We give a detailed record of the AI contributions below:
\begin{itemize}
   
    \item  Regarding Section $7$: The main theorem of this section is the characterization of definably compact sets in the analyfication as closed and bounded (\emph{c.f.} \cref{thm:heine-borel-vhat}). We (human) authors had already proved without AI assistance that any closed and bounded set is definably compact. We also had a sketch of the proof of the converse, but to show that a set is closed we needed the key component that for any point $a$ in the closure of a pro-definable set $Y$ one could build a definable type $p$ whose limit is $a$. We (human) authors had a proof of this statement when the residue field is either the $p$-adics or a real closed field (\emph{c.f.} \cref{prop:curve-selection-DR}). We did not know how to generalize this argument to $\lau{\mathbb{C}}{t}$, in particular due to the gap in \cite[Lemma $4.2.12$]{HruLoe} indicated in \cref{rmk:limit-not-definable}. We asked ChatGPT Astra whether it could see a way to fix this gap, and it elaborated the statements and proofs of  \cref{lem:gamma-curve-selection}, \cref{lem:stable-gamma-lift}, and \cref{thm:stable-residue-curve}. The latter were carefully reviewed by the authors.
     \item \Cref{rmk:FLT} was suggested by Claude Opus $4.8$.
    \item \Cref{prop:counterexample} was also suggested by ChatGPT Astra. We wondered whether $\Theta$ could be surjective and suspected it was not the case, but we could not see the counterexample. The construction of the counterexample is completely produced by ChatGPT Astra, but the authors carefully proofread and verified it. 
    \item The proof of \Cref{lem:residue-realization} in the appendix was suggested by ChatGPT Astra.
    \item ChatGPT Astra substantiated the proof of \Cref{lem:translation-patterns} from some rough descriptions of the ideas involved.
\end{itemize}
Claude Opus $4.8$ was used for editing and proofreading of the paper. As a consequence, it spotted a gap in an argument of the authors in  \Cref{prop:bounded-limits} and suggested to add the hypothesis of largeness, which helps us to argue that the type $r$ is consistent (the definition of the type and the strategy of the proof is of the authors but we did not justify this before). Consequently, we added  \Cref{rmk:babyfactlargeness}  (the argument in the proof of this remark is due to the authors).

\section{Preliminaries}\label{sec:prelim}

Let $(K,v)$ be a valued field with value group $(\Gamma,<,+,0)$.
 We write $\Gamma_\infty$ for the group $\Gamma$ together with an element $\infty$ bigger than every element of $\Gamma$ and we extend the addition by the rules $\infty+\gamma = \gamma+\infty = \infty+\infty = \infty.$

 We write $\mathcal{O}_K$ to denote its valuation ring, i.e. $\{ x \in K \ | \ v(x) \geq 0\}$, $\mathcal{M}_K$ its maximal ideal (i.e. $\{ x \in K \ | v(x)>0\}$), $\RF_K$ its residue field $\mathcal{O}_K/\mathcal{M}_K$ and $\mathrm{res}\colon \mathcal{O}_K\to\RF_K$ the quotient map. We denote by \(\RV_K^\times\) the group
\(K^\times/(1+\mathcal{M}_K)\) and \(\rv : K\to\RV_K = \RV_K^\times \cup\{0\}\) the canonical
projection (extended by \(\rv(0) = 0\)). We call the set $\RV_K$ the \emph{leading terms}. It satisfies the short exact sequence
\[
1\longrightarrow \RF_K^\times\xrightarrow{\ \iota\ }\RV_K^\times \xrightarrow{\ \bar v } \Gamma \longrightarrow 0,
\]
where $\bar v(\rv(a))=v(a)$ and $\iota$ sends $\res(u)\in\RF_K^\times$ to $\rv(u)$, for $u\in\mathcal O_K^\times$ (where both maps are well defined independent of the chosen representative).

For each $n\geq 2$ reducing modulo $n$th powers preserves exactness:
\[
1\longrightarrow \RF_K^\times/(\RF_K^\times)^n\xrightarrow{\ \iota_n\ }\RV_K^\times/(\RV_K^\times)^n\xrightarrow{\ \bar v_n\ }\Gamma/n\Gamma\longrightarrow 0.\tag{$\dagger$}
\]
We write $G_n\coloneqq \RF_K^\times/(\RF_K^\times)^n\cup\{0\}$, with multiplication extended by $0g=g0=0$, and $G_n^\times=G_n\setminus\{0\}$. We let
\[
\rv_n\colon K\longrightarrow \RV_K^\times/(\RV_K^\times)^n\cup\{0\}
\]
be the composite of $\rv$ with the quotient modulo $(\RV_K^\times)^n$ (with $\rv_n(0)=0)$; it is multiplicative, surjective, and satisfies $\bar v_n(\rv_n(a))=v(a)+n\Gamma$.

We drop the subscript $K$ when no confusion arises, writing $\RF,\RV,\mathcal O,\m$ for the residue field, leading terms, valuation ring and maximal ideal.

\begin{lemma} \label{lem:divisibilities}
Let $(K,v)$ be a henselian valued field. 
    \begin{enumerate}
\item  Suppose $\operatorname{char}(\RF)\nmid n$. Then, $d \in (K^{\times})^n$ if and only if $\rv(d) \in \RV^{n}$. In particular, the map
$K^{\times}/(K^{\times})^{n} \;\longrightarrow\; \RV^{\times}/(\RV^{\times})^{n}$
is an isomorphism.

\item If $\Gamma$ is divisible, then every coset of $K^{\times}/(K^{\times})^{n}$ has a representative in $\mathcal{O}^{\times}$. If moreover $\operatorname{char}\RF\nmid n$, there is an isomorphism
$K^{\times}/(K^{\times})^{n} \;\cong\; \RF^{\times}/(\RF^{\times})^{n}.$
Consequently, for every $\gamma \in \Gamma$ and coset $c_{i}K^{n}$ if $a \in K^{\times}$ such that $v(a)=\gamma$ and $a \in c_{i}P_{n}$.

\item For an arbitrary valued field $(K,v)$, the group $\RV^{\times}/(\RV^{\times})^{n}$ is finite if and only if both $\RF^{\times}/(\RF^{\times})^{n}$ and $\Gamma/n\Gamma$ are finite.
\end{enumerate}
\end{lemma}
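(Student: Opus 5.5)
For part (1) the plan is to use Hensel's lemma. The forward implication is trivial, since $\rv$ is multiplicative. For the converse, suppose $\rv(d)=\rv(b)^n$ with $b\in K^\times$. Then $\rv(d/b^n)=1$, so $d/b^n=u\in 1+\m$. It therefore suffices to show $1+\m\subseteq (K^\times)^n$. For $u\in 1+\m$, consider $f(X)=X^n-u\in\cO[X]$. Its reduction $X^n-1$ has the simple root $1$, because $\res(f'(1))=n\neq 0$ when $\operatorname{char}\RF\nmid n$. Henselianity then lifts this to a root $x\in 1+\m$ with $x^n=u$. Consequently the surjective homomorphism $K^\times\to\RV^\times\to \RV^\times/(\RV^\times)^n$ has kernel exactly $(K^\times)^n$, which gives the stated isomorphism.

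For part (2), take $a\in K^\times$ and write $v(a)=n\delta$ using divisibility of $\Gamma$. Pick $c$ with $v(c)=\delta$; then $a/c^n\in\cO^\times$ lies in the same coset, so every coset has a unit representative. Next assume $\operatorname{char}\RF\nmid n$ and consider the composite $\cO^\times\to K^\times/(K^\times)^n$. It is surjective by what was just shown, and its kernel is $\cO^\times\cap (K^\times)^n$. This kernel is $(\cO^\times)^n$: if $u=y^n$ with $u$ a unit, then $v(y)=0$, so $y$ is itself a unit.

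The map $\cO^\times\to\RF^\times/(\RF^\times)^n$ is also surjective. Its kernel is $(\cO^\times)^n(1+\m)$, and $(1+\m)\subseteq(\cO^\times)^n$ by the Hensel argument of part (1). So the kernel is $(\cO^\times)^n$ again. Hence both quotients equal $\cO^\times/(\cO^\times)^n$, and $K^\times/(K^\times)^n\cong\RF^\times/(\RF^\times)^n$. I read the final garbled sentence of (2) as saying that every pair (value $\gamma$, coset) is realized. To see this, take $b$ in the coset and rescale by $c^n$ with $nv(c)=\gamma-v(b)$, again using divisibility. This keeps the coset and yields valuation $\gamma$.

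For part (3), use the exact sequence $(\dagger)$. Note first that exactness on the left, i.e. injectivity of $\iota_n$, holds because $\Gamma$ is torsion-free. Indeed, if $\iota(x)=r^n$ with $r\in\RV^\times$, then $n\bar v(r)=0$, so $\bar v(r)=0$ and $r\in\iota(\RF^\times)$. Given exactness, $|\RV^\times/(\RV^\times)^n| = |\RF^\times/(\RF^\times)^n|\cdot|\Gamma/n\Gamma|$ as cardinals, and the equivalence follows. The only delicate step I anticipate is this verification of exactness in $(\dagger)$, namely injectivity of $\iota_n$ and exactness in the middle, since the paper only asserts it. Exactness in the middle comes from a direct diagram chase: if $\bar v(r)\in n\Gamma$, choose $s$ with $n\bar v(s)=\bar v(r)$; then $r s^{-n}\in\iota(\RF^\times)$.
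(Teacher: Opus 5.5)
Your proposal is correct and follows essentially the same route as the paper: Hensel's lemma applied to $X^n-u$ for (1), unit representatives obtained from divisibility of $\Gamma$ together with a kernel computation for (2), and exactness of $(\dagger)$ for (3). The differences are minor. In (2) you compute both kernels directly on $\cO^\times$ instead of passing through $\RV^\times/(\RV^\times)^n$ and part (1), and in (3) you verify the exactness of $(\dagger)$, which the paper only asserts.
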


\begin{proof}
\begin{enumerate}
\item The implication from left to right is immediate. For the converse, assume that $\rv(d) \in \RV^{n}$. Then $\rv(d) = \rv(b)^n$ for some $b \in K$, so
\[
v\!\left(\frac{d}{b^n} - 1\right) > 0.
\]
Consider the polynomial
\[
p(x) = x^n - \frac{d}{b^n} \in \mathcal{O}[x].
\]
Then $v(p(1)) > 0$ and $v(p'(1)) = v(n) = 0$, the latter since $\operatorname{char}\RF\nmid n$. By henselianity, there exists $u \in \mathcal{O}^{\times}$ such that $p(u) = 0$, hence $u^n = \frac{d}{b^n}$ and therefore $d = (ub)^n$. Therefore, the homomorphism
\[
K^{\times}/(K^{\times})^{n} \;\longrightarrow\; \RV^{\times}/(\RV^{\times})^{n},
\quad a (K^{\times})^{n} \mapsto \rv(a)\, (\RV^{\times})^{n},
\]
is bijective by the previous argument.

\item Let $a \in K^{\times}$. Since $\Gamma$ is divisible, $v(a) \in n\Gamma$, so there exists $b \in K$ such that $v(a) = n v(b)$. Thus $a = u b^n$ for some $u \in \mathcal{O}^{\times}$, and hence $a (K^{\times})^{n} = u (K^{\times})^{n}$ with $u \in \mathcal{O}^{\times}$. Consider the natural homomorphism
\[
\phi_n : \mathcal{O}^{\times}/(1+\mathcal{M}) \;\longrightarrow\; \RV^{\times}/(\RV^{\times})^{n},
\]
given by $\rv(a) \mapsto \rv(a)\, (\RV^{\times})^{n}$. By the previous paragraph and part~(1), this map is surjective. By the first isomorphism theorem,
\[
\mathcal{O}^{\times}/(1+\mathcal{M})/\ker(\phi_n) \;\cong\; \RV^{\times}/(\RV^{\times})^{n}.
\]
Moreover, $\ker(\phi_n) = (\mathcal{O}^{\times})^{n}/(1+\mathcal{M})=(\RF^{\times})^{n}$, yielding the desired isomorphism.\\
For the last part, fix $\gamma \in \Gamma(M)$. We first find an element $a \in K^{\times}$ which is $n$-divisible and has valuation $\gamma$.

Let $a,b \in K^{\times}$ be such that $\rv(b)^n = \rv(a)$ and $v(a) = \gamma$. Then
\[
v\!\left(\frac{a}{b^n} - 1\right) > 0.
\]
Consider the polynomial
$p(x) = x^n - \frac{a}{b^n}.$
Then $v(p(1)) > 0$ while $v(p'(1)) = v(n) = 0$ (as $\operatorname{char}\RF\nmid n$). By henselianity, there exists $u \in K$ such that $p(u) = 0$. Hence
$u^n = \frac{a}{b^n}, \quad \text{so } a = (ub)^n.$

Now let $cK^n$ be any coset. Choose $d \in K^{\times}$ such that $n v(d) = v(c)$. Then
$cK^n = \frac{c}{d^n} K^n.$
Let $u = \frac{c}{d^n}$, so that $v(u) = 0$. Then
$v(ua) = \gamma \quad \text{and} \quad uaK^n = uK^n = cK^n,$

\item This is immediate from the exactness of $(\dagger)$.
\end{enumerate}
\end{proof}

\section{Hyperfields and tropicalization}\label{sec:hyperfields}

In this section we introduce the power-tropical hyperfields $H_n$ and their inverse limit $H$, central objects of the paper. For background on hyperrings and fields see \cite{Jun}. 

In what follows we use $\oplus$ for hyperaddition and standard multiplicative notation. Given hyperrings  $(R,\oplus,\cdot,0,1)$ and $(R',\oplus',\cdot',0',1')$, recall that a \emph{homomorphism} $f\colon R\to R'$ is a map satisfying $f(0)=0'$, $f(1)=1'$, $f(x\cdot y)=f(x)\cdot' f(y)$ for all $x,y\in R$, and
\[
f(x\oplus y)\subseteq f(x)\oplus' f(y)\qquad\text{for all }x,y\in R,
\]
where $f(x\oplus y):=\{f(z):z\in x\oplus y\}$. We say $f$ is \emph{strict} if equality
\[
f(x\oplus y)= f(x)\oplus' f(y)\qquad\text{for all }x,y\in R,
\]
holds. A homomorphism of hyperfields is a homomorphism of the underlying hyperrings; it is automatically multiplicative on the unit groups and, being a hyperring homomorphism, satisfies $f(\ominus x)=\ominus f(x)$.

A uniform source of hyperfields, due to Krasner, is the quotient of a field by a subgroup of its multiplicative group; all the examples below, and the hyperfields $H_n$ introduced in the next subsection, arise this way.

\begin{definition}\label{def:krasner-quotient}
Let $F$ be a field and $G\leq F^\times$ a subgroup. The \emph{Krasner quotient} $F/G$ is the set of orbits $\bar a:=aG$ (with $\bar 0:=\{0\}$), equipped with the multiplication $\bar a\cdot\bar b:=\overline{ab}$ and the hyperaddition
\[
\bar a\oplus\bar b:=\{\,\overline{a'+b'} \ : \ a'\in \bar a,\ b'\in \bar b\,\}.
\]
\end{definition}

\begin{fact}[Krasner \cite{Krasner1983}]\label{fact:krasner}
For a field $F$ and a subgroup $G\leq F^\times$, the Krasner quotient $(F/G,\oplus,\cdot,\bar 0,\bar 1)$ is a hyperfield.
\end{fact}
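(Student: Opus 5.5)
We verify the hyperfield axioms for $F/G$ directly, transporting each one from the field $F$ along the quotient map $\pi\colon F\to F/G$, $a\mapsto \bar a$. Throughout we use two facts: $G$ is a multiplicative subgroup, and multiplication by a fixed element of $F^\times$ is a bijection of $F$ permuting the $G$-orbits.

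First, the multiplicative structure. Multiplication is well defined because $aG\cdot bG=abG$, using that $G$ is a subgroup of the abelian group $F^\times$. Thus $(F/G)\setminus\{\bar 0\}=F^\times/G$ is an abelian group, and $\bar 0$ is absorbing. The hypersum $\bar a\oplus\bar b$ is a nonempty subset of $F/G$ by definition. Commutativity of $\oplus$ is immediate from commutativity of $+$. The identity $\bar 0\oplus\bar a=\{\bar a\}$ holds since $\bar 0=\{0\}$.

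Second, distributivity. We need $\bar c(\bar a\oplus\bar b)=\bar c\bar a\oplus\bar c\bar b$. For $c\neq 0$, the identity $c(a'+b')=ca'+cb'$ gives one inclusion. Conversely, a typical element of $\bar c\bar a\oplus \bar c\bar b$ is $\overline{a''+b''}$ with $a''\in caG$ and $b''\in cbG$. Writing $a''=ca'$ and $b''=cb'$ gives $\overline{a''+b''}=\bar c\,\overline{a'+b'}$. The case $c=0$ is trivial.

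Third, inverses and reversibility. Put $\ominus\bar a:=\overline{-a}$. Then $\bar 0\in\bar a\oplus\overline{-a}$, witnessed by $a+(-a)$. For uniqueness, suppose $\bar 0\in\bar a\oplus\bar b$. Then $ag+bh=0$ for some $g,h\in G$, so $b=-agh^{-1}\in -aG$ and $\bar b=\overline{-a}$. Reversibility also follows: if $\bar c\in\bar a\oplus\bar b$, then $cg=ag'+bg''$ for some $g,g',g''\in G$. Rearranging gives $bg''=cg+(-a)g'$, so $\bar b\in\bar c\oplus\overline{-a}$.

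The main step is associativity, $(\bar a\oplus\bar b)\oplus\bar c=\bar a\oplus(\bar b\oplus\bar c)$ as sets. I will show that both sides equal
\[
S=\{\overline{a'+b'+c'} : a'\in\bar a,\ b'\in\bar b,\ c'\in\bar c\}.
\]
The inclusion $S\subseteq(\bar a\oplus\bar b)\oplus\bar c$ is clear. For the reverse inclusion, take an element $\overline{d+c'}$ with $\bar d\in\bar a\oplus\bar b$, so $d=(a'+b')g$ for some $g\in G$. If $d=0$ this is trivially of the required form. Otherwise
\[
d+c'=g\,(a'+b'+g^{-1}c'),
\]
and rescaling by $g$ shows the class is $\overline{a'+b'+g^{-1}c'}\in S$, since $g^{-1}c'\in\bar c$. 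The other side is symmetric. The only point needing care is this rescaling: it works precisely because a hypersum is a union of whole orbits and $G$ acts by multiplication.

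Together these checks give all the hyperfield axioms, which proves Fact~\ref{fact:krasner}.
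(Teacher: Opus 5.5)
Your proof is correct and complete. You check the canonical-hypergroup axioms, distributivity and the group structure on $F^\times/G$ properly, and in associativity you absorb $g\in G$ into the third summand to land in the set of classes of triple sums, which is the one step that needs care. The paper gives no proof of its own and simply cites Krasner; your direct verification is the standard argument. The only axiom you leave unstated is $\bar 0\neq\bar 1$, which is immediate since $\bar 0=\{0\}$ while $1\in\bar 1=G$.
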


\begin{examples}\label{ex:hyperfields}\leavevmode
\begin{enumerate}
\item (\emph{Krasner hyperfield}) $\mathbb K=\{0,1\}$, with $1\cdot 1=1$ and hyperaddition $0\oplus x=\{x\}$, $1\oplus 1=\{0,1\}$. It is the Krasner quotient $F/F^\times$ of any field $F$ with $|F|\geq 3$, and records only whether an element is zero or not.
\item (\emph{Hyperfield of signs}) $\mathbb S=\{0,1,-1\}$, with the usual multiplication of signs and hyperaddition $x\oplus 0=\{x\}$, $1\oplus 1=\{1\}$, $(-1)\oplus(-1)=\{-1\}$, $1\oplus(-1)=\{0,1,-1\}$. It is the Krasner quotient $\RR/\RR_{>0}$, and records the sign of a real number.
\item (\emph{Tropical hyperfield}) For an ordered abelian group $\Gamma$, let $\mathbb T_\Gamma$ have underlying set $\Gamma_\infty=\Gamma\cup\{\infty\}$, multiplicative unit $0$, absorbing element $\infty$, multiplication $\gamma\cdot\epsilon=\gamma+\epsilon$, and hyperaddition
\[
\gamma\oplus\epsilon=\begin{cases}\{\min(\gamma,\epsilon)\} & \text{if }\gamma\neq\epsilon,\\[2pt] \{\delta\in\Gamma_\infty:\delta\geq\gamma\} & \text{if }\gamma=\epsilon.\end{cases}
\]
It is the Krasner quotient $K/\mathcal O_K^\times$ of any valued field $(K,v)$ with value group $\Gamma$, and records the valuation. When $\Gamma=\RR$ one writes $\mathbb T:=\mathbb T_\RR$, the tropical hyperfield of \cite{Jun}.
\item (\emph{Real tropical hyperfield}) $\mathbb{RT}=(\RR\times\{1,-1\})\cup\{\infty\}$, with multiplicative unit $(0,1)$, absorbing element $\infty$, multiplication $(\gamma,\epsilon)\cdot(\gamma',\epsilon')=(\gamma+\gamma',\epsilon\epsilon')$, and the hyperaddition (where $\bar v(\gamma,\epsilon)=\gamma$, $\bar v(\infty)=\infty$, and $\infty$ is the additive neutral)
\[
(\gamma,\epsilon)\oplus(\gamma',\epsilon')=\begin{cases}
\{(\gamma,\epsilon)\} & \text{if }\gamma<\gamma',\\
\{(\gamma',\epsilon')\} & \text{if }\gamma>\gamma',\\
\{(\gamma,\epsilon)\} & \text{if }\gamma=\gamma',\ \epsilon=\epsilon',\\
\{z:\bar v(z)\geq\gamma\} & \text{if }\gamma=\gamma',\ \epsilon=-\epsilon'.
\end{cases}
\]
It combines the rules of $\mathbb T$ and $\mathbb S$, recording both the valuation and the sign, and is the Krasner quotient $K/G$ of a real closed valued field $K$ with value group $\RR$ by its group $G$ of positive units (units of positive residue). It is the real tropical hyperfield $\mathbb{RT}$ of Jell--Scheiderer--Yu \cite{jell}.
\end{enumerate}
\end{examples}

\subsection{The power-tropical hyperfields}

Let $(K,v)$ be an arbitrary valued field, as in \Cref{sec:prelim}, with value group $\Gamma$ and residue field $\RF$. We use freely the leading-term structure $\RV$, the maps $\rv,\bar v,\iota$, the exact sequence $(\dagger)$ and the notation $G_n=\RF^\times/(\RF^\times)^n\cup\{0\}$, $G_n^\times=G_n\setminus\{0\}$ introduced there. 

We define the multiplicative part of the $n$th power-tropical hyperfield as the \emph{fiber product} of $\bar v_n$ with the quotient $q\colon\Gamma\twoheadrightarrow\Gamma/n\Gamma$,
\[
H_n^\times\coloneqq \Gamma\times_{\Gamma/n\Gamma}\RV^\times/(\RV^\times)^n=\big\{(\gamma,\xi)\in\Gamma\times\RV^\times/(\RV^\times)^n : q(\gamma)=\bar v_n(\xi)\big\},
\]
a subgroup of $\Gamma\times\RV^\times/(\RV^\times)^n$ under the coordinatewise operation $(\gamma,\xi)\cdot(\gamma',\xi')=(\gamma+\gamma',\xi\xi')$, which preserves the fiber-product condition since $q$ and $\bar v_n$ are homomorphisms. We adjoin an absorbing element $\mathbf 0$ and set $H_n\coloneqq H_n^\times\cup\{\mathbf 0\}$, with $\mathbf 0\cdot x=x\cdot\mathbf 0=\mathbf 0$ and unit $\mathbf 1\coloneqq(0,\bar 1)$. Consider the map
\[
\rho_n\colon K\to H_n,\qquad \rho_n(a)=\big(v(a),\,\rv_n(a)\big)\ (a\neq 0),\qquad \rho_n(0)=\mathbf 0,
\]
which is well defined since $q(v(a))=v(a)+n\Gamma=\bar v_n(\rv_n(a))$. Given $(\gamma,\xi)\in H_n$, any $a\in K$ with $\rho_n(a)=(\gamma,\xi)$ is called a \emph{$K$-representative} of $(\gamma,\xi)$; we let $H_n^\times=H_n\setminus\{\mathbf 0\}$. It will play the role of our distinguished hyperfield morphism later on.

\begin{lemma}\label{lem:surjectivity}
The map $\rho_n\colon K^\times\to H_n^\times$ is a surjective group homomorphism; together with $\rho_n(0)=\mathbf 0$, the map $\rho_n\colon K\to H_n$ is surjective.
\end{lemma}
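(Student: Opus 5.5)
The plan is to split the statement into the homomorphism part and the surjectivity part, and to handle each directly from the definitions in \Cref{sec:prelim}.

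\textbf{Homomorphism.} For $a,b\in K^\times$ we have $v(ab)=v(a)+v(b)$. Moreover $\rv_n$ is multiplicative, being the composite of the multiplicative map $\rv$ with a group quotient. Hence
\[
\rho_n(ab)=\big(v(a)+v(b),\,\rv_n(a)\rv_n(b)\big)=\rho_n(a)\rho_n(b),
\]
and $\rho_n(1)=(0,\bar 1)=\mathbf 1$. Well-definedness, i.e. that $\rho_n(a)$ lies in the fiber product, was already noted when $\rho_n$ was defined. So $\rho_n\colon K^\times\to H_n^\times$ is a group homomorphism.

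\textbf{Surjectivity.} Fix $(\gamma,\xi)\in H_n^\times$, so that $q(\gamma)=\bar v_n(\xi)$.
\begin{enumerate}
\item Since $\rv\colon K^\times\to\RV^\times$ and the quotient map are surjective, choose $b\in K^\times$ with $\rv_n(b)=\xi$.
\item Then $v(b)+n\Gamma=\bar v_n(\xi)=\gamma+n\Gamma$, so $\gamma-v(b)=n\delta$ for some $\delta\in\Gamma$.
\item Choose $c\in K^\times$ with $v(c)=\delta$, and set $a\coloneqq bc^n$.
\end{enumerate}
It remains to check that $a$ is a $K$-representative of $(\gamma,\xi)$. On the valuation side, $v(a)=v(b)+n\delta=\gamma$. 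On the power-class side, $\rv_n(a)=\rv_n(b)\,\rv_n(c)^n=\xi$, because $\rv(c)^n\in(\RV^\times)^n$. Thus $\rho_n(a)=(\gamma,\xi)$.

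Since $\rho_n(0)=\mathbf 0$ and $H_n=H_n^\times\cup\{\mathbf 0\}$, the map $\rho_n\colon K\to H_n$ is surjective.

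The only step requiring any thought is step 2, namely adjusting the valuation without changing the class modulo $n$th powers. The fiber-product condition is exactly what makes this adjustment possible, so I expect no real obstacle.
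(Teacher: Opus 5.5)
Your proof is correct, but it runs the surjectivity argument in the opposite order from the paper. The paper first fixes $c\in K^\times$ with $v(c)=\gamma$. It then notes that $\xi\,\rv_n(c)^{-1}\in\ker\bar v_n$, uses exactness of $(\dagger)$ to write this class as $\iota_n(\bar w)$, and lifts $\bar w$ to a unit $u\in\mathcal O^\times$, so that $a=uc$ works.

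You instead first hit the power class by taking $b$ with $\rv_n(b)=\xi$. You then correct the valuation by the $n$th power $c^n$, which is invisible modulo $(\RV^\times)^n$. The fibre-product condition is exactly what makes $\gamma-v(b)$ lie in $n\Gamma$.

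Your route is slightly more economical. It uses only surjectivity of $\rv_n$ and the identity $\bar v_n(\rv_n(b))=v(b)+n\Gamma$, and never needs exactness of $(\dagger)$ at the middle term or the lifting of residue classes to units.

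The paper's route gives a bit more structural information. For any fixed $c$ of valuation $\gamma$, every element of $H_n^\times$ with valuation $\gamma$ is represented as $uc$ with $u$ a unit. This is closer in spirit to the normalisation behind the coordinates $(\gamma,g)$, $g\in G_n^\times$, introduced in \Cref{rmk:coordinates}.
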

\begin{proof}
Multiplicativity is immediate since both coordinates are homomorphisms on $K^\times$:
\[
\rho_n(ab)=\big(v(a)+v(b),\,\rv_n(a)\rv_n(b)\big)=\rho_n(a)\rho_n(b).
\]
For surjectivity, $\rho_n(0)=\mathbf 0$ so let $(\gamma,\xi)\in H_n^\times$ with $\gamma\in\Gamma$, $\xi\in\RV^\times/(\RV^\times)^n$ and $\bar v_n(\xi)=q(\gamma)$. Choose $c\in K^\times$ with $v(c)=\gamma$. Then $\bar v_n(\rv_n(c))=q(v(c))=q(\gamma)=\bar v_n(\xi)$, so $\xi\,\rv_n(c)^{-1}\in\ker\bar v_n=\operatorname{im}\iota_n$ by exactness of $(\dagger)$; write $\xi\,\rv_n(c)^{-1}=\iota_n(\bar w)$ with $\bar w\in\RF^\times/(\RF^\times)^n$. As $\iota_n$ is induced by $\iota(\res (u))=\rv(u)$, choosing $u\in\mathcal O^\times$ with $\res(u)$ in the class $\bar w$ gives $\rv_n(u)=\iota_n(\bar w)$ and $v(u)=0$. Then $a:=uc$ satisfies $v(a)=\gamma$ and $\rv_n(a)=\rv_n(u)\,\rv_n(c)=\iota_n(\bar w)\,\rv_n(c)=\xi$, i.e.\ $\rho_n(a)=(\gamma,\xi)$.
\end{proof}

\begin{corollary}\label{cor:hyperfield}
    The structure $(H_n, \oplus, \cdot, \mathbf{0}, \mathbf{1})$ is a hyperfield.
\end{corollary}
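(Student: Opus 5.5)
The plan is to realize $H_n$ as a Krasner quotient and then invoke \Cref{fact:krasner}. The hyperaddition on $H_n$ has not yet been given an independent formula, so I take it to be the one transported along $\rho_n$:
\[
x\oplus y:=\{\rho_n(a+b) : a,b\in K,\ \rho_n(a)=x,\ \rho_n(b)=y\}.
\]
Set $G:=\ker(\rho_n|_{K^\times})\leq K^\times$. By \Cref{lem:surjectivity}, $\rho_n\colon K^\times\to H_n^\times$ is a surjective group homomorphism. The first isomorphism theorem then gives a multiplicative bijection $K^\times/G\to H_n^\times$, $aG\mapsto\rho_n(a)$, which I extend by $\bar 0\mapsto\mathbf 0$.

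The key step is to check that this bijection $\phi\colon K/G\to H_n$ carries the Krasner hyperaddition of \Cref{def:krasner-quotient} exactly onto $\oplus$. The fibres of $\rho_n$ are precisely the cosets $aG$, together with $\{0\}$ for $\mathbf 0$. Hence the set of $K$-representatives of $x$ is the orbit $\phi^{-1}(x)$, and
\[
\phi(\bar a\oplus\bar b)=\{\rho_n(a'+b') : a'\in aG,\ b'\in bG\}=\phi(\bar a)\oplus\phi(\bar b).
\]
Multiplication, $\mathbf 0$ and $\mathbf 1=\rho_n(1)$ also correspond under $\phi$. So $\phi$ is an isomorphism of structures, and by \Cref{fact:krasner} $H_n$ is a hyperfield.

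If the hyperaddition on $H_n$ is instead meant to be given by an explicit formula in the coordinates $(\gamma,\xi)$, one extra step is needed. I would then show that this formula agrees with the transported one, and I expect this to be the main obstacle. The check needs \Cref{lem:surjectivity} to pick representatives with prescribed valuation and power class. It then splits into cases $v(a)\neq v(b)$ and $v(a)=v(b)$, and in the equal case into whether $\rv(a)+\rv(b)$ cancels. Once the two descriptions agree, the result follows as above.

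There is no other real difficulty: \Cref{lem:surjectivity} supplies surjectivity, and \Cref{fact:krasner} supplies all the hyperfield axioms (associativity, the unique additive inverse $\ominus x=\rho_n(-1)x$, reversibility, and distributivity).
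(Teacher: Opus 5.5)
Your proposal is correct and matches the paper's proof. The paper likewise treats $\oplus$ on $H_n$ as the hyperaddition induced by $\rho_n$, identifies $H_n$ with the Krasner quotient $K/N$ for $N=\ker(\rho_n|_{K^\times})$ using \Cref{lem:surjectivity}, and concludes by \Cref{fact:krasner}; your contingency paragraph is not needed, since the paper gives no independent coordinate formula for $\oplus$ before this corollary.
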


\begin{proof}
By \Cref{fact:krasner}, $\rho_n$ identifies $H_n$ with the Krasner quotient $K/N$ of the field $K$ by the subgroup $N=\ker(\rho_n|_{K^\times})\leq K^\times$, and the quotient map $\rho_n\colon K\to K/N$ is a strict homomorphism of hyperfields. 
\end{proof}

\begin{remark}\label{rmk:coordinates}
Note that $H_n^\times$ fits into a short exact sequence
\[
1\longrightarrow G_n^\times\xrightarrow{\ j\ } H_n^\times\xrightarrow{\ \mathrm{pr}_1\ }\Gamma\longrightarrow 0,\qquad j(g)=(0,\iota_n(g)),\quad \mathrm{pr}_1(\gamma,\xi)=\gamma,
\]
exhibiting it as an extension of the value group $\Gamma$ by $G_n^\times$ (indeed $\mathrm{pr}_1$ is onto because $\bar v_n$ is, $j$ is injective by $(\dagger)$, and $\ker\mathrm{pr}_1=\{0\}\times\iota_n(G_n^\times)=\operatorname{im}j$). Whenever $\RV^\times/(\RV^\times)^n$ is finite for every $n$, a section of $\bar v_n$ presents $H_n^\times$ in coordinates over $\RF$. For $s\in\RF^\times$ we write $[s]_n\in G_n^\times$ for its class modulo $(\RF^\times)^n$. Fix representatives $c_0=1,c_1,\dots,c_k\in K^\times$ with $\{v(c_i)\}$ a complete set of representatives of $\Gamma/n\Gamma$. For $a\in K^\times$ with $v(a)\in v(c_i)+n\Gamma$, write $v(a/c_i)=nv(b)$ and set
\[
[a]_n\coloneqq \big[\res(a/(c_ib^n))\big]_n\in G_n^\times,\qquad [0]_n\coloneqq 0,
\]
independent of $b$; then $\rho_n(a)=(v(a),[a]_n)$ under the resulting bijection $H_n^\times\cong\Gamma\times G_n^\times$. This bijection identifies $H_n^\times$ with the \emph{direct product} $\Gamma\times G_n^\times$ --- equivalently, the sequence above splits and $[\cdot]_n$ is multiplicative --- exactly when the $c_i$ can be chosen with $c_ic_j\equiv c_l\pmod{(K^\times)^n}$ whenever $v(c_l)\equiv v(c_i)+v(c_j)$. We henceforth use these coordinates, writing elements of $H_n$ as pairs $(\gamma,g)$ with $\rho_n(a)=(v(a),[a]_n)$ and $\mathbf 0=(\infty,g)$ for any $g$, when possible. 
\end{remark}

\begin{definition}[Topologies]\label{def: topologyinHn}
We endow $\Gamma$ with the order topology refined so that each coset of $n\Gamma$ is open. We extend this topology to $\Gamma_\infty$ by adding as open neighborhood of $\infty$ all the rays $(\gamma,\infty]$ as open sets. We equip $\RV^\times/(\RV^\times)^n$ with the discrete topology. We then topologize $H_n=H_n^\times\cup\{\mathbf 0\}$ by declaring:
\begin{itemize}
\item on $H_n^\times=\Gamma\times_{\Gamma/n\Gamma}\RV^\times/(\RV^\times)^n$, the subspace topology from the product topology on $\Gamma\times\RV^\times/(\RV^\times)^n$;
\item a basis of neighborhoods of $\mathbf 0$ given by the sets $\{\mathbf 0\}\cup\{x\in H_n^\times:\bar v(x)>\gamma_0\}$ for $\gamma_0\in\Gamma$,
\end{itemize}
so that $\bar v$ extends to a continuous map $\bar v\colon H_n\to\Gamma_\infty$ with $\bar v(\mathbf 0)=\infty$. We equip $H_n^\ell$ with the product topology.
\end{definition}

Suppose now that $\RF^\times/(\RF^\times)^n$ and $\Gamma/n\Gamma$ are finite (so that $G_n^\times$ is finite and $v(c_0),\dots,v(c_k)$ is a finite set of representatives of $\Gamma/n\Gamma$, in the notation of \Cref{rmk:coordinates}). Then, it is not difficult to see that the topology above is generated, in coordinates, by three kinds of sub-basic open sets, to which we refer in the sequel:
\begin{itemize}
    \item (Type A) for each $g\in G_n^\times$ and each order-open $U\subseteq\Gamma$, the set $\{(\gamma,g) : \gamma\in U\}$ (whenever $\Gamma$ is not discrete);
    \item (Type B) for each $g\in G_n^\times$ and each $i\le k$, the coset-fibre $\{(\gamma,g) : \gamma \in v(c_{i})+n\Gamma\}$;
    \item (Type C) the neighborhoods of $\mathbf{0}$ of the form $\bigcup_{g\in G_n^\times} \{(\delta, g) : \delta>\gamma_g \}$ for a family $\{\gamma_g\in \Gamma : g\in G_n^\times\}$.
\end{itemize}
Note that the topology on $\Gamma$ coincides with the order topology when $\Gamma$ is divisible. The type-C family, being indexed by the finite set $G_n^\times$, reduces to the single threshold $\gamma_0=\max_g\gamma_g$, recovering the canonical neighborhoods of $\mathbf 0$.

\begin{remark}
\label{rmk:topologyincoordinates}
Note that $H_n$ is Hausdorff. Fix the coordinates of \Cref{rmk:coordinates}, and equip $G_n^\times$ with
the discrete topology. After naming the representatives used to define
these coordinates, the bijection
\[
\kappa_n\colon \Gamma\times G_n^\times\longrightarrow H_n^\times,
\qquad
(\gamma,g)\longmapsto
\bigl(\gamma,s_n(q(\gamma))\iota_n(g)\bigr),\]
where
$s_n\colon\Gamma/n\Gamma
\mapsto \RV^\times/(\RV^\times)^n$. 
is the corresponding section of $\bar v_n$, is a definable homeomorphism. Indeed, each fiber of $q\colon\Gamma\to\Gamma/n\Gamma$ is a coset of
$n\Gamma$, and hence is clopen in the topology of
\Cref{def: topologyinHn}. On each such coset, the map $s_n\circ q$ is
constant, so the restriction of $\kappa_n$ is the product of the identity
on that coset with a bijection between discrete spaces. The same applies
to its inverse. For $g\in G_n^\times$, set
$H_{n,g}
:=
\{\mathbf 0\}\cup\kappa_n\bigl(\Gamma\times\{g\}\bigr).$ Then $H_{n,g}$ is closed in $H_n$, and the map
\[
\theta_g\colon\Gamma_\infty\longrightarrow H_{n,g},\ 
\theta_g(\gamma)=\kappa_n(\gamma,g)
\quad\text{and}\quad
\theta_g(\infty)=\mathbf 0,
\]
is a definable homeomorphism. Indeed, the complement of $H_{n,g}$ is the
open set
$\kappa_n\bigl(
\Gamma\times(G_n^\times\setminus\{g\})
\bigr)\subseteq H_n^\times,$
and, for every $\gamma_0\in\Gamma$,
\[
\theta_g\bigl((\gamma_0,\infty]\bigr)
=
H_{n,g}\cap
\Bigl(
\{\mathbf 0\}
\cup
\{x\in H_n^\times:\bar v(x)>\gamma_0\}
\Bigr).
\]
Moreover,
$\bar v\circ\theta_g
=
\operatorname{id}_{\Gamma_\infty}.$ More generally, for
$\mathbf g=(g_1,\ldots,g_\ell)\in(G_n^\times)^\ell$, set
$H_{n,\mathbf g}
:=
H_{n,g_1}\times\cdots\times H_{n,g_\ell}$
and
$\theta_{\mathbf g}
:=
\theta_{g_1}\times\cdots\times\theta_{g_\ell}
\colon
\Gamma_\infty^\ell\longrightarrow H_{n,\mathbf g}.$
Then $H_{n,\mathbf g}$ is closed in $H_n^\ell$,
$\theta_{\mathbf g}$ is a definable homeomorphism, and
$\bar v^\ell\circ\theta_{\mathbf g}
=
\operatorname{id}_{\Gamma_\infty^\ell}.$
If $G_n^\times$ is finite, the sets $H_{n,\mathbf g}$ form the finite
closed cover
$H_n^\ell
=
\bigcup_{\mathbf g\in(G_n^\times)^\ell}H_{n,\mathbf g}$.

Note that this extents to infinite products. Indeed, if $A$ is any set, the same argument show that $H_n^A$ is the union of closed sets $\bigcup_{\mathbf g\in(G_n^\times)^A} H_{n,\mathbf g}$. 
\end{remark}

For $m,n \in \Nn_{\geq 2}$ with $m \mid n$, the inclusion $(\RV^\times)^n\subseteq(\RV^\times)^m$ induces a surjective homomorphism $\RV^\times/(\RV^\times)^n\to\RV^\times/(\RV^\times)^m$, compatible with $\bar v_n,\bar v_m$ over $\Gamma/n\Gamma\to\Gamma/m\Gamma$. On the fibre products it induces the transition map
\[
\tau_{mn}\colon H_n\to H_m,\qquad (\gamma,\xi)\mapsto\big(\gamma,\ \xi\bmod(\RV^\times)^m\big),\quad \tau_{mn}(\mathbf 0)=\mathbf 0,
\]
which is continuous and satisfies $\tau_{mn}\circ\rho_n=\rho_m$. It is multiplicative, and a homomorphism of hyperfields: using $\tau_{mn}\rho_n=\rho_m$ and the surjectivity of $\rho_n$,
\begin{align*}
\tau_{mn}(\xi\oplus\eta) & =  \{\rho_m(a+b):\rho_n(a)=\xi,\ \rho_n(b)=\eta\} \\ 
& \subseteq  \{\rho_m(a'+b'):\rho_m(a')=\tau_{mn}\xi,\ \rho_m(b')=\tau_{mn}\eta\}\\
&=\tau_{mn}\xi\oplus\tau_{mn}\eta,
\end{align*}
the inclusion holding because $\rho_n(a)=\xi$ implies $\rho_m(a)=\tau_{mn}\xi$.

\begin{remark}\label{rmk:tau-not-strict}
The inclusion above can be proper, so $\tau_{mn}$ need not be a strict hyperfield morphism --- already in equicharacteristic $0$. Take $K=\RR$ with trivial valuation and $n=6$, $m=3$. Since $(\RR^\times)^6=\RR_{>0}$, a sum of two positives is never $0$, so in $H_6=\RR/(\RR^\times)^6$ one has $[1]_6\oplus[1]_6=\{[1]_6\}$ and $\tau_{36}([1]_6\oplus[1]_6)=\{[1]_3\}$. But $(\RR^\times)^3=\RR^\times$, so $H_3=\RR/(\RR^\times)^3$ has a single nonzero class, and $1+(-1)=0$ yields $\tau_{36}[1]_6\oplus\tau_{36}[1]_6=[1]_3\oplus[1]_3\ni\mathbf 0$. Thus $\{[1]_3\}\subsetneq[1]_3\oplus[1]_3$. The non-strictness must be taken seriously for the inverse limit below: an inverse limit of hyperfields along non-strict hyperfield homomorphisms need \emph{not} be a hyperfield, because the hypersum $x\oplus y=\varprojlim_n(x_n\oplus_n y_n)$ may be empty. Whether $H=\varprojlim_n H_n$ is a hyperfield turns out to depend only on the residue field: it holds when $[\RF^\times:(\RF^\times)^n]<\infty$ for all $n$ (\Cref{prop:H-hyperfield}) and can fail otherwise (\Cref{rmk:FLT}).
\end{remark}

\begin{definition}\label{def:tropical-hyperfield}
We define the limit $H$ as the inverse limit $H = \varprojlim_n H_n,$
taken with respect to the directed system given by the morphisms $\tau_{mn}$, endowed with the limit topology; we denote by $\rho\colon K \to H$ the map given by $\rho(x) = (\rho_n(x))_n$. 
\end{definition}

Multiplication, distributivity and additive inverses are computed coordinatewise and pass to the limit, so $H$ is a topological multiplicative monoid carrying a reversible hyperaddition $x\oplus y=\varprojlim_n(x_n\oplus_n y_n)$. The only point left open --- and the only place where the non-strictness of \Cref{rmk:tau-not-strict} plays a role --- is whether this hypersum is everywhere defined. When it is, $H$ is a hyperfield, which we then call the \emph{power-tropical hyperfield}; by \Cref{prop:H-hyperfield} this is the case whenever each $G_n^\times$ is finite, but not in general (\Cref{rmk:FLT}).

\begin{proposition}\label{prop:H-hyperfield}
Suppose $G_n^\times=\RF^\times/(\RF^\times)^n$ is finite for every $n$ (equivalently $[\RF^\times:(\RF^\times)^n]<\infty$). Then $H=\varprojlim_n H_n$ is a hyperfield, its topology is Hausdorff, and $\rho\colon K\to H$ is a homomorphism of hyperfields. 
\end{proposition}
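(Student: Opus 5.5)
The plan is to reduce everything to the single nontrivial point flagged before the statement: showing that the hypersum $x\oplus y=\varprojlim_n(x_n\oplus_n y_n)$ is nonempty for all $x,y\in H$. The remaining hyperfield axioms are then formal. $H^\times=\varprojlim_n H_n^\times$ is an abelian group, with inverses taken coordinatewise since the $\tau_{mn}$ are group homomorphisms on units. Commutativity, associativity (as an equality of subsets of $H$), distributivity, the neutral element $\mathbf 0$, and the unique-negative and reversibility axioms each hold in every $H_n$ by \Cref{cor:hyperfield}. They are preserved by $\tau_{mn}$ and hence pass to the limit coordinatewise, once all hypersums involved are known to be nonempty. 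Hausdorffness is immediate because each $H_n$ is Hausdorff (\Cref{rmk:topologyincoordinates}) and an inverse limit of Hausdorff spaces, as a subspace of the product, is Hausdorff. Finally, $\rho$ is a homomorphism: for $a,b\in K$ we have $\rho_n(a+b)\in\rho_n(a)\oplus_n\rho_n(b)$ for every $n$, so $\rho(a+b)\in\rho(a)\oplus\rho(b)$, and multiplicativity and $\rho(0)=\mathbf 0$, $\rho(1)=\mathbf 1$ hold coordinatewise.

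For nonemptiness, fix $x,y\in H^\times$ (the cases involving $\mathbf 0$ are trivial) and put $S_n=x_n\oplus_n y_n\neq\emptyset$. The computation before \Cref{rmk:tau-not-strict} gives $\tau_{mn}(S_n)\subseteq S_m$ for $m\mid n$. Note that $\bar v$ is constant along threads, so $v(x_n)=\gamma$ and $v(y_n)=\delta$ are independent of $n$. I split into three cases.

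\emph{Case $\gamma<\delta$.} For any representatives $a,b$ we have $v(a)<v(b)$, hence $\rv(a+b)=\rv(a)$, so $S_n=\{x_n\}$ for all $n$ and $x\in x\oplus y$. The case $\gamma>\delta$ is symmetric.

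\emph{Case $\gamma=\delta$ and $x=\ominus y$.} Here $x_n=-y_n$ in every $H_n$, so choosing representatives $a$ and $-a$ gives $\mathbf 0\in S_n$ for all $n$. Thus $\mathbf 0\in x\oplus y$.

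\emph{Case $\gamma=\delta$ and $x\neq\ominus y$.} Choose $N$ with $x_N\neq -y_N$. For every $n$ with $N\mid n$ we then have $x_n\neq -y_n$, so any representatives $a,b$ satisfy $\rv(a)\neq-\rv(b)$. It follows that $v(a+b)=\gamma$, so $T_n:=\{z\in S_n:\bar v(z)=\gamma\}$ is nonempty. Since $\tau_{mn}$ preserves $\bar v$, we get $\tau_{mn}(T_n)\subseteq T_m$. Each $T_n$ is finite: elements of $H_n^\times$ of fixed valuation $\gamma$ are in bijection with the fibre of $\bar v_n$, that is, with $G_n^\times$, which is finite by hypothesis. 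An inverse system of nonempty finite sets indexed by the directed, cofinal set $\{n: N\mid n\}$ has nonempty limit, by König's lemma or compactness of products of finite discrete sets. Since this index set is cofinal, such a thread determines a point of $H$ lying in $\varprojlim_n S_n$.

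I expect this last case to be the main obstacle. The sets $S_n$ themselves may be infinite when cancellation occurs, since they can contain all elements of valuation $\ge\gamma$. The device of freezing the valuation at $\gamma$ is what makes them finite, and the finiteness of $G_n^\times$ is used exactly there. This is consistent with \Cref{rmk:FLT}, where the statement can fail without that hypothesis.
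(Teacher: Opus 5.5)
Your arguments for nonemptiness, Hausdorffness and the homomorphism property are the paper's. You use the same case split, and the same device of pinning the valuation at $\mu$ so that the relevant sets lie in a single fibre $\bar v^{-1}(\mu)\cong G_n^\times$, where the nonemptiness of an inverse limit of finite nonempty sets applies. Your restriction to $\{n: N\mid n\}$, followed by pushing the thread down, is slightly more careful than the paper's bare assertion that every $L_n$ is nonempty.

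There is one step that does not go through as written, and the paper's proof passes over it too: associativity. You say associativity passes to the limit coordinatewise once hypersums are nonempty, but this inference is not valid. By definition, $(x\oplus y)\oplus z=\bigcup_{w\in x\oplus y}w\oplus z$ ranges over \emph{threads} $w\in H$. By contrast, $t\in\varprojlim_n\bigl((x_n\oplus_n y_n)\oplus_n z_n\bigr)$ only provides a witness at each level, namely an element of
\[
W_n=\{w\in x_n\oplus_n y_n : t_n\in w\oplus_n z_n\}.
\]
These sets are nonempty and satisfy $\tau_{mn}(W_n)\subseteq W_m$, but they may be infinite, for instance when $x_n\oplus_n y_n$ is a whole ball. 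So you need a second inverse-limit argument, and it genuinely uses the finiteness of $G_n^\times$. Your own device supplies it. There are three cases.
\begin{enumerate}
\item If one of $x,y$ is $\mathbf 0$, or $\bar v(x)\neq\bar v(y)$, then $x_n\oplus_n y_n$ is a singleton.
\item If $\bar v(x)=\bar v(y)=\mu$ is finite and $y\neq\ominus x$, then $W_n\subseteq\bar v^{-1}(\mu)$ for all $n$ in a cofinal set.
\item If $y=\ominus x$, then either $t=z$, in which case $w=\mathbf 0$ works; or $t_n\neq z_n$ for all $n$ in a cofinal set. In the latter case every $w\in W_n$ must equal $t_n$ or satisfy $\bar v(w)=\bar v(z)$, so $W_n\subseteq\{t_n\}\cup\bar v^{-1}(\bar v(z))$.
\end{enumerate}
In every case, either a constant thread works or the $W_n$ are finite on a cofinal set. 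Hence a thread $w$ exists, and you obtain $(x\oplus y)\oplus z=\varprojlim_n\bigl((x_n\oplus_n y_n)\oplus_n z_n\bigr)$. By commutativity and levelwise associativity this limit equals $x\oplus(y\oplus z)$. The other axioms you list are genuinely formal, because none of them quantifies existentially over an auxiliary element of $H$. These are commutativity, the neutral element, the unique negative, reversibility and distributivity.
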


\begin{proof}
Multiplicativity and distributivity, the absorbing role of $\mathbf 0$, and the existence and uniqueness of additive inverses pass to the inverse limit formally, since each $\tau_{mn}$ is multiplicative and each $H_n$ is a hyperfield (\Cref{cor:hyperfield}); the limit topology is a hyperfield topology because the operations are coordinatewise and the $\pi_n$ are continuous. Since each $H_n$ is Hausdorff, so is $H$. The one nonformal point is that $x\oplus y=\varprojlim_n(x_n\oplus_n y_n)$ is nonempty for all $x,y\in H$, an inverse limit of nonempty sets being possibly empty.

Fix $x,y\in H$. As $\tau_{mn}$ preserves $\bar v$, the values $\gamma=\bar v(x)$ and $\epsilon=\bar v(y)$ are constant along the threads. If $\gamma\ne\epsilon$, say $\gamma<\epsilon$, then $x_n\oplus_n y_n=\{x_n\}$ at every level (the term of smaller value dominates), so $x\in x\oplus y$. If $\gamma=\epsilon=\infty$ then $x=y=\mathbf 0$ and $x\oplus y=\{\mathbf 0\}$. If $\gamma=\epsilon$ is finite and $y=\ominus x$ (i.e. $y_n=\ominus x_n$ for all $n$), then $\mathbf 0\in x_n\oplus_n y_n$ for all $n$ and the constant thread $\mathbf 0$ lies in $x\oplus y$.

There remains $\gamma=\epsilon=:\mu$ finite and $y\ne\ominus x$. The set $\{n: y_n=\ominus x_n\}$ is closed under divisors, so its complement is cofinal in $(\Nn,\mid)$; for $n$ in the complement there is no additive cancellation, hence every element of $x_n\oplus_n y_n$ has $\bar v=\mu$. Therefore every thread of $x\oplus y$ has constant value $\mu$, and
\[
x\oplus y=\varprojlim_n L_n,\qquad L_n:=\{z\in x_n\oplus_n y_n:\bar v(z)=\mu\}.
\]
Each $L_n$ is nonempty and lies in the single fibre $\bar v^{-1}(\mu)\subseteq H_n$. With the valuation coordinate pinned to $\mu$, this fibre corresponds to the kernel of $\RV^\times/(\RV^\times)^n\to\Gamma/n\Gamma$, hence finite by hypothesis; so each $L_n$ is a finite nonempty set. The transition maps restrict to $L_n\to L_m$, making $(L_n)_n$ an inverse system of finite nonempty sets over the countable directed poset $(\Nn,\mid)$, whose inverse limit is nonempty. Thus $x\oplus y\ne\varnothing$.
\end{proof}

\begin{remark}\label{rmk:FLT}
The valuation never obstructs: with the splitting $\RV^\times=\Gamma\times\RF^\times$ one has $L_n=g_n\cdot N_n(d_n)$ for $N_n(d)=\{[u^n+Dw^n]_n:u,w\in\RF^\times,\ u^n+Dw^n\neq0\}\subseteq\RF^\times/(\RF^\times)^n$, so $H$ is a hyperfield if and only if the inverse-limit Krasner hyperfield $\varprojlim_n\big(\RF/(\RF^\times)^n\big)$ of the residue field is --- the value group plays no role.

This can fail. Take $\RF=\Qq$ (e.g. $K=\Lau{\Qq}{\RR}$ or $\lau{\Qq}{t}$), for which $[\Qq^\times:(\Qq^\times)^n]=\infty$. Enumerate the primes $p_1,p_2,\dots$ and let $d\in\varprojlim_n\Qq^\times/(\Qq^\times)^n$ be the thread of positive sign with $p_i$-adic exponent $4\,i!$; this is well defined since $4\,i!\equiv0\pmod n$ once $i\ge n$. Put $x=\rho(1)$ and $y=(0,d_n)_n$, so $\bar v(x)=\bar v(y)=0$ and $y\neq\ominus x$ (already $d_4=[1]_4\neq[-1]_4$). Then $x\oplus y=\varprojlim_n N_n(d_n)$, and we claim it is empty. Let $e$ be a thread in this limit and $p=p_i$, $\alpha=4\,i!$. For any $n$ with $\alpha\not\equiv0\pmod n$ the two summands of a representative $u^n+Dw^n$ of $e_n$ have $p$-adic valuations $\equiv0$ and $\equiv\alpha\pmod n$, which differ; by the ultrametric there is no $p$-adic cancellation and $v_p(e_n)\equiv0$ or $\alpha\pmod n$. Varying $n$ forces the profinite exponent $\alpha_p(e)\in\{0,\alpha\}\subseteq\widehat\Zz$, in particular $\alpha_{p_i}(e)\equiv0\pmod4$ for every $i$. As $d_4=[1]_4$ and every value $u^4+w^4>0$, the class $e_4$ has positive sign and all exponents $\equiv0\pmod4$, whence $e_4=[1]_4$; this means $u^4+w^4=z^4$ for some $u,w,z\in\Qq^\times$, contradicting Fermat's theorem for exponent $4$. So $x\oplus y=\varnothing$ and $H$ is not a hyperfield. 
\end{remark}

\section{The space of hyperfield points}\label{sec:hyperfield-points}

In \cite{Jun}, the author presents a unifying way to describe several spectral constructions in algebraic geometry as \emph{spaces of hyperfield points}. For a topological hyperfield $\mathbb{H}$ equipped with a hyperfield homomorphism $\rho_\mathbb{H}\colon \K\to\mathbb{H}$ and a $K$-algebra $A$, one sets $\Hom_K(A,\mathbb{H})$, to be the set of hyperring homomorphisms $A\to\mathbb{H}$ restricting to the map $\rho_\mathbb{H}$ on $K$, with the coarsest topology making $\phi\mapsto\phi(f)$ continuous for every $f\in A$. Taking $X=\operatorname{Spec}A$ and writing $X(\mathbb{H})=\Hom_K(\ccO_X,\mathbb{H})$, the hyperfields of \Cref{ex:hyperfields} recover the classical constructions, as summarised in the following table.
\begin{center}
\begin{tabular}{lll}
\hline
$\mathbb{H}$ & $X(\mathbb{H})$ & reference \\
\hline
$\mathbb{K}$ & underlying scheme $X=\operatorname{Spec}\ccO_X$ & \cite[Prop.~4.1]{Jun} \\
$\mathbb{T}$ & Berkovich analytification $X^\an$ & \cite[Prop.~5.8]{Jun} \\
$\mathbb{S}$ & real spectrum $X_r$ & \cite[Lem.~6.11]{Jun} \\
$\mathbb{RT}$ & real analytification $X_r^\an$ & \cite[\S5]{jell} \\
\hline
\end{tabular}
\end{center}

Throughout, $X$ is an affine variety over $K$ with coordinate ring $\ccO_X$. We develop tropicalization at the level of a general coefficient hyperfield and only afterwards specialize. Thus, until stated otherwise, $\mathbb H$ denotes an arbitrary topological hyperfield equipped with a hyperfield homomorphism $\rho_{\mathbb H}\colon K\to\mathbb H$. 

\begin{lemma}\label{lem:hausdorff}
If $\mathbb H$ is Hausdorff, then $X(\mathbb H)$ is Hausdorff. 
\end{lemma}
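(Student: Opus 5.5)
The plan is to embed $X(\mathbb H)$ into a product of copies of $\mathbb H$ and use that products and subspaces of Hausdorff spaces are Hausdorff. Consider the evaluation map
\[
\operatorname{ev}\colon X(\mathbb H)=\Hom_K(\ccO_X,\mathbb H)\longrightarrow \mathbb H^{\ccO_X},\qquad \phi\longmapsto (\phi(f))_{f\in\ccO_X}.
\]
By definition, the topology on $X(\mathbb H)$ is the coarsest one making every $\phi\mapsto\phi(f)$ continuous. This is exactly the initial topology induced by $\operatorname{ev}$ from the product topology, since the product topology is the initial topology for the coordinate projections.

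First, $\operatorname{ev}$ is injective, because a homomorphism $\phi$ is determined by its values $\phi(f)$. Hence $\operatorname{ev}$ is a topological embedding onto its image.

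Second, the product $\mathbb H^{\ccO_X}$ is Hausdorff whenever $\mathbb H$ is. Subspaces of Hausdorff spaces are Hausdorff, so $X(\mathbb H)$ is Hausdorff.

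We can also argue directly. If $\phi\neq\psi$, choose $f$ with $\phi(f)\neq\psi(f)$. Take disjoint open sets $U\ni\phi(f)$ and $V\ni\psi(f)$ in $\mathbb H$. Then $\{\chi:\chi(f)\in U\}$ and $\{\chi:\chi(f)\in V\}$ are disjoint open neighbourhoods of $\phi$ and $\psi$.

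There is no real obstacle here. The only point to check is that the topology on $X(\mathbb H)$ is indeed the pointwise-convergence (initial) topology, which is how it was defined above.
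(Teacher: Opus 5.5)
Your proof is correct and follows the paper's route: the paper also observes that the topology on $X(\mathbb H)$ is the initial topology for the evaluations $\mathrm{ev}_f$, that these separate points, and that an initial topology for a point-separating family of maps into Hausdorff spaces is Hausdorff. Your direct argument with the disjoint preimages $\{\chi:\chi(f)\in U\}$ and $\{\chi:\chi(f)\in V\}$ is exactly that fact unpacked, and the product-embedding formulation is an equivalent rephrasing.
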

\begin{proof}
The topology on $X(\mathbb H)$ is the initial topology for the evaluations $\mathrm{ev}_f\colon X(\mathbb H)\to\mathbb H$; these separate points, a hyperring homomorphism being determined by its values. An initial topology for a point-separating family of maps into Hausdorff spaces is Hausdorff, so $X(\mathbb H)$ is Hausdorff whenever $\mathbb H$ is. 
\end{proof}

\begin{definition}\label{def:trop-XH}
For a finite sequence $\cF=(f_1,\dots,f_\ell)\in\ccO_X^\ell$, we define the \emph{tropicalization map}
\[
\trop_\cF\colon X(\mathbb H)\to\mathbb H^\ell,\qquad \phi\mapsto(\phi(f_1),\dots,\phi(f_\ell)).
\]
For finite sequences $\cF=(f_1,\dots,f_\ell)$ and $\cF'=(f'_1,\dots,f'_r)$ and any map $\tau\colon\{1,\dots,\ell\}\to\{1,\dots,r\}$ satisfying $f'_{\tau(i)}=f_i$ for all $i$, the coordinate projection
\[
\pi_\tau\colon\mathbb H^{r}\to\mathbb H^{\ell},\qquad (z_1,\dots,z_r)\mapsto(z_{\tau(1)},\dots,z_{\tau(\ell)}),
\]
satisfies $\pi_\tau\circ\trop_{\cF'}=\trop_\cF$, and hence restricts to a surjection $\trop_{\cF'}(X(\mathbb H))\to\trop_\cF(X(\mathbb H))$. Following \cite[\S6.3]{jell}, we write $\varprojlim_{\cF}\trop_\cF(X(\mathbb H))$ for the limit of this system, whose points are the families $(z_\cF)_\cF$ with $z_\cF\in\trop_\cF(X(\mathbb H))$ compatible with all the projections $\pi_\tau$, topologized as a subspace of $\prod_{\cF}\mathbb H^{|\cF|}$; and we set
\[
\Trop\colon X(\mathbb H)\to\varprojlim_{\cF}\trop_\cF(X(\mathbb H)),\qquad \phi\mapsto(\trop_\cF(\phi))_\cF .
\]
\end{definition}

\begin{proposition}\label{prop:trop-XH-cont}
The maps $\trop_\cF$ and $\Trop$ are continuous.
\end{proposition}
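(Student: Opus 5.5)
The plan is to argue directly from the definition of the topology on $X(\mathbb H)$ as the initial topology for the evaluation maps $\mathrm{ev}_f\colon X(\mathbb H)\to\mathbb H$, $\phi\mapsto\phi(f)$, $f\in\ccO_X$. By construction each $\mathrm{ev}_f$ is continuous. For a finite sequence $\cF=(f_1,\dots,f_\ell)$ the map $\trop_\cF$ has coordinates $\mathrm{ev}_{f_1},\dots,\mathrm{ev}_{f_\ell}$. Since $\mathbb H^\ell$ carries the product topology, the universal property of the product gives continuity of $\trop_\cF$ immediately: a map into a product is continuous iff its compositions with the projections are.

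For $\Trop$, I will view it as a map into $\prod_\cF\mathbb H^{|\cF|}$ whose $\cF$-component is $\trop_\cF$. By the previous step and the universal property of the (arbitrary) product topology, this map into the product is continuous. Its image lies in $\varprojlim_\cF\trop_\cF(X(\mathbb H))$: each $\trop_\cF(\phi)$ lies in $\trop_\cF(X(\mathbb H))$ tautologically, and compatibility with the projections $\pi_\tau$ is exactly the identity $\pi_\tau\circ\trop_{\cF'}=\trop_\cF$ recorded in \Cref{def:trop-XH}. Since the limit is topologized as a subspace of the product, a map into the product that factors through a subspace is continuous into that subspace, so $\Trop$ is continuous.

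I expect no real obstacle here. The only point needing a moment of care is making sure the topology on the inverse limit is the subspace topology from $\prod_\cF\mathbb H^{|\cF|}$, which is how the paper defines it, rather than some other limit topology. Even if one used the limit topology of the diagram of subspaces $\trop_\cF(X(\mathbb H))\subseteq\mathbb H^{|\cF|}$, it agrees with this subspace topology, so the argument is unaffected.
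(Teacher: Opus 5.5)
Your proposal is correct and follows essentially the same route as the paper: continuity of each $\mathrm{ev}_f$ from the initial topology gives continuity of $\trop_\cF$ into the product $\mathbb H^\ell$, and continuity of $\Trop$ then follows from the universal property of the limit, which is topologized as a subspace of $\prod_\cF\mathbb H^{|\cF|}$. Your explicit check that $\Trop$ lands in the limit, via $\pi_\tau\circ\trop_{\cF'}=\trop_\cF$, spells out a step the paper leaves implicit.
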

\begin{proof}
By definition of the topology on $X(\mathbb H)$, each evaluation map $\mathrm{ev}_f$ is continuous, so $\trop_\cF=(\mathrm{ev}_{f_1},\dots,\mathrm{ev}_{f_\ell})$ is continuous into $\mathbb H^\ell$. The map $\Trop$ is then continuous by the universal property of the inverse limit, its components being the maps $\trop_\cF$.
\end{proof}

The following is the hyperfield-point form of Payne's theorem \cite[Theorem~1.1]{Payne}: it recovers $X(\mathbb H)$ as the inverse limit of its finite tropicalizations. The statement is completely abstract and its proof follows the ideas from \cite{jell} but is purely formal.

\begin{proposition}\label{prop:Payne-XH}
For an affine variety $X$ over $K$, the map
\[
\Trop\colon X(\mathbb H)\longrightarrow\varprojlim_{\cF}\trop_\cF(X(\mathbb H))
\]
is a homeomorphism, where $\cF$ ranges over all finite sequences of elements of $\ccO_X$.
\end{proposition}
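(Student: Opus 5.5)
We have to show that $\Trop$ is a continuous bijection whose inverse is continuous. Continuity is already given by \Cref{prop:trop-XH-cont}.

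\textbf{Injectivity.} If $\Trop(\phi)=\Trop(\psi)$, then taking the one-element sequence $\cF=(f)$ gives $\phi(f)=\psi(f)$ for every $f\in\ccO_X$. Hence $\phi=\psi$.

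\textbf{Surjectivity.} This is the only step with content. Let $(z_\cF)_\cF$ be a compatible family and define $\phi\colon\ccO_X\to\mathbb H$ by $\phi(f):=z_{(f)}$. We must check that $\phi$ is a hyperring homomorphism restricting to $\rho_{\mathbb H}$ on $K$. Each condition involves only finitely many elements, so we test it on a suitable finite $\cF$.

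For given $f,g\in\ccO_X$, take $\cF=(f,g,fg,f+g)$. By definition, $z_\cF\in\trop_\cF(X(\mathbb H))$, so $z_\cF=\trop_\cF(\psi)$ for some genuine point $\psi\in X(\mathbb H)$. Compatibility along the projections $\pi_\tau$ onto the singleton subsequences gives
\[
z_\cF=(\phi(f),\phi(g),\phi(fg),\phi(f+g)).
\]
Since $\psi$ is a homomorphism, we get $\phi(fg)=\psi(fg)=\psi(f)\psi(g)=\phi(f)\phi(g)$ and $\phi(f+g)=\psi(f+g)\in\psi(f)\oplus\psi(g)=\phi(f)\oplus\phi(g)$. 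The same trick with $\cF=(c)$ for $c\in K$ gives $\phi(c)=\psi(c)=\rho_{\mathbb H}(c)$, and in particular $\phi(0)=0$ and $\phi(1)=1$.

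So $\phi\in X(\mathbb H)$. Moreover, for any $\cF=(f_1,\dots,f_\ell)$, projecting onto the singletons shows $z_\cF=(\phi(f_1),\dots,\phi(f_\ell))=\trop_\cF(\phi)$, hence $\Trop(\phi)=(z_\cF)_\cF$. The one subtle point is that $\phi(f)$ must be read off from the family and not from a single $\psi$, because the witnessing $\psi$ depends on $\cF$. Compatibility is exactly what makes these local witnesses agree on the coordinates that matter.

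\textbf{Openness (continuity of the inverse).} A subbasic open set of $X(\mathbb H)$ has the form $\mathrm{ev}_f^{-1}(U)$ with $U\subseteq\mathbb H$ open. Its image under $\Trop$ is the intersection of $\varprojlim_\cF\trop_\cF(X(\mathbb H))$ with the preimage of $U$ under the coordinate map to $\mathbb H^{|(f)|}=\mathbb H$ at index $\cF=(f)$. This preimage is open in the product topology. Since $\Trop$ is a bijection, images commute with finite intersections, so $\Trop$ maps basic open sets to open sets. Hence $\Trop$ is a homeomorphism.
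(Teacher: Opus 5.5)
Your proof is correct and follows the paper's argument essentially step for step: you define $\phi(f):=z_{(f)}$, use compatibility with the singleton projections to identify $z_\cF$ with $(\phi(f_1),\dots,\phi(f_\ell))$, and check each homomorphism axiom on a finite sequence that is realized by a genuine point of $X(\mathbb H)$. The differences are cosmetic: you bundle the multiplicative and additive checks into the single sequence $(f,g,fg,f+g)$, and you verify openness explicitly via subbasic sets, whereas the paper simply notes that $X(\mathbb H)$ carries the initial topology.
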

\begin{proof}
Since a hyperring homomorphism is determined by its values, $\Trop$ is injective. Also, since the topology on $X(\mathbb H)$ is by definition the initial topology, $\Trop$ is a homeomorphism onto its image. (The same argument shows that the full evaluation map $E\colon X(\mathbb H)\to\prod_{f\in\ccO_X}\mathbb H$, $\phi\mapsto(\phi(f))_f$, is a homeomorphism onto its image, which will be later used). 

For surjectivity, let $(z_\cF)_\cF\in\varprojlim_\cF\trop_\cF(X(\mathbb H))$ and define $\phi\colon\ccO_X\to\mathbb H$ by $\phi(f):=z_{(f)}$. First note that
\[
z_\cF=(\phi(f_1),\dots,\phi(f_\ell))\qquad\text{for every }\cF=(f_1,\dots,f_\ell).
\]
Indeed, for each $j$ the map $\tau\colon\{1\}\to\{1,\dots,\ell\}$, $1\mapsto j$, satisfies $f_{\tau(1)}=f_j$, and compatibility with $\pi_\tau$ yields $(z_\cF)_j=z_{(f_j)}=\phi(f_j)$. (Note that this is not trivial as $\phi$ is---not yet---an element of $X(\mathbb H)$). 

To show that $\phi\in X(\mathbb H)$, we need to show that $\phi$ is a hyperring homomorphism extending $\rho_{\mathbb H}$; the point is that each defining condition involves only finitely many elements of $\ccO_X$, and every $z_\cF$ is realized by an element of $X(\mathbb H)$.
\begin{itemize}
\item (Constants) For $c\in K$, taking $\cF=(c)$ yields $\psi\in X(\mathbb H)$ with $\psi(c)=z_{(c)}$; as $\psi|_K=\rho_{\mathbb H}$, we get $\phi(c)=\rho_{\mathbb H}(c)$. In particular $\phi|_K=\rho_{\mathbb H}$, and $\phi(1)=\mathbf{1}$, $\phi(0)=\mathbf{0}$.
\item (Multiplicativity) For $f,g\in\ccO_X$, taking $\cF=(f,g,fg)$ yields $\psi\in X(\mathbb H)$ with $\trop_\cF(\psi)=z_\cF=(\phi(f),\phi(g),\phi(fg))$ by the claim; since $\psi$ is multiplicative, $\phi(fg)=\psi(fg)=\psi(f)\psi(g)=\phi(f)\phi(g)$.
\item (Hyperaddition) For $f,g\in\ccO_X$, taking $\cF=(f,g,f+g)$ yields such a $\psi$, whence $\phi(f+g)=\psi(f+g)\in\psi(f)\oplus\psi(g)=\phi(f)\oplus\phi(g)$.
\end{itemize}
Thus $\phi\in X(\mathbb H)$, and clearly $\Trop(\phi)=(z_\cF)_\cF$ by the claim.
\end{proof}

We now instantiate the hyperfield $\mathbb H$ as $H_n$ or $H$. Throughout we assume that $G_n^\times$ is finite for every $n$ (equivalently $[\RV^\times:(\RV^\times)^n]<\infty$), so that $H$ is a topological hyperfield by \Cref{prop:H-hyperfield}; this holds whenever $\RF$ is finite, local, real closed or algebraically closed, and by \Cref{rmk:FLT} it cannot be dropped.

\begin{remark}\label{rem:Payne-XH-levels}
Since $H=\varprojlim_n H_n$, each $\trop_\cF$ is the inverse limit over $n$ of the maps $\trop_\cF^n$; hence $\Trop$ is equally the inverse limit of the $\trop_\cF^n$ over all pairs $(\cF,n)$, and \Cref{prop:Payne-XH} identifies $X(H)$ with $\varprojlim_{\cF,n}\trop_\cF^n(X(H))$.
\end{remark}

In two cases the coefficient hyperfield $H$ degenerates to a hyperfield already appearing in the literature, so that $X(H)$ \emph{is} a classical analytification, by the identifications of Jun \cite{Jun} and Jell--Scheiderer--Yu \cite{jell}.

\begin{proposition}\label{prop:H-reduction}
\leavevmode
\begin{enumerate}
\item If $\RF$ is algebraically closed, then $H=\mathbb{T}_\Gamma$ is the tropical hyperfield over $\Gamma$. When $\Gamma=\RR$, this is the classical tropical hyperfield $\mathbb{T}$ and, if $K$ is complete, then $X(H)=X^\an$ (see \cite[Propositions~5.9]{Jun}).
\item If $K$ is a real closed valued field with value group $\RR$, then $H=\mathbb{RT}$ is the real tropical hyperfield, and $X(H)=X(\mathbb{RT})=X_r^\an$ is the real analytification \cite[\S5]{jell}.
\end{enumerate}
\end{proposition}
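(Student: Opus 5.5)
The plan is to identify $H$ with a known hyperfield in each case by computing the finite levels $H_n$ and the transition maps $\tau_{mn}$. The identifications $X(H)=X^\an$ and $X(\mathbb{RT})=X_r^\an$ are then quoted from \cite{Jun} and \cite{jell}. The basic tool is the exact sequence $(\dagger)$ together with the fibre-product description of $H_n^\times$.

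For (1), suppose $\RF$ is algebraically closed. Then $\RF^\times$ is divisible, so $G_n^\times$ is trivial for every $n$. By $(\dagger)$, the map $\bar v_n$ is then an isomorphism $\RV^\times/(\RV^\times)^n\to\Gamma/n\Gamma$. Hence $\mathrm{pr}_1\colon H_n^\times\to\Gamma$ is a bijection, and it is a group isomorphism. Its inverse is $\gamma\mapsto(\gamma,\bar v_n^{-1}(q(\gamma)))$, so $\rho_n(a)$ is determined by $v(a)$. It follows that the subgroup $N=\ker\rho_n$ equals $\mathcal O_K^\times$. By \Cref{cor:hyperfield} and \Cref{ex:hyperfields}(3), $H_n$ is the Krasner quotient $K/\mathcal O_K^\times=\mathbb T_\Gamma$, and all the $\tau_{mn}$ are identity maps. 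The inverse limit is therefore $\mathbb T_\Gamma$, with $\rho=v$.

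We also need the topologies to agree. On each coset $\gamma+n\Gamma$ the second coordinate is constant, so the topology of \Cref{def: topologyinHn} is the order topology refined by making the cosets of $n\Gamma$ open. When $\Gamma=\RR$ this is just the order topology, because $\RR$ is divisible, and it matches Jun's topology on $\mathbb T$. The statement $X(H)=X^\an$ for complete $K$ is then \cite[Prop.~5.9]{Jun}.

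For (2), let $K$ be real closed with value group $\RR$. The residue field $\RF$ is then real closed, so $\RF^\times/(\RF^\times)^n$ is $\{\pm1\}$ for $n$ even and trivial for $n$ odd. Since $\Gamma=\RR$ is divisible, $(\dagger)$ gives $H_n^\times\cong\RR\times G_n^\times$. For even $n$, $\rho_n(a)=(v(a),\text{sign of the leading coefficient})$, and the kernel $N$ is the group of positive units. So \Cref{ex:hyperfields}(4) identifies $H_n$ with $\mathbb{RT}$ for even $n$. For odd $n$ we get $\mathbb T$, and for $m\mid n$ with both even, $\tau_{mn}$ is the identity.

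The even integers are cofinal in $(\Nn,\mid)$, so $H=\varprojlim H_n\cong H_2=\mathbb{RT}$. This holds both as topological monoids and as hyperfields, since the hypersum in the limit is computed on a cofinal subsystem of identity maps. The topology again agrees, because $\RR$ is divisible. Then $X(H)=X(\mathbb{RT})=X_r^\an$ by \cite[\S5]{jell}.

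The only delicate point I expect is checking that the hyperaddition of the limit coincides with that of $\mathbb{RT}$. In \Cref{rmk:tau-not-strict}, maps from even to odd levels are not strict. However, a thread is determined by its components at even levels, and along those levels the system consists of identity maps. The limit hypersum is therefore exactly $x_2\oplus_2 y_2$.
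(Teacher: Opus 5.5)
Your proposal is correct and follows essentially the same route as the paper: compute $H_n$ and the transition maps $\tau_{mn}$ from $(\dagger)$, observe that $G_n^\times$ is trivial (resp.\ equal to $\{\pm1\}$ exactly at even levels), pass to the inverse limit, and cite Jun and Jell--Scheiderer--Yu for the identification of $X(H)$. Your cofinality argument for the limit hypersum is more careful than the paper's assertion that the tower stabilizes; the one step you leave implicit is that membership at odd levels $m$ is automatic, since $z_m=\tau_{m,2m}(z_{2m})\in\tau_{m,2m}(x_{2m}\oplus y_{2m})\subseteq x_m\oplus y_m$.
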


\begin{proof}
(i) Over an algebraically closed residue field $\RF^\times=(\RF^\times)^n$, so $G_n^\times=\{1\}$ and $\RV^\times/(\RV^\times)^n=\Gamma/n\Gamma$; hence $H_n^\times=\Gamma\times_{\Gamma/n\Gamma}\Gamma/n\Gamma=\Gamma$ with $\tau_{mn}=\mathrm{id}$, so $H_n=\Gamma_\infty$ and $H=\varprojlim_n\Gamma_\infty=\Gamma_\infty=\mathbb{T}_\Gamma$ carries the tropical hyperaddition (cf.\ \Cref{rmk:tau-not-strict}). For $\Gamma=\RR$ this is $\mathbb{T}$, and $X(\mathbb{T})=X^\an$ by \cite[Proposition~5.8]{Jun}.

(ii) A real closed field $\RF$ has $\RF^\times/(\RF^\times)^n=\{\pm1\}$ for $n$ even and trivial for $n$ odd, the nontrivial class recording the sign; since $\RR$ is divisible, $H_n^\times=\RR\times G_n^\times$ and the tower stabilizes with $\varprojlim_n G_n^\times=\{\pm1\}$. Thus $H=(\RR\times\{\pm1\})\cup\{\mathbf 0\}$, with $(\gamma,\epsilon)(\gamma',\epsilon')=(\gamma+\gamma',\epsilon\epsilon')$ and the hyperaddition taking the dominant term when the values differ, the same sign when $(\gamma,\epsilon)\oplus(\gamma,\epsilon)$, and the full ball $\{z:\bar v(z)\ge\gamma\}$ when $(\gamma,\epsilon)$ and $(\gamma,-\epsilon)$ cancel; this is exactly the real tropical hyperfield $\mathbb{RT}$. Hence $X(H)=X(\mathbb{RT})=X_r^\an$ by \cite[\S5]{jell}.
\end{proof}

\subsection{Properness of the tropicalization for \texorpdfstring{$\Gamma=\RR$}{Gamma=R}}\label{subsec:properness}

Throughout this subsection we assume, in addition to the standing hypothesis of \Cref{sec:hyperfield-points} that $G_n^\times$ is finite for every $n$, that $\Gamma=\RR$. Under these hypotheses we prove that, when $\cF$ generates the coordinate ring, the tropicalization $\trop_\cF^n\colon X(H)\to H_n^\ell$ is a proper map; this generalizes \cite[Lemma~3.7]{jell}.

Since $\Gamma=\RR$ is divisible, $\Gamma/n\Gamma=0$ and the splitting of \Cref{rmk:coordinates} is canonical: $H_n^\times=\RR\times G_n^\times$ and $H_n=(\RR\times G_n^\times)\cup\{\mathbf 0\}$, with $\bar v(\gamma,g)=\gamma$ and $\bar v(\mathbf 0)=\infty$. For $s\in\RF^\times$ we write $[s]_n\in G_n^\times=\RF^\times/(\RF^\times)^n$ for its power class; on units of $K$ this agrees with the notation $[\,\cdot\,]_n$ of \Cref{rmk:coordinates}.

\begin{definition}\label{def:bounded-Hn}
A subset $Y\subseteq H_n$ (resp.\ $Y\subseteq H$) is \emph{bounded} if $\bar v(Y)$ is bounded below in $\RR$, i.e.\ $Y\subseteq\bar v^{-1}([\gamma_0,\infty])$ for some $\gamma_0\in\RR$.
\end{definition}

\begin{lemma}\label{lem:heineborel}
The spaces $H_n$ and $H$ are locally compact, and a subset is compact if and only if it is closed and bounded. The same holds for the finite powers $H_n^\ell$ and $H^\ell$.
\end{lemma}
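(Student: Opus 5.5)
For $H_n$ we use the finite closed cover of \Cref{rmk:topologyincoordinates}. Since $\Gamma=\RR$ is divisible, $\Gamma/n\Gamma=0$ and the coordinate splitting is canonical. For each $g\in G_n^\times$ the map $\theta_g\colon\Gamma_\infty=\RR\cup\{\infty\}\to H_{n,g}$ is a homeomorphism onto a closed subset. Moreover $H_n=\bigcup_{g}H_{n,g}$ is a \emph{finite} union, because $G_n^\times$ is finite by the standing hypothesis.

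The space $\RR\cup\{\infty\}$, with neighbourhoods $(\gamma,\infty]$ of $\infty$, is homeomorphic to a half-open interval $(0,1]$ via $t\mapsto e^{-t}$. Hence it is locally compact, and its compact subsets are exactly the closed subsets bounded below. This is Heine--Borel on $\RR$ with the extra point $\infty$ adjoined, which one checks directly.

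\emph{Transfer to $H_n$.} Let $Y\subseteq H_n$. If $Y$ is compact, then $\bar v(Y)$ is compact in $\RR\cup\{\infty\}$, so it is bounded below. Also $Y$ is closed because $H_n$ is Hausdorff. Conversely, if $Y$ is closed and $\bar v(Y)\ge\gamma_0$, then each $\theta_g^{-1}(Y\cap H_{n,g})$ is closed and contained in $[\gamma_0,\infty]$, hence compact. Therefore $Y$ is a finite union of compact sets, and so compact. Local compactness follows in the same way: every point has the neighbourhood $\bar v^{-1}((\gamma-1,\infty])$, whose closure lies inside $\bar v^{-1}([\gamma-1,\infty])$, a compact set by what was just shown. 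For $H_n^\ell$ we run the same argument with $\theta_{\mathbf g}$ and the finite cover by the sets $H_{n,\mathbf g}$, using that $[\gamma_0,\infty]^\ell$ is compact. Here boundedness is read coordinatewise.

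\emph{Passing to $H$.} This is the step needing care. The set $H\subseteq\prod_n H_n$ is closed, being an inverse limit of Hausdorff spaces along continuous maps. The maps $\tau_{mn}$ preserve $\bar v$, so $\bar v$ is well defined on $H$, and $\bar v=\bar v\circ\pi_n$ for every $n$.

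Suppose $Y\subseteq H$ is closed with $\bar v(Y)\ge\gamma_0$. Then $Y$ lies inside $H\cap\prod_n\bar v^{-1}([\gamma_0,\infty])$. This set is a closed subset of a product of compact spaces, compact by the previous step and Tychonoff, so $Y$ is compact. Conversely, if $Y$ is compact, then $\pi_n(Y)$ is compact in $H_n$ and hence bounded, which gives the same lower bound for $\bar v(Y)$. Also $Y$ is closed because $H$ is Hausdorff (\Cref{prop:H-hyperfield}).

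Local compactness of $H$ uses the same neighbourhoods $\bar v^{-1}((\gamma-1,\infty])=\pi_2^{-1}(\cdot)$, which are open with compact closure by the above. The finite powers $H^\ell=\varprojlim_n H_n^\ell$ are handled identically. The only subtle point is that finiteness of each $G_n^\times$ is what makes the cover finite. Without it, closed bounded sets need not be compact, since an infinite discrete fibre would already be a counterexample.
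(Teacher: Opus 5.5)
Your proof is correct and follows essentially the paper's route. You reduce $H_n$ to the compactness of $\bar v^{-1}([\gamma_0,\infty])$ using finiteness of $G_n^\times$: you do this via the finite closed cover by the copies $H_{n,g}\cong\RR_\infty$, where the paper uses a direct open-cover argument at $\mathbf 0$. You then pass to $H$ as a closed subset of a product of compact sets, and to finite powers by Tychonoff, as the paper does.

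Two cosmetic slips. First, $t\mapsto e^{-t}$ maps $\RR_\infty$ onto $[0,\infty)$ rather than $(0,1]$; this is still a half-open interval, so nothing changes. Second, at the point $\mathbf 0$ the neighbourhood $\bar v^{-1}((\gamma-1,\infty])$ is empty, since $\gamma=\infty$, and should be replaced by $\bar v^{-1}((\gamma_0,\infty])$ for some finite $\gamma_0$.
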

\begin{proof}
In $H_n$ the open set $H_n^\times=\RR\times G_n^\times$ is a finite disjoint union of lines, hence locally compact. The sets $\bar v^{-1}([\gamma_0,\infty])=([\gamma_0,\infty)\times G_n^\times)\cup\{\mathbf 0\}$ are compact: given an open cover, an open set $U\ni\mathbf 0$ contains some $\bar v^{-1}((\gamma_1,\infty])$, and the remainder $\bar v^{-1}([\gamma_0,\gamma_1])=[\gamma_0,\gamma_1]\times G_n^\times$ is compact; as these sets form a neighbourhood basis of $\mathbf 0$, the space $H_n$ is locally compact. If $C$ is closed and bounded then $C\subseteq\bar v^{-1}([\gamma_0,\infty])$ is a closed subset of a compact set, hence compact; conversely the open cover $\{\bar v^{-1}((-m,\infty])\}_{m\in\Nn}$ of any set $C$ admits a finite subcover only when $\bar v(C)$ is bounded below. For $H=\varprojlim_n H_n$ the set $\bar v^{-1}([\gamma_0,\infty])$ is the inverse limit of the corresponding compact subsets of the $H_n$, hence compact, and the rest follows as before. For $H_n^\ell$ and $H^\ell$ the result follow by Tychonoff's theorem.
\end{proof}

The closedness of the hyperaddition is governed by an arithmetic condition on the residue field.

\begin{definition}\label{def:star}
The residue field $\RF$ satisfies $(\star_n)$ if the map
\[
\{w\in\RF^\times: w^n\neq 1\}\longrightarrow G_n^\times,\qquad w\longmapsto [\,w^n-1\,]_n
\]
is surjective; it satisfies $(\star)$ if $(\star_n)$ holds for every $n\geq 2$.
\end{definition}

For $x=(\gamma,g)$ and $y=(\gamma,h)$ with $\gamma\neq\infty$, we write
\[
C(g,h):=\big\{[s+t]_n : s,t\in\RF^\times,\ [s]_n=g,\ [t]_n=h,\ s+t\neq 0\big\}\subseteq G_n^\times,
\]
for the set of leading classes realized by a non-canceling sum; by construction the value-$\gamma$ part of $x\oplus y$ is $\{\gamma\}\times C(g,h)$.

\begin{lemma}\label{lem:graphclosed}
If $\RF$ satisfies $(\star)$, then the graph $\Delta:=\{(x,y,w)\in H^3: w\in x\oplus y\}$ is closed in $H^3$.
\end{lemma}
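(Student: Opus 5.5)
The plan is to reduce the statement to a statement about each finite level $H_n$, and then to prove closedness there by a sequential argument. By definition of the hyperaddition on $H=\varprojlim_n H_n$, we have $w\in x\oplus y$ if and only if $w_n\in x_n\oplus_n y_n$ for every $n$. Hence
\[
\Delta=\bigcap_n (\pi_n^{3})^{-1}(\Delta_n),\qquad \Delta_n:=\{(x,y,w)\in H_n^3: w\in x\oplus_n y\}.
\]
Each projection $\pi_n$ is continuous, so it suffices to show that every $\Delta_n$ is closed in $H_n^3$. Since $\Gamma=\RR$ and $G_n^\times$ is finite, $H_n$ is first countable: the coordinates are $\RR\times G_n^\times$ plus the point $\mathbf 0$, which has a countable neighbourhood basis. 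So it is enough to show that $\Delta_n$ is closed under limits of sequences.

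First I would write down $x\oplus_n y$ explicitly in coordinates. Take $x=(\alpha,g)$ and $y=(\beta,h)$ with $\alpha\neq\beta$, say $\alpha<\beta$ (this includes $y=\mathbf 0$). Then $x\oplus_n y=\{x\}$. If $\alpha=\beta=\gamma$ is finite, then $x\oplus_n y$ consists of $\{\gamma\}\times C(g,h)$, together with the whole ball $\bar v^{-1}((\gamma,\infty])$ when $h=[-1]_n g$, i.e.\ when cancellation is possible. Here the cancelling case yields every element of value $>\gamma$, including $\mathbf 0$, because $K$ has value group $\RR$ and the Krasner quotient description of \Cref{cor:hyperfield} lets us add a representative of $y$ that agrees with $-a$ up to a chosen higher-order term.

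Next I would extract two consequences of $(\star_n)$.

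\emph{Claim (a).} We have $g\in C(g,h)$ for all $g,h\in G_n^\times$. Scaling by a representative of $g$, it suffices to find $t$ with $[t]_n=h/g$ and $[1+t]_n=1$. By $(\star_n)$ choose $w$ with $[w^n-1]_n=h/g$ and put $t=w^n-1$; then $1+t=w^n$.

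\emph{Claim (b).} If $h=[-1]_ng$, then $C(g,h)=G_n^\times$. Take $s$ of class $g$ and $t=-sw^n$. Then $s+t=-s(w^n-1)$, and by $(\star_n)$ its class ranges over all of $G_n^\times$.

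With these two claims the limit argument becomes a case analysis. Let $(x_k,y_k,w_k)\to(x,y,w)$ with each $(x_k,y_k,w_k)\in\Delta_n$, and write $x=(\alpha,g)$, $y=(\beta,h)$. Since $G_n^\times$ is discrete, after passing to a subsequence the classes of $x_k,y_k,w_k$ are constant whenever the limit is nonzero.

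If $\alpha<\beta$, then eventually $\bar v(x_k)<\bar v(y_k)$, so $w_k=x_k$ and hence $w=x\in x\oplus y$.

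If $\alpha=\beta=\gamma<\infty$, then eventually the values of $x_k$ and $y_k$ are close to $\gamma$, and $w_k$ lies in $x_k\oplus y_k$. There are two possibilities for $w_k$.
\begin{itemize}
\item $w_k$ equals the dominant summand. Its limit is $x$ or $y$, which lies in $x\oplus y$ by Claim (a), applied to $(g,h)$ or to $(h,g)$.
\item $w_k$ has value $\ge\min(\bar v(x_k),\bar v(y_k))$ along a subsequence on which the valuations are equal. If $\bar v(w_k)$ equals that common value, then $w$ has value $\gamma$ and class in $C(g,h)$. If $\bar v(w_k)$ is strictly larger, then cancellation occurred, so $h=[-1]_ng$. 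In that case the limit $w$ has value $\ge\gamma$, and it lies in $x\oplus y$ by Claim (b) together with the description of the ball.
\end{itemize}

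If $x=y=\mathbf 0$, then $w_k$ lies in a ball shrinking to $\mathbf 0$, and so $w=\mathbf 0$.

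I expect the main obstacle to be exactly the equal-valuation boundary case. There, a sequence with $\bar v(x_k)\neq\bar v(y_k)$ converging to equal values produces the limit $x$ or $y$, and a cancelling sequence produces arbitrary classes at value $\gamma$. Claims (a) and (b) show that $(\star_n)$ is exactly what makes these limits land in $x\oplus y$. The other point needing care is the precise description of the cancelling part of $x\oplus_n y$, which must be justified via the surjectivity of $\rho_n$ in \Cref{lem:surjectivity}.
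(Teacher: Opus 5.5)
Your proposal is correct and follows essentially the same route as the paper. You reduce to closedness of each $\Delta_n$ via the continuous projections, use first countability (for $\Gamma=\RR$) to argue with sequences, extract from $(\star_n)$ the two facts $g,h\in C(g,h)$ and $x\oplus(\ominus x)=\bar v^{-1}([\gamma,\infty])$, and run the same case analysis on the valuations of the limit point. The differences are cosmetic: you prove $g\in C(g,h)$ for all $g,h$, which is valid, and you split the equal-value case by the behaviour of $w_k$ rather than by whether $y=\ominus x$.
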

\begin{proof}
Since $w\in x\oplus y$ if and only if $\pi_n(w)\in\pi_n(x)\oplus_n\pi_n(y)$ for all $n$, we have $\Delta=\bigcap_n(\pi_n^{\times3})^{-1}(\Delta_n)$ with $\Delta_n=\{(x,y,w)\in H_n^3: w\in x\oplus_n y\}$ and $\pi_n^{\times3}\colon H^3\to H_n^3$ continuous; so it suffices to show each $\Delta_n$ is closed. As $H_n$ is first countable (recall $\Gamma=\RR$) we can argue with sequences.

We first record two consequences of $(\star_n)$. Fix $g,h\in G_n^\times$ and note that $C(g,h)=g\cdot[\,1+R\,]$, where $R=\{r\in\RF^\times:[r]_n=hg^{-1}\}$ and $[\,1+R\,]:=\{[1+r]_n:r\in R,\ r\neq -1\}$.
\begin{enumerate}[(i)]
\item If $h\neq[-1]_n g$ then $g,h\in C(g,h)$. Indeed $g\in C(g,h)$ means $1\in[\,1+R\,]$, i.e.\ there is $r$ with $[r]_n=hg^{-1}$ and $1+r\in(\RF^\times)^n$; writing $1+r=w^n$ this is $[w^n-1]_n=hg^{-1}$, solvable by $(\star_n)$. By the symmetry $C(g,h)=C(h,g)$, also $h\in C(g,h)$.
\item If $h=[-1]_n g$ then, with $\gamma=\bar v(x)$, one has $x\oplus(\ominus x)=\bar v^{-1}([\gamma,\infty])$, the full closed ball. Indeed $C(g,[-1]_ng)=g[-1]_n\cdot\{[\,w^n-1\,]_n:w^n\neq 1\}=g[-1]_n\,G_n^\times=G_n^\times$ by $(\star_n)$, so the value-$\gamma$ part of $x\oplus(\ominus x)$ is all of $\{\gamma\}\times G_n^\times$; together with the elements of value $>\gamma$ produced by cancellation this is $\bar v^{-1}([\gamma,\infty])$.
\end{enumerate}

Now let $(x_k,y_k,w_k)_k$ be a sequence in $\Delta_n$ converging to $(x,y,w)$ in $H_n^3$ with $w_k\in x_k\oplus_n y_k$, and write $\gamma=\bar v(x)\leq \delta=\bar v(y)$ (this is no loss, by commutativity). As $G_n^\times$ is discrete and finite and $\bar v$ is continuous, after discarding finitely many terms we may assume that $[x_k]_n$ and $[y_k]_n$ are constant wherever the limits $x,y$ lie in $H_n^\times$.
\begin{itemize}
\item \emph{$\gamma<\delta$} Eventually $\bar v(x_k)<\bar v(y_k)$, so $x_k\oplus y_k=\{x_k\}$ and $w_k=x_k\to x$; thus $w=x\in x\oplus y=\{x\}$.
\item \emph{$\gamma=\delta=\infty$.} Then $x=y=\mathbf 0$ and $\bar v(w_k)\geq\min(\bar v(x_k),\bar v(y_k))\to\infty$, so $w=\mathbf 0\in\mathbf 0\oplus\mathbf 0$.
\item \emph{$\gamma=\delta$ finite and $y=\ominus x$.} By continuity $\bar v(w)=\lim_k\bar v(w_k)\geq\liminf_k\min(\bar v(x_k),\bar v(y_k))=a$, so $w\in\bar v^{-1}([\gamma,\infty])=x\oplus y$ by (ii).
\item \emph{$\gamma=\delta$ finite and $y\neq\ominus x$.} Put $g=[x]_n,h=[y]_n$, so $h\neq[-1]_n g$, and eventually $[x_k]_n=g$, $[y_k]_n=h$. Whenever $\bar v(x_k)<\bar v(y_k)$ one has $w_k=x_k\to x=(\gamma,g)$; whenever $\bar v(x_k)>\bar v(y_k)$, $w_k=y_k\to y=(\gamma,h)$; whenever $\bar v(x_k)=\bar v(y_k)$, $w_k\in\{\bar v(x_k)\}\times C(g,h)$ converges to a point $(\gamma,c_0)$ with $c_0\in C(g,h)$. Whichever situation occurs infinitely often determines $w$, and in each case the limit lies in $\{\gamma\}\times C(g,h)=x\oplus y$, using (i) for the first two.
\end{itemize}
Hence each $\Delta_n$, and therefore $\Delta$, is closed.
\end{proof}

\begin{lemma}\label{lem:starcases}
Condition $(\star)$ holds when $\RF$ is algebraically closed, when $\RF$ is real closed, and when $\RF$ is a finite extension of $\Qq_p$.
\end{lemma}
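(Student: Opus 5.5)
We check $(\star_n)$ separately in each case: for every class $c\in G_n^\times$ we exhibit $w\in\RF^\times$ with $w^n\neq 1$ and $[w^n-1]_n=c$.

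\emph{Algebraically closed case.} Here $G_n^\times$ is trivial, so we only need some $w$ with $w^n\neq 1$. Since $\RF$ is infinite, any $w$ that is not an $n$th root of unity works.

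\emph{Real closed case.} For $n$ odd, $G_n^\times$ is again trivial and the previous argument applies. For $n$ even, $G_n^\times=\{\pm1\}$ is given by the sign. Taking $w=2$ gives $w^n-1>0$. Taking $w=1/2$ gives $w^n-1<0$. Both classes are therefore hit.

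\emph{Finite extension $F$ of $\Qq_p$.} This is the substantive case, and I would split it according to the valuation of $w$.

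First take $w$ with $v_F(w)<0$. Then $w^n-1=w^n(1-w^{-n})$. Since $v(w^{-n})$ can be made as large as we like, Hensel's lemma (\Cref{lem:divisibilities}(1) applied to $F$ itself) makes $1-w^{-n}$ an $n$th power. So this choice only yields the trivial class $[w^n-1]_n=1$, which shows the trivial class is hit.

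Next take $w=1+z$ with $v(z)$ large. Then $w^n-1=nz(1+\tfrac{n-1}{2}z+\dots)$. Once $v(z)$ exceeds the Hensel threshold $2v(n)+v(n)$-ish, the factor $1+\tfrac{n-1}{2}z+\dots$ is an $n$th power. Hence $[w^n-1]_n=[nz]_n$.

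Now let $z$ range over elements of large valuation. Every class in $F^\times/(F^\times)^n$ has representatives of arbitrarily large valuation: multiply any representative by $\pi^{nk}$ for a uniformizer $\pi$. So $nz$ realises every class, while still $w^n\neq 1$ because $z\neq 0$ is small and $w=1+z$ is not a root of unity.

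The main obstacle is making the Hensel step precise in mixed characteristic, where $v(n)$ may be positive. I would apply the Newton/Hensel criterion in the form $v(f(a))>2v(f'(a))$ to $f(X)=X^n-(1+u)$ at $a=1$. This gives that $1+u$ is an $n$th power whenever $v(u)>2v(n)$. With that quantitative bound in hand, the remaining steps are routine. The same approach works verbatim for $p$-adically closed fields, using their henselianity and the finiteness of $G_n^\times$.
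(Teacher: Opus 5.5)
Your proof is correct and follows essentially the same route as the paper: Hensel's lemma makes a neighbourhood of $1$ consist of $n$th powers, and every class of $\RF^\times/(\RF^\times)^n$ contains elements of arbitrarily large valuation (your threshold $v(z)>3v(n)$ is right, since $\binom{n}{k}/n$ has valuation at least $-v(n)$). The paper streamlines the bookkeeping by noting that $w^n$ sweeps out all of $(\RF^\times)^n\supseteq 1+\mathfrak m_\RF^N$, so $w^n-1$ already covers $\mathfrak m_\RF^N\setminus\{0\}$ without any binomial expansion; your $v(w)<0$ step is then superfluous.
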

\begin{proof}
If $\RF$ is algebraically closed then $G_n^\times=\{1\}$ and $(\star_n)$ is vacuous.

If $\RF$ is real closed then $G_n^\times=\{1\}$ for $n$ odd, while for $n$ even $(\RF^\times)^n=\RF_{>0}$ and $G_n^\times=\{\pm1\}$. As $w$ ranges over $\RF^\times$ (with $n$ even) the value $w^n$ ranges over all of $\RF_{>0}$, so $w^n-1$ ranges over $\RF_{>-1}\setminus\{0\}$, meeting both $\RF_{>0}$ (e.g.\ $w^n=2$) and $\RF_{<0}$ (e.g.\ $w^n=\tfrac12$); thus $w\mapsto[w^n-1]_n$ is surjective.

Let $\RF$ be a finite extension of $\Qq_p$, with valuation ring $\mathcal O_\RF$, maximal ideal $\mathfrak m_\RF$ and uniformizer $\varpi$. Since $w\mapsto w^n$ maps $\RF^\times$ onto $(\RF^\times)^n$, we have $\{w^n-1:w^n\neq 1\}=\big((\RF^\times)^n-1\big)\setminus\{0\}$, so it suffices that $(\RF^\times)^n-1$ meet every coset of $(\RF^\times)^n$. By Hensel's lemma $x\mapsto x^n$ has open image, so $1+\mathfrak m_\RF^N\subseteq(\RF^\times)^n$ for some $N$, which we may take divisible by $n$; hence $\mathfrak m_\RF^N\setminus\{0\}\subseteq(\RF^\times)^n-1$. Every coset of $(\RF^\times)^n$ has a representative $\varpi^j u$ with $0\leq j<n$ and $u\in\mathcal O_\RF^\times$ (as $\RF^\times=\varpi^{\Zz}\times\mathcal O_\RF^\times$ and $\varpi^n\in(\RF^\times)^n$). The element $z:=\varpi^{N+j}u\in\mathfrak m_\RF^N\setminus\{0\}$ then satisfies $z\in(\RF^\times)^n-1$, and $z=\varpi^N\cdot\varpi^j u$ with $\varpi^N\in(\RF^\times)^n$ (since $n\mid N$), so $z$ lies in the coset of $\varpi^j u$. 
\end{proof}

\begin{lemma}\label{lem:Eclosed}
Assume $(\star)$. Then the evaluation embedding $E\colon X(H)\to\prod_{f\in\ccO_X}H$, $\phi\mapsto(\phi(f))_f$, has closed image.
\end{lemma}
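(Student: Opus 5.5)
The plan is to describe the image of $E$ as an intersection of closed sets, one for each elementary defining condition of a point of $X(H)$. An element $(z_f)_f\in\prod_{f\in\ccO_X}H$ lies in $E(X(H))$ exactly when the map $\phi\colon f\mapsto z_f$ is a hyperring homomorphism extending $\rho$. As in the proof of \Cref{prop:Payne-XH}, this splits into three families of conditions:
\begin{enumerate}[(a)]
\item $z_c=\rho(c)$ for every $c\in K$;
\item $z_{fg}=z_fz_g$ for all $f,g\in\ccO_X$;
\item $z_{f+g}\in z_f\oplus z_g$ for all $f,g\in\ccO_X$.
\end{enumerate}
The conditions $z_1=\mathbf 1$ and $z_0=\mathbf 0$ are instances of (a). Each condition involves only finitely many coordinates, so each is the preimage of a subset of a finite power $H^1$, $H^3$ under a coordinate projection, and these projections are continuous. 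It therefore suffices to show that each such subset is closed.

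For (a), the subset is the singleton $\{\rho(c)\}$, which is closed because $H$ is Hausdorff (\Cref{prop:H-hyperfield}). For (b), the subset is $\{(x,y,w)\in H^3: w=xy\}$. This is the preimage of the diagonal of $H\times H$ under the map $(x,y,w)\mapsto(xy,w)$. Multiplication on $H$ is continuous, since it is computed levelwise and each $H_n$ is a topological monoid; this is visible in coordinates, where it is addition on $\RR$, multiplication on the finite discrete group $G_n^\times$, and $\mathbf 0$ absorbs. As $H$ is Hausdorff, the diagonal is closed, so this subset is closed. For (c), after reordering coordinates, the subset is exactly the graph $\Delta$ of \Cref{lem:graphclosed}, which is closed under $(\star)$.

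Intersecting all these closed preimages gives $E(X(H))$, so the image of $E$ is closed. The only substantive input is the closedness of the hyperaddition graph, which is where $(\star)$ enters; that work has already been done in \Cref{lem:graphclosed}. The one remaining check is the continuity of multiplication on $H$ at $\mathbf 0$. It holds because $\bar v(xy)=\bar v(x)+\bar v(y)$, and this sum tends to $\infty$ whenever one factor tends to $\mathbf 0$ and the other stays bounded, which is the case for convergent nets.
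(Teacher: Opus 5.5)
Your proposal is correct and follows essentially the same route as the paper. Both describe the image by the constant, multiplicative and additive conditions: the first is closed because $H$ is Hausdorff, the second as an equalizer of continuous maps into a Hausdorff space, and the third as a preimage of the closed graph $\Delta$ of \Cref{lem:graphclosed}. Your extra check that multiplication on $H$ is continuous at $\mathbf 0$ is a detail the paper takes for granted from \Cref{prop:H-hyperfield}.
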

\begin{proof}
By the same argument as in the proof of \Cref{prop:Payne-XH}, $E$ is a homeomorphism onto
\[
E(X(H))=\Big\{(z_f)_f : z_c=\rho(c)\ (c\in K),\ z_{fg}=z_fz_g,\ z_{f+g}\in z_f\oplus z_g\ (f,g\in\ccO_X)\Big\}.
\]
Each family of conditions defines a closed subset of $\prod_f H$: the constant conditions $z_c=\rho(c)$ because $H$ is Hausdorff; the multiplicative conditions $z_{fg}=z_fz_g$ as equalizers of the continuous maps $(z_f)_f\mapsto z_{fg}$ and $(z_f)_f\mapsto z_fz_g$ into the Hausdorff space $H$; and the additive conditions $z_{f+g}\in z_f\oplus z_g$ as preimages of the closed set $\Delta$ of \Cref{lem:graphclosed} under the continuous maps $(z_f)_f\mapsto(z_f,z_g,z_{f+g})$. The intersection is therefore closed.
\end{proof}

\begin{proposition}\label{prop:properness-XH}
Assume $(\star)$ (\Cref{def:star}), and let $\cF=(f_1,\dots,f_\ell)$ generate $\ccO_X$. Then for every $n\geq 2$ the maps
$\trop_\cF^n\colon X(H)\to H_n^\ell$ and $\trop_\cF\colon X(H)\to H^\ell$
are proper.
\end{proposition}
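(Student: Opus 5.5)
The plan is to prove properness directly by showing that preimages of compact sets are compact. The topology on $X(H)$ is the initial topology through the embedding $E$ of \Cref{lem:Eclosed}. By \Cref{lem:heineborel}, compact subsets of $H_n^\ell$ (resp.\ $H^\ell$) are exactly the closed and bounded ones. So let $C\subseteq H_n^\ell$ be compact; it lies in $\bar v^{-1}([\gamma_0,\infty])^\ell$ for some $\gamma_0\in\RR$. Since $\trop_\cF^n=\pi_n^{\times\ell}\circ\trop_\cF$ with $\pi_n$ continuous and $\bar v$ preserved, the preimage $(\pi_n^{\times\ell})^{-1}(C)$ is closed in $H^\ell$ and bounded, hence compact. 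It therefore suffices to treat $\trop_\cF\colon X(H)\to H^\ell$, and this also settles the level-$n$ case.

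\textbf{Key step: valuation bounds for every $f\in\ccO_X$.} Fix a compact $C\subseteq H^\ell$ with $\bar v(C)\geq\gamma_0$ coordinatewise, and let $\phi\in\trop_\cF^{-1}(C)$. Since $\cF$ generates $\ccO_X$, each $f$ can be written (choosing one representation once and for all) as $f=\sum_{\alpha}c_\alpha f^\alpha$, a finite sum of monomials with coefficients $c_\alpha\in K$. The hyperring homomorphism axioms, iterated, give
\[\phi(f)\in\bigoplus_\alpha\rho(c_\alpha)\prod_i\phi(f_i)^{\alpha_i}.\]
Every element of an iterated hypersum has $\bar v$ at least the minimum of the $\bar v$'s of the summands; this holds level by level in each $H_n$, because $H_n$ is a Krasner quotient of a valued field and the ultrametric inequality applies. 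Hence
\[\bar v(\phi(f))\geq\gamma_f:=\min_\alpha\Big(v(c_\alpha)+|\alpha|\min(\gamma_0,0)\Big),\]
and this bound is independent of $\phi$. Consequently $E(\trop_\cF^{-1}(C))\subseteq\prod_{f}\bar v^{-1}([\gamma_f,\infty])$. Each factor is compact by \Cref{lem:heineborel}, so the product is compact by Tychonoff.

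\textbf{Closedness.} The set $E(\trop_\cF^{-1}(C))=E(X(H))\cap\mathrm{pr}_\cF^{-1}(C)$ is closed in $\prod_f H$. Here $E(X(H))$ is closed by \Cref{lem:Eclosed}, which is exactly where $(\star)$ enters through \Cref{lem:graphclosed}. The set $\mathrm{pr}_\cF^{-1}(C)$ is closed because $C$ is compact in the Hausdorff space $H^\ell$ and $\mathrm{pr}_\cF$ is continuous. A closed subset of a compact set is compact, and $E$ is a homeomorphism onto its image, so $\trop_\cF^{-1}(C)$ is compact.

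If the convention for properness also requires closedness of the map, it follows from standard facts: $X(H)$ is Hausdorff by \Cref{lem:hausdorff}, $H^\ell$ and $H_n^\ell$ are locally compact Hausdorff, and a continuous map into a locally compact Hausdorff space with compact fibres over compacta is closed.

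\textbf{Main obstacle.} The delicate point is the closedness of $E(X(H))$, namely the closedness of the hyperaddition graph. That is already packaged in \Cref{lem:graphclosed} and \Cref{lem:Eclosed}, so the remaining care goes into the bound on iterated hypersums. Since $\oplus$ is associative as a multivalued operation, I would argue by induction on the number of summands at each finite level $H_n$, using a $K$-representative of each summand, and then pass to the inverse limit.
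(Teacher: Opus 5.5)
Your proposal is correct and follows the paper's proof. Both arguments obtain uniform lower bounds on $\bar v(\phi(f))$ from the generating set via the ultrametric bound on hypersums, apply Tychonoff to $\prod_f \bar v^{-1}([\gamma_f,\infty])$, and get closedness from \Cref{lem:Eclosed}. The only cosmetic difference is that you deduce the level-$n$ statement from the $H^\ell$ case by pulling $C$ back along $\pi_n^{\times\ell}$ (closed and bounded, hence compact by \Cref{lem:heineborel}), whereas the paper runs the argument at level $n$ and notes that the limit case is identical.
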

\begin{proof}
We argue for $\trop_\cF^n$; the proof for $\trop_\cF$ is identical, with $H_n^\ell$ replaced by $H^\ell$ throughout. Let $B\subseteq H_n^\ell$ be compact; by \Cref{lem:heineborel}, $B$ is closed and bounded, say $\bar v(\mathrm{pr}_j b)\geq\gamma_0$ for all $b\in B$ and $1\leq j\leq\ell$. Then $B':=(\trop_\cF^n)^{-1}(B)$ is closed in $X(H)$ (\Cref{prop:trop-XH-cont}), and for $\phi\in B'$ and each $j$ we have $\bar v(\phi(f_j))=\bar v(\pi_n(\phi(f_j)))\geq\gamma_0$.

Let $f\in\ccO_X$ be arbitrary and write $f=\sum_\alpha c_\alpha f_1^{\alpha_1}\cdots f_\ell^{\alpha_\ell}$ with $c_\alpha\in K$, possible since $\cF$ generates $\ccO_X$. As $\phi$ is a hyperring homomorphism, $\phi(f)\in\bigoplus_\alpha\rho(c_\alpha)\prod_j\phi(f_j)^{\alpha_j}$, and since $\bar v$ is multiplicative and $\bar v(\bigoplus_i z_i)\subseteq[\min_i\bar v(z_i),\infty]$,
\[
\bar v(\phi(f))\ \geq\ \min_\alpha\Big(v(c_\alpha)+\sum_{j}\alpha_j\,\bar v(\phi(f_j))\Big)\ \geq\ \min_\alpha\Big(v(c_\alpha)+\gamma_0\textstyle\sum_j\alpha_j\Big)=:\gamma_f,
\]
a finite real number depending only on $f$ and $\gamma_0$ (the minimum being over the finitely many monomials of $f$). Hence $E(B')\subseteq\prod_f D_f$ with $D_f:=\bar v^{-1}([\gamma_f,\infty])\subseteq H$ compact (by \Cref{lem:heineborel}). By Tychonoff, $\prod_f D_f$ is compact. Now, $E(B')$ is closed in $E(X(H))$, since $E$ is a homeomorphism onto its image and $B'$ is closed, and $E(X(H))$ is closed in $\prod_f H$ (\Cref{lem:Eclosed}); so $E(B')$ is a closed subset of the compact set $\prod_f D_f$, hence compact. Again, as $E$ is a homeomorphism onto its image, $B'$ is compact.
\end{proof}

\begin{corollary}\label{cor:image-closed}
Under the hypotheses of \Cref{prop:properness-XH}, the images $\trop_\cF^n(X(H))\subseteq H_n^\ell$ and $\trop_\cF(X(H))\subseteq H^\ell$ are closed.
\end{corollary}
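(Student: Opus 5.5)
The plan is to use the standard fact that a proper continuous map into a locally compact Hausdorff space is closed. By \Cref{lem:heineborel}, $H_n^\ell$ and $H^\ell$ are locally compact, and they are Hausdorff: $H_n$ is Hausdorff by \Cref{rmk:topologyincoordinates} and $H$ by \Cref{prop:H-hyperfield}. By \Cref{prop:trop-XH-cont} and \Cref{prop:properness-XH}, the maps $\trop_\cF^n$ and $\trop_\cF$ are continuous and proper. So it only remains to check that the image of such a map is closed, and I would do this directly rather than quote the general statement.

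Let $Z:=\trop_\cF^n(X(H))$ and let $z\in\overline{Z}$. By local compactness, $z$ has a compact neighbourhood $B$. Explicitly, for $z=(z_1,\dots,z_\ell)$ take
\[
B=\prod_j \bar v^{-1}([\gamma_0,\infty])
\]
with $\gamma_0<\min_j \bar v(z_j)$ (or $\gamma_0$ arbitrary if all $z_j=\mathbf 0$). This set is closed and bounded, hence compact by \Cref{lem:heineborel}. Its interior contains $z$: $\bar v^{-1}((\gamma_0,\infty])$ is open because $\bar v$ is continuous.

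Since $\trop_\cF^n$ is proper, $B':=(\trop_\cF^n)^{-1}(B)$ is compact. Its image $\trop_\cF^n(B')=Z\cap B$ is therefore compact, and so closed in the Hausdorff space $H_n^\ell$. Now every neighbourhood $U$ of $z$ satisfies $U\cap\operatorname{int}B\cap Z\neq\varnothing$, so $z\in\overline{Z\cap B}=Z\cap B\subseteq Z$. Hence $Z$ is closed. The argument for $\trop_\cF$ into $H^\ell$ is word for word the same, using the $H$ part of \Cref{lem:heineborel}.

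There is no real obstacle here. The only point to watch is that properness is used in the form "preimages of compacts are compact", together with continuity, which is exactly what \Cref{prop:properness-XH} provides. The Hausdorff property of the target is what turns the compact set $Z\cap B$ into a closed set.
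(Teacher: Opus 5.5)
Your proof is correct and follows the paper's argument: the paper's proof is one line, citing \Cref{lem:heineborel} and \Cref{prop:properness-XH} together with the standard fact that a proper map into a locally compact Hausdorff space is closed. You have simply written out that standard fact for the image, using the explicit compact neighbourhood $\prod_j \bar v^{-1}([\gamma_0,\infty])$ and the Hausdorff property of the target.
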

\begin{proof}
This follows form (\Cref{lem:heineborel}) and \Cref{prop:properness-XH} since a proper map into a locally compact Hausdorff space is closed.
\end{proof}

\begin{remark}\label{rmk:star-sharp}
Condition $(\star)$ is the precise analogue, for properness, of the finiteness hypothesis $[\RF^\times:(\RF^\times)^n]<\infty$ that governs whether $H$ is a hyperfield (\Cref{prop:H-hyperfield,rmk:FLT}): both are purely arithmetic conditions on the residue field. Moreover $(\star)$ is sharp for the present argument. If $(\star_n)$ fails, choose a class $c$ outside the image of $w\mapsto[w^n-1]_n$ with $c\neq[-1]_n$, and pick $g,h\in G_n^\times$ with $hg^{-1}=c$; then $g\notin C(g,h)$, so for $x=(0,g)$, $y=(0,h)$, $w=(0,g)$ one has $w\notin x\oplus y$, while $x_k=(-\tfrac1k,g)\to x$, $y_k=y$, $w_k=x_k\to w$ exhibit $(x,y,w)$ as a limit of points of $\Delta_n$. Thus the graph $\Delta_n$ is not closed and the proof of \Cref{prop:properness-XH} breaks down. 
\end{remark}

\subsection{Comparison maps and the tropical shadow}

In this section we show how the hyperfield formalism provides a natural way to obtain maps between analytifications. Everything follows from the following simple lemma.      

\begin{lemma}[Functoriality]\label{lem:trop-functorial}
Let $u\colon\mathbb H\to\mathbb H'$ be a homomorphism of topological hyperfields with $u\circ\rho_{\mathbb H}=\rho_{\mathbb H'}$. Then $\phi\mapsto u\circ\phi$ defines a continuous map $u_*\colon X(\mathbb H)\to X(\mathbb H')$ and, writing $u^\ell$ for the finite power of $u$,
\[
\trop_\cF^{\mathbb H'}\circ u_*=u^\ell\circ\trop_\cF^{\mathbb H}
\qquad\text{for every finite }\cF=(f_1,\dots,f_\ell).
\]
Moreover $u^{\ell}$ restricts to a map $\trop_\cF^{\mathbb H}(X(\mathbb H))\to\trop_\cF^{\mathbb H'}(X(\mathbb H'))$ between the images; these restrictions commute with the projections $\pi_\tau$ of \Cref{def:trop-XH} and induce a unique continuous map
\[
u_{\lim} \colon \varprojlim_\cF\trop_\cF^{\mathbb H}(X(\mathbb H))\longrightarrow\varprojlim_\cF\trop_\cF^{\mathbb H'}(X(\mathbb H'))
\]
satisfying $\pi_\cF\circ u_{\lim}=u^{\ell}\circ\pi_\cF$ for every $\cF$ with $|\cF|=\ell$, where $\pi_\cF$ denotes the projection to the $\cF$-component; and $\Trop^{\mathbb H'}\circ u_*=u_{\lim}\circ\Trop^{\mathbb H}$.
\end{lemma}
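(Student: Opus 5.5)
The plan is to verify each claim directly from the definitions, with all steps formal.

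First, $u_*$ is well defined. For $\phi\in X(\mathbb H)$, the composite $u\circ\phi$ is a homomorphism of hyperrings, because composites of hyperring homomorphisms are homomorphisms. Multiplicativity and the values at $0$ and $1$ are preserved. For hyperaddition,
\[
u(\phi(f+g))\in u(\phi(f)\oplus\phi(g))\subseteq u(\phi(f))\oplus' u(\phi(g)).
\]
It restricts to $\rho_{\mathbb H'}$ on $K$, since $u\circ\phi|_K=u\circ\rho_{\mathbb H}=\rho_{\mathbb H'}$.

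For continuity I use the universal property of the initial topology on $X(\mathbb H')$. It suffices that $\mathrm{ev}_f\circ u_*=u\circ\mathrm{ev}_f$ is continuous for each $f\in\ccO_X$. This holds because $\mathrm{ev}_f$ is continuous on $X(\mathbb H)$ and $u$ is continuous.

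The identity $\trop^{\mathbb H'}_\cF\circ u_*=u^\ell\circ\trop^{\mathbb H}_\cF$ is checked coordinatewise: both sides send $\phi$ to $(u(\phi(f_1)),\dots,u(\phi(f_\ell)))$. Consequently $u^\ell$ maps $\trop^{\mathbb H}_\cF(X(\mathbb H))$ into $\trop^{\mathbb H'}_\cF(u_*X(\mathbb H))\subseteq\trop^{\mathbb H'}_\cF(X(\mathbb H'))$.

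Compatibility with the projections also holds coordinatewise. For $\tau$ as in \Cref{def:trop-XH} we have $\pi_\tau\circ u^r=u^\ell\circ\pi_\tau$, since both sides send $(z_j)$ to $(u(z_{\tau(i)}))_i$.

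To build $u_{\lim}$, send a compatible family $(z_\cF)_\cF$ to $(u^{|\cF|}(z_\cF))_\cF$. This family lies in the image sets by the previous step and is compatible by the commutation just shown. The map $u_{\lim}$ is continuous: it is the restriction of the product map $\prod_\cF u^{|\cF|}$ on $\prod_\cF\mathbb H^{|\cF|}$, which is continuous, to the subspace topologies. Uniqueness follows because a point of the limit is determined by its components $\pi_\cF$, and $\pi_\cF\circ u_{\lim}=u^\ell\circ\pi_\cF$ prescribes these.

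Finally, $\Trop^{\mathbb H'}\circ u_*$ and $u_{\lim}\circ\Trop^{\mathbb H}$ agree after composing with every $\pi_\cF$, by the tropicalization identity. Hence they are equal.

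I expect no real obstacle. The only point needing care is that $u^\ell$ lands in the image $\trop^{\mathbb H'}_\cF(X(\mathbb H'))$ rather than merely in $\mathbb H'^\ell$. This follows from the commuting identity applied to the point $u_*\phi$, which lies in $X(\mathbb H')$ because $u_*$ is well defined.
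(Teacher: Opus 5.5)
Your proposal is correct and follows essentially the same route as the paper: well-definedness via composition of hyperring homomorphisms, continuity from the initial topology and the identity $\mathrm{ev}_f\circ u_*=u\circ\mathrm{ev}_f$, the coordinatewise tropicalization square, and $u_{\lim}$ built from coordinatewise maps that commute with the projections $\pi_\tau$. The only cosmetic difference is in the continuity of $u_{\lim}$: you obtain it as a restriction of the product map $\prod_\cF u^{|\cF|}$ to the subspace topology, while the paper argues via the initial topology of the projections $\pi_\cF$, and these amount to the same thing.
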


\begin{proof}
If $\phi\colon\ccO_X\to\mathbb H$ is a hyperring homomorphism with $\phi|_K=\rho_{\mathbb H}$, then $u\circ\phi\colon\ccO_X\to\mathbb H'$ is a hyperring homomorphism (hyperring homomorphisms are closed under composition) with $(u\circ\phi)|_K=u\circ\rho_{\mathbb H}=\rho_{\mathbb H'}$, so $u_*\phi=u\circ\phi\in X(\mathbb H')$. Continuity holds because $\mathrm{ev}_f\circ u_*=u\circ\mathrm{ev}_f$ for every $f$, and the target topology is coasest topology making the evaluation maps $\mathrm{ev}_f$ continuous. Next, 
$\trop_\cF^{\mathbb H'}(u_*\phi)=((u\phi)(f_i))_i=(u(\phi(f_i)))_i=u^\ell(\trop_\cF^{\mathbb H}(\phi)),$
which is the first square. That is $u^{\ell}$ restricts to the images is clear. For $\cF\subseteq\cF'$ the maps $u^{\ell}$ and $u^{\ell'}$ commute with the coordinate projections $\mathbb H^{\ell'}\to\mathbb H^{\ell}$, being defined coordinatewise; hence the restrictions form a morphism of inverse systems and induce a map $u_{\lim}$ between the limits, uniquely determined by $\pi_\cF\circ u_{\lim}=u^{\ell}\circ\pi_\cF$, and continuous since each composite $\pi_\cF\circ u_{\lim}$ is continuous and the target carries the initial topology of its projections. The total square commutes because it does after composing with every $\pi_\cF$, by the first square.
\end{proof}

We now instantiate the previous lema in our context. 

\begin{proposition}\label{prop:T-shadow}
The coordinate projection $\tau\colon H\to\mathbb{T}_\Gamma$, $(\gamma,(g_n)_n)\mapsto\gamma$, is a continuous hyperfield homomorphism with $\tau\circ\rho=v$, for any value group $\Gamma_K$; it induces a continuous map $\tau_*\colon X(H)\to X(\mathbb{T}_\Gamma)$. 
\end{proposition}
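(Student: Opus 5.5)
The plan is to define $\tau$ on each level first and pass to the limit. For every $n$ let $\tau_n\colon H_n\to\mathbb T_\Gamma$ be $\tau_n(\gamma,\xi)=\gamma$ and $\tau_n(\mathbf 0)=\infty$, i.e.\ $\tau_n=\bar v$. Since $\tau_{mn}$ preserves the first coordinate, $\tau_m\circ\tau_{mn}=\tau_n$. Hence the $\tau_n$ are compatible with the inverse system, and $\tau:=\tau_n\circ\pi_n$ is well defined on $H$, independent of $n$. This is the coordinate projection in the statement. Continuity of each $\tau_n=\bar v$ was recorded in \Cref{def: topologyinHn}: on $H_n^\times$ it is the first projection, and the refined topology on $\Gamma$ is finer than the order topology, which is the topology of $\mathbb T_\Gamma$. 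Near $\mathbf 0$, preimages of rays $(\gamma_0,\infty]$ are exactly the basic neighbourhoods of $\mathbf 0$. Composing with the continuous $\pi_n$ gives continuity of $\tau$.

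Next I check that $\tau$ is a hyperfield homomorphism. We have $\tau(\mathbf 0)=\infty$ and $\tau(\mathbf 1)=0$, and multiplicativity holds because the product on $H$ adds first coordinates. For the hyperaddition, I will check $\tau_n(x_n\oplus_n y_n)\subseteq\tau_n(x_n)\oplus\tau_n(y_n)$ at a single level, which suffices since $x\oplus y\subseteq\pi_n^{-1}(x_n\oplus_n y_n)$. Because $H_n$ is the Krasner quotient $K/N$ (\Cref{cor:hyperfield}), any element of $x_n\oplus_n y_n$ has the form $\rho_n(a+b)$ with $\rho_n(a)=x_n$ and $\rho_n(b)=y_n$. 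Then $\tau_n(\rho_n(a+b))=v(a+b)$, and the ultrametric inequality gives $v(a+b)=\min(v(a),v(b))$ if $v(a)\ne v(b)$, and $v(a+b)\ge v(a)$ otherwise. This is exactly membership in $v(a)\oplus v(b)$ in $\mathbb T_\Gamma$. The identity $\tau\circ\rho=v$ is immediate from $\rho_n(a)=(v(a),\rv_n(a))$.

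Finally, \Cref{lem:trop-functorial}, applied with $u=\tau$, $\rho_{\mathbb H}=\rho$ and $\rho_{\mathbb H'}=v$, yields the continuous map $\tau_*\colon X(H)\to X(\mathbb T_\Gamma)$. Here I use that $v\colon K\to\mathbb T_\Gamma$ is the structural homomorphism, i.e.\ the Krasner quotient map by $\mathcal O_K^\times$. Nothing in the argument requires hypotheses on $\Gamma$ beyond those making $H$ a hyperfield.

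The only point needing care is the topological comparison. The topology on $\Gamma$ in $H_n$ is the refined one, with cosets of $n\Gamma$ open, while $\mathbb T_\Gamma$ carries the order topology on $\Gamma$ plus rays at $\infty$. I must make sure continuity goes in the right direction, which it does because the refinement only adds open sets on the source side. If the intended topology on $\mathbb T_\Gamma$ were instead the refined one, continuity would still hold via a single level: the coset condition for $n\Gamma$ is detected by the $\RV$ coordinate at level $n$.
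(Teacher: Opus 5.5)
Your proof is correct and takes essentially the paper's route: the paper gives no separate argument and presents the proposition simply as the instantiation of \Cref{lem:trop-functorial} with $u=\tau$ and $\rho_{\mathbb T_\Gamma}=v$. Your level-wise checks make explicit what that instantiation takes for granted, namely the compatibility $\bar v\circ\tau_{mn}=\bar v$, the continuity of $\bar v$ on each $H_n$, and the inclusion $\tau_n(x_n\oplus_n y_n)\subseteq\tau_n(x_n)\oplus\tau_n(y_n)$ obtained from the Krasner-quotient description and the ultrametric inequality.
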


We call the map $\tau_*$ \emph{the tropical shadow}. 

When $\Gamma$ is archimedean (rank $1$) and $(K,v)$ is complete, the target $X(\mathbb T_\Gamma)$ is a subspace of the associated Berkovich analytification, that is, the subspace $X^\an_{|K|}\subseteq X^\an$ of points whose seminorm takes all its values in $|K|$ (using multiplicative notation). When $\Gamma=\RR$, the tropical shadow map $\tau_*$ is then the passage from $X(H)$ to $H(\mathbb{T})=X^\an$, its underlying Berkovich analytification. 
When $K$ is real closed with value group $\RR$, the tropical shadow coincides with the map already considered in  \cite[\S3.2]{jell} 
\[
\tau_*\colon X_r^\an\to X^\an,\qquad (p_x,|\cdot|_x,\le_x)\mapsto(p_x,|\cdot|_x),
\]
dropping the order $\le_x$ and shown there to be proper \cite[Lemma~3.9]{jell}. They do not give it a name; \Cref{prop:T-shadow} exhibits it as the tropical shadow forgetting the residual decoration,  and \Cref{prop:shadow-proper} below recovers its properness in this generality.

Before turning to $\tau_*$ we isolate its finite-dimensional counterpart, the properness of the shadow between the two tropicalizations --- the analogue of the properness of $a\mapsto|a|$ in the proof of \cite[Lemma~3.9]{jell}. The tropicalization of the $\mathbb T_\Gamma$-points, $\trop_\cF^{\mathbb T_\Gamma}\colon X(\mathbb T_\Gamma)\to\mathbb T_\Gamma^\ell$ (\Cref{def:trop-XH} with $\mathbb H=\mathbb T_\Gamma$), is the \emph{classical tropicalization} when $\Gamma=\RR$ (for other ordered abelian groups, is has not been fully studied). Functoriality of tropicalization (\Cref{lem:trop-functorial}) applied to $u=\tau$ places the arithmetic tropicalization $\trop_\cF^{H}$ over it through the finite-power shadow $\tau^\ell\colon H^\ell\to\mathbb T_\Gamma^\ell$, in the commuting square
$\trop_\cF^{\mathbb T_\Gamma}\circ\tau_*=\tau^\ell\circ\trop_\cF^{H},$
exhibiting the classical tropicalization as the tropical shadow of ours.

\begin{lemma}\label{lem:tau-proper}
Assume $\Gamma=\RR$. Then the tropical shadow $\tau\colon H\to\mathbb T_\Gamma$ and each of its finite powers $\tau^\ell\colon H^\ell\to\mathbb T_\Gamma^\ell$ is a proper map. If moreover $\RF$ satisfies $(\star)$ (\Cref{def:star}) and $\cF$ generates $\ccO_X$, then $\tau^\ell$ restricts to a proper map
$\trop_\cF^{H}(X(H))\ \longrightarrow\ \trop_\cF^{\mathbb T_\Gamma}(X(\mathbb T_\Gamma))$
between the two tropicalizations.
\end{lemma}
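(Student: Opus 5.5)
The plan is to reduce everything to the Heine--Borel description of \Cref{lem:heineborel} for $H$, $H^\ell$, together with the elementary fact that compact subsets of $\mathbb T_\RR^\ell=(\RR\cup\{\infty\})^\ell$ are exactly the closed sets bounded below in each coordinate.

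\emph{Properness of $\tau$ and $\tau^\ell$.} First, $\tau$ is continuous: on each finite level $H_n=(\RR\times G_n^\times)\cup\{\mathbf 0\}$ the map $\bar v$ is continuous by \Cref{def: topologyinHn}, and $\tau=\bar v\circ\pi_n$. Let $B\subseteq\mathbb T_\RR^\ell$ be compact. Then $B$ is bounded below, say $B\subseteq[\gamma_0,\infty]^\ell$. The preimage $(\tau^\ell)^{-1}(B)$ is closed by continuity, and it is contained in $\bar v^{-1}([\gamma_0,\infty])^\ell$, so it is bounded in the sense of \Cref{def:bounded-Hn}. By \Cref{lem:heineborel} it is therefore compact. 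This step is routine. The only thing to check is that the compact sets $\bar v^{-1}([\gamma_0,\infty])\subseteq H$ really are compact. That was shown in \Cref{lem:heineborel} as an inverse limit of compact Hausdorff spaces, using the finiteness of the $G_n^\times$.

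\emph{Restriction to tropicalizations.} Write $A:=\trop_\cF^H(X(H))\subseteq H^\ell$ and $A':=\trop_\cF^{\mathbb T_\RR}(X(\mathbb T_\RR))$. By \Cref{lem:trop-functorial}, $\tau^\ell(A)\subseteq A'$. For a compact $B\subseteq A'$, compactness of $B$ in $A'$ is the same as compactness in $\mathbb T_\RR^\ell$. The preimage of $B$ under $\tau^\ell|_A$ is $A\cap(\tau^\ell)^{-1}(B)$. Under $(\star)$ with $\cF$ generating $\ccO_X$, \Cref{cor:image-closed} gives that $A$ is closed in $H^\ell$. Hence $A\cap(\tau^\ell)^{-1}(B)$ is a closed subset of the compact set $(\tau^\ell)^{-1}(B)$, so it is compact. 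This shows the restriction is proper.

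The step that actually uses the hypotheses is the closedness of $A$. That closedness rests on \Cref{prop:properness-XH}, and through it on \Cref{lem:graphclosed}, which is where $(\star)$ enters. Everything else is formal. So the only thing to verify carefully is that \Cref{cor:image-closed} applies verbatim to $\trop_\cF:X(H)\to H^\ell$ (and not just at finite level $n$). This holds because $H^\ell$ is locally compact Hausdorff by \Cref{lem:heineborel} and \Cref{prop:H-hyperfield}.
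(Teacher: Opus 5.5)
Your proposal is correct and follows the paper's proof. Properness of $\tau$ comes from the Heine--Borel description in \Cref{lem:heineborel}, and properness of the restriction comes from the closedness of $\trop_\cF^H(X(H))$ supplied by \Cref{cor:image-closed}, together with the functoriality square. The only cosmetic difference is that you obtain properness of $\tau^\ell$ directly, since the preimage of a compact set is closed and bounded in $H^\ell$, whereas the paper cites the fact that finite products of proper maps between locally compact Hausdorff spaces are proper.
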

\begin{proof}
For compact $C\subseteq\mathbb T_\Gamma=\RR_\infty$ one has $\tau^{-1}(C)=\bar v^{-1}(C)$, which is closed ($\tau$ being continuous) and bounded below ($C$ being so), hence compact by \Cref{lem:heineborel}; thus $\tau$ is proper. A finite product of proper maps between locally compact Hausdorff spaces is proper --- the spaces are such by \Cref{lem:heineborel} --- so $\tau^\ell$ is proper. Under $(\star)$ with $\cF$ generating, $\trop_\cF^{H}(X(H))$ is closed in $H^\ell$ (\Cref{cor:image-closed}); the restriction of a proper map to a closed subset of its source is proper, and by the commuting square its image is $\trop_\cF^{\mathbb T_\Gamma}(\tau_*(X(H)))\subseteq\trop_\cF^{\mathbb T_\Gamma}(X(\mathbb T_\Gamma))$, giving the stated map.
\end{proof}

We can now record the properness of the tropical shadow itself; we give a direct proof, and \Cref{rmk:jell-square} recovers it from \Cref{lem:tau-proper} with an analogous argument as in \cite[Lemma 3.9]{jell}.

\begin{proposition}\label{prop:shadow-proper}
Assume $\Gamma=\RR$ and that $\RF$ satisfies $(\star)$ (\Cref{def:star}). Then the tropical shadow $\tau_*\colon X(H)\to X(\mathbb T_\Gamma)$ is a proper map.
\end{proposition}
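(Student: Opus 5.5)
We follow the proof of \Cref{prop:properness-XH}, replacing the bound coming from a compact subset of $H_n^\ell$ by a bound coming from a compact subset of $X(\mathbb T_\Gamma)$. Let $C\subseteq X(\mathbb T_\Gamma)$ be compact and put $B':=\tau_*^{-1}(C)\subseteq X(H)$. By \Cref{prop:T-shadow}, $\tau_*$ is continuous. Since $X(\mathbb T_\Gamma)$ is Hausdorff (\Cref{lem:hausdorff}), $C$ is closed, and therefore $B'$ is closed in $X(H)$. It remains to show that $B'$ is compact.

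The key step is a pointwise lower bound. Fix $f\in\ccO_X$. The map $\mathrm{ev}_f\colon X(\mathbb T_\Gamma)\to\mathbb T_\Gamma=\RR_\infty$ is continuous, so $\mathrm{ev}_f(C)$ is a compact subset of $\RR_\infty$. Such a set is bounded below: it is covered by the increasing open sets $(-m,\infty]$, so it lies in one of them. Hence there is $\gamma_f\in\RR$ with $\psi(f)\geq\gamma_f$ for all $\psi\in C$. For $\phi\in B'$ we have $\bar v(\phi(f))=\tau(\phi(f))=(\tau_*\phi)(f)\geq\gamma_f$. Consequently
\[
E(B')\subseteq\prod_{f\in\ccO_X}D_f,\qquad D_f:=\bar v^{-1}([\gamma_f,\infty])\subseteq H .
\]
Each $D_f$ is compact by \Cref{lem:heineborel}, so the product is compact by Tychonoff. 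Unlike in \Cref{prop:properness-XH}, we need neither generators nor the monomial estimate, because compactness of $C$ bounds every coordinate directly.

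To finish, recall that $E$ is a homeomorphism onto its image (as in the proof of \Cref{prop:Payne-XH}), and that under $(\star)$ this image is closed in $\prod_f H$ (\Cref{lem:Eclosed}). Hence $E(B')$ is closed in $\prod_f H$, and so it is a closed subset of the compact set $\prod_f D_f$. It follows that $E(B')$ is compact, and therefore $B'$ is compact.

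The only delicate point is the use of \Cref{lem:Eclosed}, which is where $(\star)$ enters. It ensures that the limit points in $\prod_f D_f$ still satisfy the hyperaddition constraints, via the closedness of the graph in \Cref{lem:graphclosed}. Everything else is formal.
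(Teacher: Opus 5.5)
Your proof is correct and follows essentially the same route as the paper: compactness of each $\mathrm{ev}_f(C)$ gives lower bounds $\gamma_f$, then Tychonoff on $\prod_f D_f$, with \Cref{lem:Eclosed} supplying closedness and the only use of $(\star)$. The one cosmetic difference is that you obtain closedness of $E(B')$ from closedness of $B'$ in $X(H)$, exactly as in \Cref{prop:properness-XH}, whereas the paper writes $E(\tau_*^{-1}(B))=E(X(H))\cap T^{-1}(E'(B))$ with $T=\prod_f\tau$; the two arguments are equivalent.
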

\begin{proof}
Let $B\subseteq X(\mathbb T_\Gamma)$ be compact. For each $f\in\ccO_X$ the evaluation $\mathrm{ev}_f\colon X(\mathbb T_\Gamma)\to\mathbb T_\Gamma$ is continuous, so $\mathrm{ev}_f(B)$ is a compact subset of $\mathbb T_\Gamma=\RR_\infty$, hence bounded below: there is $\gamma_f\in\RR$ with $w(f)\ge\gamma_f$ for every $w\in B$. Since $\tau$ preserves $\bar v$, every $\phi\in\tau_*^{-1}(B)$ satisfies $\bar v(\phi(f))=(\tau_*\phi)(f)\ge\gamma_f$; hence $E(\tau_*^{-1}(B))\subseteq\prod_{f}D_f$, where $E\colon X(H)\hookrightarrow\prod_f H$ is the evaluation embedding and $D_f:=\bar v^{-1}([\gamma_f,\infty])\subseteq H$ is compact (\Cref{lem:heineborel}); by Tychonoff $\prod_f D_f$ is compact.

Write $E'\colon X(\mathbb T_\Gamma)\hookrightarrow\prod_f\mathbb T_\Gamma$ for the evaluation embedding of the target --- a homeomorphism onto its image, by the argument of \Cref{prop:Payne-XH} --- and $T:=\prod_f\tau\colon\prod_f H\to\prod_f\mathbb T_\Gamma$, which is continuous. Since $T\circ E=E'\circ\tau_*$ and $E'$ is injective, for $\phi\in X(H)$ one has $E(\phi)\in T^{-1}(E'(B))$ if and only if $\tau_*\phi\in B$; hence
\[
E(\tau_*^{-1}(B))=E(X(H))\cap T^{-1}(E'(B)).
\]
Now $E'(B)$ is compact, hence closed in the Hausdorff space $\prod_f\mathbb T_\Gamma$ (\Cref{lem:hausdorff}), so $T^{-1}(E'(B))$ is closed in $\prod_f H$; and $E(X(H))$ is closed in $\prod_f H$ by \Cref{lem:Eclosed} (this is where $(\star)$ is used). Therefore $E(\tau_*^{-1}(B))$ is closed and contained in the compact set $\prod_f D_f$, hence compact. As $E$ is a homeomorphism onto its image, $\tau_*^{-1}(B)$ is compact.
\end{proof}

\begin{remark}\label{rmk:jell-square}
\Cref{prop:shadow-proper} also follows by the diagram-chase of \cite[Lemma~3.9]{jell}, factoring it through the properness of the arithmetic tropicalization. Assume $\cF$ generates $\ccO_X$, and let $B\subseteq X(\mathbb T_\Gamma)$ be compact. The commuting square $\trop_\cF^{\mathbb T_\Gamma}\circ\tau_*=\tau^\ell\circ\trop_\cF^{H}$ gives
\[
\tau_*^{-1}(B)\ \subseteq\ (\trop_\cF^{H})^{-1}\big((\tau^\ell)^{-1}(\trop_\cF^{\mathbb T_\Gamma}(B))\big),
\]
whose right-hand side is compact: $\trop_\cF^{\mathbb T_\Gamma}(B)$ is compact, $\tau^\ell$ is proper (\Cref{lem:tau-proper}), and $\trop_\cF^{H}$ is proper (\Cref{prop:properness-XH}). As $\tau_*^{-1}(B)$ is closed, it is compact.
\end{remark}

\begin{remark}\label{rmk:shadow-proper}
For $K$ real closed with value group $\RR$, \Cref{prop:shadow-proper} is exactly the properness of the forgetful map $X_r^\an\to X^\an$ of \cite[Lemma~3.9]{jell}, under the identifications $H=\mathbb{RT}$, $X(H)=X_r^\an$ and $X(\mathbb T_\Gamma)=X^\an$ (\Cref{prop:H-reduction}). It is already new when $\RF$ is a local field: for $K=\Qq_p(\!(t^\RR)\!)$ the fibres of $\tau\colon H\to\mathbb T_\Gamma$ are the infinite profinite sets $\varprojlim_n G_n^\times$, yet the tropical shadow $\tau_*$ remains proper. 
\end{remark}

\section{The model-theoretic analytification}\label{sec:analytification}

In this section we introduce, for a certain class of tame valued fields, the model-theoretic avatar of the space $X(H)$ introduced in \Cref{sec:hyperfield-points}. Its construction extends Hrushovski-Loeser's stable completion from \cite{HruLoe} and builds on many of their ideas. We start with some preliminaries.

\subsection{Model-theoretic preliminaries}\label{subsec:mt-prelim}

We use Macintyre's language
$\Lmac=\{+,-,\cdot,0,1,(P_n)_{n\geq2}\}$, where $P_n$ denotes the set
of $n$th powers, including zero, and write $P_n^\times=P_n\setminus\{0\}$.
The language $\Lmac^\Gamma$ has a field sort, a value-group sort in the
language of ordered abelian groups, the distinguished point $\infty$,
and the valuation map $v$.

\begin{definition}\label[definition]{def:class}
Let $(K,v)$ be a henselian valued field of equicharacteristic $0$. We say it is a \emph{bounded Macintyre field}, or just a $\bm$-field, if
\begin{enumerate}
    \item $[\RV_K^\times:(\RV_K^\times)^n]<\infty$ for every $n\geq 2$;
    \item the theory of the residue field eliminates quantifiers in $\Lmac$, up to addition of constants.
\end{enumerate}
\end{definition}

Examples include algebraically closed and real closed valued fields,
as well as the power-series fields $L((t))$ and $L((t^\RR))$ for
$L=\CC,\RR,\QQ_p$. Iterated constructions, such as
$\CC((x))((t^\RR))$, provide further examples.

For the rest of the article we fix a complete $\Lmac^\Gamma$-theory $T$ of a $\bm$-field, a
sufficiently saturated and homogeneous universal domain $\cU\models T$,
and a small elementary submodel $K\prec\cU$.  We work in $T^{\mathrm{eq}}$. Parameter sets,
sets of codes, and indexing systems are always small relative to the saturation of $\cU$. 
Following \cite[Sections~2.2--2.3]{HruLoe}, we identify a definable set $Y$ with the set of $\cU$-points $Y(\cU)$. We write $Y(M)$ only when specifying points over a model or parameter set $M$. 

A \emph{pro-definable set} is a small inverse limit of definable sets,
with definable transition maps. It is \emph{strict} if it admits such a
presentation with surjective transition maps. A subset is
\emph{relatively definable} if it is cut out by a formula involving
finitely many coordinates in a pro-definable presentation. As for definable sets, we identify a pro-definable set $Y$ with its set of $\cU$-points $Y(\cU)$.\footnote{That this is a correct identification follows by \cite[Corollary 10]{kamensky}}. Types (partial and complete) over small parameter sets are also viewed as pro-definable sets.

When an affine variety $X$ over $K$ is regarded as a definable set, we
implicitly base change to $\cU$. Thus $\ccO_X=\cU[X]$ in statements
about universal-domain points; statements over a specified model $M$
use $M[X]$.

For a parameter set $C$, we write
$\Gamma(C)=\Gamma\cap\operatorname{dcl}^{\mathrm{eq}}(C)$.
For a field $L$, $\Gamma(L)$ is its value group. A basis
$(e_i)$ of a valued vector space is \emph{valuation-independent} if
\[
 v\left(\sum_i c_i e_i\right)=\min_i\bigl(v(c_i)+v(e_i)\bigr)
\]
for every coefficient tuple from the base field, with $v(0)=\infty$.

We shall use the following relative quantifier-elimination result.

\begin{proposition}\label[proposition]{prop:QE} Let $(K,v)$ be a $\bm$-field and $T=\mathrm{Th}(K,v)$. Then, up to addition of constants, $T$ has relative elimination of field quantifiers in $\Lmac^\Gamma$. Moreover the residue field $\RF$ and the value group $\Gamma$ are orthogonal and fully stably embedded.
\end{proposition}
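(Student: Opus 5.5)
The plan is to deduce the result from the relative quantifier elimination for henselian valued fields of equicharacteristic $0$ in the $\RV$-language, due to Basarab and Kuhlmann (in the form given by Flenner), and then translate back to $\Lmac^\Gamma$ using the finiteness hypothesis on $\RV^\times/(\RV^\times)^n$. That theorem says every formula in the field sort is equivalent, modulo the theory, to one of the form $\psi(\rv(f_1(x)),\dots,\rv(f_m(x)))$. Here the $f_i$ are polynomials over the parameters and $\psi$ is a formula in the induced structure on $\RV$. Moreover $\RV$ is stably embedded, with its induced structure that of the short exact sequence $1\to\RF^\times\to\RV^\times\to\Gamma\to 0$. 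So the work reduces to analysing this exact sequence and showing that, after naming constants, the $\RV$-conditions are expressible with the predicates $P_n$ and valuations of polynomials.

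\textbf{Step 1: eliminating $\RV$.} For each $n$, choose representatives $c_{n,1},\dots,c_{n,k_n}\in K^\times$ of the finitely many classes of $K^\times/(K^\times)^n\cong\RV^\times/(\RV^\times)^n$, using \Cref{lem:divisibilities}(1), and name them as constants. Then the class of $\rv(f(x))$ modulo $n$th powers is expressible in $\Lmac$ by $\bigvee_i P_n(f(x)/c_{n,i})$. The key structural fact I would prove is a relative elimination for the exact sequence itself. Under $[\RV^\times:(\RV^\times)^n]<\infty$, with these classes named, any formula in the short exact sequence is a Boolean combination of three kinds of condition: conditions on $\Gamma$ (in the language of ordered abelian groups) applied to $\bar v$ of terms; conditions on $\RF$ applied to residues of terms of value $0$; and the power-class predicates $(\RV^\times)^n\cdot\rv(c_{n,i})$.

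I expect this to be the main obstacle. I would approach it by a back-and-forth between partial isomorphisms of the sequences. The finite-index hypothesis together with the named coset representatives should let any extension of a partial isomorphism on the $\Gamma$ side be lifted, adjusting the leading terms by elements of the correct $n$th power class. This is essentially the argument behind the Aschenbrenner--Chernikov--Gehret--Ziegler description of pure short exact sequences, which I would cite or adapt.

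\textbf{Step 2: reducing residue conditions to $\Lmac$ and concluding.} A residue condition $\chi(\res(g_1/h_1),\dots)$ on $\RF$ is, by hypothesis (2) of \Cref{def:class}, a Boolean combination of $\Lmac$-atomic conditions on residues, possibly using names for constants. Each such atomic condition lifts to a field formula:
\begin{itemize}
\item an equation $\res(a)=0$ becomes $v(a)>0$;
\item a condition $P_n(\res(a))$ becomes $P_n(a)$ when $v(a)=0$, by \Cref{lem:divisibilities}(1).
\end{itemize}
Combining this with Step 1, every field formula becomes a Boolean combination of $P_n(f(x))$, $f(x)=0$, and value-group formulas in the terms $v(f_i(x))$. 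This is exactly relative elimination of field quantifiers in $\Lmac^\Gamma$.

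\textbf{Orthogonality and full stable embeddedness.} Both follow from the same analysis. Any definable subset of $\RF^a\times\Gamma^b$ is a Boolean combination of products of $\RF$-definable and $\Gamma$-definable sets, since in the exact sequence the only link between the two sorts runs through $\RV$. Full stable embeddedness then follows because the induced structure on each sort is just its own pure structure: $\Lmac$ on $\RF$, with the named constants, and the ordered group structure on $\Gamma$.
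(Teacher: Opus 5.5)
Your proposal is correct and takes essentially the same route as the paper. Both use Flenner's relative elimination down to $\RV$, then elimination for the pure short exact sequence $1\to\RF^\times\to\RV^\times\to\Gamma\to0$ (the paper cites Chernikov--Simon, Prop.~3.1 and Cor.~3.4, with ACGZ as the general framework, rather than redoing a back-and-forth), and finally lift residue $\Lmac$-conditions to power predicates by Hensel's lemma. The only step you leave implicit that the paper spells out is clearing denominators in power conditions, $P_k(A/B)\Leftrightarrow P_k(AB^{k-1})$ for $B\neq0$.
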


\begin{proof}
Since $(K,v)$ is henselian of equicharacteristic 0, the field sort eliminates quantifiers relative to the $\RV$-sort \cite[Proposition~5.1]{flenner}.
In turn, the $\RV$-sort eliminates quantifiers down to $\RF$ and $\Gamma$ by \cite[Proposition~3.1]{inpminimal}. This also implies that the two sorts are moreover fully stably embedded \cite[Corollary~3.4]{inpminimal}
 and, being tied only through $\bar v$, orthogonal (more generally, this is the relative elimination of pure short exact sequences of abelian structures \cite{acgz}). Thus every formula in field variables $x$ is equivalent to a Boolean combination of quantifier-free $\Lmac$-formulas in $x$, of conditions $\psi(\res(g_1(x)/h_1(x)),\dots, \res(g_\ell(x)/h_\ell(x)))$ on ratios of equal value with $\psi$ an $\Lmac$-formula, and of conditions $\theta(v(g_1(x)),\dots, v(g_\ell(x)))$ with $\theta$ a formula in the language of ordered abelian groups. It suffices to encode the information of $\psi$. 

Since $K$ is a $\bm$-field, we may assume that $\psi$ is quantifier-free (up to constants). Partition first into cases where the denominators are nonzero and the rational expressions whose residues occur are integral. A residue equation $\res(A/B)=0$ is then equivalent to $v(A)>v(B)$. For a nonzero residue, the condition that $\res(dR(Y))$ be a $k$-th power is equivalent, by Hensel's lemma, to $P_k(dR(Y))$, provided $v(dR(Y))=0$. The case of zero residue is handled separately by $v(dR(Y))>0$. Finally, on $B\neq0$ one has
\[
P_k(A/B)\ \Longleftrightarrow\ P_k(AB^{k-1}),
\]
so clearing denominators gives power conditions on polynomials. This proves the asserted relative elimination of field quantifiers, allowing value-group variables in the formulas $\theta$.
 \end{proof}

Lastly, a valued field
is \emph{maximally complete} if it has no proper immediate extension,
where immediate means that the residue field and value group are
unchanged. Every small parameter set is contained in such a small model:
start with an elementary submodel containing the parameters and take a
maximal immediate extension. The extension is elementary by the
Ax--Kochen--Ershov principle in residue characteristic zero, and it embeds over the original model
into a sufficiently large universal domain/monster model. Maximal completeness also
implies that every finite-dimensional valued vector space over the model
has a valuation-independent basis (see \cite[Corollary 4.3]{cubi-kuh-rze}). 

\subsection{The model-theoretic analytification \texorpdfstring{$\widehat{X}_T$}{of X}}

For a definable set $Y$ over $C$, let $S_Y(C)$ be the space of complete
types over $C$ concentrating on $Y$. A global type is a complete type
over $\cU$. It is \emph{$C$-definable} if, for every formula
$\varphi(x;y)$, there is a formula $d_p\varphi(y)$ over $C$ such that
\[
 \varphi(x;b)\in p\quad\Longleftrightarrow\quad
 \cU\models d_p\varphi(b).
\]
It is \emph{$C$-invariant} if every automorphism of $\cU$ fixing $C$
fixes $p$; definability over $C$ implies invariance over $C$.
We write $S^{\mathrm{def}}_Y$ for all global definable types on $X$,
allowing arbitrary small sets of definition parameters. The same
notation applies to pro-definable sets: formulas involve finitely many
coordinates, and the type contains a small partial type presenting the
set. A definable type over a model is identified with the global
extension given by its definitions.

\begin{definition}\label{def:weak-ortho}\
\begin{enumerate}
\item A type $p\in S_Y(K)$ is \emph{weakly orthogonal to the value group}
if $\Gamma(K(a))=\Gamma(K)$. We denote this subspace by
$S_X^{\perp^w\Gamma}(K)$. 

\item  For a definable map $f:Y\to Z$, the \emph{pushforward} $f_*p$ is defined
by $\psi(y;b)\in f_*p$ if and only if $\psi(f(x);b)\in p$.
For pro-definable maps this definition is applied coordinatewise.
\item A global definable type $p$ is \emph{orthogonal to $\Gamma$} if every
definable map $f:X\to\Gamma_\infty^r$ has realized pushforward $f_*p$.
We denote that realized value by $f(p)$.
\end{enumerate}
\end{definition}

Note that if $p$ is weakly-orthogonal to $\Gamma$ we have in particular that $v(f(a))\in\Gamma(K)_\infty$ for every $f\in K[X]$. Also, for $\bm$-fields, this is equivalent to the fact that $p(x)\cup q(\eta)$ is complete over $K$ for every complete type
$q\in S_{\Gamma^r}(K)$ and every finite $r$ (which is the standard definition given in \cite{HruLoe} for weak orthogonality).

\begin{definition}\label[definition]{def:analytification}
Let $Y$ be a definable set. We write $\widehat Y_T$ for the global definable types concentrating on $Y$ and orthogonal to $\Gamma$. If $C$ is a small parameter set over which $X$ is defined, let $\widehat Y_T(C)$ be the subset of global $C$-definable types. We omit the subscript $T$ when it is clear. 
\end{definition}

For definable types over models, weak orthogonality implies orthogonality. 

\begin{lemma}\label{lem:def-orthogonal}
Let $p\in \mathcal S_Y^{\perp^w\Gamma}(K)$ and suppose $p$ is $K$-definable. Then $p\in \widehat{Y}_T(K)$. In particular, its global extension belongs to $\widehat{Y}_T$. 
\end{lemma}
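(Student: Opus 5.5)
Let $\tilde p$ denote the global extension of $p$ given by its definition scheme over $K$. We have to show that for every definable map $f\colon Y\to\Gamma_\infty^r$, defined over arbitrary parameters $b\in\cU$, the pushforward $f_*\tilde p$ is realized. Since $\Gamma_\infty^r$ is a product, it suffices to treat $r=1$: a product of realized coordinates is realized. Write $f(x)=F(x,b)$ with $F$ an $\emptyset$-definable family (over $K$).

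The idea is to use the definition scheme to replace $b$ by a point in $K$. Since $K\prec\cU$ and the $\varphi$-definitions $d_p$ are over $K$, every consistent statement about $\tilde p$ and parameters $b$ reflects to parameters in $K$.

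\textbf{Step 1 (parameters in $K$).} For $b'\in K$, the function $F(x,b')$ is $K$-definable. Take $a\models p$. By weak orthogonality, $\Gamma(K(a))=\Gamma(K)$, so $F(a,b')\in\dcl(Ka)\cap\Gamma_\infty=\Gamma(K)_\infty$. Hence there is $\gamma\in\Gamma(K)_\infty$ with the formula $F(x,b')=\gamma$ in $p$, and therefore in $\tilde p$. So the pushforward is realized when the parameters lie in $K$.

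\textbf{Step 2 (uniformity).} We want to show that the value $\gamma$ is given by a definable function $g(b')$, and then that this persists to all $b\in\cU$. Consider the formula $\psi(y,z)\colon$ ``$F(x,y)=z$'' and its definition $d_p\psi(y,z)$ over $K$. By Step 1,
\[
K\models\forall y\,\exists!\,z\ d_p\psi(y,z).
\]
Existence holds by Step 1. Uniqueness holds because $\tilde p$ is consistent: two distinct values $z$ cannot both be in the type, and this is expressible through $d_p\psi$. Since $K\prec\cU$, the same sentence holds in $\cU$. So for the given $b$ there is a unique $\gamma\in\Gamma_\infty(\cU)$ with $\cU\models d_p\psi(b,\gamma)$, i.e. the formula $F(x,b)=\gamma$ lies in $\tilde p$. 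Thus $f_*\tilde p$ is the realized type of $\gamma$, and $\tilde p\in\widehat Y_T(K)$.

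The main obstacle is Step 1, namely showing that $F(a,b')$ lies in $\Gamma(K)_\infty$. This rests on $\Gamma(C)=\Gamma\cap\dcl^{\mathrm{eq}}(C)$ and on weak orthogonality being stated for $\dcl$-values in $\Gamma$, not only for valuations of polynomials. If weak orthogonality were available only in the form $v(f(a))\in\Gamma(K)_\infty$ for $f\in K[X]$, I would invoke \Cref{prop:QE} instead. By that result, a definable function to $\Gamma$ is piecewise given by $\emptyset$-definable functions in the ordered group sort applied to values $v(g_i(x))$, with the pieces cut out by field formulas. Since $p$ is complete over $K$, it selects one piece, and the values $v(g_i(a))$ lie in $\Gamma(K)$. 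Because $\Gamma$ is stably embedded, the resulting value is in $\dcl(\Gamma(K))\subseteq\Gamma(K)$.
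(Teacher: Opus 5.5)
Your proof is correct, but it takes a different route from the paper. The paper's proof is a single citation. It notes that $\RF$ and $\Gamma$ are stably embedded and orthogonal (\Cref{prop:QE}) and that a $K$-definable type is in particular $K$-invariant. It then appeals to \cite[Theorem~3.27]{orthogonal}, a general result passing from weak orthogonality to orthogonality for invariant types.

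You instead give the standard direct argument for types definable over a model:
\begin{enumerate}
\item Weak orthogonality realizes every pushforward whose parameters lie in $K$.
\item Consequently, in $K$ the definition $d_p\psi(y,z)$ of the formula $F(x,y)=z$ is the graph of a total function.
\item Since $K\prec\cU$, this transfers to all parameters in $\cU$, so the global extension has realized pushforwards.
\end{enumerate}

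What each route buys:
\begin{itemize}
\item Your argument is self-contained and uses only that $K$ is a model and that $p$ has a $K$-definition scheme. Relative quantifier elimination enters only if $\Gamma(K(a))$ is read as the value group of the field $K(a)$ rather than as $\Gamma\cap\operatorname{dcl}^{\mathrm{eq}}(Ka)$. You handle that reading correctly, though the conclusion $F(a)\in\Gamma(K)$ there comes from quantifier elimination together with $K\prec\cU$, not from stable embeddedness.
\item As a by-product, your argument shows that the realized value of $f_*\tilde p$ depends $K$-definably on the parameters of $f$.
\item The cited theorem gives generality the lemma does not need: it covers $K$-invariant, not necessarily definable, types. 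The price is importing the stable-embeddedness and orthogonality hypotheses.
\end{itemize}

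One small detail to fix: in Step 2 the quantifier $\forall y$ should range over the $K$-definable set of parameters $y$ for which $F(\cdot,y)$ is a function from $Y$ to $\Gamma_\infty$. Existence in Step 1 is only guaranteed there.
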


\begin{proof}
Since the residue field and value group are stably embedded and orthogonal (\Cref{prop:QE}) and $p$ is $K$-definable --- hence $K$-invariant --- the result follows by \cite[Theorem~3.27]{orthogonal}.
\end{proof}

\begin{corollary}\label[lemma]{cor:def-orthogonal}
Let $(K,v)$ be a maximally complete $\bm$-field. Suppose that over its residue field $\RF(K)$ every type is definable. Then all types in $S_Y^{\perp^w\Gamma}(K)$ are $K$-definable. In particular, the restriction function from $\widehat{Y}_T(K)$ to $S_Y^{\perp^w\Gamma}(K)$ is a bijection. \end{corollary}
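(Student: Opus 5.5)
The plan is to show that a type $p=\tp(a/K)$ weakly orthogonal to $\Gamma$ is $K$-definable. By \Cref{prop:QE}, up to constants every formula $\varphi(x;y)$ in field variables is a Boolean combination of three kinds of formulas: quantifier-free $\Lmac$-formulas in $(x,y)$; conditions $\theta(v(g_1(x,y)),\dots,v(g_\ell(x,y)))$ on the value group; and residue conditions $\psi(\res(g_i(x,y)/h_i(x,y)))$, which can be rewritten as power conditions $P_k(\cdot)$ on polynomials. So it suffices to produce $K$-definitions of $\varphi(x;b)\in p$ for these atomic pieces. We work with the polynomials $g(x,y)\in K[x,y]$ of bounded degree $d$. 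For fixed $d$, $g(a,b)=\sum_\mu c_\mu(b)a^\mu$, so the relevant data is the finite-dimensional $K$-vector space $V_d$ spanned by the monomials $a^\mu$ of degree $\le d$ inside $K(a)$.

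The first step is to use maximal completeness: by \cite[Corollary 4.3]{cubi-kuh-rze}, $V_d$ has a valuation-independent basis $e_1,\dots,e_m$ over $K$. By weak orthogonality, $v(e_i)\in\Gamma(K)$, so after scaling by elements of $K$ we may take $v(e_i)=0$. The coordinates of the monomials $a^\mu$ in this basis are fixed elements of $K$. For an elementary extension point $b$, however, the coefficients lie in $\cU$, and we need valuation-independence over $\cU$, not just over $K$. Here I would argue that valuation-independence transfers: the basis can be chosen so that the residues $\res(e_i)$ together with the $\Gamma$-data are $K$-linearly independent and "generic". More carefully, I would instead reduce every formula to one over $K$ and invoke \Cref{lem:def-orthogonal} only at the end. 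Concretely, the claim is that $v(\sum c_i e_i)=\min v(c_i)$ for all $c\in\cU$; I would prove it via the residue field. Writing $c_i=\lambda\tilde c_i$ with $\min v(\tilde c_i)=0$, the equality reduces to $\sum\res(\tilde c_i)\res(e_i)\neq0$. That is, the residues $\res(e_i)$ must be linearly independent over $\RF(\cU)$. This is the main obstacle. Independence over $\RF(K)$ follows from valuation-independence over $K$. To pass to $\RF(\cU)$, I would use that $\RF$ is stably embedded and orthogonal to $\Gamma$ (\Cref{prop:QE}), and in particular that $\tp(\res(e)/\RF(K))$ is a definable type by hypothesis. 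Linear independence over $\RF(K)$ of a realization of a definable type over the model $\RF(K)$ implies independence from $\RF(\cU)$, that is, linear disjointness, for the realization of the global heir. This is because "$\sum z_i r_i=0$" is definable in $z$, and any nontrivial solution would by the definition scheme yield one in $\RF(K)$.

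With linear disjointness in hand, for every $b$ we get $v(g(a,b))=\min_i v(c_i(b))$, which is a $K$-definable function of $b$. Similarly, $\res$ of a normalized $g(a,b)$ is $\sum\res(\tilde c_i(b))\,\res(e_i)$. A power condition $P_k(g(a,b))$ is then, by \Cref{lem:divisibilities}(1) and the finiteness of $\RV^\times/(\RV^\times)^n$ (choose representatives in $K$), a condition of the form "$\sum_i \res(\tilde c_i(b))\res(e_i)$ lies in a given coset", which is a formula about $\res(e)$ with parameters from $\RF(\cU)$ that are $K$-definable in $b$. Its truth is therefore decided by the definition scheme of the definable type $\tp(\res(e)/\RF(K))$, composed with these $K$-definable maps. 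Value-group conditions are decided by the explicit minimum formula. Combining the atomic cases gives $d_p\varphi$ over $K$ for every $\varphi$, so $p$ is $K$-definable.

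Finally, by \Cref{lem:def-orthogonal}, $p$ lies in $\widehat Y_T(K)$. Restriction $\widehat Y_T(K)\to S_Y^{\perp^w\Gamma}(K)$ is well defined, since $K$-definable global types orthogonal to $\Gamma$ restrict to weakly orthogonal types. It is injective because a $K$-definable type over a model is determined by its restriction to $K$, and it is surjective by what we just proved.
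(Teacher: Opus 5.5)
Your proof is correct, but it follows a different route from the paper. The paper's argument has three steps:
\begin{itemize}
\item it normalizes finitely many elements of $\RV(L)^\times$, for $L=K(a)$, into $\RF(L)^\times$, using $\Gamma(L)=\Gamma(K)$;
\item it obtains definability of $\tp(\RV(L)\cup\Gamma(L)/\RV(K))$ from the hypothesis on residue types and pure stable embeddedness of $\RF$;
\item it invokes the transfer result \cite[Proposition~6.3.5(1)]{beautiful}, which lifts definability from the $\RV$-type to the field type when $L/K$ has a valuation basis.
\end{itemize}
You open that black box in the special case at hand. Via \Cref{prop:QE} and a value-$0$ valuation-independent basis, you write $g(a,b)=\sum_i C_i(b)e_i$ and read off $v(g(a,b))=\min_i v(C_i(b))$. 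You then reduce $P_n(g(a,b))$, through coset representatives in $K$ and Hensel's lemma, to a field formula in $\res(e)$ that is decided by the definition scheme of $\tp(\res(e)/\RF(K))$. This is essentially the computation the paper itself performs in the proof of \Cref{lem:transfer-directed-refinement}. Your route is self-contained and exhibits the definitions explicitly; the paper's is shorter and treats the $\RV$-data uniformly, without case analysis.

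The step you flag as the main obstacle is superfluous. Since $K$ is a model, $K$-definability of $p\in S_Y(K)$ only concerns parameters $b\in K$. For such $b$ the coefficients $C_i(b)$ lie in $K$, so valuation-independence over $K$ already gives both the minimum formula and $\sum_i\res(C_i(b)/\lambda)\res(e_i)\neq 0$. The global extension is then given by the same scheme automatically.

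Your linear-disjointness argument is correct, but it is only needed if you want to realize the global extension by a concrete $a'$. In that case you should also justify choosing $a'\models p$ with $\res(e(a'))$ realizing the global extension of the residue type. That choice is consistent, again by definability of that type over the model $\RF(K)$.
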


\begin{proof}
Let $p\in S_Y^{\perp^w\Gamma}(K)$. Fix $a\models p$ in an elementary extension of $K$, and put $L=K(a)$. Maximal completeness implies that $L/K$ has a valuation basis. Since $\Gamma(L)=\Gamma(K)$, any finite tuple $\xi_i\in\RV(L)^\times$ can be normalized by $d_i\in K$ of the same values, leaving $\xi_i/\rv(d_i)\in\RF(L)^\times$. By the hypothesis on residue types and pure stable embeddedness of $\RF$, these normalized tuples have definable types over $\RV(K)$. The value-group coordinates already lie in $\Gamma(K)$. Thus $\tp(\RV(L)\cup\Gamma(L)/\RV(K))$ is definable, and \cite[Proposition~6.3.5(1)]{beautiful} gives definability of $p$. The last assertion follows by \Cref{lem:def-orthogonal}.  
\end{proof}

Relative quantifier elimination gives the following criterion for equality of elements in $\widehat{X}_T$. 

\begin{corollary}\label[corollary]{cor:type-determination}
Let $T$ be the theory of the $\bm$-field $K$ and $p,q\in \widehat{X}_T$ where $X$ is an affine variety base changed to $\mathcal{U}$ and $\ccO_X=\mathcal U[X]$. Assume that:
\begin{enumerate}
\item for every $f\in \ccO_X$, $v(f(p))=v(f(q))$;
\item for every $f\in \ccO_X$, every natural number $n \in \Nn$ and every representative $a_{i}$ of $K^{\times}/ (K^{\times})^{n}$ we have $P_{n}(a_{i}f(x)) \in p$ if and only if $P_{n}(a_{i}f(x)) \in q$.
\end{enumerate}
Then $p=q$. 
\end{corollary}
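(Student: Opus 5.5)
To show $p=q$ it suffices to check that $p$ and $q$ contain the same formulas $\varphi(x;b)$, with $b$ ranging over parameters from $\cU$. \Cref{prop:QE} lets us reduce to special forms of $\varphi$. Since the statement is about global types, parameters come from $\cU$; we may absorb them into the coefficients of polynomials in $\ccO_X=\cU[X]$. By relative elimination of field quantifiers, every formula $\varphi(x;b)$ in field variables is equivalent modulo $T$ to a Boolean combination of three kinds of atoms:
\begin{itemize}
\item $\theta(v(g_1(x)),\dots,v(g_\ell(x)))$, with $\theta$ a formula in the language of ordered abelian groups (possibly with parameters from $\Gamma(\cU)$);
\item polynomial equations $g(x)=0$;
\item power predicates $P_n(g(x))$,
\end{itemize}
where all $g_i,g\in\ccO_X$. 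The proof of \Cref{prop:QE} already performs this reduction. It turns residue conditions into valuation inequalities and $P_k$ conditions on polynomials, clearing denominators via $P_k(A/B)\Leftrightarrow P_k(AB^{k-1})$. Since $p,q$ are complete types, it is enough to show that each atom lies in $p$ iff it lies in $q$.

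For the first two kinds I will use orthogonality to $\Gamma$. Both $p$ and $q$ lie in $\widehat X_T$, so the pushforward of each under $x\mapsto(v(g_1(x)),\dots,v(g_\ell(x)))$ is realized, with value $(v(g_i(p)))_i$. By hypothesis (1) this tuple is the same for $p$ and $q$. Hence $\theta(v(g_1(x)),\dots)$ lies in $p$ iff $\cU\models\theta(v(g_1(p)),\dots)$ iff it lies in $q$. The equation $g(x)=0$ is the special case $v(g(x))=\infty$.

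For the power predicates I will apply hypothesis (2) with $a_i=1$ to get agreement on $P_n(g(x))$ directly. The representatives $a_i$ in (2) matter only when the reduction from \Cref{prop:QE} produces atoms of the form $P_n(c\,g(x))$ with a constant $c$. In that case write $c=a_i d^n$ with $d\in\cU^\times$, using that $\cU^\times/(\cU^\times)^n=K^\times/(K^\times)^n$: the index is finite by \Cref{lem:divisibilities}, and $K\prec\cU$ gives the same representatives. Then $P_n(c\,g(x))\leftrightarrow P_n(a_i g(x))$, and moreover $a_i g\in\ccO_X$. So in all cases (2) gives agreement.

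The main obstacle is the first step: making sure that \Cref{prop:QE} really yields atoms of exactly these shapes with all parameters absorbed into $\ccO_X$ or $\Gamma(\cU)$. In particular, residue-sort conditions $\psi(\res(g_1/h_1),\dots)$ with quantifier-free $\Lmac$-formulas $\psi$ must become, case by case, valuation conditions and $P_k$ conditions on polynomials. I would handle this as in that proof. First split on the values $v(h_j(x))$ and on whether the ratios are integral; these are conditions of the first kind. Then a residue ring equation $\res(A/B)=0$ becomes $v(A)>v(B)$. A residue power condition becomes $P_k(AB^{k-1})$ when $v(A/B)=0$, by Hensel's lemma (\Cref{lem:divisibilities}). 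Finally, equality or inequality of polynomial combinations of residues reduces to a single residue of a polynomial expression, so it is again a valuation comparison. Once every atom has one of the listed forms, the argument above closes the proof.
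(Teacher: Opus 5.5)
Your proposal is correct and follows the paper's route. The paper's proof notes that (1) also settles the zero-set atoms $f(x)=0$, since $v(f(p))=\infty$ if and only if $v(f(q))=\infty$, and then invokes the relative quantifier elimination of \Cref{prop:QE}; this is exactly the reduction you spell out, with orthogonality and (1) handling the value-group atoms and (2) handling the power predicates. (Your detour through $c=a_i d^n$ is harmless but unnecessary, since $c\,g$ already lies in $\ccO_X$.)
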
 

\begin{proof}
Note that condition (1) already implies that for every $f\in \ccO_X$ we have $f(x)=0 \in p$ if and only if $f(x)=0 \in q$ since $v(f(p))=\infty$ if and only if $v(f(q))=\infty$. The conclusion now follows from \Cref{prop:QE}. 
\end{proof}

\begin{remark}
The hyperfields introduced in the preceding sections are (pro)definable objects. Indeed, note that since $\RV$ is definable in $\eq{T}$, so are the quotients $\RV^\times/(\RV^\times)^n$. The valuation induces a map $\RV^\times/(\RV^\times)^n\to\Gamma/n\Gamma$, and therefore $H_n^\times = \Gamma\times_{\Gamma/n\Gamma}\RV^\times/(\RV^\times)^n$ (and $H_n=H_n^\times\cup \{\mathbf{0}\}$) is definable in $\eq{T}$. The map $\rho_n$, multiplication and the ternary hyperaddition relation are also definable. Finally, since the maps $\tau_{mn}:H_n\to H_m$ for $m\mid n$ are definable surjections, the inverse limit $H=\varprojlim_n H_n$ is a countable strict pro-definable set (in general, $H$ does not need to be is not a single definable set). We equip $H_n$ and $H$ with the (definable) topologies already defined and treat this sets as definable sets, so evaluated in $\mathcal U$.     
\end{remark}

We realize $\widehat{X}_T$ inside the space of hyperfield points $X(H)$ of \Cref{sec:hyperfield-points}, using $\ccO_X=\mathcal U[X]$. The topology on $\widehat{X}_T$ will be induced by this embedding. Abusing notation, we will aslo use $\infty$ to denote the hyperfield element $\mathbf{0}$.

\begin{proposition}\label[proposition]{prop:hyperfield-embedding}
Let $X$ be an affine variety over $K$, base changed to $\mathcal U$ in this assertion, with coordinate ring $\ccO_X=\mathcal U[X]$. The map
$\Theta\colon \widehat{X}_T \to X(H),$ defined by $\Theta(p)(f)=\rho(f(p))$, is a well-defined injection.
\end{proposition}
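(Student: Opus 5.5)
We first make sense of $\rho(f(p))$. For $p\in\widehat X_T$ and $f\in\ccO_X$, orthogonality to $\Gamma$ means $v(f(x))$ takes a single realized value $\gamma_f\in\Gamma_\infty$ on $p$. To get the power-class coordinate at level $n$, we use that $p$ is complete and $\RV^\times/(\RV^\times)^n$ is finite (\Cref{def:class}). Pick representatives $a_1,\dots,a_r\in K^\times$ of $K^\times/(K^\times)^n$; by \Cref{lem:divisibilities}(1) these are also representatives of $\RV^\times/(\RV^\times)^n$. If $\gamma_f\neq\infty$, then exactly one formula $P_n(a_i f(x))$ lies in $p$. This singles out a class $\xi_{f,n}$, namely the class of $\rv_n(a_i)^{-1}$, with $\rv_n(f(x))=\xi_{f,n}$ on $p$. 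We set $\rho_n(f(p)):=(\gamma_f,\xi_{f,n})$, and $\rho_n(f(p))=\mathbf 0$ if $\gamma_f=\infty$. Equivalently, $\rho_n(f(p))$ is the realized value of the pushforward $(\rho_n\circ f)_*p$ in the definable set $H_n$. These values are compatible under $\tau_{mn}$, since $\tau_{mn}\circ\rho_n=\rho_m$ holds in $\cU$ and hence on any realization. So $\Theta(p)(f):=(\rho_n(f(p)))_n\in H$ is well defined.

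Next we check that $\Theta(p)$ is a hyperring homomorphism restricting to $\rho$ on $K$ (indeed on $\cU$). For every finite collection of the relevant data at a fixed level $n$, choose a realization $a\models p$ in a bigger monster model; then $\rho_n(f(p))=\rho_n(f(a))$ for each of the finitely many $f$ involved. For constants $c$ we get $\rho_n(c(p))=\rho_n(c)$. Multiplicativity at level $n$ follows from $\rho_n(f(a)g(a))=\rho_n(f(a))\rho_n(g(a))$. For hyperaddition, $\rho_n(f(a)+g(a))\in\rho_n(f(a))\oplus_n\rho_n(g(a))$ because $\rho_n$ is a (strict) hyperfield homomorphism by \Cref{cor:hyperfield}; this inclusion is a first-order condition, so it holds at every level $n$. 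Since the hyperaddition in $H$ is $\varprojlim_n(x_n\oplus_n y_n)$, we obtain $\Theta(p)(f+g)\in\Theta(p)(f)\oplus\Theta(p)(g)$. The same levelwise argument shows that $\Theta(p)(0)=\mathbf 0$ and $\Theta(p)(1)=\mathbf 1$.

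For injectivity, suppose $\Theta(p)=\Theta(q)$. Comparing first coordinates gives $v(f(p))=v(f(q))$ for all $f$, which is condition (1) of \Cref{cor:type-determination}. Comparing the level-$n$ classes $\rv_n(f)$ tells us, for each representative $a_i$ of $K^\times/(K^\times)^n$, whether $P_n(a_if(x))$ lies in $p$, and likewise for $q$. Indeed, when $f(p)\neq0$ we have $P_n(a_if(x))\in p$ if and only if $\rv_n(a_i)\xi_{f,n}=\bar 1$, using \Cref{lem:divisibilities}(1); when $f(p)=0$ the formula holds trivially in both types. This is condition (2), so \Cref{cor:type-determination} gives $p=q$.

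I expect the only delicate point to be the well-definedness of the $\RV$-coordinate. The worry is that, for $f$ with parameters from $\cU$ rather than $K$, the representatives $a_i$ of $K^\times/(K^\times)^n$ still cover all classes of $\cU$. This holds because the index $[\RV^\times:(\RV^\times)^n]$ is the same finite number in $K\prec\cU$, so representatives in $K$ remain representatives in $\cU$. Everything else is a transfer of the fact that $\rho_n$ is a homomorphism, argued through a realization.
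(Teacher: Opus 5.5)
Your proposal is correct and follows essentially the same route as the paper. Orthogonality realizes the valuation coordinate, and completeness together with finiteness of $\RV^\times/(\RV^\times)^n$ pins down the power-class coordinate; the homomorphism conditions are transferred through a realization, with the level-$n$ hyperaddition witnesses descending to $\cU$ by elementarity; and injectivity comes from \Cref{cor:type-determination}. Your explicit check that representatives of $K^\times/(K^\times)^n$ stay representatives in $\cU$ and its elementary extensions spells out a point the paper leaves implicit.
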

\begin{proof}
To see it is well-defined, write $\phi_p=\Theta(p)$ and fix $a\models p$; then $\rho(f(p))=\rho(f(a))$ is independent of the choice of $a$: its valuation is realized by orthogonality, and its remaining coordinate at every level lies in a finite power quotient. Here $a$ realizes the global type in an elementary extension of $\mathcal U$, but $\rho(f(a))$ lives entirely in $\mathcal{U}$. For $f,g\in\ccO_X$, multiplicativity of $\rho$ gives $\phi_p(fg)=\rho(f(a)g(a))=\phi_p(f)\phi_p(g)$, while the hyperaddition on each $H_n$ gives
\[
\phi_p(f+g)=\rho\big((f+g)(a)\big)\in\rho(f(a))\oplus\rho(g(a))=\phi_p(f)\oplus\phi_p(g),
\]
At each finite level the witnesses $f(a),g(a)$ to hyperaddition descend to $\mathcal U$ since their values lie in $H_n$. The inverse-limit definition gives the displayed inclusion in $H$. On constants $\phi_p$ agrees with $\rho$, so $\phi_p\in\Hom_{\mathcal U}(\ccO_X,H)$.

It remains to show injectivity. If $p\neq q$, by \Cref{cor:type-determination}, they are separated by a formula of one of following two kinds. Either $v(f(p))\neq v(f(q))$ for some $f\in\ccO_X$, and therefore $\rho(f(p))\neq\rho(f(q))$; or $p$ and $q$ agree on all such values but differ on a power-coset formula $P_n(af(x))$ ($a\in K^\times$), in which case $v(f(p))=v(f(q))$ while $[f(p)]_n\neq[f(q)]_n$, so $\rho_n(f(p))\neq\rho_n(f(q))$. Either way $\Theta(p)\neq\Theta(q)$.
\end{proof}

\begin{definition}\label[definition]{def:TheTopology}
We equip $\widehat{X}_T$ with the coarsest topology making the embedding $\Theta$ of \Cref{prop:hyperfield-embedding} continuous. We call $\widehat{X}_T$ the \emph{model-theoretic analytification of $X$ with respect to $T$}. For a definable subset $Y\subseteq X$, we equip $\widehat Y_T\subseteq\widehat X_T$ with the subspace topology. 
\end{definition}

\begin{remark}\label{rem:topology_over_M}
For a specified model $M$ over which $X$ is defined, the same construction with $M[X]$ gives an embedding $\Theta_M:\widehat X_T(M)\to X(H(M))$ and its intrinsic topology, using evaluation parameters from $M$. Note that this need not be the subspace topology induced from the universal domain. In addition, $\Theta_M: \widehat{X}_T(M)\to X(H(M))$ factorizes into the following canonical injections
\begin{equation}\label{eq:comparison}
 \widehat X_T(M)
 \longrightarrow \
 S_X^{\perp^w\Gamma}(M)
 \longrightarrow\
 X(H(M)),
\end{equation}
where the first map is restriction to $K$ and the second map, which we call $\Xi_M$, is defined by the same formula as $\Theta_M$
\[
\Xi_M(p)(f)=(\rho_n(f(a)))_n.
\]
for any (some) realization $a$ of $p$. Note that using an analogue of \Cref{cor:type-determination} for weakly orthogonal types, the proof of \Cref{prop:hyperfield-embedding} shows that $\Xi_M$ is injective. Hence, if we also equipp $S_X^{\perp^w\Gamma}(M)$ with the coarsest topology making $\Xi_M$ continuous, both $\Theta_M$ and $\Xi_M$ are are topological embeddings.
\end{remark}

\begin{remark}\label{rem:opens}
The following is a concrete subbasis of open sets for the topology of $\widehat{X}_T$. Let $U\subseteq X$ be a Zariski open subset. For $f\in\ccO_X(U)$, a definable
open interval $I\subseteq\Gamma_\infty$, and $a\in\cU^\times$, put
\[
\begin{aligned}
 (U,I)_f&=\{p\in\widehat U:v(f(p))\in I\},\\
 (U,aP_n^\times)_f&=\{p\in\widehat U:P_n^\times(a^{-1}f(x))\in p\}.
\end{aligned}
\]
These give the valuation and nonzero power-coset opens. To describe
neighbourhoods of zero, choose representatives $a_1,\ldots,a_{c(n)}$
of the nonzero $n$th-power classes and bounds $\gamma_i\in\Gamma$.
Writing $V_f=\{x\in U:f(x)=0\}$, also include
\[
 \widehat V_f\ \cup\
 \bigcup_{i=1}^{c(n)}
 \bigl((U,a_iP_n^\times)_f\cap(U,(\gamma_i,\infty))_f\bigr).
\]
These sets are the inverse images of the corresponding subbasic opens
of $H_n$. In particular, the topology is generated by finite-level
evaluations of regular functions. Using this subbasis, it is easy to see that every basic open is of the form $\widehat{Y}_T\subseteq \widehat{X}_T$ for a definable subset $Y\subseteq X$. 
\end{remark}

In the algebraically closed case this construction recovers the stable
completion, including its topology.

\begin{proposition}\label[proposition]{prop:ACVF-HL}
Suppose $T=\ACVF$. Then, the topological spaces $\widehat{X}_T$ defined in Definition \ref{def:TheTopology} and $\widehat{X}$ as defined by Hrushovski--Loeser in \cite[Sections~3.3--3.5]{HruLoe} are equal. The same holds for the intrinsic topologies on points over a specified model.
\end{proposition}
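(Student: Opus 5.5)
The plan is to compare the two spaces at three levels: the underlying sets, the topologies on $\cU$-points, and the intrinsic topologies over a model $M$.

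\textbf{Underlying sets.} First I will check that in $\ACVF$ the hypotheses of the construction degenerate. The residue field is algebraically closed, so $G_n^\times$ is trivial. The value group is divisible, so $\RV^\times/(\RV^\times)^n$ is trivial and every $P_n^\times$ is the whole multiplicative group. By \Cref{prop:H-reduction}(i) we then have $H=\mathbb T_\Gamma=\Gamma_\infty$ and $\rho=v$. Hrushovski--Loeser define $\widehat X$ as the set of global $\cU$-definable types on $X$ that are stably dominated. By \cite[Proposition~2.9.1 and Section~3.1]{HruLoe}, in $\ACVF$ a definable type is stably dominated if and only if it is orthogonal to $\Gamma$. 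So, as sets, $\widehat X=\widehat X_T$. I will cite this equivalence as given in \cite{HruLoe} rather than reprove it. It is the only genuinely model-theoretic input, and the main point to get right is that both notions refer to global types definable over arbitrary small parameter sets.

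\textbf{Topologies on $\cU$-points.} Next I will compare the topologies. By \cite[Section~3.5]{HruLoe}, the topology on $\widehat X$ for affine $X$ is generated by the sets
\[
\{p\in\widehat U : v(f(p))\in I\},
\]
where $U\subseteq X$ is Zariski open, $f$ is regular on $U$, and $I\subseteq\Gamma_\infty$ is an open interval. I will compare this with the subbasis of \Cref{rem:opens}. The power-coset opens $(U,aP_n^\times)_f$ reduce to $\widehat U\cap\{f\neq 0\}$, which is a valuation open with $I=\Gamma$. The neighbourhoods of zero reduce to sets of the form $\{v(f)>\gamma\}$. So the subbasis of \Cref{rem:opens} consists precisely of Hrushovski--Loeser opens. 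Conversely, each Hrushovski--Loeser open is of this form.

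One detail needs care here. Hrushovski--Loeser allow $f$ regular on an arbitrary Zariski open $U$, whereas \Cref{def:TheTopology} only uses $f\in\ccO_X$. I will handle this by covering $U$ with basic opens $D(g)$ and writing $f=h/g^m$. Then
\[
\{v(f)\in I\}\cap\widehat{D(g)}=\{v(g)<\infty,\ v(h)-m\,v(g)\in I\}.
\]
This set is open in the initial topology for $(v(h),v(g))$, because subtraction on $\Gamma$ is continuous and $\{v(g)<\infty\}$ is open. Taking the union over the cover gives the open set on $\widehat U$.

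\textbf{Intrinsic topologies over $M$.} Finally, for a model $M$, I will run the same argument with $M[X]$ and intervals with endpoints in $\Gamma(M)$. This matches the definition of $\widehat X(M)$ and its topology in \cite[Section~3.5]{HruLoe}. I expect this step to be routine once the identification of the previous paragraph is in place.
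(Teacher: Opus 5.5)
Your proposal is correct and follows the same route as the paper. You identify the underlying sets via the $\ACVF$ equivalence between stable domination and orthogonality to $\Gamma$, observe that $H=\Gamma_\infty$ so that the topology pulled back along $\Theta$ is generated by valuations of regular functions, and repeat the argument over a model $M$. Your reduction of regular functions on a Zariski open $U$ to elements of $\ccO_X$, by writing $f=h/g^m$ on basic opens $D(g)$, is a detail the paper leaves implicit, and you handle it correctly.
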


\begin{proof}

In $\ACVF$, the definable types orthogonal to $\Gamma$ are precisely
the stably dominated types, so the underlying sets agree. Since every
nonzero element is an $n$th power, $H=\Gamma_\infty$ and the topology on $H$ is hence determined by the valuation alone. Its pullback is therefore the topology generated
by valuations of regular functions on Zariski open subsets, as in
\cite[Sections~3.3--3.5]{HruLoe}.
\end{proof}

\subsection{ When is $\Theta$ a homeomorphism?}

In this section we work over the model $K$ and address the following natural question: when is the map $\Theta_K\colon \widehat{X}_T(K)\to X(H(K))$ (resp. the map $\Xi_K \colon  S_X^{\perp^w\Gamma}(K) \to X(H(K))$) a homeomorphism? On the positive side, we will show that for $\bm$-fields of the form $\Lau{k}{\RR}$ with $k$ either algebraically closed, real closed or $p$-adically closed, both $\Theta_K$ and $\Xi_K$ are homeomorphisms (in particular, restriction to $K$ from $\widehat{X}_T(K)$ to  $S_X^{\perp^w\Gamma}(K)$ is a homeomorphism). The result for $k$ algebraically closed is essentially putting together results from \cite{HruLoe} and \cite{Jun}. On the negative side, in \Cref{prop:counterexample} we provide an example where the map $\Xi_K$ is not surjective.

When working with Hahn series fields of the form $\Lau{k}{\Gamma}$ with $\Gamma$ a divisible, the cross-section $\gamma \mapsto t^\gamma$ allow us to use coordinates by letting $\rho_n(a)=(v(a),[\res(t^{-v(a)}a)]_n)$ for $a\ne0$. Thus, we may identify 
\[
H_n(K)=\{\mathbf{0}\}\sqcup(\Gamma\times G_n^\times),\qquad
 H(K)=\{\mathbf{0}\}\sqcup(\Gamma\times G^\times),
\]
where $G=\varprojlim G_n^\times \sqcup\{\mathbf{0}\}$. For $\phi\colon \ccO_X\to H$ an element of $X(H(K))$, the ideal $\mathfrak{p}=\ker{\phi}$ is prime. Hence, the morphism $\phi$ descends to $K[X]/\mathfrak p$ and extends to its fraction field $F=\mathrm{Frac}(\ccO_X/\mathfrak p)$. Composing with the projection $\mathrm{pr_1}$ on $H(K)$ to $\Gamma(K)$ gives a valuation $v_F$ on $F$ extending $v$ with $v_F(F)=\Gamma(K)$. In addition, the projection $\mathrm{pr_1}$ of $H$ to $G$ factors through the residue field $\RF_F$. Indeed, suppose that $v_F(x)=v_F(y)=0$ and $v_F(x-y)>0$. Hyperaddition at $x=y+(x-y)$ gives $\phi(x)\in\phi(y)\oplus \phi(x-y)$, and since $v(y)<v(x-y)$, we get $\phi(y)\oplus \phi(x-y)=\{\phi(y)\}$, thus $\phi(x)=\phi(y)$. Therefore, we obtain a hyperfield morphism $\overline\phi:k_F\longrightarrow G$ extending $\mathrm{pr}_2\circ\rho$ on $\RF_{K}$. This shows that for $f\in F^\times$ 
\begin{equation}\label{eq:normalized-label}
 \phi(f)=\bigl(v_F(f),
       \overline\phi(\res_{v_F}(t^{-\gamma}f))\bigr).
\end{equation}
Using this, the following lemma gives us a criterion for $\phi$ to have a pre-image in $S_X^{\perp^w\Gamma}(M)$ under $\Xi_K$. 

\begin{lemma}\label{lem:criterion for pre image}
Let $\phi\in X(H(K))$ and $F$ be defined as above. If $\RF_F$ embeds into an elementary extension of $\RF_K$, then there is $p\in S_X^{\perp^w\Gamma}(K)$ such that $\Xi_K(p)=\phi$.  
\end{lemma}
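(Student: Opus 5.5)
The goal is to realize $\phi$ by a point $a$ in an elementary extension of $K$ with $\Xi_K(\tp(a/K))=\phi$. The natural candidate is the generic point of $\mathfrak p$ inside a suitable valued field extension of $F$. We build an elementary extension $N\succ K$ and a $K$-embedding of valued fields $\iota\colon (F,v_F)\to N$ with three properties: $v_F$ is induced by $v_N$; $v_N(\iota F^\times)=\Gamma(K)$; and the power classes of residues are as prescribed by $\overline\phi$. Then $a:=\iota(\bar x)$, the image of the coordinate functions, realizes $\mathfrak p$. Weak orthogonality is immediate, since every element of $K(a)=\iota F$ has value in $\Gamma(K)$. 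By \eqref{eq:normalized-label}, for $f\in F^\times$ we have $\rho(\iota f)=(v_F(f),[\res(t^{-\gamma}\iota f)]_n)_n$. So $\Xi_K(\tp(a/K))=\phi$ as soon as the residue map of $N$ restricted to $\RF_F$ agrees with $\overline\phi$ on power classes. The plan is therefore to construct $N$ with these three properties.

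\textbf{Step 1 (residue field).} By hypothesis there is $\RF'\succeq\RF_K$ and an embedding $\sigma\colon\RF_F\to\RF'$. We choose $\sigma$ so that $[\sigma(s)]_n=\overline\phi(s)$ for all $s\in\RF_F^\times$ and all $n$; this is the point where the hypothesis must be used carefully. Since $\RF'\succeq\RF_K$ and $G_n^\times$ is finite with representatives in $\RF_K$, the power class of $\sigma(s)$ is an element of $G_n^\times(K)$. We would argue that $\overline\phi$ is a hyperfield morphism $\RF_F\to G$ extending the canonical map on $\RF_K$. Morphisms of this kind are controlled by $P_n$-orderings, in analogy with the appendix's characterization of formally $p$-adic extensions via hyperfield morphisms. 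We would then arrange, by compactness (a type over $\RF_F\cup\RF_K$ asserting $P_n$-membership according to $\overline\phi$), that the embedding into some $\RF'\succeq\RF_K$ respects $\overline\phi$. Strictly, the hypothesis should be read as: $\RF_F$ embeds compatibly with $\overline\phi$. We expect this compatibility to be \textbf{the main obstacle}. If the statement intends only abstract embeddability, we would use quantifier elimination in $\Lmac$ for $\RF$ to show that the $\Lmac$-diagram determined by $\overline\phi$ together with $\RF_F$ is consistent with $\Th(\RF_K,\RF_K)$.

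\textbf{Step 2 (valued field).} Set $N_0:=\Lau{\RF'}{\Gamma(K)}$. Since $K=\Lau{k}{\Gamma}$, we have $K\subseteq N_0$. We would show $K\preceq N_0$ by Ax--Kochen--Ershov in equicharacteristic $0$, using $\RF_K\preceq\RF'$ and equal value groups. Next we embed $(F,v_F)$ into $N_0$ over $K$ with value group $\Gamma(K)$ and residue map $\sigma$. Since $(F,v_F)$ has value group $\Gamma(K)$, we would take its henselization or maximal immediate extension. We would then apply Kaplansky-type embedding: in equicharacteristic $0$, a valued field with residue field embeddable into $\RF'$ and value group $\Gamma(K)$ embeds into the maximally complete field $\Lau{\RF'}{\Gamma(K)}$, compatibly with given embeddings on $K$ and on the residue field. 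To respect the cross-section $t^\gamma$, we compose with the automorphism of $N_0$ that rescales by a character, if necessary. The condition to preserve is that residues of $t^{-\gamma}f$ are sent by $\sigma$, which holds because the elements $t^\gamma$ lie in $K$.

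\textbf{Step 3.} We put $p=\tp(\iota(\bar x)/K)$ computed in $N_0$. The computation above then gives $\Xi_K(p)=\phi$ and $p\in S_X^{\perp^w\Gamma}(K)$.
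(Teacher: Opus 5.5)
Your main line is the paper's proof. The paper likewise puts $F$ inside a henselian valued field $L$ with value group $\Gamma(K)$ and residue field $k'\succeq\RF_K$, gets $K\preccurlyeq L$ from Ax--Kochen--Ershov, and compares power classes by Hensel's lemma; your Kaplansky embedding into $\Lau{\RF'}{\Gamma}$ is just a concrete way of producing such an $L$. You are also right that the residue embedding must be compatible with $\overline\phi$; this is exactly what the paper's ``the choice of $k'$'' silently uses.

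Drop the fallback for a merely abstract embedding, because it cannot work. In \Cref{prop:counterexample} the residue field $\RF_F=\lau{\CC}{s}(\bar T)$ is purely transcendental over $\RF_K$, so it embeds into an elementary extension of $\RF_K$, yet $\phi$ has no preimage. Also, no rescaling automorphism is needed, since your embedding is over $K\ni t^\gamma$.
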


\begin{proof}
Let $k'$ be such that $\RF_F$ embeds into $k'$ and $\RF_K\preceq k'$. Take a henselian valued extension $L/F$ with value group $\Gamma(K)$ and residue field $k'$, inducing this chosen residue embedding. Ax--Kochen--Ershov gives $(K,v)\preccurlyeq(L,v)$. Let $a\in X(F)\subseteq X(L)$ be the tuple given by the coordinate functions. We show that $p=\tp(a\K)$ works. First, $\Gamma(K(a))=\Gamma(K)$ since $K(a)\subseteq L$ and
$\Gamma(L)=\Gamma(K)$, so $p\in S_X^{\perp^w\Gamma}(K)$. By construction, $v_F$ is precisely the valuation induced by $\phi$. Fix $f\in \ccO_X$ and $n\in\mathbb{N}_{\geq 2}$. By divisibility of $\Gamma$, $t^\gamma$ is an $n$th power in $K$. For $c\in\RF_K^\times$, Hensel's lemma gives
\[
 P_n\left(\frac{f(a)}{t^\gamma c}\right) \quad\Longleftrightarrow\quad P_n\left(\frac{\res(t^{-\gamma}f(a))}{c}\right).
\]
Together with \eqref{eq:normalized-label} and the choice of $k'$, this
shows that $\rho_n(f(a))=\phi_n(f)$ for every $f,n$, hence $\Xi(p)=\phi$. 
\end{proof}

We now show that when $\RF_K$ is algebraically closed, real closed or $p$-adically closed, the hypothesis of \Cref{lem:criterion for pre image} are always satisfied. 

\begin{proposition}\label{prop:surjectivity}
Let $\phi\in X(H(K))$ and $F$ as defined above. If $\RF_K$ is either algebraically closed, real closed, or $p$-adically closed, then $\RF_F$ embeds into an elementary extension of $\RF_K$.   
\end{proposition}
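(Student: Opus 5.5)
The approach is to treat the three cases separately. In each case, I would use the hyperfield morphism $\overline\phi\colon\RF_F\to G$ constructed above, which extends $\mathrm{pr}_2\circ\rho$ on $\RF_K$. The aim is to endow $\RF_F$ with enough structure to embed it into a model of the theory of $\RF_K$. Then model completeness of that theory, in the appropriate language, makes the embedding elementary over $\RF_K$. Throughout, $\RF_F$ is a field extension of $\RF_K$: by construction $v_F$ extends $v$ with the same value group, so residue fields extend.

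\emph{Algebraically closed case.} Nothing about $\overline\phi$ is needed. Let $k'$ be an algebraic closure of $\RF_F$. Then $k'\supseteq\RF_K$ are both algebraically closed, and model completeness of ACF gives $\RF_K\preccurlyeq k'$.

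\emph{Real closed case.} I would compose $\overline\phi$ with the projection $G\to G_2^\times\cup\{0\}=\{0,\pm1\}$, which is a hyperfield morphism onto the sign hyperfield $\mathbb S$. A hyperfield morphism $\RF_F\to\mathbb S$ is the same thing as an ordering of $\RF_F$. Concretely, $P:=\overline\phi_2^{-1}(1)$ is closed under multiplication; it is closed under addition because $1\oplus1=\{1\}$ in $\mathbb S$; and $P\sqcup -P=\RF_F^\times$ because the map is multiplicative and takes $-1$ to $-1$. Since $\overline\phi$ extends the sign map of $\RF_K$, this ordering extends the unique ordering of $\RF_K$. Let $k'$ be the real closure of $(\RF_F,P)$. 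It contains $\RF_K$ as an ordered subfield, so by model completeness of RCF, $\RF_K\preccurlyeq k'$.

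\emph{$p$-adically closed case.} This is the main obstacle. Here I want to show that $\RF_F$ is formally $p$-adic over $\RF_K$, i.e.\ carries a $p$-valuation of the same $p$-rank extending that of $\RF_K$. Once that is done, its $p$-adic closure is a model of the theory of $\RF_K$. By model completeness of $p\mathrm{CF}_d$ in Macintyre's language (after naming constants), it is then an elementary extension of $\RF_K$.

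To produce the valuation, I would invoke the characterization of formally $p$-adic extensions via hyperfield morphisms proved in the appendix. The idea is to define the valuation ring from power classes, in the spirit of the Kochen operator. For suitable $n$ and a fixed constant $c$ from $\RF_K$, say that $x\in\mathcal O$ when $\overline\phi_n(1+cx^n)$ lies in the class $[1]_n$, exactly as happens in $\RF_K$. The points to check are these:
\begin{enumerate}
\item this is a valuation ring, which should follow from the hyperaddition rules in $G_n$ together with condition $(\star)$ of \Cref{lem:starcases};
\item it restricts to $\mathcal O_{\RF_K}$ on $\RF_K$;
\item its residue field and the value of $p$ are unchanged.
\end{enumerate}
Condition (iii) is what gives the same $p$-rank. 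The delicate part is (i)--(iii), so I would lean on the appendix rather than reprove them here.
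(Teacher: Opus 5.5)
Your proposal is correct and follows the paper's proof: the same three-case split, with the algebraic closure in the first case, the ordering obtained from $\overline\phi$ composed with $G\to G_2=\mathbb S$ in the real closed case, and an appeal to the appendix (\Cref{thm:formally-p-adic}, via \Cref{lem:residue-realization}) in the $p$-adically closed case. One caveat is that your sketch of the appendix is not how it argues: there is no Kochen-type ring and condition $(\star)$ plays no role; instead it builds a place into $\QQ_p\cup\{\infty\}$ from translation patterns (\Cref{lem:translation-patterns}, which uses all levels of $G$ at once), composes it with $v_p$, and checks B\'elair's axioms for $P_n(x)\iff x=0\vee\overline\phi_n(x)=1$, which yields an embedding respecting the power classes prescribed by $\overline\phi$ (the form needed in \Cref{thm:surjective1}) rather than merely a $p$-valuation extending that of $\RF_K$.
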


\begin{proof}
If $\RF_K$ is algebraically closed the result is trivial since every field embeds into an elementary extension of $\RF_K$. Suppose that $\RF_K$ is real closed. In this case, $G = \{-1,0,1\}=\mathbb{S}$ the sign hyperfield and $H^\times = \RR\times \mathbb{S}$. Therefore, we have a hyperfield morphism $\bar{\varphi}\colon \RF_F\to \mathbb{S}$. This induces an ordering on $\RV_F$ by setting 
 $a>0$ if and only if  $\bar{\varphi}(a)=1$,
which makes $\RF_F$ an ordered field (here you use that $\bar{\varphi}$ is a hyperfield morphism). Since every order field embeds into its real closure, $\RF_F$ embeds into an elementary extension of $\RF_K$. 

Finally, for $p$-adically closed fields, this is the content of the Appendix, see \Cref{lem:residue-realization}. 
\end{proof}

\begin{theorem}\label{thm:surjective1}
Suppose $k$ is either algebraically closed, real closed or $p$-adically closed. Let $\Gamma$ be a divisible ordered abelian group. For the $\bm$-field $K=\Lau{k}{\Gamma}$, the map $\Xi_K$ is a homeomorphism. 
\end{theorem}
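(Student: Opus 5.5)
We must show that $\Xi_K\colon S_X^{\perp^w\Gamma}(K)\to X(H(K))$ is a bijective homeomorphism. By \Cref{rem:topology_over_M}, $\Xi_K$ is already injective, and it is a topological embedding once $S_X^{\perp^w\Gamma}(K)$ carries the coarsest topology making $\Xi_K$ continuous. So the whole statement reduces to surjectivity: a continuous bijective embedding is a homeomorphism onto its image, and the image would then be all of $X(H(K))$.

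For surjectivity, fix $\phi\in X(H(K))$. As in the discussion preceding \Cref{lem:criterion for pre image}, set $\mathfrak p=\ker\phi$ and $F=\mathrm{Frac}(K[X]/\mathfrak p)$. Then $\phi$ induces a valuation $v_F$ on $F$ extending $v$, with value group $\Gamma(K)$. It also induces a hyperfield morphism $\overline\phi\colon\RF_F\to G$ extending $\mathrm{pr}_2\circ\rho$ on $\RF_K$, and \eqref{eq:normalized-label} expresses $\phi$ through $v_F$ and $\overline\phi$. Here $K=\Lau{k}{\Gamma}$ with $\Gamma$ divisible, so the cross-section $\gamma\mapsto t^\gamma$ is available and these coordinates make sense.

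By \Cref{prop:surjectivity}, since $k$ is algebraically closed, real closed or $p$-adically closed, $\RF_F$ embeds into an elementary extension of $\RF_K=k$. Two points deserve care:
\begin{itemize}
\item In the real closed case, the embedding must be compatible with the sign data $\overline\phi$. The order is read off from $\overline\phi$, so the embedding into the real closure respects signs.
\item In the $p$-adically closed case, the embedding must respect all power classes. This is what the appendix (\Cref{lem:residue-realization}) provides.
\end{itemize}

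\Cref{lem:criterion for pre image} then yields $p\in S_X^{\perp^w\Gamma}(K)$ with $\Xi_K(p)=\phi$. Its proof builds a henselian $L\supseteq F$ with value group $\Gamma$ and residue field $k'\succeq k$, uses Ax--Kochen--Ershov to get $K\preccurlyeq L$, and takes the type of the coordinate tuple. Divisibility of $\Gamma$ makes $t^\gamma$ an $n$th power, so the power classes of $f(a)$ are governed by residues via Hensel's lemma. This proves surjectivity.

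The main obstacle is the compatibility just described. The residue embedding $\RF_F\hookrightarrow k'$ must carry $\overline\phi$ to the actual power-class map of $k'$ modulo $n$th powers, for every $n$, and not merely be a field embedding. The lemma phrases this as "the choice of $k'$", so I would make it explicit. For algebraically closed $k$ it is vacuous, for real closed $k$ it is the order compatibility, and for $p$-adically closed $k$ it is exactly the formally $p$-adic characterization of the appendix.

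As a final check on the topology, the subbasic opens of $X(H(K))$ are preimages under evaluations of the Type A, B and C sets. Their pullbacks under $\Xi_K$ are the defining subbasic opens of $S_X^{\perp^w\Gamma}(K)$. So $\Xi_K^{-1}$ is continuous and $\Xi_K$ is a homeomorphism.
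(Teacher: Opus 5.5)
Your proposal is correct and matches the paper's proof: reduce to surjectivity using the topological embedding of \Cref{rem:topology_over_M}, then combine \Cref{prop:surjectivity} with \Cref{lem:criterion for pre image}. Your explicit requirement that the residue embedding $\RF_F\hookrightarrow k'$ carry $\overline\phi$ to the actual power-class maps of $k'$ in each case is a useful sharpening of a condition the paper leaves implicit in \emph{the choice of $k'$}.
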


\begin{proof}
Since $\Xi_K$ is a topological embedding (see \Cref{rem:topology_over_M}), it suffices to show it is surjective. The result now follows from \Cref{prop:surjectivity} and \Cref{lem:criterion for pre image}. 
\end{proof}

\begin{corollary}\label{cor:homeomorphism_theta}
Suppose $k$ is either algebraically closed, real closed or $p$-adically closed. For the $\bm$-field $K=\Lau{k}{\RR}$ the map $\Theta_K$ is a homeomorphism. 
\end{corollary}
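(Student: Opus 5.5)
The plan is to use the factorization \eqref{eq:comparison} from \Cref{rem:topology_over_M}: $\Theta_K=\Xi_K\circ r$, where $r\colon\widehat X_T(K)\to S_X^{\perp^w\Gamma}(K)$ is restriction to $K$. Since $\RR$ is divisible, \Cref{thm:surjective1} applies to $K=\Lau{k}{\RR}$, so $\Xi_K$ is a homeomorphism. It therefore suffices to show that $r$ is a bijection. Once $r$ is bijective, the topology on $\widehat X_T(K)$ is the one pulled back along $\Theta_K$. Likewise, the topology on $S_X^{\perp^w\Gamma}(K)$ is the one pulled back along $\Xi_K$. Hence $r$ is automatically a homeomorphism, and so is $\Theta_K=\Xi_K\circ r$.

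For bijectivity I would invoke \Cref{cor:def-orthogonal}, which requires two things. The first is that $K$ is a maximally complete $\bm$-field. Hahn series fields $\Lau{k}{\Gamma}$ are maximally complete, and they are henselian of equicharacteristic $0$. Moreover $[\RV^\times:(\RV^\times)^n]<\infty$ by \Cref{lem:divisibilities}(3): here $\Gamma/n\Gamma=0$, and $[k^\times:(k^\times)^n]$ is finite for $k$ algebraically closed, real closed or $p$-adically closed. Finally, $k$ eliminates quantifiers in $\Lmac$ up to constants, by Tarski for $\mathbb C$ and $\RR$ and by Macintyre for $\QQ_p$ (one needs constants for the finite extensions / $p$-adically closed fields of higher $p$-rank).

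The second requirement is that every type over the residue field $k$ is definable. For $k$ algebraically closed or real closed this follows from stability of ACF and from the Marker--Steinhorn theorem for o-minimal structures over $\RR$. I expect the main obstacle to be $k=\QQ_p$ (or more generally a $p$-adically closed field of the form appearing here). There one needs Delon's theorem that all types over $\QQ_p$ are definable. Since $\QQ_p$ is the prime model, this is the standard citation, and it is what I would invoke. If $k$ is an arbitrary $p$-adically closed field, I would restrict attention to the case intended by the paper, the $\QQ_p$-type fields listed in the introduction, where Delon's theorem holds. With both hypotheses in place, \Cref{cor:def-orthogonal} gives that $r$ is a bijection, and the argument in the first paragraph concludes the proof.
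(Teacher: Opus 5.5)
Your route is exactly the paper's: $\Theta_K=\Xi_K\circ r$, with $\Xi_K$ a homeomorphism by \Cref{thm:surjective1}, and $r$ a bijection by \Cref{cor:def-orthogonal}, hence a homeomorphism for the initial topologies. The paper's one-line proof never checks the residue-type hypothesis of \Cref{cor:def-orthogonal}, and you are right that the content lies there.

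Your treatment of that hypothesis, however, overreaches in the real closed case. Marker--Steinhorn gives definability of all types over $\RR$, not over an arbitrary real closed field. So the caveat you raise for $p$-adically closed $k$ is needed equally for real closed $k$, and there it is a failure of the statement, not just of the proof.

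Here is a counterexample. Take $k=\RR\cap\QQ^{\mathrm{alg}}$, so that $K=\Lau{k}{\RR}\preccurlyeq\Lau{\RR}{\RR}$ by Ax--Kochen--Ershov. Then $\tp(\pi/K)$ lies in $S_{\mathbb A^1}^{\perp^w\Gamma}(K)$, since $\Lau{\RR}{\RR}$ has value group $\RR$. This type is not $K$-definable. If it were, its pushforward under $\res$ would be a $K$-definable type on $\RF$. By stable embeddedness of $\RF$, $\tp(\pi/k)$ would then be definable in $k$. But $\{c\in k: c>\pi\}$ is not definable in the o-minimal field $k$. Since $\Xi_K$ is injective, $\Xi_K(\tp(\pi/K))$ is not in the image of $\Theta_K$. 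The same pattern, using an element of $\ZZ_p$ transcendental over $k=\QQ_p\cap\QQ^{\mathrm{alg}}$, shows that your $p$-adic caveat is also genuinely necessary.

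So your argument is complete precisely for $k$ algebraically closed, $k=\RR$ and $k=\QQ_p$, via stability, Marker--Steinhorn and Delon respectively. These are the cases $k\in\{\CC,\RR,\QQ_p\}$ announced in the introduction, and that is the form in which the corollary should be stated.
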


\begin{proof} By \Cref{cor:def-orthogonal}, restriction to $K$ is a homeomorphism from  $\widehat{X}_T(K)$ to $S_X^{\perp^w\Gamma}(K)$. The result follows by \Cref{thm:surjective1}.  \end{proof}

However, $\Xi_K$ is not surjective in general. We will now construct a point of $\mathbb A^1(H(K))$ which is induced by no complete type over $K$. In fact, the obstruction already occurs at the induced map on $H_2$.

\begin{proposition}\label{prop:counterexample}
Let $k=\lau{\mathbb{C}}{s}$ and $K=\Lau{k}{\mathbb{R}},$
and equip $K$ with the $t$-adic valuation. Then $K$ is a bm-field,
but $ \Xi_K:S_{\mathbb A^1}^{\perp^w\Gamma}(K)
       \to\mathbb A^1(H(K))$
is not surjective.
\end{proposition}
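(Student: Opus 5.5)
The plan is in two parts. First check that $K$ is a $\bm$-field. Then build an explicit point $\phi\in\mathbb A^1(H(K))$ whose residual part $\overline\phi$ comes from a valuation with an \emph{irrational} value, and show that no realization in an elementary extension can reproduce it, because the residue field there has a $\mathbb Z$-group as value group.

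\emph{$K$ is a $\bm$-field.} $K$ is henselian of equicharacteristic $0$ with value group $\RR$, which is divisible. Since $\CC$ is algebraically closed and $\lau{\CC}{s}$ is henselian, $\RF^\times/(\RF^\times)^n\cong\Zz/n\Zz$ via $v_s\bmod n$. So by \Cref{lem:divisibilities}(3) each $\RV^\times/(\RV^\times)^n$ is finite. Quantifier elimination for $\Th(\lau{\CC}{s})$ in $\Lmac$ after naming $s$ follows from the same relative elimination used in \Cref{prop:QE}: the residue field $\CC$ is handled in $\Lmac$, and the value group $\Zz$ in Presburger, which is coded by the predicates $P_n$ together with the constant $s$. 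With these coordinates, $G^\times=\varprojlim_n G_n^\times\cong\widehat\Zz$, and the class of $r\in\RF^\times$ is the image of $v_s(r)$ in $\widehat\Zz$. The hyperaddition on $G_n$ is as follows. If $i\neq j$ then $i\oplus j=\{i,j\}$ (the sum takes the minimum valuation, which is either class). If $i=j$ then $i\oplus i=G_n$, since leading coefficients in $\CC$ can cancel to arbitrary higher order.

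\emph{Construction of $\phi$.} On $\RF(X)=\lau{\CC}{s}(X)$, take a valuation $w$ extending $v_s$ with $w(X)=\sqrt2$ (a monomial valuation), so its value group is $\Zz+\Zz\sqrt2$. Fix $\lambda\in\widehat\Zz\setminus\Zz$ and let $\psi\colon\Zz+\Zz\sqrt2\to\widehat\Zz$ be the homomorphism with $\psi(m+n\sqrt2)=m+n\lambda$. Set $\overline\phi:=\psi\circ w$ on $\RF(X)^\times$, with $\overline\phi(0)=\mathbf 0$. This is a hyperfield morphism to $G$ by the description of $\oplus$ above. If $w(a)\neq w(b)$, then $\overline\phi(a+b)$ is one of $\overline\phi(a),\overline\phi(b)$. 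If $w(a)=w(b)$, everything is allowed. On $\RF$ it agrees with the class map because $\psi|_{\Zz}$ is the canonical inclusion. Now let $v_F$ be the Gauss valuation on $K(x)$, whose residue field is $\RF(\bar x)$, and define $\phi(f)=(v_F(f),\overline\phi(\res(t^{-v_F(f)}f)))$ as in \eqref{eq:normalized-label}. I will check directly, level by level in each $H_n$, that $\phi$ is a hyperring homomorphism $K[x]\to H$ restricting to $\rho$. Multiplicativity is clear. For hyperaddition, split on whether $v_F(f)\neq v_F(g)$, or they are equal with no cancellation of leading residues, or they cancel; in each case use the corresponding property of $\overline\phi$.

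\emph{Non-surjectivity.} Suppose $\Xi_K(p)=\phi$ and $a\models p$ in some $L\succcurlyeq K$. Then $v(a)=0$, and $r:=\res(a)$ lies in $\RF_L\succcurlyeq\lau{\CC}{s}$, whose value group $\Delta$ is a $\mathbb Z$-group containing $\Zz$ as an elementary substructure. By Hensel's lemma, $\rho_n(c)$ for a unit $c$ is $v_s(\res c)\bmod n$. Let $\delta=v_s(r)\in\Delta$. Comparing $\phi$ with $\rho(f(a))$ for $f=x$, $x-s$ and $x-s^2$ gives the following congruences in every $\Delta/n\Delta$:
\[
\delta\equiv\lambda,\qquad v_s(r-s)\equiv\psi(\min(\sqrt2,1))=1,\qquad v_s(r-s^2)\equiv\psi(\min(\sqrt2,2))=\lambda .
\]
Since $\lambda\notin\Zz$, we get $\delta\notin\{1,2\}$, so $v_s(r-s^m)=\min(\delta,m)$. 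The middle congruence excludes $\min=\delta$ (that would give $\delta\equiv1$), so $\delta>1$. The last congruence excludes $\min=2$, so $\delta<2$. But a $\mathbb Z$-group has no element strictly between $1$ and $2$, a contradiction. So $\phi\notin\operatorname{im}\Xi_K$.

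\emph{Main obstacle.} I expect the delicate step to be verifying that $\phi$ really satisfies the hyperaddition condition in each $H_n$ as defined via $K$-representatives, especially when values are equal and leading terms cancel. This amounts to checking that $H_n$'s hypersum of two elements of equal value and opposite class is the full ball. I would first verify, as in \Cref{lem:graphclosed}(ii), that $\lau{\CC}{s}$ satisfies $(\star)$, which makes the cancellation case automatic.
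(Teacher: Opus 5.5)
Your proof is correct, and it takes a genuinely different route from the paper's.

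\textbf{The paper's route.} The paper never builds $\phi$ by hand. It picks an irrational $\alpha\in(-\tfrac12,-\tfrac14)$, sets $\Lambda=\ZZ+\QQ\alpha$ and $E=\Lau{\lau{\CC}{s^\Lambda}}{\RR}$, and proves $H(K)\cong H(E)$ (because $\Lambda/n\Lambda\cong\ZZ/n\ZZ$). It then takes $\phi(f)=\rho_E(f(s^\alpha))$, so the hyperring-morphism property comes for free. Non-realizability reduces to the single $\Lmac(K)$-sentence $\forall z\,(P_2(1+sz^2)\rightarrow P_2(1+sz^4))$, true in $K$ via the composite valuation into $\RR\oplus_{\mathrm{lex}}\ZZ$. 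This contradicts $\phi(1+sT^2)=\rho(1)$ and $\phi(1+sT^4)=\rho(s)$ already in $H_2$, and no realization has to be analysed.

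\textbf{Your route.} You construct $\phi$ abstractly from the Gauss valuation, the monomial valuation $w(X)=\sqrt2$ on $\RF(X)$, and the twist $\psi$ into $\widehat\ZZ$. This obliges you to check hyperaddition level by level. That check goes through as you describe, and $(\star)$ is not needed: since $[-1]_n$ is trivial, the cancelling case is $(\gamma,i)\oplus(\gamma,i)$, which is the full ball by direct computation. You then work with a realization, using that the $s$-adic value group of $\RF_L\succeq\lau{\CC}{s}$ is a $\ZZ$-group. This holds because the valuation ring is defined by the same formula in both fields, e.g.\ $P_2(1+sx^2)$.

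\textbf{Comparison.} Both proofs exploit the same obstruction: an irrational residual value lying in a gap of a $\ZZ$-group. Yours detects it with the tests $x$, $x-s$, $x-s^2$ at a level $n\geq 3$ where $\lambda_n\not\equiv 1,2$; the paper's detects it with squares. Your version buys flexibility and transparency. Only $\lambda\notin\{1,2\}$ is used (even $\lambda=0$ works), so the recipe yields many distinct non-realizable points and isolates the mechanism. The paper's version shows the bad point is induced by an honest field point of a non-elementary extension with the same hyperfield, and gives an explicit level-$2$ obstruction.

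\textbf{A small point in your $\bm$-field check.} The predicates $P_n$ together with the constant $s$ only encode congruences of valuations. To make comparisons $v(f)\leq v(g)$ quantifier-free, you also need the quantifier-free description of the valuation ring of $\lau{\CC}{s}$ as $\{x: P_2(1+sx^2)\}$. This is the same test $P_2(1+sz^2)$ that drives the paper's argument.
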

\begin{proof} The field $K$ is henselian of equicharacteristic zero, with value group $\RR$ and residue field $\lau{\mathbb{C}}{s}$. Note that $\lau{\mathbb{C}}{s}^\times/(\lau{\mathbb{C}}{s}^\times)^n$ is $\ZZ/n\ZZ$. Indeed, given $u\in k^\times$ we can write $u=s^m c(1+h)$ where 
$m\in\mathbb Z,$ $ c\in\mathbb C^\times$,
$h\in s\mathbb C[[s]].$
Since $\mathbb C$ is algebraically closed, $c$ admits an
$n$th root, and the formal binomial series
$(1+h)^{1/n}$ belongs to $1+s\mathbb C[[s]]$.
Consequently, $u$ is an $n$th power in $k$ if and only if
$n\mid m$. Thus the $s$-adic valuation induces an isomorphism
$k^\times/(k^\times)^n
\cong
\mathbb Z/n\mathbb Z.$ Since $\mathbb R$ is divisible, it follows that
$K^\times/(K^\times)^n
\simeq k^\times/(k^\times)^n
\simeq \mathbb Z/n\mathbb Z.$

Now, choose an irrational real number
$\alpha\in(-\tfrac12,-\tfrac14)$ and let
$\Lambda=\ZZ+\QQ\alpha\subseteq\RR$,  $k'=\mathbb{C}(\!(s^\Lambda)\!)$, and $E=\Lau{k'}{\RR}$. These are Hahn fields and $K\subseteq E$.

 \begin{claim*} 
For $n\ge2$, $H_n(K)\xrightarrow{\sim}H_n(E)$, (computed with respect to the $t$-adic valuation).
 \end{claim*}
 \begin{proof}
For $A=\ZZ$ or $\Lambda$, write $k_A=\mathbb{C}(\!(s^A)\!)$. 
We have then two valuations on $k_{A}$: one is the $s$-adic valuation, which we refer to as the inner valuation; and the $t$-adic valuation that it inherits from $E$. We will refer to the second one as outer valuation.
As before, the inner valuation
identifies $k_A/(k_A^\times)^n=\{\mathbf{0}\}\sqcup A/nA.$
 The inclusion
induces $\ZZ/n\ZZ\simeq\Lambda/n\Lambda$. Indeed, since $\alpha\notin\mathbb Q$, we have $\mathbb Z\cap\mathbb Q\alpha=\{0\}$. Hence
$\Lambda=\mathbb Z\oplus\mathbb Q\alpha$.  $\mathbb Q\alpha$ is divisible, so $n\Lambda=n\mathbb Z\oplus\mathbb Q\alpha$. Consequently, the inclusion $\mathbb Z\to\Lambda$ induces an isomorphism
$\mathbb Z/n\mathbb Z \cong \Lambda/n\Lambda$. Note that the hyperadditions also coincide. For nonzero elements labelled by
$i,j\in A/nA$, one has
\begin{equation}\label{eq:residue-addition-counter}
 i\boxplus j=
 \begin{cases}
 \{i,j\},&i\ne j,\\
 \{\mathbf{0}\}\sqcup A/nA,&i=j.
 \end{cases}
\end{equation}
Indeed, if $i\ne j$, representatives of these classes have
distinct inner valuations, so the class of their sum is the
class of the summand of smaller valuation. Both possibilities
occur, since adding elements of $n\mathbb Z\subseteq nA$ to
the valuations does not change their classes.\\
Suppose now that $i=j$. Choose $a\in A$ representing $i$.
For any $\ell\in A/nA$, choose $b\in A$ representing $\ell$
with $b>a$; this is possible since $n\mathbb Z\subseteq nA$.
The elements $x=s^a,$ where $y=-s^a+s^b$ both represent $i$, whereas $x+y=s^b$ represents $\ell$.
Here $y$ represents $i$ because its leading coefficient
is $-1$, which is an $n$th power in $\mathbb C$, and its
principal-unit factor is an $n$th power by Hensel's lemma.
Taking $y=-x$ also gives the zero element.
This proves \eqref{eq:residue-addition-counter}.

Both outer hyperfields $H_n(K)$ or $H_n(E)$ have nonzero elements since the outer valued field have value group $\mathbb{R}$ with leading coefficient in $k_A$, whose inner valuation gives $G_n(k_A)\simeq \ZZ/n\ZZ$, so they can be identified as $\RR\times\ZZ/n\ZZ$. At unequal outer values, their sum is the
smaller-value summand. At a common outer value $\gamma$, different
labels in $\ZZ/n\ZZ$ give just the two input elements, by
\eqref{eq:residue-addition-counter}; and the sum of two elements with equal labels gives precisely
$ \{\mathbf{0}\}\cup
 \{(\delta,j):\delta\ge\gamma,\ j\in\ZZ/n\ZZ\}.$ Thus the additions as well as the multiplications agree. These identifications commute with
all transition maps, so $H(K)\simeq H(E)$ over $K$. This concludes the proof of the Claim. 
 \end{proof}

Evaluate at $a=s^\alpha\in E$. Through this identification we obtain
$\phi:K[T]\to H(K),$ defined by $f\to\rho_E(f(a)).$ This is a hyperfield morphism over $K$ with
\begin{equation}\label{eq:inconsistent-values}
 \phi(1+sT^2)=\rho_K(1),\qquad
 \phi(1+sT^4)=\rho_K(s).
\end{equation}
For the first equality, $1+2\alpha>0$, so $1+s^{1+2\alpha}$ is in $1+\mathfrak{m}_s$, where $\mathfrak{m}_s$ denotes the maximal ideal w.r.t. the s-adic valuation. It is an $n$th power for every $n$.
For the second one, $1+4\alpha<0$, and $1+s^{1+4\alpha}=s^{1+4\alpha}(1+s^{-1-4\alpha}).$ The parenthesized factor is again in $1+\mathfrak{m}_s$, while
$4\alpha\in n\Lambda$ for every $n$. Thus its class is that of $s$
at every level. All these elements have outer valuation $0$.
We claim that no type over $K$ can satisfy
\eqref{eq:inconsistent-values}. Compose the outer valuation with the
$s$-adic valuation on $k$ to obtain a henselian valuation
 $w:K^\times\longrightarrow\RR\oplus_{\mathrm{lex}}\ZZ.$
Its least positive element is $\delta=w(s)=(0,1)$. For every $z\in K$,
\begin{equation}\label{eq:valuation-ring-square}
 P_2(1+sz^2)\quad\text{if and only if} \quad w(z)\ge0.
\end{equation} Indeed, if $w(z)\ge0$, then $w(sz^2)>0$ and Hensel's
lemma makes $1+sz^2$ a square. If $w(z)<0$, discreteness at the
least positive value gives $\delta+2w(z)<0$, which is not divisible by $2$ in $\mathbb{R}\oplus\ZZ$. Hence
so $1+sz^2$ cannot be a square. Since the valuation ring of $w$ is closed under squaring,
\eqref{eq:valuation-ring-square} implies
\begin{equation}\label{eq:square-implication}
 K\models\forall z\,
 \bigl(P_2(1+sz^2)\rightarrow P_2(1+sz^4)\bigr).
\end{equation}
But $s$ is not a square in $K$, and the $H_2$ part of
\eqref{eq:inconsistent-values} requires
$P_2(1+sT^2)$ and $\neg P_2(1+sT^4).$
This contradicts \eqref{eq:square-implication} in $K$, so no type over $K$ can realize it, proving non-surjectivity.
\end{proof}

\subsection{Functoriality revisited}

Within a fixed theory, a definable map $f:X\to Y$ induces the pushforward $f_*:\widehat X_T\to\widehat Y_T$. For a morphism of varieties this map is continuous, since evaluation at a regular function $g$ on $Y$ becomes evaluation at $g\circ f$ on $X$. A change of valued-field theory requires a separate argument: an inclusion of fields need not preserve Macintyre's power predicates. We record the comparison with the algebraic closure, which is the more natural case.

\begin{lemma}\label[lemma]{lem:maps}
Let $(K,v)$ be a $\bm$-field whose value group is divisible or a $\Zz$-group, let $T=\mathrm{Th}(K,v)$, and let $X$ be a variety over $K$. There is a canonical map
\[
\eta:\widehat X_T(K)\to \widehat X_{\ACVF}(K)
\]
which preserves all polynomial valuations. The map is continuous, and on an affine chart its evaluation map satisfies 
\[
\Theta_{K^\mathrm{alg}}\circ\eta=\tau_*\circ\Theta_K,
\]
where $\tau$ is the tropical shadow of \Cref{prop:T-shadow} and $K^{\mathrm{alg}}$ is an algebraic closure of $K$.
\end{lemma}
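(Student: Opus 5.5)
We define $\eta$ by extending a $K$-definable type to the algebraic closure and then taking a canonical stably dominated type with the same valuation data. Let $p\in\widehat X_T(K)$ and let $a\models p$ (realizing the global extension, or just its restriction to $K$). Since $p$ is orthogonal to $\Gamma$, every $f\in K[X]$ has $v(f(a))\in\Gamma(K)_\infty$, and the map $f\mapsto v(f(a))$ is a valuation on $K[X]/\mathfrak p_a$, where $\mathfrak p_a=\{f:f(a)=0\}$. The polynomial valuations of a point of $\widehat X_{\ACVF}(K^{\mathrm{alg}})$ are exactly $\Gamma$-valued seminorms on $K^{\mathrm{alg}}[X]$. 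So we first extend this valuation from $K[X]$ to $K^{\mathrm{alg}}[X]=K[X]\otimes_K K^{\mathrm{alg}}$.

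Pass to $L=K(a)$ with value group $\Gamma(K)$. We want a canonical extension of $v|_L$ to $L\otimes_K K^{\mathrm{alg}}$ that is compatible with the given valuation on $K^{\mathrm{alg}}$. The plan is to use that $K$ is henselian of equicharacteristic $0$. Then $K^{\mathrm{alg}}/K$ is defectless, and the valuation on $K^{\mathrm{alg}}$ is unique. We define the valuation on $L K^{\mathrm{alg}}$ via a valuation-independent basis. Concretely, for each finite Galois $K'/K$ we pick a valuation basis $(e_i)$ of $K'/K$ and set $v(\sum_i \ell_i e_i)=\min_i\bigl(v(\ell_i)+v(e_i)\bigr)$ for $\ell_i\in L$.

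This is the point where the hypothesis that $\Gamma$ is divisible or a $\Zz$-group enters. We must show this formula is independent of the basis and compatible in towers. In ACVF terms, it says that $\tp(a/K)$ has a unique extension orthogonal to $\Gamma$ over $K^{\mathrm{alg}}$, namely the one keeping $\Gamma(K^{\mathrm{alg}}(a))=\Gamma(K^{\mathrm{alg}})=\Gamma(K)\otimes\QQ$. We get this type as the unique generically stable type in $\ACVF$ determined by the valued field $K^{\mathrm{alg}}(a)$: its value group is $\Gamma(K)\otimes\QQ$ and its residue field is algebraic over $\RF(L)\RF^{\mathrm{alg}}$. The resulting stably dominated type is $K^{\mathrm{alg}}$-definable (here I would invoke Hrushovski--Loeser's characterization via the valued-field extension and its definability over $K^{\mathrm{alg}}$), and it is $\mathrm{Gal}(K^{\mathrm{alg}}/K)$-invariant, hence $K$-definable. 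This defines $\eta(p)$, and by construction $v(f(\eta(p)))=v(f(p))$ for $f\in K[X]$.

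The identity $\Theta_{K^{\mathrm{alg}}}\circ\eta=\tau_*\circ\Theta_K$ is then immediate on $K[X]$. The left side sends $f$ to $v(f(\eta(p)))$, and the right side sends $f$ to $\tau(\rho(f(p)))=v(f(p))$. Continuity follows because the topology on $\widehat X_{\ACVF}(K)$ is generated by the maps $q\mapsto v(g(q))$ for $g$ over $K$ (\Cref{prop:ACVF-HL}). For such $g$, the composite $p\mapsto v(g(\eta(p)))=\bar v(\Theta_K(p)(g))$ is continuous, since $\bar v$ and $\tau$ are continuous (\Cref{prop:T-shadow}).

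The main obstacle is the well-definedness and $K$-definability of the extension to $K^{\mathrm{alg}}$. In particular, we must check that orthogonality to $\Gamma$ survives base change. I expect the $\Zz$-group case to need care, because $\Gamma(K^{\mathrm{alg}})$ is the divisible hull and ramified extensions enlarge the value group. I would handle it by verifying that $L$ and $K^{\mathrm{alg}}$ are linearly disjoint as valued extensions over $K$ (using $\Gamma(L)=\Gamma(K)$ together with defectlessness), which makes the valuation-basis formula canonical.
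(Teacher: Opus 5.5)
Your proposal is incomplete. The valuation-theoretic half is correct, but the step that produces an element of $\widehat X_{\ACVF}(K)$ is not justified.

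\textbf{What works.} Your valuation-basis computation is a sound, more elementary substitute for the \emph{uniqueness} half of what the paper takes from \cite[Lemma~3.2]{orthogonal}. Take $K'/K$ finite with valuation basis $\{\pi_j u_k\}$. Then every extension of $v|_L$ to $LK'$ is given by your formula, for two reasons. First, the $v(\pi_j)$ are distinct modulo $\Gamma(L)=\Gamma(K)$. Second, the residues of the $u_k$ stay linearly independent over $\RF(L)$. The second point needs $\RF(K)$ to be relatively algebraically closed in $\RF(L)$. This holds because $\RF(K)\preceq\RF(\cU')$ for $a\in\cU'$, but you should say so: defectlessness and $\Gamma(L)=\Gamma(K)$ alone do not give it. This step also uses no hypothesis on $\Gamma$, so your worry about the $\ZZ$-group case is unfounded here.

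\textbf{The gap.} It lies in the passage from the valued field $K^{\mathrm{alg}}(a)$ to a point of $\widehat X_{\ACVF}(K)$. You say the polynomial valuations of points of $\widehat X_{\ACVF}(K^{\mathrm{alg}})$ are \emph{exactly} the $\Gamma$-valued seminorms on $K^{\mathrm{alg}}[X]$. That holds over a maximally complete algebraically closed base, but not in general. Moreover $K^{\mathrm{alg}}$ is usually not maximally complete (e.g.\ $K=\lau{\CC}{t}$).

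Here is a concrete failure. Let $K$ be the Puiseux field, a model of $\ACVF$ and hence a $\bm$-field with value group $\QQ$. Let $a=\sum_i t^{q_i}\in\Lau{\CC}{\QQ}$ with rationals $q_i$ increasing to $\sqrt2$. Then $K(a)/K$ is immediate, so $\Gamma(K(a))=\Gamma(K)$. Yet $\tp(a/K)$ is not definable, because the radii of $K$-balls containing $a$ are exactly the rationals below $\sqrt2$, a non-definable cut.

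Your construction never uses the $K$-definability of $p$, only the valued field $K(a)$. So it applies verbatim to such types, and it cannot by itself yield a definable, let alone stably dominated, type. The characterization of stably dominated types you invoke presupposes definability or invariance. Likewise, Galois invariance gives $K$-definability only once $K^{\mathrm{alg}}$-definability is already known.

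\textbf{What is missing.} You need a transfer of the $T$-definability of $p$ to $\ACVF$-definability. Running your argument over $\cU$ instead of $K$ gives the unique $\Gamma$-orthogonal extension of $\mathrm{qftp}(a/\cU)$ to $\cU^{\mathrm{alg}}$. That still does not make it $\ACVF$-definable over $K$, for two reasons:
\begin{itemize}
\item Invariance of $p$ under $\mathrm{Aut}(\cU/K)$ only gives invariance under those automorphisms of $\cU^{\mathrm{alg}}$ that preserve $\cU$ with its $T$-structure, not under all of $\mathrm{Aut}_{\ACVF}(\cU^{\mathrm{alg}}/K)$.
\item Orthogonality to $\Gamma$ of a global type does not by itself imply definability.
\end{itemize}
This is exactly what the paper imports from \cite[Lemma~3.2]{orthogonal}: the extension is definable over geometric imaginaries from $K$, and these are $\ACVF$-definable over $K$. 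That is also where the hypothesis on $\Gamma$ is used, not in the valuation-basis computation as you suggest.

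Once $\eta$ is correctly defined, your verification of continuity and of $\Theta_{K^{\mathrm{alg}}}\circ\eta=\tau_*\circ\Theta_K$ matches the paper's.
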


\begin{proof}
Let $p\in\widehat X_T(K)$ and $a\models p$ a realization in $\cU$. Its quantifier-free type over $\mathcal U$ is $K$-invariant, and $v(\mathcal U(a))=v(\mathcal U)$ by orthogonality. By \cite[Lemma~3.2]{orthogonal} applied with the geometric imaginaries over $K$, gives a unique extension of this type to $\mathcal U^{\mathrm{alg}}$ which is orthogonal to the value group and definable over those imaginaries. Note that the hypotheses on the value group place us in the setting of such a result. The geometric imaginaries in question are definable over $K$ in $\ACVF$, so the global definable extension is $K$-definable. In $\ACVF$ it is stably dominated. Therefore, we define $\eta(p)$ to be this type. Uniqueness makes the construction canonical.

For every regular function $f$ over $K$, by construction, we have that $v(f(\eta(p)))=v(f(p))$. Since power quotients become trivial over an algebraically closed residue field, the target's topology is generated, on Zariski charts, by the valuation evaluations. In particular, $\eta$ is continuous and the equality of evaluations is precisely the displayed identity with the tropical shadow.
\end{proof}

\subsection{Analytification of $p$-adic semi-algebraic sets}\label{sec:padic}

Model-theoretically, the construction of the real analytification $Y_r^\an$ in \cite{jell} of a semi-algebraic subset of $\RR^n$ rests on the following trick: the real closed field $\RR$ is an elementary substructure of $K=\Lau{\RR}{\RR}$ in the language of rings (by Tarski's quantifier elimination of real closed fields), so a semi-algebraic set $Y(\RR) \subseteq \RR^n$ determines a semi-algebraic subset $Y(K)\subseteq K^n$ defined by the same formula defining $Y$. By \Cref{cor:homeomorphism_theta}, in this situation the space $\widehat{Y}_\RCVF(K)$ is precisely the real analytification $Y_r^\an$.

The reader can easily note that the previous definition only relies on the following property of a field $K_0$: the fact that $K_0$ is an elementary substructure of $\Lau{K_0}{\RR}$ in the language of rings. This is an interesting class of fields which, in characteristic 0, corresponds to the class of divisibly $t$-henselian fields.\footnote{This class has been investigated by Susana Santoyo, a master's student codirected by Sylvy Anscombe and the first author.} For any such a field $K_0$ having in addition quantifier elimination in $\Lmac$ up to addition of constants, and any $\Lring$-definable subset $Y\subseteq K^n$, one can define its analytification as the corresponding space $\widehat{Y}_T(K)$ where $K=\Lau{K_0}{\RR}$ and $T=\Th_{\Lmac}(K)$. Here the valuation is corresponds to the $t$-adic valuation. In particular, the $p$-adic fields and the Laurent series fields $\CC(\!(x)\!)$ and $\RR(\!(x)\!)$ belong to this class. Since the $p$-adic valuation is $\Lring$-definable, our framework provides a definition for what the analytification of ``semi-algebraic subsets'' of $p$-adically closed fields should be.

\begin{definition}\label{def:p-adic-analytification}
Let $K_0$ be such that $K_0\prec \Lau{K_0}{\RR}=K$ and suppose $K$ is a $\bm$-field. Let $Y\subseteq K_0^n$ be an $\Lmac$-definable set. Let $Y(K)$ be the corresponding subset of $K^n$. We define the analytification of $Y$ as the set $\widehat{Y}_T(K)$ where $T=\Th_{\Lmac}(K)$ (the valuation being the $t$-adic valuation). In particular, taking $K_0=\QQ_p$ and $Y$ semi-algebraic, the $p$-adic analytification of $Y$ is $\widehat{Y}_T(K)$. 
\end{definition}

Over $\lau{\CC}{x}$ a single variety can carry two genuinely different analytifications, according to the valued-field structure one works with. Indeed, let $X$ be a variety over $\CC(\!(x)\!)$ and $Y\subseteq X$ be an $\Lring$-definable subset. On one hand, $\CC(\!(x)\!)$ is itself a $\bm$-field for the $x$-adic valuation, with residue field $\CC$ and value group $\ZZ$. Setting $T_0=\Th_{\Lmac}(\CC(\!(x)\!))$, gives the analytification $\widehat{Y}_{T_0}(\CC(\!(t)\!))$, a space over the value group $\ZZ$. Since the residue field is algebraically closed, $H(\lau{\CC}{x}) = \ZZ_\infty$ which is a totally disconnected space (it is Hausdorff and has a basis of clopen sets). Similarly, $\widehat{X}_{T_0}(\CC(\!(t)\!))$ (and therefore $\widehat{Y}_{T_0}(\CC(\!(t)\!))$) is totally disconnected and corresponds to the subset of $X^\an$ of semi-norms with integer radii. On the other hand, $\CC(\!(x)\!)$ is the residue field of $K=\Lau{\lau{\CC}{x}}{\RR}$, $K$ is also a $\bm$-field and $\CC(\!(x)\!)\preceq K$ in $\Lring$. It therefore falls under the construction in  \Cref{def:p-adic-analytification} (with $K_0=\CC(\!(t)\!)$), and viewing $Y$ over $K$ through the same $\Lring$-formula produces a second analytification $\widehat{Y}_{T}(K)$, with $T=\Th_{\Lmac}(K)$, a space over the value group $\RR$ and with residue field $\CC(\!(t)\!)$. This second analytification provides a very different topological space. Since  
\[
H = \{\mathbf{0}\}\cup (\RR\times \varprojlim_n \lau{\CC}{x}^\times/(\lau{\CC}{x}^\times)^n)
\]
the shadow map $\tau_*\circ\Theta_K:\widehat{X}_T(K)\to X^\an$ is surjective. Surprisingly, $\widehat{X}_{T}(K)$ (and hence $\widehat{Y}_{T}(K)$) is still totally disconnected, but in this case, what produces this behavior is the projective limit $\varprojlim_n \lau{\CC}{x}^\times/(\lau{\CC}{x}^\times)^n)$.

\section{Strict pro-definability and tropicalization}\label{subsec:strict-pro}

Hrushovski and Loeser showed in \cite{HruLoe} that their stable completion $\widehat X$ is strict pro-definable. We first prove the analogous result for $\widehat X_T$ using beautiful pairs \cite{beautiful}, and then give an explicit presentation using that result. We then study the restriction of the tropical maps in the model-theoretic context. 

\subsection{Strict pro-definbility}

In order to show strict pro-definability, we will need the following property on the residue field: 

\begin{enumerate}
\item[(B)] the class of all stably embedded pairs of $\mathrm{Th}(\RF)$, in the sense of \cite[Definition~2.2.1]{beautiful}, has the extension property, the amalgamation property and beauty transfer \cite[Section~2.4]{beautiful}.
\end{enumerate}

\begin{proposition}\label{prop:strict-prodefinability}
Let $(K,v)$ be a $\bm$-field whose residue field satisfies \textup{(B)}, and let $T=\Th(K,v)$. Then the class $\rF$ of global definable types orthogonal to $\Gamma$, allowing all small parameter sets of definition, is a natural class of definable types, the types in $\rF$ are uniformly definable, and for every definable set $X$ over $K$ the space $\widehat{X}_T$ is strict pro-definable. 

\end{proposition}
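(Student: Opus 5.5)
The plan is to apply the general machinery of \cite{beautiful}. There, strict pro-definability of a space of definable types is deduced from three properties of a class $\rF$ of global definable types. First, $\rF$ must be a \emph{natural class}: closed under the relevant operations and such that membership of $p$ depends only on its restriction to a model over which it is defined. Second, the types in $\rF$ must be \emph{uniformly definable}. Third, the pair theory attached to $\rF$ must be well behaved. I will verify these properties for the class of global definable types orthogonal to $\Gamma$. Since beautiful pairs are only controlled through the sorts that carry non-trivial types, the verification reduces to the residue field; this is where Hypothesis (B) enters.

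\emph{Step 1: $\rF$ is a natural class.} Closure under pushforward by definable maps is immediate from \Cref{def:weak-ortho}(3): if $f_*p$ is not realized on $\Gamma_\infty^r$, then for a definable $g$ the composite $g\circ f$ witnesses non-orthogonality of $p$. Closure under tensor products uses orthogonality and stable embeddedness of $\RF$ and $\Gamma$ (\Cref{prop:QE}). Realizations of $p\otimes q$ generate no new values in $\Gamma$, because by relative QE every value-group term is $v$ of a polynomial, and values of the form $v(g(a,b))$ are computed inside the fiber over $b$, which is $\Gamma(\cU(b))=\Gamma(\cU)$. Finally, the fact that membership is determined over models follows from \Cref{lem:def-orthogonal}: weak orthogonality of the restriction, together with definability, gives orthogonality.

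\emph{Step 2: reduction to $\RV$ and then to $\RF$.} By relative QE (\Cref{prop:QE}) and \cite[Proposition~6.3.5]{beautiful}, a definable type in the field sort is determined by, and its definability is controlled by, the induced type on $\RV$. For $p\in\rF$ the $\Gamma$-part is realized, so the $\RV$-part reduces, after normalizing by constants of the right value as in the proof of \Cref{cor:def-orthogonal}, to a definable type in $\RF$. Hence the pairs $(M\prec N)$ in which $N$ realizes only $\rF$-types over $M$ correspond to pairs with $\Gamma(M)=\Gamma(N)$ and stably embedded residue-field pairs. I will then transfer the extension property, the amalgamation property and beauty from the residue field, which hold there by (B), to the valued field, using Ax--Kochen--Ershov-style embedding arguments. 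These give the existence of a beautiful pair theory whose models code exactly the $\rF$-types.

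\emph{Step 3: conclusion.} With beautiful pairs in hand, \cite{beautiful} yields uniform definability: for each $\varphi(x;y)$ there is a single formula $d\varphi(y,z)$ such that every $p\in\rF$ has $d_p\varphi = d\varphi(y,c_p)$ for some $c_p$. It also yields strict pro-definability of $\widehat X_T$ for each $K$-definable $X$, since the set of codes $c_p$ is cut out by a type in the pair language. Stable embeddedness of the small model in the pair then makes this set pro-definable in $T$, and the coordinatewise projections of codes are surjective.

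The main obstacle will be Step 2, namely checking the transfer of extension, amalgamation and beauty through the extension $\RF\to\RV\to K$ while keeping $\Gamma$ fixed. For this I plan to use the orthogonality of $\RF$ and $\Gamma$ and the finiteness of $\RV^\times/(\RV^\times)^n$, which make the power predicates on $K$ controlled by $\RF$ and $\Gamma$ via \Cref{lem:divisibilities}.
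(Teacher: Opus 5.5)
Your outline has the same architecture as the paper's proof. You establish naturality of $\rF$ (your Step~1 is fine; the paper simply cites \cite[Example~2.3.4(2)]{beautiful}), reduce through $\RV$ to $\RF$ with trivial pairs on the value-group side, and conclude via \cite[Proposition~2.4.12, Theorem~2.5.4]{beautiful}.

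The gap is the step you flag yourself. The transfer of the extension property, amalgamation and beauty from $\Th(\RF)$ to $T$ is only announced, and beyond bookkeeping it is the whole content of the proposition. The paper does not do this by hand. It observes three things:
\begin{itemize}
\item $T$, in its $\RV$-presentation, is benign by \cite[Fact~6.1.2(1)]{beautiful};
\item the auxiliary residue sorts $\RF^\times/(\RF^\times)^n$ are finite by condition~(1) of \Cref{def:class}, so they contribute nothing to pairs;
\item the transfer theorem for benign theories, \cite[Theorem~8.3.1]{beautiful}, therefore applies to the class induced by all stably embedded pairs of $\Th(\RF)$ and the trivial pairs of $\Th(\Gamma)$.
\end{itemize}
Hypothesis (B) supplies the residue-side hypotheses of that theorem, and trivial pairs satisfy the value-group-side ones in any theory. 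Without this theorem, or a proof of it, your argument stops after the reduction.

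Nor is the missing step a routine Ax--Kochen--Ershov exercise. Your description of $\rK_\rF$ as the pairs with $\Gamma(M)=\Gamma(N)$ and a stably embedded residue pair is not a characterization. Take $T=\Th(\lau{\CC}{t})$, let $M$ be the henselization of $\CC(t)$ for the $t$-adic valuation, and let $N=\lau{\CC}{t}$. By AKE $M\preceq N$, and the two fields have the same residue field and value group. Yet for $a\in N\setminus M$ the type $\tp(a/M)$ is not definable. If it were, the traces on $M$ of the balls around $a$ would form an $M$-definable nested family of balls indexed by $\Gamma(M)$. This family has a common point in the spherically complete field $N$, hence by elementarity in $M$, and that point must be $a$.

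So an AKE-style construction of extensions or amalgams must also guarantee that the valued-field pairs it produces are stably embedded. This is a separatedness condition that is invisible on $\RF$ and $\Gamma$. Beauty transfer additionally needs control of the theory of the pairs, not just of single embeddings. This is what \cite[Section~8]{beautiful} provides through the benign framework, and your proposal offers no substitute for it.
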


\begin{proof}
The class $\rF$ is natural in the sense of \cite[Definition~2.3.1]{beautiful} by \cite[Example~2.3.4(2)]{beautiful}, orthogonality to a sort being preserved under restriction to subtuples, automorphisms and definable push-forwards. The power predicates are ring-definable, and $\RV$ and its quotients are imaginary sorts. Passing to the corresponding $\RV$-presentation therefore preserves definability of types and orthogonality to $\Gamma$; being henselian of equicharacteristic $0$, this theory is benign in the sense of \cite[Definition~6.1.1]{beautiful} by \cite[Fact~6.1.2(1)]{beautiful}. The residue-side data of \cite[Section~8.3]{beautiful} consists of $\RF$ together with the auxiliary sorts $\RF^\times/(\RF^\times)^n$; by condition (1) of \Cref{def:class} these quotients are finite. Hence in any stably embedded pair, the sorts contribute trivially to pairs, and the residue-side data reduces to $\RF$ itself, as in \cite[Example~8.3.3(1)]{beautiful}.

Under the correspondence between natural classes of definable types and natural classes of stably embedded pairs \cite[Lemma~2.3.3]{beautiful}, $\rF$ corresponds to the class $\rK_\rF$ of pairs $(N,M)$ with $\mathrm{tp}(N/M)\in \rF(M)$. This class corresponds to the class induced, in the sense of \cite[Section~8.3]{beautiful}, by the full class of stably embedded pairs of $\mathrm{Th}(\RF)$ on the residue side and the class of trivial pairs of $\mathrm{Th}(\Gamma)$ on the value-group side.

Both side classes are natural \cite[Example~2.3.4(1)]{beautiful}; the residue-side class has the extension property, amalgamation and beauty transfer by (B), and the class of trivial pairs has all three for any theory, its associated types being the realized ones. Hence \cite[Theorem~8.3.1]{beautiful} applies: $\rK_\rF$-beautiful pairs exist and $\rK_\rF$ has beauty transfer. Uniform definability of the types in $\rF$ follows by \cite[Proposition~2.4.12]{beautiful}, and strict pro-definability of $\rF$ --- hence of $\widehat{X}_T=\rF_X$, the set $X$ being definable --- by \cite[Theorem~2.5.4]{beautiful}, the canonical parameters being imaginaries of $\mathcal U$.
\end{proof}

\begin{remark}\label[remark]{rmk:B-examples}
Hypothesis (B) holds when $\RF$ is algebraically closed, real closed, or $p$-adically closed; see \cite[Sections~3.1--3.2 and Theorem~7.2.3]{beautiful}. For iterated valued-field constructions as $\lau{\CC}{t}$ it can be checked using \cite[Theorem~8.3.1]{beautiful}. When one passes to a pure-field reduct, this application also requires the relevant valuation to be definable. We retain (B) as an explicit hypothesis wherever it is used.
\end{remark}

Using \Cref{prop:strict-prodefinability}, one can give a more explicit presentation of $\widehat X_T$ as a strict pro-definable set. Let $(K,v)$ be a $\bm$-field whose residue field satisfies \textup{(B)}, and fix a $K$-definable subset $X\subseteq\mathbb A^m$. We use polynomial families $P(x,y)\in\ZZ[x,y]$, specializing $y$ to tuples from $\mathcal U$. By \Cref{prop:strict-prodefinability}, for every formula $\varphi(x;y)$ there is a uniform formula $\psi(y;z)$ defining the $\varphi$-definitions of points $q\in\widehat X$. After quotienting by equality of definitions, these definitions have canonical imaginary parameters. Strict pro-definability ensures that the image of any finite tuple of such code maps is $K$-interpretable. This finite-image assertion is interpreted in the universal domain, as in \cite[Lemma~2.2.8]{HruLoe}.

\begin{proposition}\label[proposition]{prop:uniformizing-functions} Let $(K,v)$ be a $\bm$-field whose residue field satisfies \textup{(B)}. Let
$\mathcal{F}=\big(P_1(x,y_1),\ldots,P_k(x,y_k)\big),$
where $P_i(x,y_i)\in\ZZ[x,y_i]$, and let $n\geq 2$. Then there exist a $K$-interpretable set $S_{\mathcal{F}}^n=S_{\mathcal{F}}^n(X)$ (we omit $X$ from the notation when no confusion arises) and a definable map
$g_{\mathcal{F}}^n\colon
S_{\mathcal{F}}^n
\times \mathcal U^{|y_1|}
\times\cdots\times \mathcal U^{|y_k|}
\longrightarrow H_n^k$
such that, for every $q\in\widehat{X}$, there is a canonical parameter
$c_q^{\mathcal{F},n}\in S_{\mathcal{F}}^n$ satisfying
$g_{\mathcal{F}}^n
\bigl(c_q^{\mathcal{F},n};b_1,\ldots,b_k\bigr)=\big(
\rho_n(P_1(q,b_1)),\ldots,
\rho_n(P_k(q,b_k))
\big)$
for every
$
\bar{b}=(b_1,\ldots,b_k)
\in
\mathcal U^{|y_1|}\times\cdots\times \mathcal U^{|y_k|}.
$

For a nonzero value, $\rho_n(P_i(q,b_i))$ records its valuation and
its class in $\RV^\times/(\RV^\times)^n$; a zero value gives $\mathbf0\in H_n$.

\end{proposition}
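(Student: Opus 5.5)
The plan is to use the uniform definability of the types in $\rF$ from \Cref{prop:strict-prodefinability}, applied to finitely many formulas that together determine the value $\rho_n(P_i(x,y_i))$. Fix $i$. By \Cref{lem:divisibilities} and condition (1) of \Cref{def:class}, $\RV^\times/(\RV^\times)^n$ is finite. After naming constants $a_1,\dots,a_{c}$ in $K$ representing its classes (allowed, since we work up to constants), the class of a nonzero element $z$ in $\RV^\times/(\RV^\times)^n$ is the unique $j$ with $P_n^\times(a_j^{-1}z)$. Hence $\rho_n(z)$ is determined by $v(z)\in\Gamma_\infty$ together with this finite label. For $q\in\widehat X$ and $b_i$ fixed, orthogonality to $\Gamma$ makes $v(P_i(q,b_i))$ realized. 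The formulas $\varphi_{ij}(x;y_i)\equiv P_n^\times(a_j^{-1}P_i(x,y_i))$ and $\varphi_{i0}(x;y_i)\equiv P_i(x,y_i)=0$ have uniform definitions $\psi_{ij}(y_i;z_{ij})$, so the label and the zero case are read off from the canonical parameters of these definitions.

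The valuation coordinate is the delicate part. Consider the formulas $\chi_i(x;y_i,\delta)\equiv v(P_i(x,y_i))\geq\delta$, with $\delta$ a $\Gamma$-variable; they are allowed since we work in $\Lmac^\Gamma$ and $T^{\mathrm{eq}}$. Uniform definability gives $d_q\chi_i(y_i,\delta)$, defined by a formula $\psi_i(y_i,\delta;z_i)$ instantiated at a canonical parameter. Since $q$ is orthogonal to $\Gamma$, for each $b_i$ the set $\{\delta: d_q\chi_i(b_i,\delta)\}$ is the ray $(-\infty,\gamma]$ with $\gamma=v(P_i(q,b_i))$, or all of $\Gamma$ when the value is $\infty$. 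Thus $\gamma$ is the definable maximum of this set. I would then let $S_{\mathcal F}^n$ be the product of the sorts of canonical parameters of all the $\psi_{ij}$ and $\psi_i$. The tuple $c_q^{\mathcal F,n}$ collects the canonical codes, which are invariant under automorphisms fixing $q$ and hence canonical. The map $g_{\mathcal F}^n(c;\bar b)$ outputs $\mathbf 0$ if $\psi_{i0}(b_i;c)$ holds, and otherwise the pair consisting of the maximum of the $\delta$-ray and the unique label $j$ with $\psi_{ij}(b_i;c)$.

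The main obstacle is that $g^n_{\mathcal F}$ must be a total definable function on an interpretable set $S^n_{\mathcal F}$ defined over $K$, whereas the codes are only known to exist for parameters coming from actual types. To handle this, I would restrict $S^n_{\mathcal F}$ to the set of codes $c$ for which the defined sets behave correctly for all $\bar b$: for each $b$, exactly one of $\psi_{i0}$ and the $\psi_{ij}$ holds, and the $\delta$-set is a ray with a maximum (or all of $\Gamma$ exactly when $\psi_{i0}$ holds). This is a first-order condition in $c$ over $K$, so the restricted set is $K$-interpretable. Off this set, $g$ can be extended by a default value. That the image of the code map is $K$-interpretable is precisely what strict pro-definability provides, as in the discussion preceding the statement and \cite[Lemma~2.2.8]{HruLoe}. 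The resulting $K$-interpretable set, possibly larger, still works, since only the identity at actual codes $c_q$ is required.
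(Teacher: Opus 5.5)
Your proposal is correct and essentially the paper's argument: both use the uniform definability of \Cref{prop:strict-prodefinability} together with orthogonality to $\Gamma$ to turn the definition scheme of $q$ into a uniformly definable total function $\bar b\mapsto(\rho_n(P_i(q,b_i)))_i$ with a canonical code, although the paper does this with the single formula $\theta_{\mathcal F,n}(x;\bar y,\bar u)$, $\bar u$ ranging over the imaginary sort $H_n^k$, instead of your separate zero, power-class and valuation-threshold formulas. Of your two options for $S^n_{\mathcal F}$, keep the exact image of the code map (interpretable by strict pro-definability), as the paper does: the surjectivity of the maps in \Cref{lem:transition-maps-definable-trop} and of $\mathbf c$ in \Cref{prop:explicit-presentation} use that every element of $S^n_{\mathcal F}$ is some $c_q^{\mathcal F,n}$, so your larger \emph{well-behaved codes} set proves the statement as written but would not serve there.
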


\begin{proof}
Let $\bar y=(y_1,\ldots,y_k)$ and let $\bar u=(u_1,\ldots,u_k)$ range over $H_n^k$. Consider the formula
\[
\theta_{\mathcal{F},n}(x;\bar y,\bar u):=\bigwedge_{i=1}^k\rho_n(P_i(x,y_i))=u_i,
\]
where, if $u_i=(z_i,w_i)\neq\mathbf{0}$, the $i$-th conjunct reads $P_i(x,y_i)\neq0\wedge v(P_i(x,y_i))=z_i\wedge\rv_{n}(P_i(x,y_i))=w_i$, and the condition $u_i=\mathbf{0}$ reads $P_i(x,y_i)=0$. By uniform definability there is a formula $d\theta_{\mathcal{F},n}(\bar y,\bar u;c)$ such that every $q\in\widehat{X}$ admits a parameter $c_q^{\mathcal{F},n}$ with
\[
\theta_{\mathcal{F},n}(x;\bar b,\bar u)\in q\Longleftrightarrow d\theta_{\mathcal{F},n}\bigl(\bar b,\bar u;c_q^{\mathcal{F},n}\bigr)
\]
for every $\bar b$ and every $\bar u\in H_n^k$. Since $q$ is orthogonal to $\Gamma$, for every $\bar b$ there is exactly one $\bar u$ with $\theta_{\mathcal{F},n}(x;\bar b,\bar u)\in q$ (as in \Cref{prop:hyperfield-embedding}), so $d\theta_{\mathcal{F},n}(\,\cdot\,;c_q^{\mathcal{F},n})$ defines a total function $\mathcal U^{|y_1|}\times\cdots\times \mathcal U^{|y_k|}\to H_n^k$; replacing $c_q^{\mathcal{F},n}$ by a code of this function, we may assume it is a canonical parameter. Set $S_{\mathcal{F}}^n:=\{c_q^{\mathcal{F},n}:q\in\widehat{X}\}$, the image in the universal domain of this canonical-code map. It is $K$-interpretable by strict pro-definability. Let $g_{\mathcal{F}}^n(c;\bar b)$ be the unique $\bar u$ such that $d\theta_{\mathcal{F},n}(\bar b,\bar u;c)$ holds. The displayed identities hold by construction.
\end{proof}

\begin{definition}\label[definition]{def:parameter-maps}
Let $\mathcal{F}=\bigl(P_1(x,y_1),\ldots,P_k(x,y_k)\bigr)$ and let $n\geq2$. The \emph{parameter map at level $(\mathcal{F},n)$} is
\[
\mathbf{c}_{\mathcal{F}}^n\colon\widehat{X}\rightarrow S_{\mathcal{F}}^n,\qquad q\mapsto c_q^{\mathcal{F},n}.
\]
By \Cref{prop:uniformizing-functions}, for every $\bar b\in \mathcal U^{|y_1|}\times\cdots\times \mathcal U^{|y_k|}$ the composite $g_{\mathcal{F}}^n\bigl(\mathbf{c}_{\mathcal{F}}^n(\,\cdot\,);\bar b\bigr)$ sends $q$ to $\bigl(\rho_n(P_1(q,b_1)),\ldots,\rho_n(P_k(q,b_k))\bigr)$.
\end{definition}

\begin{lemma}\label[lemma]{lem:transition-maps-definable-trop}
If $m\mid n$, there is a unique definable surjection
$h_{mn}^{\mathcal{F}}\colon S_{\mathcal{F}}^n\to S_{\mathcal{F}}^m$
with $h_{mn}^{\mathcal{F}}(c_q^{\mathcal{F},n})=c_q^{\mathcal{F},m}$ for every $q\in\widehat{X}$. In addition, given $\cF=(f_1,\dots,f_\ell)$ and $\cF'=(f'_1,\dots,f'_r)$ and a map $\tau\colon\{1,\dots,\ell\}\to\{1,\dots,r\}$ satisfying $f'_{\tau(i)}=f_i$ for all $i$, then there is a unique definable surjection
$r_{\mathcal{F}'\mathcal{F}}^n\colon S_{\mathcal{F}'}^n\to S_{\mathcal{F}}^n$
with $r_{\mathcal{F}'\mathcal{F}}^n(c_q^{\mathcal{F}',n})=c_q^{\mathcal{F},n}$. These maps commute with one another.
\end{lemma}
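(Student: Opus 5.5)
The plan is to build both maps from the definable functions $g^n_{\mathcal F}$ of \Cref{prop:uniformizing-functions}. We use the fact that each canonical parameter $c\in S^n_{\mathcal F}$ is, by construction, a code of the total function $\bar b\mapsto g^n_{\mathcal F}(c;\bar b)$. Hence $c$ is determined by (and interdefinable with) this function. That is the key point.

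\textbf{Level change.} For $m\mid n$ we put
\[
h^{\mathcal F}_{mn}(c):=\text{the code of the function }\bar b\mapsto \tau_{mn}^{k}\bigl(g^n_{\mathcal F}(c;\bar b)\bigr),
\]
where $\tau_{mn}^k$ is the coordinatewise transition map $H_n^k\to H_m^k$. This map is definable, since $\tau_{mn}$ is definable and codes of uniformly definable functions exist in $T^{\mathrm{eq}}$. Take $c=c^{\mathcal F,n}_q$ and use $\tau_{mn}\circ\rho_n=\rho_m$. The coded function is then $\bar b\mapsto(\rho_m(P_i(q,b_i)))_i=g^m_{\mathcal F}(c^{\mathcal F,m}_q;\bar b)$. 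Its code is therefore $c^{\mathcal F,m}_q$, so $h^{\mathcal F}_{mn}(c^{\mathcal F,n}_q)=c^{\mathcal F,m}_q$.

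\textbf{Restriction along $\tau$.} For a map $\tau$ with $f'_{\tau(i)}=f_i$ (here $f_i=P_i(x,y_i)$ and $f'_j=P'_j(x,y'_j)$), we let $r^n_{\mathcal F'\mathcal F}(c')$ be the code of
\[
\bar b\longmapsto \pi_\tau\bigl(g^n_{\mathcal F'}(c';\bar b')\bigr),
\]
where $\bar b'$ is any tuple with $b'_{\tau(i)}=b_i$. When $\tau$ is not injective, this requires $b_i=b_{i'}$ whenever $\tau(i)=\tau(i')$. If $\tau$ is not surjective, the unused coordinates of $\bar b'$ are filled with some fixed $K$-definable choice, for instance $0$. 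The coordinates of $g$ do not depend on one another, since the $i$-th coordinate is $\rho_n(P_i(q,b_i))$. So for $c'=c^{\mathcal F',n}_q$ the value does not depend on the filler. This gives $r^n_{\mathcal F'\mathcal F}(c^{\mathcal F',n}_q)=c^{\mathcal F,n}_q$.

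\textbf{Main obstacle: definability and independence of choices off the image.} The filler choice and the consistency requirement only hold on codes of the form $c_q$. We therefore define the maps on $S^n_{\mathcal F}$ itself. Recall that $S^n_{\mathcal F}$ is exactly the set of such codes and is $K$-interpretable by strict pro-definability. On this set everything is well defined.

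\textbf{Surjectivity.} It is immediate, because every element of $S^m_{\mathcal F}$ (respectively $S^n_{\mathcal F}$) has the form $c^{\mathcal F,m}_q$ (respectively $c^{\mathcal F,n}_q$) for some $q\in\widehat X$, and that element is the image of $c^{\mathcal F,n}_q$ (respectively $c^{\mathcal F',n}_q$).

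\textbf{Uniqueness.} Any map with the stated property is already prescribed on all of $S^n_{\mathcal F}$, since that set consists only of such codes.

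\textbf{Commutation.} Take $h^{\mathcal F}_{mn}\circ r^n_{\mathcal F'\mathcal F}$ and $r^m_{\mathcal F'\mathcal F}\circ h^{\mathcal F'}_{mn}$. Both send $c^{\mathcal F',n}_q$ to $c^{\mathcal F,m}_q$ for every $q$, so they agree on $S^n_{\mathcal F'}$. The same argument gives the cocycle identities $h_{lm}h_{mn}=h_{ln}$ and $r_{\mathcal F'\mathcal F}r_{\mathcal F''\mathcal F'}=r_{\mathcal F''\mathcal F}$.
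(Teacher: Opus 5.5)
Your proof follows the paper's route. Both maps are obtained by transporting the coded evaluation functions along $\tau^k_{mn}$ and $\pi_\tau$, and uniqueness, surjectivity and commutation all come from the fact that every element of $S^n_{\mathcal F}$ has the form $c^{\mathcal F,n}_q$. Where the paper appeals to interdefinability of the relevant codes, you write the maps down by an explicit uniform formula, which makes their definability transparent. The level-change step, surjectivity, uniqueness and commutation are all fine.

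There is one slip, in the restriction step when $\tau$ is not injective.
\begin{itemize}
\item \emph{The problem.} You use a single tuple $\bar b'$ with $b'_{\tau(i)}=b_i$ for all $i$. Then your function $\bar b\mapsto\pi_\tau\bigl(g^n_{\mathcal F'}(c';\bar b')\bigr)$ is only defined on the locus where $b_i=b_{i'}$ whenever $\tau(i)=\tau(i')$. But $c^{\mathcal F,n}_q$ codes a function on all of $\mathcal U^{|y_1|}\times\cdots\times\mathcal U^{|y_\ell|}$. For example, for $\mathcal F=(P,P)$, $\mathcal F'=(P)$ and $\tau$ constant, $c^{\mathcal F,n}_q$ records $\rho_n(P(q,b_1))$ and $\rho_n(P(q,b_2))$ for independent $b_1,b_2$.
\item \emph{Consequence.} The code of your partial function is not $c^{\mathcal F,n}_q$, so $r^n_{\mathcal F'\mathcal F}(c^{\mathcal F',n}_q)=c^{\mathcal F,n}_q$ does not follow as written. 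Your remark that the consistency requirement only holds on codes of the form $c_q$ does not help: it is a constraint on $\bar b$, not on the code.
\item \emph{The fix.} Use the coordinate independence you already noted. Define the $i$-th output coordinate separately as $\mathrm{pr}_{\tau(i)}\,g^n_{\mathcal F'}(c';\bar b^{(i)})$, where $\bar b^{(i)}$ has $b_i$ in slot $\tau(i)$ and $0$ elsewhere. For $c'=c^{\mathcal F',n}_q$ this equals $\rho_n(P_i(q,b_i))$ for every $\bar b$, as required.
\end{itemize}
Separately, to get equality on the nose rather than only up to interdefinability of codes from different families, define each map as the unique element of the target set whose associated function equals the transported one. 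This is first-order: the target set is interpretable, and since its elements are canonical parameters, $c\mapsto g(c;\,\cdot\,)$ is injective on it.
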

\begin{proof}
The function coded by $c_q^{\mathcal{F},m}$ is obtained from the one coded by $c_q^{\mathcal{F},n}$ composed  with $\tau_{mn}^k$. Since the elements of each of the sets  $S_{\mathcal{F}}^{n}$ are canonical parameters of functions, they are unique up to interdefinability. Hence $\tau^{k}_{mn}(c_{q}^{\mathcal{F},n})$ and $c_{q}^{\mathcal{F},m}$ are interdefinable. Then there is a $\emptyset$-definable map $h_{mn}: S_{\mathcal{F}}^{n}\to S_{\mathcal{F}}^{m}$ that sends $c_{q}^{\mathcal{F},n}$ to $c_{q}^{\mathcal{F},m}$.\\
For $\cF$, $\cF'$ and $\tau$ as in the statement, let $\pi_{\tau}: H_n^{r} \to H_n^{\ell}$ be the map described in \cref{def:trop-XH}. Similarly, the function coded by $c_q^{\mathcal{F},n}$ is obtained from the one coded by $c_q^{\mathcal{F}',n}$  by composition with $\pi_{\tau}$. Since the elements of each of the set $S_{\mathcal{F}}^n$  are canonical parameters of functions, they are unique up to interdefinability and they determine unique definable maps,  then $\pi_{\tau} (c_{q}^{\mathcal{F}',n})$ and $c_{q}^{\mathcal{F},n}$ are interdefinable. Let $r^{n}_{\mathcal{F'}\mathcal{F}}$ be the definable map given by sending $c_{q}^{\mathcal{F'},n}$ to $c_{q}^{\mathcal{F},n}$.\\
By construction, both maps $h_{mn}^{\mathcal{F}}$ and $r^{n}_{\mathcal{F'},\mathcal{F}}$ are surjective. For instance, for $h_{mn}^{\mathcal{F}}$ given $c_{q}^{\mathcal{F},m}$ is the canonical parameter of some type $q \in \hat{X}(M)$. Then $h_{mn}(c_{q}^{\mathcal{F},n})=c_{q}^{\mathcal{F},m}$. And likewise for $r^{n}_{\mathcal{F}', \mathcal{F}}$.
\end{proof}

Let $\mathbb F$ be the set of finite sequences of polynomial families
over $\ZZ$, ordered with respect to the existence of maps $\tau$ as in \Cref{def:trop-XH}. 
Define $\mathbf S_X=\varprojlim_{(\mathcal F,n)}S_{\mathcal F}^n$
using the surjective transition maps of
\Cref{lem:transition-maps-definable-trop}. Their compatibility is the identity
$ h_{mn}^{\mathcal F}\circ r_{\mathcal F'\mathcal F}^{n}
 =r_{\mathcal F'\mathcal F}^{m}\circ h_{mn}^{\mathcal F'}.$
Write $\pi_{\mathcal F}^n$ for the projections. The compatible parameter
maps induce $\mathbf c:\widehat{X}_T\longrightarrow\mathbf S_X,$ defined as
 $q\longmapsto(c_q^{\mathcal F,n})_{\mathcal F,n}.$ By construction, $\mathbf S_X$ is a strict pro-definable set over $K$.

\begin{proposition}\label[proposition]{prop:explicit-presentation}
Let $X$ be an affine variety over $K$. Then
$\mathbf{c}\colon\widehat{X}_T\longrightarrow \mathbf{S}_X$
is a bijection. In particular, $\mathbf{c}$ presents $\widehat{X}_T$ as a strict pro-definable set over the countable indexing family $\mathbb{F}\times\Nn_{\geq2}$.
\end{proposition}

\begin{proof}
If $\mathbf c(q)=\mathbf c(q')$, all coded evaluation functions agree.
Every polynomial over $\cU$ is a specialization of a polynomial family
over $\ZZ$, so $\rho_n(f(q))=\rho_n(f(q'))$ for every $f$ and $n$.
By \Cref{cor:type-determination}, $q=q'$.

For surjectivity, let $s=(s_{\mathcal F}^n)\in\mathbf S_X$ and let
$A_{\mathcal F,n}=
 \{q\in\widehat X:c_q^{\mathcal F,n}=s_{\mathcal F}^n\}.$
Each set is nonempty by the definition of $S_{\mathcal F}^n$.
Any finite collection has nonempty intersection: choose a common upper
index and a type giving the code prescribed there, then use compatibility
with the transition maps. 
Compactness gives a global type $p$ belonging to every $A_{\mathcal F,n}$, whence $\mathbf c(p)=s$.  The type $p$ is $C$-definable for the small set of parameters 
\[
C=\{c_q^{\mathcal F,n} : c_q^{\mathcal F,n}=s_{\mathcal F}^n \text{ for some $(\mathcal{F}, n)\in \mathbb F\times\Nn_{\geq2}$}\}.
\]
Moreover it is orthogonal to $\Gamma$ by construction. Indeed, for any $f\in \cO_X=\cU[X]$, there is a polynomial $P$ over $\ZZ$ and a tuple $b$ such that $f(x)=P(x,b)$ on $X$. Therefore taking $\mathcal{F}=(P)$ and $n=2$ we have that the following formula belongs to $p$ for some $q\in \widehat{X}_T$
\[
\rho_2(f(x))= g_\mathcal{F}^2(c_q^{\mathcal{F},2},b).  
\]
 Since $q$ is orthogonal to $\Gamma$, the projection of $g_\mathcal{F}^2(c_q^{\mathcal{F},2},b)\in H_2$ to $\Gamma$ (the first coordinate) belongs to $\Gamma$, so $v(f(p))\in \Gamma$. 

The pro-definable structure of $\widehat{X}_T$ given by \Cref{prop:strict-prodefinability} associates a type $p$ with the tuple of its canonical parameters over $\Lmac^\Gamma$-formulas. Let $Cb(p)=(c_{p}^{\varphi})_{\varphi \in \Lmac^\Gamma}$ be the canonical base of $p$. Let $p^{qf}$ be the quantifier free type of $p$ in $\Lmac^\Gamma$ and note that $\mathbf{c}(p)$ is a canonical base of $p^{qf}$. We aim to show that that $Cb(p)$ and $\mathbf{c}(p)$ are interdefinable, so let $\sigma$ an automorphism of $\cU$ that fixes $Cb(p)$. Since $p \vdash p^{qf}$, then $\sigma(p^{qf})=p^{qf}$ consequently $\sigma(\mathbf{c}(p))=\mathbf{c}(p)$, thus $\mathbf{c}(p) \subseteq \dcl^{eq}(Cb(b))$. For the converse, suppose $\sigma$ fixes $\mathbf{c}(p)$, then $\sigma(p^{qf})=p^{qf}$ and by quantifier elimination $p^{qf}\vdash p$ so $\sigma(p)=p$. Therefore, $Cb(p) \subseteq \dcl^{eq}(\mathbf{c}(c))$. This shows the map $\mathbf{c}$ is pro-definable. Moreover, this shows that the construction is functorial over models (and, more generally, over $\eq{\acl}$-closed subsets of parameters). If the codes belong to a definably closed parameter set $C$, uniqueness makes the type $C$-invariant; its canonical
definition parameters then belong to $C$. This identifies the functorial
$C$-points as well.
\end{proof}

\begin{remark}\label[remark]{rmk:embedding-independence}
The presentation depends on the chosen closed embedding $X\subseteq\mathbb{A}^m$. A polynomial bijection $\varphi\colon X\to X'$ induces a bijection $\varphi_*\colon\widehat{X}_T\to\widehat{X'}_T$, and since each $P'(\varphi(x),y')$ is again of the form $Q(x,c)$ with $Q\in\ZZ[x,y]$ and $c$ a tuple from $K$, the $(\mathcal{F}',n)$-definition of $\varphi_*q$ is definably determined by finitely many definitions of $q$; hence the presentations attached to two embeddings are canonically isomorphic as pro-definable sets. The indexing family $\mathbb{F}$, on the other hand, depends only on $m$ and not on $K$.
\end{remark}

\begin{remark}\label[remark]{rmk:topology-from-presentation}
Although strict pro-definability is a purely model-theoretic statement, the topology of $\widehat{X}_T$ is recovered from this presentation: equip each $S_{\mathcal{F}}^n$ with the coarsest topology making the maps $c\mapsto g_{\mathcal{F}}^n(c;\bar b)$ continuous for all $\bar b$, and $\mathbf{S}_X$ with the inverse-limit topology. The topology pulled back to $\widehat{X}$ along the bijection $\mathbf{c}$ is then the coarsest one making all maps $q\mapsto\rho_n(f(q))$ continuous, with $f$ a polynomial over $\mathcal U$ and $n\geq2$; since the topology of $H$ is the coarsest topology of its projections to the $H_n$, this is precisely the topology induced by $\Theta$ (\Cref{def:TheTopology}), and $\mathbf{c}$ becomes a homeomorphism.
\end{remark}

\subsection{Tropicalization}

The tropicalizations of $\widehat{X}_T$ are simply the restriction, along the embedding $\Theta$ of \Cref{prop:hyperfield-embedding}, of the tropicalizations of $X(H)$ defined in \Cref{def:trop-XH}. For a finite sequence $\cF=(f_1,\dots,f_\ell)$ in $\ccO_X=\mathcal U[X]$ and $n\geq 2$ we denote by the same symbols the composites
\[
\trop_\cF^n:=\trop_\cF^n\circ\Theta\colon\widehat{X}_T\to H_n^\ell,\qquad
\trop_\cF:=\trop_\cF\circ\Theta\colon\widehat{X}_T\to H^\ell,
\]
so that, for any realization $a\models p$,
\[
\trop_\cF^n(p)=\big(\rho_n(f_1(p)),\dots,\rho_n(f_\ell(p))\big),\qquad
\trop_\cF(p)=\big(\rho(f_1(p)),\dots,\rho(f_\ell(p))\big).
\]
Order the finite sequences by inclusion
and form the inverse limit of the images of these maps. The resulting
map is
\[
\Trop:=\Trop\circ\Theta\colon\widehat{X}_T\longrightarrow
\varprojlim_\cF\trop_\cF(\widehat{X}_T),
\qquad p\longmapsto(\trop_\cF(p))_\cF.
\] By the definition of the topology on $\widehat{X}_T$ (\Cref{def:TheTopology}) these maps are continuous, being the restrictions of the continuous maps of \Cref{prop:trop-XH-cont}. Under hypothesis \textup{(B)}, each finite-level map $\trop_\cF^n$ is pro-definable and has definable image by strict pro-definability; $\trop_\cF$ is its pro-definable inverse limit.

\begin{remark}\label[remark]{rem:surjectivity} Let $\cF=(f_1,\ldots,f_\ell)\subseteq \ccO_X$, and $m,n\geq 2$ with $m|n$.
\begin{itemize}
    \item As a consequence of \cref{lem:surjectivity}, for each $\bar{z}\in \varprojlim_\cF \trop_\cF(\widehat{X}_T)$, each finite $\cF\subseteq \ccO_X$ and each $n\geq 2$, the component $\bar{z}_\cF^n$ lies in $\trop_\cF^n(\widehat{X}_T)$, so there exists $p\in \widehat{X}_T$ with $\trop_\cF^n(p)=\bar{z}_\cF^n$. 

\item The map $\tau_{mn}^\ell$ restricts to a surjection
$ \trop_\cF^n(\widehat X_T)\longrightarrow\trop_\cF^m(\widehat X_T).$
\item For $\cF\subseteq \cF'$, the projection $\pi\colon H_n^{|\cF'|}\to H_n^{|\cF|}$ restricts to a surjection $\trop_{\cF'}^n(\widehat{X}_T)\to \trop_\cF^n(\widehat{X}_T)$.
\end{itemize}
In particular, the inverse systems $(\trop_\cF^n(\widehat{X}_T))_n$ and $(\trop_\cF(\widehat{X}_T))_\cF$ are surjective.
\end{remark}

The following is a model-theoretic version of Payne's theorem for points over a specified model $K$. In the following two statements the residue field is $\RF(K)$, regular functions belong to $K[X]$, and the tropicalizations of $\widehat X_T(K)$ take values in $H_n(K)$ and $H(K)$, with their intrinsic topologies.

\begin{proposition}\label[proposition]{prop:Payne} Assume $(K,v)$ is maximally complete and all types over $\RF(K)$ are definable. Let $X$ be an affine variety over $K$. Then 
\[
\Trop^H: \widehat{X}_T(K) \longrightarrow \varprojlim_{\mathcal{F}} \trop_{\mathcal{F}}(\widehat{X}_T(K))
\]
is a homeomorphism, where $\mathcal{F}$ is the family of all finite sequences $(f_{1},\dots, f_{n})$ where $f_{i}\in K[X]$.
\end{proposition}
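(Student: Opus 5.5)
The plan is to mirror the abstract argument of \Cref{prop:Payne-XH}, using the model $K$ and the hypotheses to realize the limit point by a type. First, injectivity and the embedding property. Over $K$, $\Trop^H=\Trop\circ\Theta_K$, and $\Theta_K$ is a topological embedding by \Cref{rem:topology_over_M}. The map $\Trop$ on $X(H(K))$ is a homeomorphism onto its image, by the first paragraph of the proof of \Cref{prop:Payne-XH}. Hence $\Trop^H$ is a homeomorphism onto its image, and the only issue is surjectivity onto $\varprojlim_\cF\trop_\cF(\widehat X_T(K))$.

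For surjectivity, take a compatible family $(z_\cF)_\cF$ and define $\phi(f):=z_{(f)}$. Exactly as in \Cref{prop:Payne-XH}, each defining condition of a hyperring homomorphism involves only finitely many $f$. Each $z_\cF$ is realized by some $p\in\widehat X_T(K)$, hence by $\Theta_K(p)\in X(H(K))$. So $\phi\in X(H(K))$, and $\trop_\cF(\phi)=z_\cF$ for all $\cF$. It then remains to show that $\phi$ lies in the image of $\Theta_K$.

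This is the main step, and I will do it via compactness for types over $K$ rather than through the criterion of \Cref{lem:criterion for pre image}, since the residue field is general here. Consider the partial type
\[
\Sigma(x)=\{x\in X\}\cup\{\rho_n(f(x))=\phi_n(f): f\in K[X],\ n\geq2\},
\]
where each condition is a $K$-formula: a valuation equality with a value in $\Gamma(K)$ together with a power-coset condition $P_n(a^{-1}f(x))$, or $f(x)=0$. A finite subset of $\Sigma$ involves finitely many $f$, and, taking a common multiple, a single level $n$. Set $\cF$ to be this tuple. Since $z_\cF=\trop_\cF(p)$ for some $p\in\widehat X_T(K)$, a realization of $p$ satisfies this finite subset, so $\Sigma$ is finitely satisfiable. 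Let $q\in S_X(K)$ be a completion. Because $\Sigma$ pins $v(f(x))\in\Gamma(K)_\infty$ for every $f\in K[X]$, and $\Gamma(K(a))$ is generated by such values (as ratios), $q$ is weakly orthogonal to $\Gamma$, i.e.\ $q\in S_X^{\perp^w\Gamma}(K)$, and by construction $\Xi_K(q)=\phi$.

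Finally, the hypotheses enter. Since $K$ is maximally complete and all types over $\RF(K)$ are definable, \Cref{cor:def-orthogonal} shows $q$ is $K$-definable and lifts to a unique $\tilde q\in\widehat X_T(K)$. By the factorization \eqref{eq:comparison}, $\Theta_K(\tilde q)=\Xi_K(q)=\phi$. Therefore $\Trop^H(\tilde q)=(z_\cF)_\cF$, which gives surjectivity. The delicate points are (a) checking that the conditions of $\Sigma$ at level $n$ are expressible over $K$ (this uses finiteness of $\RV^\times/(\RV^\times)^n$ and representatives in $K$, \Cref{lem:divisibilities}), and (b) that $\Gamma(K(a))=\Gamma(K)$ follows from the values of polynomials, so that we really obtain weak orthogonality.
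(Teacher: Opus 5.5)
Your proposal is correct and follows essentially the same route as the paper. Injectivity and the embedding property come from $\Theta_K$ together with \Cref{prop:Payne-XH}. Surjectivity comes from realizing the thread by a completion of the partial type $\{x\in X\}\cup\{\rho_n(f(x))=z_{f,n}\}$, which is weakly orthogonal to $\Gamma$ and hence $K$-definable by \Cref{cor:def-orthogonal}. The only difference is that you first assemble $\phi\in X(H(K))$; this is harmless but unnecessary, since the partial type can be written directly from the coordinates of the thread.
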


\begin{proof}
By \Cref{prop:Payne-XH} the map $\Trop^{H}\colon X(H(K))\to\varprojlim_\cF\trop_\cF^{H}(X(H(K)))$ is a homeomorphism. Since the topology on $\widehat{X}_T(K)$ is the one induced by $\Theta_K$ (\Cref{def:TheTopology}), $\Theta_K$ is a homeomorphism onto its image; hence $\Trop^{H}=\Trop^{H}\circ\Theta_K$ is a homeomorphism onto its image $\Trop^{H}(\widehat{X}_T(K))\subseteq\varprojlim_\cF\trop_\cF^{H}(\widehat{X}_T(K))$, the subspace topology there agreeing with the inverse-limit topology. It therefore remains only to show that $\Trop^{H}$ is surjective, i.e.\ that every thread of $\varprojlim_\cF\trop_\cF^{H}(\widehat{X}_T(K))$ is realized by a type.

Let $\bar z\in\varprojlim_\cF\trop_\cF^H(\widehat X_T(K))$, and write $z_{f,n}\in H_n(K)$ for its coordinate associated to $f$ and $n$. Consider the partial type
\[
q_{\bar z}=\{x\in X\}\ \cup\
\{\rho_n(f(x))=z_{f,n}:f\in K[X],\ n\geq2\}.
\]
The value $z_{f,n}=\mathbf{0}$ here means $f(x)=0$, so zero coordinates are included. Every finite subset involves finitely many functions $\cF$ and finitely many levels. Taking a common multiple $N$ of the levels, the definition of the inverse limit supplies $p_\cF\in\widehat X_T(K)$ with the prescribed $\cF$-coordinate, and hence with all the prescribed finite-level coordinates. Thus $q_{\bar z}$ is consistent.

Choose a complete extension $q\in S_X(K)$. Its polynomial valuations all lie in $\Gamma_\infty(K)$, so $\Gamma(K(a))=\Gamma(K)$ for $a\models q$, so $q$ is weakly orthogonal to $\Gamma$. By \Cref{cor:def-orthogonal} $q$ is definable and its global extension belongs to $\widehat X_T(K)$. By construction its tropicalizations give $\bar z$. Hence $\Trop^H$ is surjective. 
\end{proof}

We finish this section with a couple of density results. 

\begin{definition}
For a $K$-definable set $Y\subseteq X$, we let the map
\[
\iota\colon Y\to \widehat{Y}_T(K),\qquad \iota(a)=\tp(a/K),
\]
A point $\iota(a)\in \widehat{Y}_T(K)$ is called a \emph{$K$-simple point of $\widehat{Y}_T(K)$}. Given a finite family $\cF=(f_1,\dots,f_\ell)$ of regular functions ($n\geq 2$) on $\K[X]$, the tropicalizations of $Y$ correspond to the image of $S$ under $\trop_\cF^n$ and $\trop_\cF$ composed with $\iota$, that is, $\trop_\cF^n(S)=\trop_\cF^n(\iota(S))$. 
\end{definition}

Note that $\trop_\cF^n(S)\subseteq\trop_\cF^n(\widehat{S}_T)$, and likewise for $\trop_\cF$ and $\Trop$. This is the counterpart of Jell, Schreiderer and Yu's real tropicalization $\trop_{r,\cF}(Y)$ of \cite[\S3.1, \S5.2]{jell}, with $\rho_n\colon K\to H_n$ in the role of the signed seminorm $\trop_r\colon K\to\mathbb{RT}$.

Our first result shows that simple points are dense in $\widehat{Y}_T(K)$ exactly as in \cite[Lemma 3.6.1]{HruLoe}.

\begin{proposition}\label{prop:density-semialg}
The set $\iota(Y)$ is dense in $\widehat{Y}_T(K)$.
\end{proposition}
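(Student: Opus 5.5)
Since the topology on $\widehat{Y}_T(K)$ is pulled back along $\Theta_K$ from finite-level evaluations, it suffices to show that every nonempty basic open set $W\subseteq\widehat{Y}_T(K)$ contains a simple point $\iota(a)$ with $a\in Y(K)$. By \Cref{rem:opens} (read over $K$, with regular functions in $K[X]$ and parameters in $K$), a basic open set is a finite intersection of subbasic sets of the form $(U,I)_f$, $(U,cP_n^\times)_f$ and the neighbourhoods of zero. The last assertion of \Cref{rem:opens} then gives a $K$-definable set $Z\subseteq Y$ with $W=\widehat{Z}_T(K)$. Concretely, $Z$ is cut out by $x\in Y$ together with the finitely many conditions $v(f(x))\in I$, $P_n^\times(c^{-1}f(x))$, and the disjunctions describing the zero neighbourhoods. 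All of these are $\Lmac^\Gamma$-formulas over $K$.

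Next we check that a definable type lies in $W$ exactly when it concentrates on $Z$. For $p\in\widehat{Y}_T(K)$, membership in a subbasic open is decided by the formula defining it. For instance, $p\in(U,I)_f$ iff $v(f(p))\in I$ iff the formula $v(f(x))\in I$ belongs to $p$. The only care needed is that the value $v(f(p))$ is realized in $\Gamma(K)$, by orthogonality, so the condition on the value is equivalent to a formula in $p$. Similarly, the power-coset condition is literally a formula in $p$. So $p\in W$ iff $Z(x)\in p$.

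Now take $p\in W$, so $Z$ is a $K$-definable set lying in a consistent type over $K$. Then $Z(K)\neq\emptyset$, since $K$ is a model and $\exists x\,Z(x)$ holds in $\cU\succ K$. Pick $a\in Z(K)$. The realized type $\tp(a/\cU)$ is $K$-definable and orthogonal to $\Gamma$ (values $v(f(a))\in\Gamma(K)$), so $\iota(a)\in\widehat{Y}_T(K)$, and it concentrates on $Z$, hence $\iota(a)\in W$.

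The main obstacle I expect is the bookkeeping in the middle step: making sure that the zero-neighbourhood opens and the intervals $I$ with parameters in $\Gamma(K)$ really translate into $K$-definable conditions whose satisfaction by $p$ coincides with membership. This is where orthogonality of $p$ to $\Gamma$ is used. I would handle the zero neighbourhoods by splitting the union into finitely many pieces indexed by the power classes $a_i$ (finitely many by the $\bm$ hypothesis), plus the piece $f(x)=0$.
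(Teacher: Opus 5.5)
Your proposal is correct and follows essentially the same route as the paper: use \Cref{rem:opens} to write a basic open as $\widehat{Z}_T(K)$ for a $K$-definable $Z\subseteq Y$, then use $K\prec\cU$ to find $a\in Z(K)$, so that $\iota(a)$ lies in that open. Your additional checks make explicit what the paper leaves implicit: the parameters are taken from $K$, membership in the open amounts to containing the defining formula of $Z$, and the realized type lies in $\widehat{Y}_T(K)$.
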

\begin{proof} Let $O$ be a non-empty basic open of $\widehat{Y}_T(K)$, taken inside the ambient space $\widehat{X}_T(K)$ with $X=\mathbb{A}^n$. By \Cref{rem:opens} we may assume $O=\widehat{Z}_T(K)$ for a definable subset $Z\subseteq Y$. Choose $p\in O$ and a realization $b\models p$ in $\cU$. If $Z$ is defined by a formula $\psi(x)$, $\cU\models \exists x \psi(x)$. Since $K\prec \cU$, we have that $K\models\exists x\psi(x)$. Therefore, there is $a\in K^n$ with $a\in Z$ so $\iota(a)=\tp(a/K)\in\widehat{Z}_T(K)=O$.
\end{proof}

Wioth respect to the tropicalization of simple points we have a strong density result, generalizing \cite[Theorem~6.2]{jell}. 

\begin{proposition}\label{prop:strong-density}
Let $Y,X, S$, $\cF=(f_1,\dots,f_\ell)$, and $n\geq 2$ as above. Then
\[
\trop_\cF^n(Y)=\trop_\cF^n(\widehat{Y}_T)\qquad\text{in } H_n^\ell .
\]
\end{proposition}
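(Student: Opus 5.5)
The plan is to prove the two inclusions separately. The inclusion $\trop_\cF^n(Y)\subseteq\trop_\cF^n(\widehat{Y}_T)$ is the remark made just before the statement: for $a\in Y(K)$ the simple point $\iota(a)=\tp(a/K)$ (with its realized, hence definable and $\Gamma$-orthogonal, global extension) satisfies $\rho_n(f_i(\iota(a)))=\rho_n(f_i(a))$. So the real content is the reverse inclusion. Fix $p\in\widehat{Y}_T(K)$ and put $\bar z=\trop_\cF^n(p)=(z_1,\dots,z_\ell)\in H_n(K)^\ell$. We must find $a\in Y(K)$ with $\rho_n(f_i(a))=z_i$ for all $i$.

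First step: show that each fibre condition $\rho_n(f_i(x))=z_i$ is expressed by an $\Lmac^\Gamma$-formula with parameters in $K$. If $z_i=\mathbf 0$, it is $f_i(x)=0$. If $z_i=(\gamma_i,\xi_i)$, choose a $K$-representative $c_i\in K^\times$ with $\rho_n(c_i)=z_i$; this exists by the surjectivity of $\rho_n$ over $K$ (\Cref{lem:surjectivity}). Then $\rho_n(f_i(x))=z_i$ is equivalent to $f_i(x)\neq0\wedge v(f_i(x))=v(c_i)\wedge \rv_n(f_i(x))=\rv_n(c_i)$. By \Cref{lem:divisibilities}(1), the last conjunct is equivalent to $P_n(f_i(x)c_i^{-1})$, since $\rv(d)\in\RV^n$ iff $d\in (K^\times)^n$ in residue characteristic $0$.

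With this in hand, let $Z\subseteq Y$ be the $K$-definable set cut out by the defining formula of $Y$ together with these $\ell$ conditions. By construction $p$ concentrates on $Z$: for $b\models p$ one has $\rho_n(f_i(b))=z_i$, because $\rho_n(f_i(p))$ is well defined by orthogonality (\Cref{prop:hyperfield-embedding}). Hence $Z(\cU)\neq\emptyset$. Since $K\prec\cU$, we get $Z(K)\neq\emptyset$, exactly as in the proof of \Cref{prop:density-semialg}. Any $a\in Z(K)$ then lies in $Y(K)$ and satisfies $\trop_\cF^n(a)=\bar z$. Equivalently, one can invoke \Cref{prop:density-semialg} directly: the preimage $(\trop_\cF^n)^{-1}(\bar z)$ is $\widehat{Z}_T(K)$, which contains $p$ and so is a nonempty open set, because each point of $H_n$ has a neighbourhood fibre with that class. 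I would phrase it via elementarity to avoid having to discuss whether singletons are open.

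The only delicate point I expect is the first step, namely that membership in the exact fibre of $\rho_n$ is first-order over $K$ and not merely type-definable. This is where finiteness of $\RV^\times/(\RV^\times)^n$ and henselianity enter, through the choice of representatives $c_i\in K$. For the statement over $\cU$ instead of $K$, the same argument works after replacing $K$ by a small model containing the parameters of $p$ and of $\cF$.
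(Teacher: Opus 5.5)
Your argument is correct and is essentially the paper's. The paper likewise writes, for each $j$, an $\Lmac(K)$-formula $\theta_j$ that lies in $p$ and forces $\rho_n(f_j(x))=\rho_n(f_j(p))$: either $f_j(x)=0$, or $f_j(x)\in d_jP_n^*$ together with $v(f_j(x))=v(b_j)$. It then transfers $\exists x\,(\psi\wedge\bigwedge_j\theta_j)$ from $\cU$ to $K$. Your single $K$-representative $c_j$ of $z_j$ is only a repackaging of the pair $(d_j,b_j)$. The paper uses finiteness of $K^\times/(K^\times)^n$ and orthogonality to put $d_j,b_j$ in $K$, which is exactly what justifies your assertion $\bar z\in H_n(K)^\ell$.

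The one slip is the aside invoking \Cref{prop:density-semialg}. The fibre $(\trop_\cF^n)^{-1}(\bar z)$ is not open in general; for divisible $\Gamma$, basic opens only pin the valuation coordinate to an interval. Your elementarity argument never uses openness, so nothing is lost.
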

\begin{proof}
The inclusion $\trop_\cF^n(Y)\subseteq\trop_\cF^n(\widehat{Y}_T)$ follows by definition since $\trop_\cF^n(Y)=\trop_\cF^n(\iota(Y))$ and $\iota(Y)\subseteq\widehat{Y}_T$. For the reverse inclusion, let $p\in\widehat{Y}_T$ and put $z=\trop_\cF^n(p)=(\rho_n(f_1(p)),\dots,\rho_n(f_\ell(p)))\in H_n^\ell$. We produce $a\in Y$ with $\trop_\cF^n(a)=z$.

Fix $1\le j\le\ell$. Since $K$ is a $\bm$-field, $[K^\times:(K^\times)^n]<\infty$, so $K^\times$ is partitioned into finitely many cosets of $P_n^*=(K^\times)^n$; as $p$ is a complete type, it decides which one $f_j(x)$ meets, i.e.\ there is a representative $d_j\in K^\times$ with $\big(f_j(x)\in d_jP_n^*\big)\in p$, unless $\big(f_j(x)=0\big)\in p$. Moreover, $p$ is orthogonal to $\Gamma$, so the pushforward $(v\circ f_j)_*p$ is realized at a point $\gamma_j\in\Gamma_K\cup\{\infty\}$; that is, $\big(v(f_j(x))=\gamma_j\big)\in p$, with $\gamma_j=v(b_j)$ for some $b_j\in K^\times$ when $f_j(p)\neq0$. Put
\[
\theta_j(x):=\begin{cases}
\big(f_j(x)\in d_jP_n^*\big)\wedge\big(v(f_j(x))=v(b_j)\big), & f_j(p)\neq0,\\[2pt]
f_j(x)=0, & f_j(p)=0,
\end{cases}
\]
an $\Lmac(K)$-formula; by the above, $\theta_j\in p$.

We claim $\theta_j(a)$ forces $\rho_n(f_j(a))=\rho_n(f_j(p))$ for every $a$. If $f_j(p)=0$ this is clear, both sides being $\mathbf{0}$. Otherwise, suppose $\theta_j(a)$ holds and write $f_j(a)=d_je^n$ with $e\in K^\times$ and $v(f_j(a))=v(b_j)=\gamma_j$. Using multiplicativity of $\rho_n$ and $[e^n]_n=[e]_n^n=1$ in $G_n^\times=\RF^\times/(\RF^\times)^n$,
\[
\rho_n(f_j(a))=\rho_n(d_j)\,\rho_n(e)^n=(v(d_j),[d_j]_n)\cdot(nv(e),1)=(v(d_j)+nv(e),[d_j]_n)=(\gamma_j,[d_j]_n).
\]
A realization of $p$ also satisfies $\theta_j$, so the same computation gives $\rho_n(f_j(p))=(\gamma_j,[d_j]_n)$. Hence $\rho_n(f_j(a))=\rho_n(f_j(p))$, proving the claim.

Now, if $\psi(x)$ defines $Y$, set $\theta(x):=\psi(x)\wedge\bigwedge_{j=1}^\ell\theta_j(x)$, an $\Lmac$-formula over $K$ with $\theta\in p$. Choosing a realization $b\models p$ in $\cU$ we have that $\cU\models\exists x\,\theta(x)$, hence $K\models\exists x\,\theta(x)$. Any witness $a\in X(K)$ lies in $Y$ and satisfies every $\theta_j$, so $\trop_\cF^n(a)=(\rho_n(f_j(a)))_j=(\rho_n(f_j(p)))_j=z$. Therefore $z\in\trop_\cF^n(Y)$.
\end{proof}

\section{Definable compactness}\label{sec:defcompact}

Throughout this section, $X$ is an affine variety over a small model
$K\prec\cU$, the value group $\Gamma$ is divisible or a $\ZZ$-group, and
hypothesis~\textup{(B)} holds. As in Section~\ref{sec:analytification},
definable and pro-definable sets are evaluated in the universal domain
$\cU$. Unless otherwise specified, types are global and definable, with
arbitrary small sets of definition parameters. Regular functions and
neighbourhoods may have parameters from $\cU$. We use $\RF$ for the 
residue field (as a definable set). The notation $\mathrm{DOAG}$ denotes the theory of
nontrivial divisible ordered abelian groups, and $\mathrm{PRES}$ denotes
Presburger arithmetic.

Our aim in this section is to prove a definable Heine--Borel theorem for $\widehat X$, that is, to show that every pro-definable subset $Y$ of $\widehat{X}$ is definably compact if and only if it is closed and bounded. Here we use the definition of definable compactness due to Hrushovski and Loeser \cite{HruLoe} which we will soon recall. The right-to-left direction assumes in addition that the residue field is large---an assumption which holds for all the examples of interest. The converse direction is more subtle and we show it only for algebraically closed, real
closed, and $p$-adically closed residue fields. For those cases, we establish an analogue of curve selection in the spirit of~\cite[Proposition 4.2.13]{HruLoe}. It intends to replace the curve selection in $o$-minimal expansions of groups. More explicitly, at each point $a$ in the closure of a pro-definable set $Y$ in a pro-definable topological space, it asserts the existence of a
global definable type $p$ on $Y$ converging to the point $a$. The real closed and $p$-adic cases
use directed refinements of definable types, while the algebraically closed case is treated by stable domination.

\begin{notation} In this section we will work with several topological spaces. Given $Z$ a topological space and $Y \subseteq Z$ we write $\mathrm{cl}(Y)$ to denote its topological closure within $Z$. 
\end{notation}
\subsection{Limits, boundedness and generalities}

\subsubsection{Lifting definable types}\label{subsec:lifting-new}
For a surjective map $f:D\to E$, a \emph{definable lift} of a type
$q$ on $E$ is a definable type $p$ on $D$ satisfying $f_*p=q$.
The assertion that every definable type admits such a lift is called
\emph{surjectivity transfer}. The results below concern the valued-field
theories fixed in this section; they are not assertions about arbitrary
theories.
\begin{proposition}\label[proposition]{prop:lifting-definable}
For every definable set $D$, the space $S^{\mathrm{def}}(D)$ is strict
pro-definable. If $f:D\to E$ is a surjective definable map, then
$f_*:S^{\mathrm{def}}(D)\to S^{\mathrm{def}}(E)$ is surjective.
\end{proposition}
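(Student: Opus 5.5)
Both assertions will be obtained from the beautiful-pairs machinery of \cite{beautiful}, in the same way as \Cref{prop:strict-prodefinability}, but now applied to the class of \emph{all} global definable types rather than those orthogonal to $\Gamma$.

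\textbf{Identifying the pair class.} By \cite[Lemma~2.3.3]{beautiful}, the class $\rF_{\mathrm{all}}$ of all definable types is natural. It corresponds to the class of all stably embedded pairs of $T$. Since $T$ is benign (\cite[Fact~6.1.2(1)]{beautiful}), \cite[Section~8.3]{beautiful} reduces this pair class to a residue-side class and a value-group-side class. On the residue side I take all stably embedded pairs of $\Th(\RF)$; these have extension, amalgamation and beauty transfer by (B). On the value-group side I take all stably embedded pairs of $\Th(\Gamma)$.

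\textbf{Value-group side.} This is the main point where the standing hypothesis that $\Gamma$ is divisible or a $\ZZ$-group is needed. I would show that $\mathrm{DOAG}$ and $\mathrm{PRES}$ have beautiful pairs for all definable types, with extension, amalgamation and beauty transfer. For $\mathrm{DOAG}$ this is o-minimal: definable types over a model are built from cuts at $\pm\infty$ or at elements. For $\mathrm{PRES}$ the same holds after adding the congruence data. I expect these to be among the worked examples of \cite[Sections~3--7]{beautiful}, which I would check and cite.

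\textbf{Strict pro-definability.} With both side classes in hand, \cite[Theorem~8.3.1]{beautiful} gives that $\rF_{\mathrm{all}}$-beautiful pairs exist and that $\rF_{\mathrm{all}}$ has beauty transfer. Then \cite[Proposition~2.4.12]{beautiful} yields uniform definability, and \cite[Theorem~2.5.4]{beautiful} yields strict pro-definability of $S^{\mathrm{def}}(D)$ for every definable $D$. This is verbatim the argument of \Cref{prop:strict-prodefinability}. The finite quotients $\RF^\times/(\RF^\times)^n$ and $\Gamma/n\Gamma$ again contribute trivially, by condition (1) of \Cref{def:class}.

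\textbf{Surjectivity transfer.} Let $q\in S^{\mathrm{def}}(E)$ be definable over a small model $M$, and realize it by $e\models q|_M$. The natural route is through pairs: surjectivity transfer for a natural class is equivalent to the following extension statement. Given a pair $(M(e),M)$ in the class, where $M(e)$ denotes the model generated, and a preimage $d\in f^{-1}(e)$, I want a model $N\ni d$ with $N\supseteq M(e)$ such that $(N,M)$ is still stably embedded, i.e.\ $\tp(N/M)$ is definable.

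To get this I would use the extension property: first choose $d$ with $\tp(d/M(e))$ definable over $M(e)$, which is possible by taking a heir/coheir-type definable extension inside the beautiful pair. Then use transitivity of definability, namely that definable over something definable over $M$ is definable over $M$, which holds via stable embeddedness of the pair.

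\textbf{Expected obstacle and fallback.} I expect the main obstacle to be producing $\tp(d/M(e))$ definable. For this I would first reduce $f$ to a coordinate projection $D\subseteq E\times \cU$ by induction on fibre dimension, so that it suffices to lift along one field variable. In one field variable, the fibre $f^{-1}(e)$ is a definable subset of $\cU$ over $M(e)$. By relative quantifier elimination (\Cref{prop:QE}) it is described by finitely many balls and $\RV$-data. I would then take the generic type of a smallest ball, or a definable type at a boundary cut given by value-group definable types, which exist over $\mathrm{DOAG}$ or $\mathrm{PRES}$. Alternatively, if \cite{beautiful} states surjectivity transfer directly for classes obtained via \cite[Theorem~8.3.1]{beautiful}, I would cite that and skip this step.
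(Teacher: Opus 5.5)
Your strict pro-definability argument follows the paper's route: the residue side comes from (B), and the class of all stably embedded pairs of $\mathrm{DOAG}$ and $\mathrm{PRES}$ comes from \cite{beautiful}; these are transferred, and then \cite[Theorem~2.5.4]{beautiful} applies. The gap is in surjectivity transfer, where the real content sits somewhere other than where you put it. What you need is the extension property for the class of all stably embedded pairs of $T$ itself: for $e\models q|_M$ there is a model $N\succeq M$ containing $e$ with $\tp(N/M)$ definable. The paper obtains this by transferring the extension property from $\RF$ and $\Gamma$ to $T$, via \cite[Theorems~5.2.5 and~8.1.2, Example~8.3.3(1)]{beautiful}. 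You never establish it for $T$. From \cite[Theorem~8.3.1]{beautiful} you only extract existence of beautiful pairs and beauty transfer. You then posit a pair $(M(e),M)$ in the class, with $M(e)$ ``the model generated'' by $M$ and $e$. No such canonical model exists, and the existence of \emph{some} model containing $Me$ whose type over $M$ is definable is exactly the missing extension property.

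Once you have it, the step you flag as the main obstacle disappears. Since $N$ is a model containing $e$ and $f$ is an $M$-definable surjection, there is $d\in N$ with $f(d)=e$. Then $\tp(d/M)$ is $M$-definable because $(N,M)$ is stably embedded. Its global $M$-definable extension $p$ satisfies $f_*p=q$, since both are $M$-definable with the same restriction to the model $M$. No heir/coheir choice of $d$ and no transitivity argument is needed.

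The one-variable fallback should be dropped, for three reasons:
\begin{itemize}
\item It amounts to re-proving the transfer theorem by hand.
\item It is not routine outside $\ACVF$. When the residue field is real closed or $p$-adically closed, the ``generic type'' of a ball is not complete, so you would also have to choose definable residue-field and value-group types.
\item It does not obviously cover definable sets in imaginary sorts. That is how the proposition is used later, for instance for maps into $\Gamma$ and for the codes presenting $\widehat X$.
\end{itemize}
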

\begin{proof}
As in \Cref{prop:strict-prodefinability}, the residue-side data consists
of $\RF$ and its finite power quotients, and hence reduces to $\RF$.
Its class of stably embedded pairs has the extension property and beauty
transfer by \textup{(B)}. The class of all stably embedded pairs of
$\Gamma$ has the same properties by
\cite[Theorem~3.2.1, Corollary~3.2.2]{beautiful} for $\mathrm{DOAG}$ and
\cite[Theorem~4.2.2]{beautiful} for $\mathrm{PRES}$.
The transfer results \cite[Theorems~5.2.5 and~8.1.2,
Example~8.3.3(1)]{beautiful} give the corresponding properties for the
valued-field theory. The extension property yields surjectivity transfer,
and beauty transfer gives strict pro-definability
\cite[\S2.5, Theorem~2.5.4]{beautiful}.
\end{proof}

\begin{proposition}\label{prop:lifting-prodefinable}
For a small inverse system of definable sets,
$S^{\mathrm{def}}\!\left(\varprojlim_iD_i\right)
=\varprojlim_i S^{\mathrm{def}}(D_i).$
If $f:D\to E$ is a surjective pro-definable map, every global definable
type $q$ on $E$ has a global definable lift to $D$. Extra small parameters
of definition are allowed.
\end{proposition}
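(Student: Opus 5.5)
The plan has two parts: first the identification $S^{\mathrm{def}}(\varprojlim_i D_i)=\varprojlim_i S^{\mathrm{def}}(D_i)$, then the lifting statement. The lifting statement will be reduced to the definable case, \Cref{prop:lifting-definable}, by a compactness-style argument along the inverse system.

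\textbf{The identification.} Let $D=\varprojlim_{i\in I} D_i$ with transition maps $\pi_{ij}$ and projections $\pi_i$.

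A global definable type on $D$ is a small consistent collection of formulas, each involving finitely many coordinates. Hence it is determined by its pushforwards $(\pi_i)_*p$. Each pushforward is definable, with definitions inherited from those of $p$. The pushforwards are compatible under the maps $(\pi_{ij})_*$.

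Conversely, given a compatible family $(p_i)_i$ of definable types, take the union of the $p_i$, pulled back along the projections. This is finitely consistent, because finitely many indices are dominated by a single index $k$ and $p_k$ already decides the relevant formulas. Since $I$ is directed, the union is complete. It is definable over the union of the (small) definition parameters of the $p_i$. This gives the displayed equality, and also shows why extra small parameters must be allowed.

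\textbf{The lifting statement.} Present the surjective pro-definable map $f\colon D\to E$ as a morphism of inverse systems. After reindexing (the standard trick of \cite[\S2.2]{HruLoe}), we may take $D=\varprojlim_j D_j$ and $E=\varprojlim_j E_j$ with $f$ induced by definable maps $f_j\colon D_j\to E_j$. Surjectivity of $f$ does not automatically make each $f_j$ surjective. So we replace $D_j$ and $E_j$ by the images $\pi_j(D)$ and $\pi_j(E)$. These are only $\bigvee$-definable in general, so we instead use the relatively definable images obtained by saturation and compactness. Then every $f_j$ is surjective onto the relevant piece.

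Given $q=(q_j)_j$ on $E$, consider the set $L_j$ of definable lifts $r$ of $q_j$ along $f_j$ that are compatible with the lifts at lower indices. By \Cref{prop:lifting-definable} each $L_j$ is nonempty. We want an element of $\varprojlim_j L_j$. The first identification then turns such a thread into a global definable type $p$ on $D$ with $f_*p=q$.

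\textbf{The main obstacle.} The difficulty is nonemptiness of this inverse limit: the sets $L_j$ are not finite, so a naive choice at each level can fail to extend.

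I would try strict pro-definability of $S^{\mathrm{def}}(D_j)$, from \Cref{prop:lifting-definable}. Under it, each $L_j$ becomes a nonempty pro-definable set: it is cut out by the definitions of $q_j$ and the condition $(f_j)_*r=q_j$. The family $(L_j)_j$ forms a small inverse system of nonempty pro-definable sets over small parameters. By saturation of $\cU$, the limit of such a system is nonempty, since any finite subsystem is witnessed by a lift at a dominating index.

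If this compactness step fails directly, the fallback is to build the lift by transfinite recursion along a cofinal well-ordered chain of indices. At each step we use surjectivity transfer for the fibred map $D_{j+1}\to D_j\times_{E_j}E_{j+1}$, applied to the type already built. This again reduces to \Cref{prop:lifting-definable}. Limit stages are handled by the union argument from the identification part.
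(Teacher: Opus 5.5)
Your main argument is the paper's: the sets $L_j$ of lifts of $q_j$ along $f_j$ are nonempty and pro-definable by \Cref{prop:lifting-definable}, and compactness produces a compatible thread, which the first part turns into the desired lift; finite consistency is checked by lifting at a dominating index and pushing down. Two small clean-ups: define $L_j$ simply as all lifts of $q_j$, imposing compatibility afterwards in the compactness step, and do not replace the stages (the images $\pi_j(D),\pi_j(E)$ are type-definable, not $\bigvee$-definable); just note that $\pi_j(E)=f_j(\pi_j(D))\subseteq f_j(D_j)$, a definable set on which $q_j$ concentrates, so the transfinite fallback is never needed.
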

\begin{proof}

Every formula involves finitely many coordinates. A compatible family
of complete definable types on the stages therefore determines a
complete definable type on the inverse limit. The union of their
definition parameters is small.

For the lifting assertion, choose compatible presentations
$D=\varprojlim_iD_i$, $E=\varprojlim_iE_i$, and
$f=\varprojlim_i f_i$, with transition maps $\lambda_{ji}$ and
$\rho_{ji}$. Write $q_i=(\pi_i^E)_*q$.
The stage maps $f_i$ need not be surjective. Nevertheless,
surjectivity of $f$ gives
$ \pi_i^E(E)\subseteq f_i(D_i).$
Since the right-hand side is definable, $q_i$ concentrates on it.
By \Cref{prop:lifting-definable}, the pro-definable set $L_i=\{p_i\in S^{\mathrm{def}}(D_i):(f_i)_*p_i=q_i\}$
is nonempty. The conditions $p_i\in L_i$ and
$(\lambda_{ji})_*p_j=p_i$ are finitely consistent: lift at a stage
dominating the finitely many indices and push the lift down.
Compactness gives a
compatible family $(p_i)_i$. The corresponding type $p$ satisfies
$f_*p=q$.
\end{proof}

\subsubsection{Limits, boundedness and definable compactness}

We use Hrushovski and Loeser's notion of definable compactness
\cite[Section~4.1]{HruLoe}. All topological spaces considered below
have a basis of relatively definable open sets.

\begin{definition}\label[definition]{def:limit-def-compact}
Let $Z$ be a definable or pro-definable topological space, and let $p$ be a global definable type concentrating on $Z$. A point $a\in Z$ is a \emph{limit} of $p$, written $a=\lim p$, if every relatively definable open neighbourhood of $a$, with parameters from $\cU$, belongs to $p$. We say $Z$ is \emph{definably compact} if every such global definable type has a limit in $Z$. The parameters defining $p$ need not lie in the parameter set defining $Z$.
\end{definition}

\begin{definition}\label[definition]{def:bounded-sets}
\leavevmode
\begin{enumerate}
\item Let $A$ be a set, and  $Y\subseteq\Gamma_\infty^A$ is \emph{bounded} if $Y\subseteq[\beta,\infty]^A$ for some $\beta\in\Gamma$.
\item $Y\subseteq H_n^\ell$ is \emph{bounded} if $\bar v^{\,\ell}(Y)$ is bounded in $\Gamma_\infty^\ell$; for $\ell=1$ this agrees with \Cref{def:bounded-Hn}.
\item For a fixed closed affine embedding $X\subseteq\mathbb A^m$, a set $D\subseteq X$ is \emph{bounded} if $v(x_i)\geq\beta$ for all $x\in D$ and $i\leq m$, for some $\beta\in\Gamma$. This is independent of the closed affine embedding, since polynomial functions on a bounded set have valuations bounded below.
\item $Z\subseteq\widehat X$ is \emph{bounded} if $Z\subseteq\widehat D$ for some bounded definable $D\subseteq X$.
\end{enumerate}
\end{definition}

\begin{lemma}\label{lem:defcompact-generalities}
Let $Z,Z'$ be definable or pro-definable topological spaces.
\begin{enumerate}
\item If a type concentrates on a closed pro-definable subset $C\subseteq Z$
and has a limit in $Z$, then that limit belongs to $C$.
\item If $h:Z\to Z'$ is continuous and pro-definable and $\lim p=a$,
then $\lim h_*p=h(a)$.
 \item If $Z=Z_1\cup\dots\cup Z_k$ with each $Z_i$ closed and pro-$K$-definable, then $Z$ is definably compact if and only if each $Z_i$ is.
\end{enumerate}
\end{lemma}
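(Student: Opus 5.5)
The three items are general topology adapted to definable types, and I would prove each directly from \Cref{def:limit-def-compact}, using that all spaces have a basis of relatively definable open sets.

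For (1), suppose $p$ concentrates on the closed pro-definable set $C$ and $a=\lim p\in Z$, but $a\notin C$. Since $Z\setminus C$ is open and the space has a basis of relatively definable opens, there is a relatively definable open $U$ with $a\in U\subseteq Z\setminus C$. By the definition of limit, $U\in p$. As $p$ concentrates on $C$, $p$ contains the (partial) type defining $C$, so $U\cap C=\emptyset$ is contradictory with $p$ being consistent. The one thing to check is the passage from "$U\cap C=\emptyset$" to an inconsistency within finitely many formulas. This follows by compactness: $C$ is an intersection of relatively definable sets, and $U\cap C=\emptyset$ in the saturated $\cU$ forces $U\cap C_0=\emptyset$ for some single relatively definable $C_0\supseteq C$ in the presentation, with $C_0\in p$.

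For (2), let $V$ be a relatively definable open neighbourhood of $h(a)$ in $Z'$. Then $h^{-1}(V)$ is open by continuity, and it is relatively definable because $h$ is pro-definable, with only finitely many coordinates involved in $V$. It contains $a$, so $h^{-1}(V)\in p$. By the definition of pushforward, $h^{-1}(V)\in p$ is exactly $V\in h_*p$. Hence $h(a)=\lim h_*p$; this step is routine.

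For (3), I would prove the two directions separately.
\begin{itemize}
\item Left to right: suppose $Z$ is definably compact. A global definable type $p$ on $Z_i$ is also a type on $Z$, so it has a limit $a\in Z$. By (1), since $Z_i$ is closed, $a\in Z_i$. Limits computed in $Z_i$ and in $Z$ agree: a relatively definable open of $Z_i$ is of the form $U\cap Z_i$ with $U$ open in $Z$, and $U\cap Z_i\in p$ iff $U\in p$, because $p$ concentrates on $Z_i$.
\item Right to left: let $p$ be a global definable type on $Z$. The $Z_i$ are pro-$K$-definable, hence their union is described by a disjunction. The delicate point, and the main obstacle, is showing that $p$ concentrates on some $Z_i$: a complete type containing an infinite partial type for a union must pick one piece. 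I would argue by contradiction. If for each $i$ some relatively definable $D_i\supseteq Z_i$ has $\neg D_i\in p$, then $\bigwedge_i\neg D_i\in p$, while $Z\subseteq\bigcup_i D_i$. This contradicts $p$ concentrating on $Z$, again by compactness applied to the presentation of $Z$. So $p$ concentrates on some $Z_i$, where it has a limit by assumption, and that limit is a limit in $Z$ by the same comparison of neighbourhoods as in the first direction.
\end{itemize}
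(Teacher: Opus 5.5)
Your proposal is correct and follows essentially the same route as the paper's proof: a relatively definable neighbourhood of the limit disjoint from $C$ for (1), pulling back neighbourhoods along $h$ for (2), and for (3) item (1) in one direction and completeness of $p$ to select a piece $Z_i$ in the other, with the compactness steps the paper leaves implicit now made explicit. The one small imprecision is in comparing limits in $Z_i$ and in $Z$: a relatively definable open of $Z_i$ is in general only the trace of an open (not necessarily relatively definable) subset of $Z$, so "$U\in p$" is not yet meaningful. You should first shrink to a basic relatively definable neighbourhood of the limit point inside it, which suffices because only neighbourhoods of that point matter.
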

\begin{proof}
For (1), a limit outside $C$ would have a relatively definable open neighbourhood disjoint from $C$. For (2), pull back neighbourhoods under $h$. For $(3)$ assume first that $Z$ is definably compact. Let $p$ be a definable type concentrating in one of the $Z_{i}$, since $Z$ is definably compact its limit $\mathrm{lim}(p) \in Z$. As $Z_{i}$ is closed, by $(1)$, $\mathrm{lim}(p) \in Z_{i}$. Thus each $Z_{i}$ is definably compact. For the converse, let $p$ be a definable type concentrating on $Z$. Since the type is complete there is some $i$ such that $p \vdash x \in Z_{i}$, since the later is definably compact $\mathrm{lim}(p) \in Z_{i} \subseteq Z$.
\end{proof}

We finish this section by recalling a construction for definable types that does not depend
on the valued-field theory.

\begin{definition}\label{def: integral} Let $R$ be pro-definable, let $r$ be a
global definable type on $R$, and let $\mathscr H(w)$ be a pro-definable
family of global definable types on $D$, indexed by $w\in R$.
Here a pro-definable family means that, for every formula
$\varphi(x;y)$, a relatively definable relation $\chi_\varphi(w;y)$
expresses
$\varphi(x;e)\in\mathscr H(w)
\ \text{if and only if}\ \chi_\varphi(w;e).$\\
The \emph{integral} $\int_r\mathscr H$ is defined by
\[
 \varphi(x;e)\in\int_r\mathscr H
 \quad\text{if and only if}\quad
 \chi_\varphi(w;e)\in r.
\]

\end{definition}
\begin{remark}\label{rmk:propertiesintegral} The type defined in \cref{def: integral} is a complete consistent type. Completeness follows because these relations respect negation.
For consistency, finitely many conditions from $r$ can be realized by
one index $w$, and the corresponding formulas are jointly consistent
in $\mathscr H(w)$. Applying the definitions of $r$ to
$\chi_\varphi$ proves definability over the combined definition
parameters. Equivalently, over a small model $N$ containing those
parameters, choose $w^*\models r\rest_N$ and then
$d^*\models\mathscr H(w^*)\rest_{Nw^*}$. The integral restricts to
$\tp(d^*/N)$.
\end{remark}
We will use this construction  of \cref{def: integral} first for the family
assigning to each point of $\widehat X$ its represented field type.

\subsection{The value group and the hyperfields \texorpdfstring{$H_n$}{Hn}}
\begin{remark}\label{rmk:noteveryvaluegroupworks} Some restrictions on the value group are necessary to obtain a Heine-Borel-characterization.
We refer the reader to \cite{poly} for an introduction to poly-regular groups. In \cite[Proposition $5.1$]{dpminimal}, it is shown that for $\Gamma$ a pure ordered abelian group it is $dp$-minimal if and only if $[\Gamma:n\Gamma]< \infty$ for all $n \in \mathbb{N}$.
By quantifier elimination for poly-regular groups, any type is fully determined by a cut along with the cosets of $n\Gamma+ \Delta_{i}$ where $\Delta_{i}$ are the finitely many possible convex subgroups of $\Gamma$.\\
Let $\Gamma(K)$ be a dense, regular, dp-minimal and  non-divisible ordered abelian group. Let $\beta \in \Gamma(K)$ be such that $\beta> 0$ is not $n$-divisible. Consider the partial type, $q(x)=\{ x > \gamma \ | \ \Gamma(K) \models n\gamma < \beta\}$. By quantifier elimination and dp-minimality any completion of such type is definable. This type concentrates in the bounded and closed set $[0, n\beta]$ but it does not have a limit since its limit in the divisible hull is $\frac{\beta}{n} \notin \Gamma(K)$. Likewise, if $\Gamma$ is poly-regular then it has an $\emptyset$-definable convex subgroup $\Delta$. Any completion of the type $q(x)$ stating that $x \in \Delta$ and $x> \beta$ for all $\beta \in \Delta(\Gamma(K))$ is definable. However, it does not have a limit in $\Gamma(K)$, even though it concentrates in the definable closed subset $[0,\alpha]$ (where $\alpha \in \Gamma(K)$ and $\alpha+\Delta> \Delta$. )
\end{remark}

In view of \cref{rmk:noteveryvaluegroupworks}, we will assume that $\Gamma$ is either a divisible group or a $\ZZ$-group. In the latter case, the topology on $\Gamma$ is discrete and and the intervals $(h,\infty]$ form a neighborhood basis at $\infty$.

\begin{notation}\label[notation]{not:gamma-types}
We use the following global definable types on $\Gamma$.
Their completeness and definability follow from quantifier elimination:
a one-type is determined by its cut; in the $\ZZ$-group case, we also need its
congruence classes modulo each positive integer.
\begin{itemize}
\item $\tau_{\infty}$ (\emph{at $+\infty$}): the conditions $\{x>\gamma : \gamma\in\Gamma\}$, together with $\{x\equiv 0\bmod m : m\geq2\}$ in the $\ZZ$-group case; and symmetrically $\tau_{-\infty}$ (\emph{at $-\infty$}), with $x<\gamma$ throughout.
\item In the divisible case, for $\gamma\in\Gamma$ the two cut-types $\tau_{\gamma^{+}}$ and $\tau_{\gamma^{-}}$, concentrating just above, resp.\ just below, $\gamma$.
\end{itemize}
\end{notation}

Quantifier elimination also gives the following description.

\begin{fact}\label[fact]{fact:one-types-gamma}
Every definable one-type $q$ on $\Gamma$ is either realized in $\Gamma$, has cut at $+\infty$ or $-\infty$, or, in the divisible case, is of the form $\tau_{\gamma^{+}}$ or $\tau_{\gamma^{-}}$ for a unique $\gamma\in\Gamma$. In the $\ZZ$-group case, types at either infinite cut may have different compatible congruence classes. Every definable one-type on $\Gamma_\infty$ concentrating on a bounded subset of $\Gamma_\infty$ has a unique limit in $\Gamma_\infty$.
\end{fact}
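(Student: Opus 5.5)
The plan is to derive everything from quantifier elimination for the pure ordered abelian group $\Gamma$: $\mathrm{DOAG}$ in the divisible case, $\mathrm{PRES}$ in the $\ZZ$-group case. Both theories are stably embedded by \Cref{prop:QE}. By quantifier elimination, a complete one-type $q$ over $\cU$ on $\Gamma$ is determined by the cut it induces on $\Gamma(\cU)$ and, in the $\ZZ$-group case, by its congruence classes modulo each $m\geq2$.

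\textbf{Classification of the cuts.} Let $q$ be a definable one-type with cut $(L,R)$, where $L=\{\gamma:\gamma<x\in q\}$. By definability, $L$ is a $\cU$-definable subset of $\Gamma$. A definable subset of a model of $\mathrm{DOAG}$ is a finite union of points and intervals, so a definable initial segment has the form $\emptyset$, $\Gamma$, $(-\infty,\gamma)$ or $(-\infty,\gamma]$. In the Presburger case, a definable initial segment is $\emptyset$, $\Gamma$ or $(-\infty,\gamma]$, since every nonempty bounded-above definable set has a maximum. The cases then sort as follows.
\begin{itemize}
\item If $L=\emptyset$ or $L=\Gamma$, the cut is at $-\infty$ or $+\infty$.
\item If the cut is at a point $\gamma$ and $q$ is not realized, then in the divisible case $q$ is $\tau_{\gamma^-}$ or $\tau_{\gamma^+}$. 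These are complete because the cut determines the type.
\item In the $\ZZ$-group case, a cut $(-\infty,\gamma]\mid(\gamma,\infty)$ is forced to be realized by $\gamma+1$, by discreteness.
\end{itemize}
Uniqueness of $\gamma$ is clear from the cut. For the infinite cuts in the $\ZZ$-group case, the congruence data are not determined by the cut. Any compatible system of residues gives a complete type, and it is definable because the cut and the congruences are each definable; this accounts for the remark about differing congruence classes.

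\textbf{Limits.} Let $q$ concentrate on a bounded set $[\beta,\infty]$. This rules out the cut at $-\infty$. The subbasic neighbourhoods in $\Gamma_\infty$ are order-intervals, refined by cosets of $n\Gamma$ in the $\ZZ$-group case (where the topology is discrete), together with the rays $(\gamma,\infty]$ at $\infty$. The candidate limit in each case is:
\begin{itemize}
\item if $q$ is realized at $\gamma$, including $\gamma=\infty$, the limit is $\gamma$;
\item if the cut is at $+\infty$ (either case), or $q$ concentrates on $\infty$, the limit is $\infty$, since every ray $(\gamma,\infty]$ lies in $q$;
\item for $\tau_{\gamma^\pm}$ in the divisible case, the limit is $\gamma$, since every open interval around $\gamma$ contains a one-sided interval $(\gamma,\gamma+\epsilon)$ or $(\gamma-\epsilon,\gamma)$ belonging to $q$.
\end{itemize}
Uniqueness follows from Hausdorffness of $\Gamma_\infty$, because disjoint neighbourhoods of two limits cannot both lie in $q$.

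The main subtlety I expect is in the $\ZZ$-group case with the coset-refined topology at $\infty$. There I must check that the neighbourhood basis at $\infty$ is only the rays, so that the congruence data of a type at $+\infty$ impose no obstruction to converging to $\infty$. This matches the paper's convention stated just before \Cref{not:gamma-types}. I must also rule out non-realized one-types at finite cuts, which is where discreteness of $\ZZ$-groups is essential.
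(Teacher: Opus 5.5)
Your proposal is correct and takes the same route as the paper, which simply invokes quantifier elimination for the pure value group ($\mathrm{DOAG}$ or $\mathrm{PRES}$, transferred via stable embeddedness of $\Gamma$ from \Cref{prop:QE}) and leaves the case analysis implicit. Your classification of the definable initial segments, the discreteness argument excluding unrealized finite cuts for $\ZZ$-groups, and the case-by-case check of limits (using that the rays $(\gamma,\infty]$ form a neighbourhood basis of $\infty$) supply exactly those omitted details.
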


\begin{definition}Let $A$ be a set and $b\in\Gamma_\infty^A(K)$. Let $A_\infty$ be the coordinates of $b$ equal to $\infty$, and consider the standard neighborhood basis $\{W_t\}_{t\in I}$ of $b$, where: in the $\ZZ$-group case $I=\Gamma$ and $W_t$ has $(t,\infty]$ at the $\infty$-coordinates and $\{b_a\}$ elsewhere. In the divisible case take $I=\Gamma\times\Gamma_{>0}$ and $W_{(\gamma,\epsilon)}$ has $(\gamma,\infty]$  at the $\infty$-coordinates, and $(b_a-\epsilon,b_a+\epsilon)$ elsewhere. %Let $\nu_b$ be the $K$-definable type on $I$ obtained by pushing $\tau_\infty$ (resp.\ $\tau_\infty\otimes\tau_{0^+}$) forward under the diagonal map $\gamma\mapsto(\gamma,\ldots,\gamma)$.
\end{definition}
For a definable family of neighbourhoods $(W_t)_{t\in I}$, a global
definable type $\nu$ on $I$ is called \emph{cofinal} if it contains
$\{t:W_t\subseteq W_{t_0}\}$ for every fixed $t_0\in I$. This expresses
that the neighbourhoods shrink along the index type. 

\begin{lemma}\label{lem:cofinal-gamma-family}
Let $V\subseteq\Gamma_\infty^A$ be a pro-definable set and $b\in\Gamma_\infty^A$. Then, there is a definable neighbourhood basis
$\{W_t:t\in I\}$ and a global definable type $\nu_b$ on $I$ such that $\{t:W_t\subseteq W_{t_0}\}\in\nu_b$ for all $t_0\in I$. Moreover, if $b\in\operatorname{cl}(V)$ (for the product topology), then there is a global definable type $q$ on $V$ with $\lim q=b$. 
\end{lemma}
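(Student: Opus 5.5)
The plan is to treat the two assertions separately: first build an explicit cofinal index type for the standard basis of the definition preceding the lemma, then use it to integrate a family of definable types on $V$ witnessing that $b$ lies in the closure.

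\emph{First assertion.} Take $\{W_t\}_{t\in I}$ to be the standard basis of $b$ from the preceding definition. In the $\ZZ$-group case $I=\Gamma$, and I would let $\nu_b:=\tau_\infty$ from \Cref{not:gamma-types}. Then $W_t\subseteq W_{t_0}$ holds as soon as $t\geq t_0$, and $\tau_\infty$ contains $t>t_0$ for every $t_0$. In the divisible case $I=\Gamma\times\Gamma_{>0}$, and I would take $\nu_b:=\tau_\infty\otimes\tau_{0^+}$. This is the definable type of a pair $(\gamma,\epsilon)$ with $\epsilon\models\tau_{0^+}$ and $\gamma\models\tau_\infty$ over $\epsilon$; it is definable because both factors are definable and tensor products of definable types are definable. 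The relation $W_{(\gamma,\epsilon)}\subseteq W_{(\gamma_0,\epsilon_0)}$ holds whenever $\gamma\geq\gamma_0$ and $\epsilon\leq\epsilon_0$. Both conditions lie in $\nu_b$, so $\nu_b$ is cofinal.

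The parameters here are harmless. The sets $W_t$ are relatively definable, since they constrain only finitely many coordinates in each formula: for infinite $A$, the product-topology basis should be taken with finitely many constrained coordinates. So I would refine the index set to $I\times\mathcal P_{\mathrm{fin}}(A)$, or index by finite subsets $A_0$ and pass to the limit. Concretely, I would only require each formula to mention finitely many coordinates, and take the cofinal type coordinatewise on finite subproducts. By \Cref{prop:lifting-prodefinable}, these compatible definable types glue to a definable type on the inverse limit.

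\emph{Second assertion.} Suppose $b\in\mathrm{cl}(V)$. For every $t\in I$ the relatively definable set $V\cap W_t$ is nonempty. I would pick a definable family of definable types $\mathscr H(t)$ concentrating on $V\cap W_t$ and then set $q:=\int_{\nu_b}\mathscr H$, using \Cref{def: integral} and \Cref{rmk:propertiesintegral}. The simplest choice is a realized type, $\mathscr H(t)=\tp(s(t)/\cU)$ for a pro-definable choice function $s\colon I\to V$ with $s(t)\in W_t$. Since such a selection may not exist in general, I would instead apply \Cref{prop:lifting-prodefinable} to the surjective pro-definable projection $\{(t,y):y\in V\cap W_t\}\to I$. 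This gives a global definable type $\tilde q$ lifting $\nu_b$, and I would set $q$ to be its pushforward to $V$. It is a global definable type on $V$.

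To prove $\lim q=b$, fix a relatively definable open neighbourhood $O$ of $b$. Choose $t_0$ with $W_{t_0}\subseteq O$. Cofinality puts $t\in\{t:W_t\subseteq W_{t_0}\}$ into $\nu_b$, so $\tilde q$ contains $y\in W_t\subseteq W_{t_0}\subseteq O$. Hence $O\in q$.

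\emph{Main obstacle.} I expect the delicate step to be applying lifting when $A$ is infinite and $V$ is only pro-definable. The fibered set must be presented as a pro-definable surjection onto $I$, and its stage maps need not be surjective; \Cref{prop:lifting-prodefinable} is exactly what handles this. One must also check that the neighbourhood basis uses finitely many coordinates, which is how I would treat the infinite-$A$ case in the first assertion.
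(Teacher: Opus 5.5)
Your first assertion is the paper's argument: $\nu_b=\tau_\infty$ in the $\ZZ$-group case and $\tau_\infty\otimes\tau_{0^+}$ in the divisible case. For the second assertion, the paper only writes $q=\int_{\nu_b}W$ and leaves the family of types on the fibres implicit. Your version makes the same idea explicit: lift $\nu_b$ along $P=\{(t,y):y\in V\cap W_t\}\to I$ via \Cref{prop:lifting-prodefinable}, then push forward to $V$. This is also how the paper argues in \Cref{lem:gamma-curve-selection} and \Cref{lem:level-images-closed}.

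One step needs repair when $A$ is infinite. First, drop the refinement to $I\times\mathcal P_{\mathrm{fin}}(A)$. It is not a definable index set, so it carries no definable type. Moreover, any family constraining only a fixed finite $A_0\subseteq A$ is not cofinal for neighbourhoods that constrain a coordinate outside $A_0$.

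Instead, keep $W_t$ constraining all coordinates, as in the paper's definition. Then $W_t$ is pro-definable, but it is neither relatively definable nor open. Consequently $V\cap W_t\neq\emptyset$, which is exactly the surjectivity of $P\to I$ that your lifting step needs, does not follow directly from $b\in\operatorname{cl}(V)$.

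It does follow by saturation. Every finite subfamily of the coordinate conditions defining $W_t$ is a basic open neighbourhood of $b$: finite points are isolated in the $\ZZ$-group case, intervals are used in the divisible case, and rays $(h,\infty]$ are used at $\infty$-coordinates. Each such neighbourhood meets $V$. Hence the small partial type expressing $y\in V\cap W_t$, over $t$ and the parameters of $V$, is finitely satisfiable.

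With that fixed, your limit computation goes through. Every relatively definable open $O\ni b$ contains a box in finitely many coordinates, hence contains some $W_{t_0}$, and cofinality of $\nu_b$ then puts $O$ into $q$.
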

\begin{proof}
In the $\ZZ$-group case take $\nu_b=\tau_\infty$ and in the divisible case take the tensor product $\tau_\infty\otimes\tau_{0^+}$, where of $h\models\tau_\infty$ and
$\varepsilon\models\tau_{0^+}$. Increasing $h$ and decreasing
$\varepsilon$ refines every fixed member of the basis. For the second part, take $q=\int_{\nu_b} W$, where $W = (W_t)_{t\in I}$. 
\end{proof}

\begin{lemma}\label{lem:bounded-target-limits}
Every global definable type on a bounded pro-definable subset $Y$ of $\Gamma_\infty^A$
or $H_n^A$ has a unique limit in $\Gamma_\infty^A$
or $H_n^A$ respectively.
\end{lemma}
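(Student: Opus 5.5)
We reduce to coordinates and apply \Cref{fact:one-types-gamma} coordinatewise. Let $p$ be a global definable type concentrating on $Y\subseteq[\beta,\infty]^A$ (resp.\ on $Y\subseteq H_n^A$ with $\bar v^{A}(Y)\subseteq[\beta,\infty]^A$). Since the topology on $\Gamma_\infty^A$ and on $H_n^A$ is the product topology, and every relatively definable open set involves only finitely many coordinates, a point $a$ is a limit of $p$ if and only if, for each coordinate $i\in A$, $a_i$ is a limit of the pushforward $(\pi_i)_*p$. This reduces existence to $A$ a singleton. For uniqueness, we use that $\Gamma_\infty$ and $H_n$ are Hausdorff (\Cref{rmk:topologyincoordinates}) and that the separating open sets can be taken relatively definable. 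Two distinct limits would give disjoint relatively definable neighbourhoods, both in $p$, which is a contradiction.

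\emph{Case $\Gamma_\infty$.} Here $(\pi_i)_*p$ is a definable one-type on $\Gamma_\infty$ concentrating on $[\beta,\infty]$. By \Cref{fact:one-types-gamma} it has a unique limit. To spell this out: if it is realized at $\gamma$, the limit is $\gamma$. If it is $x=\infty$, the limit is $\infty$. If its cut is at $+\infty$, every basic neighbourhood $(\gamma,\infty]$ of $\infty$ is in the type, so the limit is $\infty$. The cut at $-\infty$ is excluded by the lower bound $\beta$. In the divisible case, $\tau_{\gamma^\pm}$ has limit $\gamma$, since every interval around $\gamma$ contains a one-sided piece. In the $\mathbb Z$-group case there are no other cuts to consider.

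\emph{Case $H_n$.} Here we use the finite closed cover $H_n=\bigcup_{g\in G_n^\times}H_{n,g}$ of \Cref{rmk:topologyincoordinates}, where each $H_{n,g}$ is definably homeomorphic to $\Gamma_\infty$ via $\theta_g$ with $\bar v\circ\theta_g=\mathrm{id}$. Since $G_n^\times$ is finite (bm-field hypothesis) and $p$ is complete, the pushforward $q=(\pi_i)_*p$ concentrates on a single $H_{n,g}$. Then $(\theta_g^{-1})_*q$ is a definable type on $\Gamma_\infty$ concentrating on $[\beta,\infty]$, because $\bar v$ is bounded below on $Y$. By the first case it has a limit $\gamma$. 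Applying \Cref{lem:defcompact-generalities}(2) to the continuous definable map $\theta_g$ gives $\lim q=\theta_g(\gamma)\in H_{n,g}\subseteq H_n$. Note that the limit is computed in $H_{n,g}$, but a neighbourhood in $H_n$ intersected with the closed set $H_{n,g}$ is a neighbourhood there, so $\theta_g(\gamma)$ is also a limit in $H_n$.

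\emph{Reassembling.} Set $a=(a_i)_{i\in A}$ with $a_i=\lim(\pi_i)_*p$. A basic open neighbourhood of $a$ is a finite intersection $\bigcap_{j}\pi_{i_j}^{-1}(U_j)$ with each $U_j\ni a_{i_j}$ open. Each $\pi_{i_j}^{-1}(U_j)$ lies in $p$, so their finite intersection does as well, and hence $a=\lim p$.

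The main obstacle is bookkeeping rather than substance. One must check that a relatively definable neighbourhood in the pro-definable setting can be shrunk to a basic product neighbourhood that is still definable. In the $\mathbb Z$-group case one must also observe that the coset-fibre opens of Type B cause no trouble: the fibre $H_{n,g}$ together with the value coordinate already determine the coset of $n\Gamma$, which is constant near a finite limit because $\Gamma$ is discrete.
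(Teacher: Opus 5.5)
Your proof is correct and follows essentially the paper's argument. You apply \Cref{fact:one-types-gamma} coordinatewise on $\Gamma_\infty^A$, and for $H_n$ you restrict to a piece of the closed cover of \Cref{rmk:topologyincoordinates}, pull back along $\theta_g$ and push the limit forward. The only differences are minor: you reduce to a single coordinate before using the cover, which handles infinite $A$ directly where the paper uses the tuple cover $H_{n,\mathbf g}$ of $H_n^\ell$, and you get uniqueness from Hausdorffness with relatively definable separating neighbourhoods rather than from \Cref{lem:defcompact-generalities}.
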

\begin{proof}
For $\Gamma_\infty^A$, each coordinate type has a limit by
\Cref{fact:one-types-gamma}, and finite intersections of coordinate
neighbourhoods show convergence to the tuple of these limits.
For $H_n^\ell$, a complete type concentrates on one member
$H_{n,\mathbf g}$ of the finite closed cover in
\Cref{rmk:topologyincoordinates}. Pull it back under the definable
homeomorphism $\theta_{\mathbf g}$, apply the value-group case,
and push the limit forward. These homeomorphisms preserve boundedness.
Uniqueness follows from \Cref{lem:defcompact-generalities}(3).
\end{proof}

The following is the Heine-Borel characterization for the value group. 

\begin{proposition}\label{prop:heine-borel-gamma}
Let $Y\subseteq\Gamma_\infty^A$ be a pro-definable set. Then $Y$ is definably compact if and only if it is closed and bounded. 
\end{proposition}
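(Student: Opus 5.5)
We prove the two directions separately, in each case reducing to the tools already in place: \Cref{lem:bounded-target-limits} for the existence of limits, \Cref{lem:cofinal-gamma-family} for building types that converge to points of the closure, and \Cref{lem:defcompact-generalities}(1) for locating limits inside closed sets.

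\emph{Closed and bounded implies definably compact.} Let $Y\subseteq\Gamma_\infty^A$ be closed and bounded, and let $p$ be a global definable type concentrating on $Y$.
\begin{itemize}
\item Since $Y$ is bounded, $p$ concentrates on a bounded pro-definable subset of $\Gamma_\infty^A$.
\item By \Cref{lem:bounded-target-limits}, $p$ therefore has a unique limit $a\in\Gamma_\infty^A$.
\item Since $Y$ is closed and $p$ concentrates on $Y$, \Cref{lem:defcompact-generalities}(1) gives $a\in Y$.
\end{itemize}
Hence $Y$ is definably compact.

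\emph{Definably compact implies closed.} Let $Y$ be definably compact and let $b\in\operatorname{cl}(Y)$.
\begin{itemize}
\item By the second part of \Cref{lem:cofinal-gamma-family}, applied with $V=Y$, there is a global definable type $q$ on $Y$ with $\lim q=b$.
\item By definable compactness, $q$ has a limit $a\in Y$.
\item Limits in $\Gamma_\infty^A$ are unique because the space is Hausdorff. Indeed, two distinct points have disjoint relatively definable neighbourhoods, and a complete type cannot contain both.
\end{itemize}
So $b=a\in Y$, and $Y$ is closed.

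The one point needing care is that $\int_{\nu_b}W$ really concentrates on $Y$. For this, the family should be taken as $t\mapsto$ a definable type on $Y\cap W_t$ rather than on $W_t$ itself. Such types exist because $b\in\operatorname{cl}(Y)$ makes each $Y\cap W_t$ nonempty. To choose them uniformly in $t$, use realized types in $Y\cap W_t$ picked via definable choice in the pro-definable setting, or integrate the generic types given by \Cref{prop:lifting-prodefinable}. I take this uniformity as the content of \Cref{lem:cofinal-gamma-family}.

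\emph{Definably compact implies bounded.} Suppose $Y$ is definably compact but unbounded, so for every $\beta\in\Gamma$ some point of $Y$ has a coordinate below $\beta$.
\begin{itemize}
\item Unboundedness is witnessed at one of finitely many coordinates on a relatively definable piece, using compactness of the pro-definable presentation. Pushing forward along that coordinate reduces to a single coordinate $a\in A$ with $\pi_a(Y)\subseteq\Gamma_\infty$ unbounded below.
\item Consider the family $Y_h=\{y\in Y:y_a<h\}$, indexed by $h\in\Gamma$; each $Y_h$ is nonempty. Integrate over $\tau_{-\infty}$, as in \Cref{def: integral}, a uniformly chosen definable type on $Y_h$. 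This gives a global definable type $p$ on $Y$ whose pushforward to coordinate $a$ contains $x<\gamma$ for every $\gamma\in\Gamma$.
\item By definable compactness, $p$ has a limit $c\in Y$. By \Cref{lem:defcompact-generalities}(2), applied to the continuous projection $\pi_a$, $(\pi_a)_*p$ then has limit $c_a\in\Gamma_\infty$.
\item This is impossible: $(\pi_a)_*p$ contains $x<c_a-1$ when $c_a\in\Gamma$ and $x<0$ when $c_a=\infty$, so it avoids a neighbourhood of $c_a$.
\end{itemize}
Hence $Y$ is bounded.

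The main obstacle, in both the closedness and the boundedness steps, is producing a definable type on a nonempty pro-definable set uniformly in the parameter $t$ or $h$, so that the integral is well defined. I would handle this by applying \Cref{prop:lifting-prodefinable} to the projection from $\{(t,y):y\in Y\cap W_t\}$ onto the index set. This projection is surjective, so the index type $\nu_b$ (respectively $\tau_{-\infty}$) lifts to a definable type on the total space. Pushing that lift forward to $Y$ yields the required $q$ (respectively $p$) directly, and no integral is needed.
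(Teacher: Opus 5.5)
Your proof is correct. The direction closed and bounded $\Rightarrow$ definably compact is the paper's argument (\Cref{lem:bounded-target-limits} plus \Cref{lem:defcompact-generalities}(1)); the other two steps differ from what the paper writes.

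\emph{Boundedness.} The paper works on the one-variable projection $Y_a$. It uses o-minimality in the divisible case, and Cluckers' Presburger cell decomposition in the $\ZZ$-group case, to get a definable type at $-\infty$ on $Y_a$. It then pushes this type to $Y$ along a section $s\colon Y_a\to Y$ obtained from definable choice. You instead lift $\tau_{-\infty}$ along the surjection $\{(h,y)\in\Gamma\times Y: y_a<h\}\to\Gamma$ using \Cref{prop:lifting-prodefinable}. This is the device the paper itself uses for $\widehat X$ in \Cref{prop:defcompact-bounded}. Your route treats both cases uniformly and needs neither cell decomposition nor definable choice. This matters because definable choice is delicate here: $Y$ may be pro-definable with infinitely many coordinates, which requires compatible choices at every stage, and $Y_a$ is only type-definable. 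The price is that your route depends on surjectivity transfer, i.e.\ on (B) and on the $\mathrm{DOAG}/\mathrm{PRES}$ results of \cite{beautiful}. The paper's argument is elementary when $A$ is finite and $Y$ definable.

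\emph{Closedness.} The paper's written proof stops after boundedness and never shows closedness. The later appeal in \Cref{prop:heine-borel-Hn} to a ``Claim'' in this proof suggests that step was intended but is missing. Your argument via \Cref{lem:cofinal-gamma-family} and Hausdorff uniqueness of limits supplies it. Your repair of $\int_{\nu_b}W$, which as written integrates a family of sets rather than types, is also correct: lifting $\nu_b$ along $\{(t,y): y\in Y\cap W_t\}\to I$ is the mechanism of \Cref{lem:gamma-curve-selection}.

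Three small points need fixing. First, when $A$ is infinite, $W_t$ constrains all coordinates and is not open. So $Y\cap W_t\neq\emptyset$ does not follow from $b\in\operatorname{cl}(Y)$ alone; you need the finite-satisfiability plus saturation argument of \Cref{lem:gamma-curve-selection}. Second, the reduction to a single coordinate is simply saturation: a small family of lower bounds $\beta_a$ for the sets $\pi_a(Y)$ has a common lower bound. Third, in the divisible case, replace $c_a-1$ by $c_a-\epsilon$ for some $\epsilon\in\Gamma_{>0}$, since there is no distinguished $1$.
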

\begin{proof}
Assume that $Y$ is bounded. By \cref{lem:bounded-target-limits}, given $q$ a global definable type centered on $Y$ has a unique limit $\bar{a}$ in $\Gamma_{\infty}^{A}$. If $Y$ is closed within $\Gamma_{\infty}^{A}$, then $\mathrm{lim(q)}=\bar{a} \in Y$. We conclude that $Y$ is definably compact.

For the converse, assume that $Y$ is definably compact. We first show that it must be bounded.  
 Suppose for $a\in A$ that the $a$-th projection $Y_{a}=\pi_{a}(Y)\subseteq\Gamma_\infty$ of $Y$ is not bounded. In the divisible case, by o-minimality, $Y_{a}$ is a finite union of intervals and points, thus $Y_{a}$ contains an interval unbounded below. In particular, the type at $-\infty$, $\sigma(x)= \{ x < \gamma \ | \ \gamma \in \Gamma\}$ is centered in $Y_{a}$. In the $\ZZ$-group case, $Y_{a}$  by \cite[Theorem $1$]{CluckersPressburger}, $Y_{a}$ contains a cell $\{x<\alpha,\ x\equiv r\bmod m\}$ unbounded below. Let $\sigma$ be the pushforwards of type at $-\infty$ given by translating $\tau_{-\infty}$  (\emph{c.f.} \Cref{not:gamma-types}.(1)) along the map $x \to x+r$. Then $\sigma \in Y_{a}$. 
We obtain in both cases a definable one-type $\sigma$ on $Y_{a}$ at $-\infty$. By definable choice there is a definable section $s\colon Y_{a}\to Y$ of the coordinate projection to the $i$-th coordinate, and $s_*(\sigma)$ is a definable type on $Y$ whose $i$-th coordinate pushforward is at $-\infty$, hence has no limit; by \Cref{lem:defcompact-generalities}(4), $s_*(\sigma)$ has no limit either, contradicting definable compactness.
\end{proof}

The previous result induces a  Heine Borel characterization for the finite powers of the hyperfield $H_{n}$. 
\begin{proposition}\label{prop:heine-borel-Hn}
Let $Y\subseteq H_n^A$ be a pro-definable subset (in $T^{\mathrm{eq}}$). Then $Y$ is definably compact if and only if it is closed and bounded. 
\end{proposition}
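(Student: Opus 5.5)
We plan to reduce to the value-group case \Cref{prop:heine-borel-gamma} through the finite closed cover of \Cref{rmk:topologyincoordinates}. For $\mathbf g\in(G_n^\times)^A$ set $Y_{\mathbf g}:=Y\cap H_{n,\mathbf g}$ and $Y'_{\mathbf g}:=\theta_{\mathbf g}^{-1}(Y_{\mathbf g})\subseteq\Gamma_\infty^A$. Since $\theta_{\mathbf g}$ is a definable homeomorphism with $\bar v^A\circ\theta_{\mathbf g}=\mathrm{id}$, it preserves and reflects closedness, boundedness and limits of definable types (\Cref{lem:defcompact-generalities}(2) in both directions). Hence $Y_{\mathbf g}$ is definably compact, closed, or bounded exactly when $Y'_{\mathbf g}$ is.

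\emph{Closed and bounded implies definably compact.} This direction is essentially \Cref{lem:bounded-target-limits}. Let $p$ be a global definable type on $Y$. By \Cref{lem:bounded-target-limits}, $p$ has a limit $a\in H_n^A$, and $a\in Y$ by \Cref{lem:defcompact-generalities}(1) since $Y$ is closed. For infinite $A$, $p$ is complete and so concentrates on a single $H_{n,\mathbf g}$. This holds because each coordinate's class $g_a$ is decided by a formula, and $H_{n,\mathbf g}$ is the product of the closed sets $H_{n,g_a}$.

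\emph{Definably compact implies closed and bounded.} First we show boundedness. Suppose $\bar v^A(Y)$ is unbounded in some coordinate $a$. The $a$-th projection $\pi_a(Y)\subseteq H_n$ is definable and is a finite union of the sets $\pi_a(Y)\cap H_{n,g}$, so one of these, transported by $\theta_g^{-1}$, is unbounded below in $\Gamma_\infty$. We then argue as in \Cref{prop:heine-borel-gamma}: take the type $\sigma$ at $-\infty$ (translated into the right congruence class in the $\ZZ$-group case), push it into $H_{n,g}$ by $\theta_g$, and lift it to $Y$ through a definable section of $\pi_a$. By definable choice in $T^{\mathrm{eq}}$, or alternatively by \Cref{prop:lifting-prodefinable}, we obtain a definable type on $Y$ whose $a$-coordinate pushforward has no limit. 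By \Cref{lem:defcompact-generalities}(2), the type itself then has no limit, a contradiction.

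Next we show closedness. Let $b\in\mathrm{cl}(Y)$. Only finitely many coordinates of $b$ are constrained by any basic neighbourhood, and the $G_n$-labels of nonzero coordinates of $b$ are locally constant. So every neighbourhood of $b$ meets $Y\cap H_{n,\mathbf g}$ for a suitable $\mathbf g$ that agrees with the labels of $b$ at its nonzero coordinates. Choosing such a $\mathbf g$ coordinatewise, we get that $b$ lies in the closure of $Y_{\mathbf g}$, hence $\theta_{\mathbf g}^{-1}(b)\in\mathrm{cl}(Y'_{\mathbf g})$. \Cref{lem:cofinal-gamma-family} then gives a global definable type $q$ on $Y'_{\mathbf g}$ with $\lim q=\theta_{\mathbf g}^{-1}(b)$. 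Pushing forward, $(\theta_{\mathbf g})_*q$ is a definable type on $Y$ with limit $b$. By definable compactness it also has a limit in $Y$, and since $H_n^A$ is Hausdorff the limits coincide, so $b\in Y$.

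The main obstacle is the closedness step when $A$ is infinite. There one must make sure that a single $\mathbf g$ works uniformly, which requires care at the coordinates where $b=\mathbf 0$. We intend to handle it by noting that for those coordinates any label may be chosen, each $H_{n,g}$ containing $\mathbf 0$. If this fails, we will instead run the integral construction of \Cref{lem:cofinal-gamma-family} directly in $H_n^A$, using the Type C neighbourhoods of $\mathbf 0$.
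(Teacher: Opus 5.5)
Your overall strategy is the paper's: reduce through the closed cover $H_n^A=\bigcup_{\mathbf g}H_{n,\mathbf g}$ and the homeomorphisms $\theta_{\mathbf g}$ to \Cref{prop:heine-borel-gamma} and \Cref{lem:bounded-target-limits}, and prove closedness by producing a definable type converging to a point of the closure. The paper's proof checks closedness piece by piece via \Cref{lem:defcompact-generalities}(3) and $\overline{Y}=\bigcup_{\mathbf g}\overline{Y\cap H_{n,\mathbf g}}$, and both steps use that the cover is finite. So the obstacle you isolate for infinite $A$ is real.

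Your proposed way around it does not work. That $\mathbf 0\in H_{n,g}$ for every $g$ only gives $b\in H_{n,\mathbf g}$, not $b\in\operatorname{cl}(Y\cap H_{n,\mathbf g})$. For example, with one coordinate, $Y=\theta_g(\Gamma)$ and $b=\mathbf 0$, we have $b\in\operatorname{cl}(Y)$ but $Y\cap H_{n,g'}=\emptyset$ for $g'\neq g$. At a zero coordinate of $b$ you must pick a label that occurs in $Y$ arbitrarily close to $b$. For a single coordinate, pigeonhole suffices ($G_n^\times$ is finite and the neighbourhoods are directed). For infinitely many zero coordinates these choices must be made coherently, and that needs a compactness argument.

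One argument that works is the following. For a basic neighbourhood $W$ of $b$, set $C_W=\{\mathbf g\in(G_n^\times)^A: Y\cap W\cap H_{n,\mathbf g}\neq\emptyset\}$.
\begin{enumerate}
\item $C_W$ is nonempty because $b\in\operatorname{cl}(Y)$.
\item $C_W$ shrinks as $W$ shrinks, so the family has the finite intersection property.
\item $C_W$ is closed in the profinite space $(G_n^\times)^A$: by saturation, emptiness of the pro-definable set $Y\cap W\cap H_{n,\mathbf g}$ is witnessed by finitely many conditions $y_a\in H_{n,g_a}$.
\end{enumerate}
Tychonoff then gives $\mathbf g\in\bigcap_W C_W$, that is, $b\in\operatorname{cl}(Y\cap H_{n,\mathbf g})$, and the rest of your argument goes through.

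Your fallback is also correct, and it avoids the cover altogether. Take the neighbourhoods $W(b;s)$, $s\in S$, of \S\ref{subsec:DR-curve-selection} over all coordinates in $A$, where every label is allowed at the zero coordinates. The projection $\{(y,s)\in Y\times S: y\in W(b;s)\}\to S$ is surjective by saturation. Lift the cofinal type $\tau$ with \Cref{prop:lifting-prodefinable} and push the lift to $Y$. This gives a definable type on $Y$ converging to $b$, as in \Cref{lem:gamma-curve-selection} and \Cref{lem:level-images-closed}, and it works uniformly in $A$.

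There is a smaller issue of the same kind in your boundedness step. For pro-definable $Y$, the projection $\pi_a(Y)$ is in general only type-definable. So you cannot apply o-minimality or cell decomposition to it, and definable choice is not available for the pro-definable map $\pi_a\colon Y\to\pi_a(Y)$. It is safer to lift $\tau_{-\infty}$ along $\{(y,t)\in Y\times\Gamma:\bar v(y_a)<t\}\to\Gamma$, as in \Cref{prop:defcompact-bounded}. Also say explicitly that, $A$ being small, coordinatewise lower bounds give a uniform one by saturation.
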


\begin{proof}
By the analogue for infinite products of \Cref{rmk:topologyincoordinates}, $H_n^A=\bigcup_{\mathbf g}H_{n,\mathbf g}$ where $\mathbf{g}\in (G_n^\times)^A$, is a closed (small) cover and each $\theta_{\mathbf g}\colon\Gamma_\infty^A\to H_{n,\mathbf g}$ is a definable homeomorphism which, by construction, sends a bounded set to bounded set. The statements now follow from \Cref{prop:heine-borel-gamma} through \Cref{lem:defcompact-generalities}(1)--(3): a complete type on $Y$ concentrates on some $Y\cap H_{n,\mathbf g}$, and closedness, boundedness, limits and definable compactness can all be checked on the pieces.

For the last sentence of the statement, let $c \in H_{n}^{\ell}(K)$ be in the closure of a definable set $Y$. Since the cover is finite,
$\overline{Y}
=
\bigcup_{\mathbf g}\overline{Y\cap H_{n,\mathbf g}}.$
Choose $\mathbf g$ such that
$c\in\overline{Y\cap H_{n,\mathbf g}}$.
As $H_{n,\mathbf g}$ is closed, $c\in H_{n,\mathbf g}$.
Let $D:=\theta_{\mathbf g}^{-1}(Y\cap H_{n,\mathbf g}),$ and $d:=\theta_{\mathbf g}^{-1}(c).$
Then $d\in\overline D$. By the Claim in the proof of
\Cref{prop:heine-borel-gamma}, there is a $K$-definable type
$r$ concentrating on $D$ with $\lim r=d$.
The pushforward $p:=(\theta_{\mathbf g})_*r$ concentrates on
$Y$ and satisfies $\lim p=c$.
\end{proof}

\begin{lemma}
\label{lem:gamma-curve-selection}
Suppose $\Gamma\models\mathrm{PRES}$. Let $I$ be small, let
$V\subseteq\Gamma_\infty^I$ be pro-definable in the pure value-group
structure, and let $b\in\operatorname{cl}(V)$ for the product topology.
There is a global definable type $q$ on $V$ with $\lim q=b$. The type $q$ is  definable
in the stably embedded structure $\Gamma_{\infty}$, and its canonical base might require us to add a small set of parameters over the value group.
\end{lemma}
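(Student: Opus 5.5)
Since $\Gamma$ is a $\ZZ$-group, the topology on $\Gamma$ is discrete, and the only non-isolated value is $\infty$. Let $I_\infty=\{i\in I: b_i=\infty\}$ and $I_f=I\setminus I_\infty$. A basic neighbourhood of $b$ is then determined by a finite set $J\subseteq I$ and a threshold $h\in\Gamma$. It consists of the tuples that equal $b_i$ at the finite coordinates $i\in J\cap I_f$ and exceed $h$ at the coordinates $i\in J\cap I_\infty$. This is the family $W_t$ of the definition preceding \Cref{lem:cofinal-gamma-family}, restricted to finitely many coordinates. The natural first attempt is $q=\int_{\tau_\infty}(W_h\cap V)$ as in \Cref{lem:cofinal-gamma-family}. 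That construction needs a pro-definable family of global definable types on the sets $W_h\cap V$, i.e.\ it presupposes a definable choice of a type inside each neighbourhood. I will supply this choice by reducing to a single-variable Presburger curve.

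Step 1 (reduction to finitely many coordinates). A formula in $q$ mentions only finitely many coordinates. By \Cref{prop:lifting-prodefinable}, $S^{\mathrm{def}}$ commutes with small inverse limits, so it suffices to build, for each finite $J\subseteq I$ with projection $V_J=\pi_J(V)$, a definable type $q_J$ on $V_J$ converging to $b_J$, compatibly under the projections. Compatibility will come from applying compactness to the partial type \[\{x\in V\}\cup\{x_i=b_i: i\in I_f\}\cup\{x_i>h: i\in I_\infty,\ h\in\Gamma\}\] in a canonical way, as below.

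Step 2 (the key Presburger curve). For finite $J$, closure gives that for every $h$ there is $x\in V_J$ with $x_i=b_i$ on $J\cap I_f$ and $x_i>h$ on $J\cap I_\infty$. The set $D_J(h)$ of such $x$ is a nonempty definable family decreasing in $h$. Presburger arithmetic has definable Skolem functions, since it has definable choice via least elements of bounded-below definable sets. I would use this to choose $s_J(h)\in D_J(h)$ definably, for instance by taking lexicographically least coordinates after shifting each coordinate in $J\cap I_\infty$ by $h$. Then I set $q_J=(s_J)_*\tau_\infty$. Convergence to $b_J$ is immediate: each neighbourhood with threshold $h_0$ contains $s_J(h)$ for all $h>h_0$, and $\tau_\infty$ contains $h>h_0$. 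Definability of $q_J$ follows because $\tau_\infty$ is definable (\Cref{not:gamma-types}) and $s_J$ is definable. The parameters are those defining $V_J$ and $b_J$, which may lie outside $\Gamma(K)$; this accounts for the small set of extra parameters in the statement. Stable embeddedness of $\Gamma$ (\Cref{prop:QE}) ensures everything happens in the pure structure.

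Step 3 (coherence; the main obstacle). The types $q_J$ for different $J$ need not be compatible, because the Skolem functions for $J$ and $J'\supseteq J$ are chosen independently. My plan is to make every choice through one variable $h\models\tau_\infty$ and to enlarge $J$ along a directed system. For $J\subseteq J'$, the pushforward $\pi_{J*}q_{J'}$ also converges to $b_J$, so I redefine the system so that only the types coming from larger $J'$ are used. To obtain a global object, I consider the set $L_J$ of definable types on $V_J$ with limit $b_J$. Each $L_J$ is nonempty by Step 2 and is a pro-definable set of definable types, using strict pro-definability of $S^{\mathrm{def}}$ in $\mathrm{PRES}$ from \cite{beautiful}, as in \Cref{prop:lifting-definable}. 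The conditions $p_J\in L_J$ together with $\pi_{J*}p_{J'}=p_J$ are finitely consistent, because a single finite $J'$ dominates finitely many indices and $\pi_{J*}$ preserves limits by \Cref{lem:defcompact-generalities}(2). Compactness, exactly as in the proof of \Cref{prop:lifting-prodefinable}, yields a compatible family, hence a global definable type $q$ on $V$ with $\lim q=b$.

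The delicate point is that being a limit is not obviously a pro-definable condition on codes of definable types, since it quantifies over all $h\in\Gamma$. Its definability in $\mathrm{PRES}$ requires care. I would express it through the definition scheme: for each $i\in J\cap I_\infty$, the formula $x_i>y$ is required to hold for all $y$, and each finite coordinate is required to equal $b_i$. This is a single first-order condition on the code of the definition of a formula, so it yields a relatively definable condition.
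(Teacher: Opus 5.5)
Your Step 2 has a genuine gap. It treats $V_J=\pi_J(V)$, and hence $D_J(h)$, as a definable family. That is what lets Presburger definable choice produce a definable curve $s_J$ with $s_J(h)\in D_J(h)$ for every $h$. But $V$ is only pro-definable (in the paper, types over small sets count as pro-definable sets), so $V_J$ is in general only type-definable. It is the intersection of the definable sets $V_J^{\Sigma_0}=\{x_J:\exists x_{I\setminus J}\bigwedge\Sigma_0\}$, with $\Sigma_0$ ranging over finite parts of the partial type defining $V$.

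For such sets a definable selection may not exist. Take $I$ a singleton and $V=\{x\in\Gamma: x\equiv 0 \bmod m \text{ for all } m\geq 2\}$; by saturation $\infty\in\operatorname{cl}(V)$. By Presburger cell decomposition, a definable $s\colon\Gamma\to\Gamma$ has, on some cell $\{h>\alpha: h\equiv c\bmod N\}$, the form $h\mapsto a(h-c)/N+\gamma$ with $a\in\ZZ$. If all its values there lie in $V$, then $a=s(h+N)-s(h)$ is a standard integer divisible by every $m$. So $a=0$ and $s$ is eventually constant, contradicting $s(h)>h$. Hence Step 2 does not establish $L_J\neq\emptyset$. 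The lemma itself still holds here, since $\tau_\infty$ of \Cref{not:gamma-types} already contains the congruence conditions, but Step 2 cannot produce it.

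The repair fits your framework. Apply Step 2 to each definable approximation $V_J^{\Sigma_0}$, whose closure still contains $b_J$. Then add to the compactness of Step 3 the conditions that $p_J$ concentrates on $V_J^{\Sigma_0}$, for all finite $\Sigma_0$. Finitely many such conditions are met by pushing down the type that Step 2 gives on a single approximation $V_{J'}^{\Sigma_0'}$, with $J'$ and $\Sigma_0'$ large enough. Your final paragraph correctly shows that $\lim p_J=b_J$ is a condition on codes, given uniform definability of types in $\mathrm{PRES}$.

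Once repaired, your argument is a hand-made version of the paper's. The paper forms the single pro-definable set
\[
R=\{(u,h)\in V\times\Gamma:\ u_i=b_i \text{ if } b_i\neq\infty,\ u_i>h \text{ if } b_i=\infty\}.
\]
It gets surjectivity of $R\to\Gamma$ from closure and saturation, and lifts $\tau_\infty$ using \Cref{prop:lifting-prodefinable}. The proof of that proposition is exactly your stagewise lift followed by compactness, carried out on the definable stages of a presentation; this is why non-definable finite projections never arise there. Your Presburger definable choice only replaces surjectivity transfer at each definable stage. You still need the beautiful-pairs input (uniform definability of types) for the compactness on codes.
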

\begin{proof}
Let $I_{\mathrm{fin}}=\{i:b_i\ne\infty\}$ and
$I_\infty=\{i:b_i=\infty\}$. Consider
\[
 R=\{(u,h)\in V\times\Gamma \ |  \
       u_i=b_i\ (i\in I_{\mathrm{fin}}),\text{and} \ 
       u_i>h\ (i\in I_\infty)\}.
\]

The projection $R\to\Gamma$ is surjective. Indeed, fix $h\in\Gamma$.
Every finite collection of the coordinate conditions defines an open
neighbourhood of $b$: finite points of $\Gamma_\infty$ are isolated and
$(h,\infty]$ is a neighbourhood of $\infty$. Since $b\in\operatorname{cl}(V)$,
each such collection has a point in $V$. Together with a small presentation of
$V$, the conditions are finitely satisfiable, and saturation supplies
$u\in V$ satisfying all of them.

Lift $\tau_\infty$ along this projection using \Cref{prop:lifting-prodefinable}, and let $q$ be the
pushforward of the lift to $V$. It contains $u_i=b_i$ for
$i\in I_{\mathrm{fin}}$. For $i\in I_\infty$ and every
$\delta\in\Gamma$, the lifted type contains $u_i>h>\delta$, so
$u_i>\delta\in q$. Thus each coordinate converges to $b_i$, and finite
intersections give convergence in the product topology.
\end{proof}

\subsection{Limits on bounded subsets. Closed and bounded implies definably compact. }\label{subsec:standard-part-new}

We adapt the standard-part construction of
\cite[Lemma~4.2.2]{HruLoe}, retaining both valuation and power-class
information. The residue field will be assumed \emph{large}: every
smooth geometrically integral curve with a rational point has infinitely
many rational points. Equivalently, rational points on any smooth
geometrically integral variety with a rational point are Zariski dense;
see \cite{pop-large}. Algebraically closed, real closed, and $p$-adically
closed fields are large. Note that largeness is indeed a topological notion as shown in~\cite{firstpaper}, which is closely related to the topology generated by henselian local domains~\cite{henselianlarge,field-top-1,field-top-2,LargeHenselian}. The fact that largeness of the residue field is required here suggests a reasonable notion of definable compactness in the setting of fraction fields of henselian domains.

We first describe the standard part on value groups and then the
associated residue-field construction. In this subsection, ``bounded''
for an element of an extension of $\Gamma(M)$ means bounded in absolute
value by an element of $\Gamma(M)$.

\begin{definition}\label[definition]{def:standard-part}
Let $M\prec\cU$ be small, and let $F$ be a henselian valued field extension of $M$ such that every element of $\Gamma(F)$ has $\Gamma(M)$-definable type over $\Gamma(M)$. Let
\[
\Delta^{\mathrm{bd}}(M):=\{\eta\in\Gamma(F):|\eta|\leq\gamma\text{ for some }\gamma\in\Gamma(M)\}
\]
be the convex hull of $\Gamma(M)$ in $\Gamma(F)$, and let $\tilde v$ be the coarsening of $v$ by this convex subgroup. Its valuation ring $R_M(F)$ has maximal ideal
\[
\m_{\infty,M}(F):=\{a\in F:v(a)>\gamma\text{ for all }\gamma\in\Gamma(M)\}.
\]
For $\eta\in\Delta^{\mathrm{bd}}(M)$ put
$\st_M(\eta):=\lim\tp(\eta/\Gamma(M))\in\Gamma(M)$, which exists and is unique by \Cref{fact:one-types-gamma}; for $\eta>\Gamma(M)$, and for $\eta=\infty$, put $\st_M(\eta)=\infty$.

On $\Delta^{\mathrm{bd}}(M)$, the map $\st_M$ is a nondecreasing
group homomorphism fixing $\Gamma(M)$. In the divisible case,
$\eta-\st_M(\eta)$ has absolute value smaller than every positive
element of $\Gamma(M)$; the sum of two such errors has the same property.
This proves additivity, and comparison of the corresponding cuts proves
monotonicity. In the $\ZZ$-group case, every bounded definable one-type
is realized, so $\Delta^{\mathrm{bd}}(M)=\Gamma(M)$ and $\st_M$ is the
identity. Let $\Delta^{\mathrm{inf}}(M)=\ker(\st_M).$
This subgroup is convex. In the divisible case it consists of the
elements infinitesimal over $\Gamma(M)$, whereas in the $\ZZ$-group
case it is zero. Thus $\st_M$ preserves weak inequalities; distinct
elements can have the same standard part.

The quotient $F^{\mathrm{st}}_M:=R_M(F)/\m_{\infty,M}(F)$ is a field. The valuation induced by $v$ on this field has value group $\Delta^{\mathrm{bd}}(M)$ and residue field $\RF(F)$. Coarsening it by $\Delta^{\mathrm{inf}}(M)$ gives
\[
v^{\mathrm{st}}(a+\m_{\infty,M}(F)):=\st_M(v(a)).
\]
This valuation extends that of $M$ and has value group $\Gamma(M)$. The composition criterion for henselian valuations shows that $F^{\mathrm{st}}_M$ is henselian of equicharacteristic $0$. \end{definition}

\begin{remark}\label[remark]{rmk:standard-part-equations}
Write $\res^{\mathrm{st}}\colon R_M(F)\to F^{\mathrm{st}}_M$ for the residue map. For a tuple $d$ from $R_M(F)$, put
$d^{\mathrm{st}}:=(\res^{\mathrm{st}}(d_1),\dots,\res^{\mathrm{st}}(d_m)).$ Since $M\cap\m_{\infty,M}(F)=\{0\}$, the map $\res^{\mathrm{st}}$ embeds $M$ into $F^{\mathrm{st}}_M$, and we identify $M$ with its image. As $\res^{\mathrm{st}}$ is a ring homomorphism fixing $M$, $f(d^{\mathrm{st}})=\res^{\mathrm{st}}(f(d))$ for every $f\in M[x_1,\dots,x_m]$; in particular every polynomial equation over $M$ satisfied by $d$ is satisfied by $d^{\mathrm{st}}$.
\end{remark}

\begin{lemma}\label[lemma]{lem:one-plus-m-divisible}
Let $M\subseteq F$ be as in \Cref{def:standard-part}. For every $n\geq2$ the group $1+\m_{\infty,M}(F)$ is $n$-divisible. Consequently, for $a\in R_M(F)\setminus\m_{\infty,M}(F)$ and $c\in M^\times$,
$P_n(ac^{-1})$ holds in $F$ if and only if $P_n\big(\res^{\mathrm{st}}(a)c^{-1}\big)$ holds in $F^{\mathrm{st}}_M$.
\end{lemma}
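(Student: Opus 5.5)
The plan has two steps: first show that $1+\m_{\infty,M}(F)$ is $n$-divisible by Hensel's lemma for $F$ itself, and then deduce the comparison of power predicates by factoring off such a unit.

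\emph{Divisibility.} Let $u=1+\epsilon$ with $\epsilon\in\m_{\infty,M}(F)$. Since $F$ has equicharacteristic $0$, $v(n)=0$. Consider $p(x)=x^n-u\in\mathcal O_F[x]$. Then $v(p(1))=v(\epsilon)>0$ and $v(p'(1))=v(n)=0$, so henselianity of $F$ gives $w\in\mathcal O_F^\times$ with $w^n=u$. The root $w$ must itself lie in $1+\m_{\infty,M}(F)$. One way to see this is the refined Hensel estimate $v(w-1)=v(p(1))-v(p'(1))=v(\epsilon)$. Alternatively, take the root in the Hensel neighbourhood and factor
\[
w^n-1=(w-1)(w^{n-1}+\dots+1).
\]
Here $w\equiv1$ modulo $\m_F$, so the second factor has residue $n\neq0$ and valuation $0$; hence $v(w-1)=v(\epsilon)>\Gamma(M)$. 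So $u=w^n$ with $w\in1+\m_{\infty,M}(F)$.

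\emph{Transfer of $P_n$.} Let $a\in R_M(F)\setminus\m_{\infty,M}(F)$, so $\res^{\mathrm{st}}(a)\neq0$, and let $c\in M^\times$.

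For the forward direction, suppose $ac^{-1}=b^n$ in $F$. Then $v(b)=v(ac^{-1})/n$ is bounded by elements of $\Gamma(M)$, so $b\in R_M(F)\setminus\m_{\infty,M}(F)$. Applying the ring homomorphism $\res^{\mathrm{st}}$, which fixes $M$ by \Cref{rmk:standard-part-equations}, gives $\res^{\mathrm{st}}(a)c^{-1}=\res^{\mathrm{st}}(b)^n$.

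For the converse, suppose $\res^{\mathrm{st}}(a)c^{-1}=\beta^n$ with $\beta\in F^{\mathrm{st}}_M$. Lift $\beta$ to some $b\in R_M(F)$; then $b\notin\m_{\infty,M}(F)$. Its $n$th power satisfies $\res^{\mathrm{st}}(ac^{-1}b^{-n})=1$. Since $b$ and $ac^{-1}$ are units of $R_M(F)$, this means $ac^{-1}b^{-n}\in 1+\m_{\infty,M}(F)$. By the first step this element is an $n$th power in $F$, and therefore so is $ac^{-1}$.

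\emph{Main obstacle.} The only delicate point is the membership $w\in1+\m_{\infty,M}(F)$ in the first step. Ordinary henselianity only yields $w\in 1+\m_F$, so the factorization (or refined Hensel) argument is what upgrades this. Equivalently, one can observe that $R_M(F)$ is a henselian valuation ring, being a coarsening of a henselian valuation, with residue characteristic $0$. The standard Hensel argument applied to the coarsened valuation $\tilde v$ then yields the root directly in $1+\m_{\infty,M}(F)$.
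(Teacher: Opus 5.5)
Your proposal is correct and follows the paper's proof: $n$-divisibility of $1+\m_{\infty,M}(F)$ via Hensel's lemma, then the forward direction by reducing an $n$th root (a unit of $R_M(F)$), and the converse by lifting a root $b$ and absorbing $ac^{-1}b^{-n}\in 1+\m_{\infty,M}(F)$. The only difference is that the paper applies Hensel's lemma directly to the coarsened valuation $\tilde v$, whereas your main argument applies it to $v$ and uses the factorization $w^n-1=(w-1)(w^{n-1}+\dots+1)$ to place the root in $1+\m_{\infty,M}(F)$; this avoids needing the fact that coarsenings of henselian valuations are henselian.
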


\begin{proof}
Apply Hensel's lemma for the coarse valuation $\tilde v$ to $X^n-(1+\varepsilon)$ at $1$, for $\varepsilon\in\m_{\infty,M}(F)$. Its derivative $n$ is a unit, since the residue characteristic is zero. The resulting root belongs to $1+\m_{\infty,M}(F)$. An $n$th root in $F$ of $ac^{-1}$ is a unit of $R_M(F)$, so reduction proves the forward implication. Conversely, lift an $n$th root of $\res^{\mathrm{st}}(a)c^{-1}$ to a unit $b\in R_M(F)$. Then $ac^{-1}b^{-n}\in1+\m_{\infty,M}(F)$ is an $n$th power, and hence so is $ac^{-1}$.
\end{proof}

\begin{lemma}\label[lemma]{lem:limits-coordinatewise}
Let $q$ be a global definable type on $\widehat X$ and $r\in\widehat X$. For $f\in\ccO_X$ and $n\geq2$, write $\rho_n f\colon\widehat X\to H_n$ for the evaluation $p\mapsto\rho_n(f(p))$. Then $\lim q=r$ if and only if
$\lim\big(\rho_n f\big)_*q=\rho_n(f(r))$ in $H_n$,
for every $f\in\ccO_X$ and every $n\geq2$.
\end{lemma}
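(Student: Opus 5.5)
The proof unwinds the definitions on both sides through the topology on $\widehat X$ fixed in \Cref{def:TheTopology} and the concrete subbasis of \Cref{rem:opens}. The topology on $\widehat X$ is the coarsest one making every evaluation $\rho_n f\colon\widehat X\to H_n$ continuous, for $f\in\ccO_X$ and $n\geq2$. Indeed, $\Theta$ induces it, the topology of $X(H)$ is initial for the evaluations into $H$, and the topology of $H$ is initial for the projections $\pi_n$. Hence the sets $(\rho_n f)^{-1}(U)$, with $U$ relatively definable open in $H_n$, form a subbasis of relatively definable opens of $\widehat X$, and their finite intersections form a basis.

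For the forward implication, suppose $\lim q=r$. Each $\rho_n f$ is continuous and pro-definable: it is continuous by the previous paragraph, and pro-definable because it is read off from finitely many coordinates of the presentation in \Cref{prop:explicit-presentation}, or directly from the definition scheme. \Cref{lem:defcompact-generalities}(2) then gives $\lim(\rho_n f)_*q=\rho_n(f(r))$.

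For the converse, assume that every pushforward converges to the prescribed point. Let $W$ be a relatively definable open neighbourhood of $r$.

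\begin{itemize}
\item Shrink $W$ to a basic open $r\in\bigcap_{i\le k}(\rho_{n_i}f_i)^{-1}(U_i)\subseteq W$, where each $U_i\subseteq H_{n_i}$ is an open neighbourhood of $\rho_{n_i}(f_i(r))$.
\item One may take $U_i$ to be relatively definable. This follows from the description of the Type A, B and C sub-basic sets: they are definable in $T^{\mathrm{eq}}$ with parameters from $\cU$.
\item By hypothesis $U_i\in(\rho_{n_i}f_i)_*q$. By the definition of pushforward, this means $(\rho_{n_i}f_i)^{-1}(U_i)\in q$.
\item Since $q$ is closed under finite conjunctions, the basic open belongs to $q$.
\item Finally, $q$ is complete and the basic open implies $W$, so $W\in q$. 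Thus $\lim q=r$.
\end{itemize}

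The main obstacle is making the basis argument precise. The definition of limit quantifies over all relatively definable open neighbourhoods of $r$ in $\widehat X$, and a relatively definable open set is open but need not itself be a finite intersection of subbasic sets. So one must check that every relatively definable open neighbourhood contains a basic open neighbourhood built from finitely many evaluations with definable targets. That holds because the topology is generated, as a topology, by these subbasic sets, so any open set containing $r$ contains a finite intersection of them containing $r$. The one point needing care is that the neighbourhoods of $\mathbf 0$ in $H_n$, of Type C, are definable even when $\Gamma/n\Gamma$ is nontrivial. This holds because $G_n^\times$ and $\Gamma/n\Gamma$ are finite, so each such neighbourhood is a finite union of definable pieces, as in \Cref{rem:opens}.
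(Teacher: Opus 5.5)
Your proposal is correct and follows the paper's proof. Necessity comes from continuity of the evaluations $\rho_n f$ via \Cref{lem:defcompact-generalities}(2), and sufficiency from shrinking any relatively definable neighbourhood of $r$ to a finite intersection of preimages of definable opens of the $H_n$, each of which lies in $q$ by hypothesis; your extra care about the basis and the definability of the neighbourhoods of $\mathbf 0$ just spells out what the paper's one-line appeal to \Cref{def:TheTopology} leaves implicit.
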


\begin{proof}
Necessity follows from continuity. Conversely, by \Cref{def:TheTopology}, every neighbourhood of $r$ contains a finite intersection of inverse images of definable open neighbourhoods under the evaluation maps $\rho_n f$, allowing parameters from $\cU$. Each inverse image belongs to $q$ by the hypothesis, and hence so does their intersection. Thus $q$ converges to $r$.
\end{proof}

\begin{remark}\label{rmk:babyfactlargeness} Let $L$ be large and $(E,w)$ be a valued field extending (the trivially valued field) $L$. If $L\prec_\exists \RF(E)$, then $L\prec_\exists E$ (where $\prec_\exists$ denotes existential closedness in the language of rings). Indeed, let $M$ be such that $\RF(E)\subseteq M$ and $L\preceq M$. Without loss, we may assume $E$ is henselian. Let $E'$ be an extension of $E$ such that $\Gamma(E)=\Gamma(E')$ and $\RF(E')=M$. By \cite[Theorem 17]{kuhlmann2004}, $M\prec_\exists E'$ and therefore $L\preceq_\exists E'$, which shows that $L\prec_\exists E$. 
\end{remark}

\begin{proposition}\label[proposition]{prop:bounded-limits}
Assume hypothesis~\textup{(B)} and suppose the residue field $\RF$ is large. Let $D$ be a bounded definable subset of $X$ and let $q$ be a global definable type on $\widehat D$. Then $\lim q$ exists in $\widehat X$.
\end{proposition}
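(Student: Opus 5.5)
We adapt the standard-part construction of \cite[Lemma~4.2.2]{HruLoe}. Let $M\prec\cU$ be a small model containing the parameters of $D$ and the definition parameters of $q$; we may take $M$ sufficiently saturated relative to these parameters. Integrate (\Cref{def: integral}, \Cref{rmk:propertiesintegral}) the family assigning to each point of $\widehat X$ its represented field type. This gives a global definable type on $D$. Choose $w^*\models q\rest_M$ and then $d^*\models w^*\rest_{Mw^*}$, and let $F$ be a henselization of $M(d^*)$, inside some elementary extension. Every element of $\Gamma(F)$ is of the form $v(f(d^*))$ with $f$ over $M$. Its type over $\Gamma(M)$ is the pushforward of a definable type, hence $\Gamma(M)$-definable by stable embeddedness, so \Cref{def:standard-part} applies. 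Since $D$ is bounded, the coordinates $d^*_i$ satisfy $v(d^*_i)\ge\beta\in\Gamma(M)$, so $d^*\in R_M(F)^m$. We then form $d^{\mathrm{st}}\in X(F^{\mathrm{st}}_M)$ by \Cref{rmk:standard-part-equations}.

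The candidate limit is a type $r$ over $M$ realized by $d^{\mathrm{st}}$ after a suitable embedding. The valued field $F^{\mathrm{st}}_M$ is henselian of equicharacteristic $0$, has value group $\Gamma(M)$, and has residue field $\RF(F)$, which extends $\RF(M)$ and is a definable extension. The main obstacle is to show that $F^{\mathrm{st}}_M$ embeds over $M$ into an elementary extension of $M$ with the same value group $\Gamma(M)$. This is the consistency of $r$.

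\begin{itemize}
\item For the value group, $\Gamma(F^{\mathrm{st}}_M)=\Gamma(M)$ is already correct.
\item For the residue field, we need $\RF(M)\prec_\exists\RF(F)$ in $\Lmac$ (or $\Lring$). This holds because $\RF(F)$ lives inside an elementary extension of $\cU$, so its residue field embeds into an elementary extension of $\RF(M)$.
\item Largeness of $\RF$, via \Cref{rmk:babyfactlargeness}, then gives $\RF(M)\prec_\exists F^{\mathrm{st}}_M$ viewed as a field extension. Combined with the henselian Ax--Kochen--Ershov principle (relative quantifier elimination, \Cref{prop:QE}), this gives a henselian $L\succeq M$ with value group $\Gamma(M)$ into which $F^{\mathrm{st}}_M$ embeds over $M$.
\end{itemize}

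Set $r=\tp(d^{\mathrm{st}}/M)$ in $L$. It is weakly orthogonal to $\Gamma$, since $\Gamma(M(d^{\mathrm{st}}))\subseteq\Gamma(M)$. Since $M$ can be chosen maximally complete, with the residue types definable as in \Cref{cor:def-orthogonal}, or directly via the definability of $q$, $r$ is $M$-definable. By \Cref{lem:def-orthogonal} its global extension lies in $\widehat X$.

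Finally, we verify $\lim q=r$ using \Cref{lem:limits-coordinatewise}. Fix $f\in\ccO_X$, enlarging $M$ to contain its parameters, and $n\ge2$.

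\begin{itemize}
\item The valuation is computed by $v^{\mathrm{st}}(f(d^{\mathrm{st}}))=\st_M(v(f(d^*)))$, which is the limit of the pushforward of $v\circ f$ by the definition of $\st_M$ and \Cref{fact:one-types-gamma}. Values above $\Gamma(M)$ correspond to $f(d^{\mathrm{st}})=0$ and converge to $\infty$.
\item When $\st_M(v(f(d^*)))$ is finite, the power class is preserved by \Cref{lem:one-plus-m-divisible}. We normalize by $c\in M$ of the same standard valuation. In the divisible case the infinitesimal error is absorbed into an $n$th power, as $\Gamma$ is divisible. In the $\ZZ$-group case $\st_M$ is the identity.
\end{itemize}

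Hence the pushforward $(\rho_nf)_*q$, which is bounded by \Cref{lem:bounded-target-limits}, has limit $\rho_n(f(r))$ in $H_n$, because each subbasic neighbourhood of types A, B and C is then in the pushforward. Independence of the choice of $M$ follows from uniqueness of limits, since $H_n$ is Hausdorff.
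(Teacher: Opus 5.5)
Your overall plan is the paper's: integrate $q$ to a field type $\sigma$, realize it, pass to the standard-part field $F^{\mathrm{st}}_M$, and combine largeness with Ax--Kochen--Ershov to embed it into an elementary extension of $M$. The gap is in the step that carries the power-class information, and it does not go through as written.

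You take $F$ to be the henselization of $M(d^*)$. The predicates $P_n(f(x)/c)\in\sigma$ hold for $d^*$ in the ambient model $N$. But \Cref{lem:one-plus-m-divisible} only compares $n$th powers in $F$ with $n$th powers in $F^{\mathrm{st}}_M$, and for the henselization these need not agree with $n$th powers in $N$. In the divisible case, $v(f(d^*)/c)$ can be a positive infinitesimal $\epsilon$ with $\epsilon\notin n\Gamma(F)$, since $\Gamma(F)=\Gamma(M(d^*))$ is usually not divisible. So the root that is supposed to ``absorb the infinitesimal error'' lies in $N$ but not in $F$.

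Independently, the embedding $F^{\mathrm{st}}_M\hookrightarrow L$ supplied by AKE is only a valued-field embedding over $M$. It preserves $P_n$, but it reflects $P_n$ only if the representatives $c_1,\dots,c_s\in M$ of $M^\times/(M^\times)^n$ exhaust the nonzero $n$th-power classes of $F^{\mathrm{st}}_M$ and remain distinct there. Concretely, take $\RF$ real closed, $\sigma\ni P_2(x)$, and $v(x)$ a positive infinitesimal. Then $d^*$ is not a square in $M(d^*)^h$. Nothing prevents the AKE embedding from sending the $v^{\mathrm{st}}$-residue of $d^{\mathrm{st}}$ to a negative element, in which case your $r$ puts $x$ in the class $[-1]_2$ while $\lim(\rho_2x)_*q$ has class $[1]_2$.

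The missing idea is to take $F$ to be the relative algebraic closure of $M(d^*)$ in $N$.
\begin{itemize}
\item $F$ is still henselian, and $\Gamma(F)/\Gamma(M(d^*))$ is torsion, so \Cref{def:standard-part} still applies.
\item For $a\in R_M(F)^\times$, writing $a=c_jb^n$ in $N$ forces $b\in F$, and in fact $b\in R_M(F)^\times$. Reducing, the $c_j$ represent all classes of $F^{\mathrm{st}}_M$.
\item The $c_j$ stay distinct in $F^{\mathrm{st}}_M$: an $n$th root of $c_i/c_j$ there lifts, by \Cref{lem:one-plus-m-divisible}, to $F\subseteq N$, hence to $M$.
\end{itemize}
Then every embedding over $M$ into an elementary extension of $M$ preserves and reflects all power predicates.

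Two further points need repair.

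First, in the divisible case the residue field of $(F^{\mathrm{st}}_M,v^{\mathrm{st}})$ is not $\RF(F)$. It is $E=\RF(F^{\mathrm{st}}_M)$, which carries the valuation induced by $v$: value group $\Delta^{\mathrm{inf}}(M)$, trivial on $\RF(M)$, residue field $\RF(F)$. You must apply \Cref{rmk:babyfactlargeness} to this valued field $E$, to upgrade $\RF(M)\prec_\exists\RF(F)$ to $\RF(M)\prec_\exists E$; that is the residue hypothesis AKE needs. Your statement ``$\RF(M)\prec_\exists F^{\mathrm{st}}_M$ as a field extension'' is not this hypothesis, and if the residue field really were $\RF(F)$, largeness would be superfluous.

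Second, \Cref{cor:def-orthogonal} needs every type over $\RF(M)$ to be definable. That is not assumed, and it typically fails, e.g.\ when $\RF(M)$ is non-archimedean real closed; you cannot avoid this, since $M$ must contain the parameters of $q$. The route that works is the one you only gesture at. Fix a small set $A$ of definition of $q$, and define the global type $r$ outright:
\begin{itemize}
\item give $f_b(x)$ the valuation $\lambda_f(b)\in\Gamma_\infty$, the limit of the pushforward of $q$ under $p\mapsto v(f_b(p))$, which is an $A$-definable function of $b$;
\item give $f_b(x)$ the nonzero power predicates of $\sigma$.
\end{itemize}
Then $r$ is $A$-definable by construction and complete by \Cref{prop:QE}. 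The standard-part field is used only to prove finite consistency. This also removes the need to re-choose $M$ for each $f$ and then argue coherence through uniqueness of limits.
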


\begin{proof}
Choose a small parameter set $A\subseteq\cU$ over which $X,D$, and $q$ are defined. Uniform definability of types makes
$E_\varphi(p;b)\ \text{if and only if}\ \varphi(x;b)\in p$
a relatively definable relation on $\widehat X$, for each formula $\varphi(x;b)$. For the pro-definable family $\mathscr{H}(p)=p$ of field types (indexed by $\widehat{X}$ itself), let $\sigma = \int_q \mathscr{H}$, as in \Cref{def: integral}. In particular, 
\[
\varphi(x;b)\in\sigma
\ \text{if and only if }\ \{p\in\widehat{X}, E_\varphi(p;b)\}\in q, 
\]
which makes $\sigma$ a global $A$-definable type concentrating on $D$.\\
For a fixed coefficient tuple $b$, orthogonality to $\Gamma$ ensures
that every $p\in\widehat D$ determines a unique value
$v(f_b(p))\in\Gamma_\infty$: this is the unique $\gamma$ such that
$v(f_b(x))=\gamma$ belongs to $p$. This induces a natural evaluation map $e_{f,b}:\widehat D\to \Gamma_\infty,
$ given by $ p\mapsto v(f_b(p)).$\\
Its pushforward $(e_{f,b})_*q$ is a global definable type on
$\Gamma_\infty$, recording the valuation of $f_b$ as $p$ varies
according to $q$. Since $D$ is bounded, these valuations are bounded
below, so \Cref{fact:one-types-gamma} gives a unique limit, denoted
$\lambda_f(b)$. 

Equivalently, for every definable open neighbourhood
$W$ of $\lambda_f(b)$, the set
$\{p\in\widehat D:v(f_b(p))\in W\}$ belongs to $q$.
In the $\ZZ$-group case, a finite limit $\gamma$ means that
$v(f_b(p))=\gamma$ belongs to $q$. In the divisible case, it means
that $\gamma-\varepsilon<v(f_b(p))<\gamma+\varepsilon$
belongs to $q$ for every $\varepsilon>0$. In either case,
$\lambda_f(b)=\infty$ precisely when $v(f_b(p))>h$ belongs to $q$
for every $h\in\Gamma$. Moreover, $\lambda_f$ is $A$-definable. Writing $e_f(p,b)=v(f_b(p))$, its graph at a finite $\lambda$ is given in the divisible case by $\forall\varepsilon>0\ (d_qp)\big(\lambda-\varepsilon<e_f(p,b)<\lambda+\varepsilon\big),$
and in the discrete case by $(d_qp)(e_f(p,b)=\lambda)$. In both cases its value is $\infty$ exactly when $\forall\gamma\in\Gamma\ (d_qp)(e_f(p,b)>\gamma).$
Here $(d_qp)\psi(p,b)$ denotes the formula in $b$ defining whether $\psi(p,b)$ belongs to $q$; the displayed conditions are formulas over $A$.\\
We define a global type $r$ by assigning $f_b(x)$ the valuation $\lambda_f(b)$, declaring $f_b(x)=0$ exactly when $\lambda_f(b)=\infty$, and otherwise keeping its nonzero power predicates from $\sigma$. Thus, with $P_n^\times=P_n\setminus\{0\}$, $P_n^\times(f_b(x))\in r$ if and only if 
$\lambda_f(b)\ne\infty$  and $P_n^\times(f_b(x))\in\sigma.$

\begin{claim}The type $r$ is a complete consistent global type.
\end{claim}
\begin{proof}[Proof of the claim:] 
By compactness, it is sufficient to realize any finite collection $\Sigma_{0}(X)$ of these
conditions in a field obtained from a standard-part construction and then embed it into the monster model. \\
Choose a small model $M\prec\cU$ containing $A$ and all parameters in finitely many prescribed conditions, and realize $\sigma\rest_M$ by $d$ in an elementary extension $N$ of $M$. Let $F$ be the relative algebraic closure of $M(d)$ in $N$. This field is henselian, since it is an algebraic extension of the henselization of $M(d)$ inside $N$. Every value in $\Gamma(M(d))$ has a definable type over $\Gamma(M)$, by definability of $\sigma\rest_M$ and stable embeddedness of $\Gamma$. Since $\Gamma(F)/\Gamma(M(d))$ is torsion, the same holds for every value in $\Gamma(F)$. Indeed, a positive integer multiple belongs to $\Gamma(M(d))$ and determines the value uniquely. Thus we can apply  \Cref{def:standard-part}. Boundedness places $d$ in $R_M(F)$, and polynomial evaluation commutes with reduction. In particular,
$v^{\mathrm{st}}(f_b(d^{\mathrm{st}}))=\lambda_f(b) \ (b\in M).$ The value on the right belongs to $\Gamma_\infty(M)$, since $\lambda_f$ is $A$-definable.

Let $E=\RF(F_M^{\mathrm{st}})$. The finer valuation on $F_M^{\mathrm{st}}$ induces on $E$ a valuation trivial on $\RF(M)$, with residue field $\RF(F)$. Since $\RF(M)\subseteq\RF(F)\subseteq\RF(N),$ and $\RF(M)\prec\RF(N),$
we have $\RF(M)\prec_\exists\RF(F)$. By \cref{rmk:babyfactlargeness} we have  $\RF(M)\prec_\exists E$. Also $\Gamma(F_M^{\mathrm{st}})=\Gamma(M)$. By, the Ax-Kochen-Ershov principle we obtain
$(M,v)\prec_\exists(F_M^{\mathrm{st}},v^{\mathrm{st}}).$ By compactness, $F_M^{\mathrm{st}}$ embeds as a valued field over $M$ into an elementary extension of $M$.\\
Choose representatives $c_1,\dots,c_s\in M^\times$ of $M^\times/(M^\times)^n$; the same finite list represents all classes in $N$. If $a\in R_M(F)^\times$, write $a=c_jb^n$ in $N$. Since $b$ is algebraic over $F$, it lies in $F$, and $nv(b)=v(a)-v(c_j)\in\Delta^{\mathrm{bd}}(M)$ implies $b\in R_M(F)^\times$. Reduction shows that all nonzero power classes in $F_M^{\mathrm{st}}$ are represented by this list. They are distinct by \Cref{lem:one-plus-m-divisible}: an $n$th root of $c_i/c_j$ in $F_M^{\mathrm{st}}$ lifts to one in $F$, hence in $N$, and therefore in $M$. Any embedding over $M$ into an elementary extension of $M$ consequently preserves and reflects all power predicates. The embedded tuple $d^{\mathrm{st}}$ realizes the finitely many conditions in $\Sigma_{0}(x)$.\\
Thus $\Sigma_{0}(x)$ is consistent, and so it is $r$. By \Cref{prop:QE}, they determine a complete type, and their uniform definitions give a global $A$-definable type $r$. By \Cref{rmk:standard-part-equations} it concentrates on $X$. This concludes the proof of the Claim. 
\end{proof}

Note that $r\rest_{M}$ is $M$-definable and weakly orthogonal by construction. Then, by \Cref{lem:def-orthogonal}, $r\in \widehat{X}$. 
Finally, fix $f\in\ccO_X$ and $n\geq2$. The type $(\rho_nf)_*q$ has a unique limit by \Cref{lem:bounded-target-limits}. Its valuation coordinate is the value assigned to $f$ by $r$. If this value is finite, the finite power-class coordinate is the class chosen by $\sigma$, which is also the class assigned by $r$. If it is infinite, every branch converges to zero in $H_n$. Therefore
$\lim(\rho_nf)_*q=\rho_n(f(r)).$ By \Cref{lem:limits-coordinatewise}, $\lim q=r$.
\end{proof}

\begin{theorem}\label[theorem]{thm:closed-bounded-defcompact}
Assume \textup{(B)}, suppose the residue field $\RF$ is large, and let $Y\subseteq\widehat X$ be a closed and bounded pro-definable subset. Then $Y$ is definably compact.
\end{theorem}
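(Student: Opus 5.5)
The plan is to reduce directly to \Cref{prop:bounded-limits} and then use closedness via \Cref{lem:defcompact-generalities}(1). Let $q$ be a global definable type concentrating on $Y$; we must produce a limit of $q$ lying in $Y$.

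First, since $Y$ is bounded, \Cref{def:bounded-sets}(4) gives a bounded definable set $D\subseteq X$ with $Y\subseteq\widehat D$. The type $q$ concentrates on $Y$, so it also concentrates on $\widehat D$. This holds because $\widehat D$ is a pro-definable subset of $\widehat X$ containing the pro-definable set $Y$, and the partial type presenting $Y$ already implies membership in $\widehat D$. Then \Cref{prop:bounded-limits}, which uses hypothesis (B) and largeness of $\RF$, yields a limit $r=\lim q$ in $\widehat X$.

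Second, since $Y$ is closed in $\widehat X$ and $q$ concentrates on $Y$, \Cref{lem:defcompact-generalities}(1) places $r$ in $Y$. Concretely, if $r\notin Y$, then by \Cref{rem:opens} there is a relatively definable basic open neighbourhood $O$ of $r$ disjoint from $Y$. Since $\lim q=r$, $O\in q$, which is inconsistent with $q$ concentrating on $Y$. Thus every global definable type on $Y$ has a limit in $Y$, and $Y$ is definably compact.

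The only point needing care is the use of (1) for the topology of $\widehat X$. One must check that the complement of $Y$ near $r$ contains a \emph{relatively definable} open neighbourhood, rather than just some open set. This follows because the topology has a basis of relatively definable opens: the finite intersections of preimages of subbasic opens of $H_n$ under the evaluations $p\mapsto\rho_n(f(p))$, as in \Cref{rem:opens}. All the substantial work, namely the standard-part construction and the consistency of the type $r$, is already contained in \Cref{prop:bounded-limits}. So I expect no real obstacle beyond verifying that $q$ on $Y$ is indeed a type on $\widehat D$ in the sense required there.
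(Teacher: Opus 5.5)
Your proposal is correct and follows the paper's proof exactly: boundedness gives a bounded definable $D$ with $Y\subseteq\widehat D$, \Cref{prop:bounded-limits} supplies $\lim q\in\widehat X$, and closedness together with \Cref{lem:defcompact-generalities}(1) puts the limit in $Y$. Your remark that the basic opens of \Cref{rem:opens} are relatively definable simply unpacks part (1) of that lemma.
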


\begin{proof}
Let $q$ be a global definable type on $Y$. By boundedness, $Y\subseteq\widehat D$ with $D$ bounded and definable, so $\lim q$ exists in $\widehat X$ by \Cref{prop:bounded-limits}. It lies in $Y$ by closedness and \Cref{lem:defcompact-generalities}(1).
\end{proof}
\subsection{Definable compactness implies closed and bounded}
Now we want to show that a pro-definable set $Y$ contained in the analyification $\widehat{X}(M)$, that is definably compact is necessarily closed and bounded.  
This part builds on \cite[Lemma~4.2.12, Proposition~4.2.13]{HruLoe}. The authors argue that any point can be extended to a global definable type in products of $\Gamma_{\infty}$, and then lift such type to the space of stably dominated types centered on the variety $X$. We follow closely their strategy, but instead we work with the approach of directed refinements of definable types introduced in \cite{AndujarJohnsonTypes} to derive the closure condition (\Cref{subsec:directed-refinements}).

\subsubsection{Finite tropical images and boundedness}

\begin{lemma}\label[lemma]{lem:level-images-closed}
Assume \textup{(B)}. Let $Y\subseteq\widehat X$ be a definably compact pro-definable subset. Then, for every finite sequence $\cF=(f_1,\dots,f_\ell)$ of regular functions on $X_{\cU}$ and every $n\geq2$, the image $Y_\cF^n:=\trop^n_\cF(Y)\subseteq H_n^\ell$ is closed.
\end{lemma}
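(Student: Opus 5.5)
The plan is to show that every point $c\in H_n^\ell$ in the closure of $Y_\cF^n$ is the limit of a global definable type on $Y_\cF^n$, and then to lift that type back to $Y$, using definable compactness of $Y$ and continuity of $\trop^n_\cF$. The first task is to see that $Y_\cF^n$ is a definable subset of $H_n^\ell$. Under \textup{(B)}, \Cref{prop:strict-prodefinability} makes $\widehat X$ strict pro-definable, and $\trop^n_\cF$ factors through finitely many codes $c_q^{\cF,n}$ by \Cref{prop:uniformizing-functions}. Hence $\trop^n_\cF$ is a pro-definable map, and the image of the pro-definable set $Y$ is pro-definable in $H_n^\ell$. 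Because $H_n^\ell$ is definable, this image is an intersection of a small family of definable sets. If $Y$ is only pro-definable and not strict, I will instead argue with this $\bigwedge$-definable image directly; the closure argument below does not need definability.

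Next, fix $c\in\operatorname{cl}(Y_\cF^n)$. Using the finite closed cover $H_n^\ell=\bigcup_{\mathbf g}H_{n,\mathbf g}$ of \Cref{rmk:topologyincoordinates}, choose $\mathbf g$ with $c\in\operatorname{cl}(Y_\cF^n\cap H_{n,\mathbf g})$, and transport along the definable homeomorphism $\theta_{\mathbf g}$ to a point $d\in\operatorname{cl}(D)$ with $D\subseteq\Gamma_\infty^\ell$ pro-definable. In the divisible case I apply the second part of \Cref{lem:cofinal-gamma-family}: integrate the family of nonempty sets $W_t\cap D$ against the cofinal type $\nu_d$ to get a global definable type on $D$ converging to $d$. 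This uses that each $W_t\cap D$ is nonempty, which follows from $d\in\operatorname{cl}(D)$, and that definable types on the pieces exist, which follows from \Cref{prop:lifting-definable}. In the $\ZZ$-group case I apply \Cref{lem:gamma-curve-selection} instead. Pushing forward by $\theta_{\mathbf g}$ then gives a global definable type $s$ on $Y_\cF^n$ with $\lim s=c$.

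Then I lift. The map $\trop^n_\cF\colon Y\to Y_\cF^n$ is a surjective pro-definable map, so \Cref{prop:lifting-prodefinable} gives a global definable type $p$ on $Y$ with $(\trop^n_\cF)_*p=s$. By definable compactness of $Y$, $p$ has a limit $a\in Y$. By \Cref{lem:defcompact-generalities}(2) and continuity of $\trop^n_\cF$ (\Cref{prop:trop-XH-cont}), $\lim s=\trop^n_\cF(a)$. Since $H_n^\ell$ is Hausdorff, limits are unique, so $c=\trop^n_\cF(a)\in Y_\cF^n$.

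I expect the main obstacle to be the lifting step. \Cref{prop:lifting-prodefinable} needs $\trop^n_\cF|_Y$ to be a pro-definable map between pro-definable sets, which relies on the strict pro-definable presentation $\mathbf c$ of \Cref{prop:explicit-presentation}. It also needs the lifted type to concentrate on $Y$ rather than merely on $\widehat X$, which I will get by presenting $Y$ as a pro-definable subset of $\mathbf S_X$. A secondary subtlety is the construction of a type on $D$ converging to $d$ when $D$ is only pro-definable. For this I will use compactness and saturation as in the proof of \Cref{lem:gamma-curve-selection}, so that the sets $W_t\cap D$ are nonempty in the monster model before integrating.
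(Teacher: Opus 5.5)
Your proof is correct and follows the paper's argument: reduce to a piece $H_{n,\mathbf g}$ of the finite closed cover via $\theta_{\mathbf g}$, use the cofinal type $\nu$ from \Cref{lem:cofinal-gamma-family}, lift it with \Cref{prop:lifting-prodefinable}, take the limit given by definable compactness, and conclude by continuity of $\trop^n_\cF$ and uniqueness of limits. The only difference is bookkeeping: the paper does a single lift of $\nu_{b'}$ along $P=\{(y,t)\in Y\times I:\trop^n_\cF(y)\in\theta_{\mathbf g}(W_t)\}\to I$, while you first build a convergent type on the image (by lifting along $\{(u,t)\in D\times I : u\in W_t\}\to I$, as in your last paragraph) and then lift it along $\trop^n_\cF|_Y$, and these two lifts compose to the paper's one.
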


\begin{proof}
Let $b\in H_n^\ell$ be in the closure of $Y^n_\cF$. The sets $H_{n,\mathbf g}$ described in \Cref{rmk:topologyincoordinates} form a finite closed cover of $H^\ell_n$, so $b$ lies in the closure of $Y^n_\cF\cap H_{n,\mathbf g}$ for some $\mathbf g$. Recall that $\theta_{\mathbf g}^{-1}$ is a definable homeomorphism, let $D:=\theta_{\mathbf g}^{-1}(Y^n_\cF\cap H_{n,\mathbf g}),$ and $b':=\theta_{\mathbf g}^{-1}(b)\in\overline D.$
Let $\{W_t\}_{t\in I}$ be the neighbourhood basis at $b'$ and let $\nu_{b'}$ be its global definable cofinal type from \Cref{lem:cofinal-gamma-family}. Consider the pro-definable set $P:=\{(y,t)\in Y\times I:
       \trop^n_\cF(y)\in\theta_{\mathbf g}(W_t)\}.$
Since for $t\in I$, $W_t$ meets $D$, the projection $\pi\colon P\to I$ is surjective. By \Cref{prop:lifting-prodefinable}, there is a global definable type $r$ on $P$ with $\pi_*r=\nu_{b'}$. Let $p$ be its pushforward to $Y$.

For every $t_0\in I$, cofinality of $\nu_{b'}$ implies $\nu_{b'}(t)\models W_t\subseteq W_{t_0}$. Consequently
$p\models \trop^n_\cF(y)\in\theta_{\mathbf g}(W_{t_0}),$
and hence $(\trop^n_\cF)_*p$ converges to $b$. Definable compactness of $Y$ gives a limit of $p$ $z=\lim p\in Y$. By continuity and uniqueness of limits, $\trop^n_\cF(z)=\lim(\trop^n_\cF)_*p=b.$
Thus $b\in Y^n_\cF$.
\end{proof}

\begin{proposition}\label[proposition]{prop:defcompact-bounded}
Assume \textup{(B)}. Every definably compact pro-definable subset of $\widehat X$ is bounded.
\end{proposition}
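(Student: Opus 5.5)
We argue by contraposition: assuming $Y\subseteq\widehat X$ is a definably compact pro-definable subset that is not bounded, we will build a global definable type on $Y$ with no limit in $Y$. Fix the closed embedding $X\subseteq\mathbb A^m$ with coordinate functions $x_1,\dots,x_m$, and take $\cF=(x_1,\dots,x_m)$ and $n=2$. Boundedness of $Y$ in the sense of \Cref{def:bounded-sets} should reduce to boundedness below of the coordinate valuations $v(x_i(p))$ for $p\in Y$. If for every $i$ there is $\beta_i$ with $v(x_i(p))\geq\beta_i$ for all $p\in Y$, put $\beta=\min_i\beta_i$ and $D=\{x\in X: v(x_i)\geq\beta\text{ for all } i\}$. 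For $p\in Y$ the formula $v(x_i)\geq\beta$ lies in $p$ by orthogonality, so $p\in\widehat D$. Hence non-boundedness means that some coordinate, say $x_i$, has $\bar v(\trop^2_{(x_i)}(Y))$ unbounded below in $\Gamma_\infty$.

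Now consider the pro-definable continuous map $e=\bar v\circ\trop^2_{(x_i)}\colon Y\to\Gamma_\infty$, whose image $V:=e(Y)$ is a definable subset of $\Gamma_\infty$. Here we use (B) and strict pro-definability (\Cref{prop:strict-prodefinability}) so that images of $\trop^n_\cF$ are definable, as in the discussion after \Cref{rmk:topology-from-presentation}. As $V$ is unbounded below, the argument in the proof of \Cref{prop:heine-borel-gamma} applies: o-minimality in the divisible case, or the Presburger cell decomposition in the $\ZZ$-group case, gives a global definable one-type $\sigma$ on $V$ at $-\infty$. This is $\tau_{-\infty}$ suitably translated into a congruence class in the $\ZZ$-group case.

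Since $e\colon Y\to V$ is a surjective pro-definable map, \Cref{prop:lifting-prodefinable} provides a global definable type $p$ on $Y$ with $e_*p=\sigma$. By definable compactness, $p$ has a limit $z=\lim p\in Y$. By \Cref{lem:defcompact-generalities}(2) and continuity of $e$, we then get $\lim\sigma=e(z)\in\Gamma_\infty$. But $\sigma$ contains $x<\gamma$ for every $\gamma\in\Gamma$, while every $c\in\Gamma_\infty$ has a definable neighbourhood, $(c-1,\infty]$ or $(c',\infty]$, bounded below and hence omitted by $\sigma$. This contradiction shows $Y$ is bounded.

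The main obstacle is the lifting step. We must make sure that \Cref{prop:lifting-prodefinable} applies to $e$, which requires $Y$ and $V$ to have compatible pro-definable presentations and, if needed, $V$ to be genuinely definable. This is where hypothesis (B) enters. A lesser point to verify is that coordinatewise lower bounds on valuations do give containment in $\widehat D$, which is immediate from orthogonality.
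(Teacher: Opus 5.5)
Your strategy is the paper's: lift a type at $-\infty$ to $Y$ by surjectivity transfer, then contradict definable compactness using continuity of $h_i\colon p\mapsto v(x_i(p))$. The problem is the step claiming that $V=e(Y)$ is a \emph{definable} subset of $\Gamma_\infty$.

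\textbf{The gap.} Under (B), strict pro-definability makes the finite-level images of $\widehat X$ itself definable. It says nothing about an arbitrary pro-definable subset $Y\subseteq\widehat X$. For such $Y$ the image $e(Y)$ is in general only type-definable, that is, a small intersection $\bigcap_j V_j$ of definable sets.

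For example, take $X=\mathbb A^1$ and $\delta>0$, with $\delta$ nonstandard in the $\ZZ$-group case. The set $\{\gamma\in\Gamma: m\gamma<\delta\text{ for all }m\geq1\}$ is type-definable. It is not definable, since it is convex, bounded above by $\delta$, and has no supremum in $\Gamma$. Its preimage under $p\mapsto v(x(p))$ is a pro-definable subset of $\widehat X$ whose image is exactly this set.

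So you cannot invoke o-minimality or Presburger cell decomposition on $V$ as in the proof of \Cref{prop:heine-borel-gamma}, and the existence of your type $\sigma$ on $V$ is not established. The step you flag as the main obstacle is not one: \Cref{prop:lifting-prodefinable} already allows pro-definable targets.

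\textbf{Repair along your route.} Each $V_j\supseteq V$ is definable and unbounded below.
\begin{itemize}
\item In the divisible case, each $V_j$ contains a ray $(-\infty,\alpha_j)$, so $\tau_{-\infty}$ concentrates on $V$.
\item In the $\ZZ$-group case, each $V_j$ is eventually periodic near $-\infty$. You then need a compactness argument in $\widehat{\ZZ}$ to choose one compatible family of congruence classes that works for all $V_j$ simultaneously, and take the type at $-\infty$ with those classes.
\end{itemize}

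\textbf{The paper's route.} It avoids any analysis of $h_i(Y)$. It takes $P_i=\{(y,t)\in Y\times\Gamma: h_i(y)<t\}$, whose projection to $\Gamma$ is surjective precisely because $h_i$ is unbounded below. It lifts $\tau_{-\infty}$ itself along this projection and pushes the lift forward to $Y$. Then $h_i(y)<t<\gamma$ for every $\gamma\in\Gamma$ forces $(h_i)_*p$ to be at $-\infty$, whatever its congruence data.
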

\begin{proof}
Let $Y\subseteq\widehat X$ be definably compact. Let $x_1,\ldots,x_m$ be the coordinate functions on $X$. Suppose that $h_i\colon Y\to\Gamma_\infty$, given by $h_i(y)=v(x_i(y))$, is unbounded below. Then
\[
P_i:=\{(y,t)\in Y\times\Gamma:h_i(y)<t\}
   \to\Gamma,\qquad  (y,t)\longmapsto t,
\]
is a surjective pro-definable map. Lift the global definable type $\tau_{-\infty}$ of \Cref{not:gamma-types} along this map by \Cref{prop:lifting-prodefinable}. Call such a lift $q$ and let $p$ be the pushforward of the lift to $Y$. For every $\gamma\in\Gamma$, $q\models h_i(y)<t<\gamma$. Thus $(h_i)_*p$ is at $-\infty$ and has no limit in $\Gamma_\infty$. This is a contradiction, since definable compactness of $Y$ gives a limit of $p$ in $Y$, whose image under the continuous map $h_i$ would be a limit of $(h_i)_*p$. Each coordinate therefore has a lower bound, which gives boundedness (cf. \Cref{def:bounded-sets}).
\end{proof}

\subsubsection{Directed refinements of definable types}\label{subsec:directed-refinements}

To construct a definable type concentrating on a pro-definable set with a prescribed limit in its closure, we construct a single definable directed family shrinking to the limit. A definable family $(G_t)_{t\in T}$ is \emph{downward directed} if any two members of it are refined by a third. i.e., for any $G_{t_1},G_{t_2}$, there is $G_{t_3}\subseteq G_{t_1}\cap G_{t_2}$. A global definable type $\nu$ on $T$ is \emph{cofinal} for $(G_t)_{t\in T}$, if for every $t_0\in T$,
$\{t\in T:G_t\subseteq G_{t_0}\}\in\nu.$ Thus an index $t'$ realizing $\nu$ selects a set contained in every member of the family in $T(\cU)$. 

\begin{definition}\label[definition]{def:directed-refinement}
Let $p(x)$ be a global definable type on a definable set $V$. We say that $p$ has the
\emph{directed refinement property}, denoted $\mathrm{(DR)}$, if
\begin{itemize}
    \item for every formula $\varphi(x;y)$, there is a downward-directed definable family $(G_t)_{t\in T}$  such that for every member $\varphi(V;b)$ of the family $\mathcal F_\varphi(p)=\{\varphi(V;b):\varphi(x;b)\in p\}$, there is $G_t\subseteq \varphi(V;b)$.
    \item Moreover, $G_t\in p$ for all $t$ and it admits a global definable cofinal type $\nu$ on $T$. 
\end{itemize}
Note that the family and its cofinal type may depend on
$\varphi$ and may use additional small sets of parameters.
\end{definition}

\begin{lemma}\label[lemma]{lem:residue-directed-refinement}
Every global definable type in a finite tuple of field variables in a
real closed or $p$-adically closed field has property $\mathrm{(DR)}$.
\end{lemma}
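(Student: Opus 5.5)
The proof goes by induction on the length $m$ of the variable tuple $x=(x_1,\dots,x_m)$. It uses the explicit description of definable one-types in $\mathrm{RCF}$ and $\mathrm{pCF}$, together with definable Skolem functions. These exist in $\mathrm{RCF}$ outright and in $\mathrm{pCF}$ after naming constants, as in the $\Lmac$ setting of \Cref{def:class}.

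\emph{Case $m=1$.} In $\mathrm{RCF}$, o-minimality and the analogue of \Cref{fact:one-types-gamma} give four possibilities for a global definable one-type $p$:
\begin{itemize}
\item $p$ is realized by some $a$;
\item $p$ lies at $+\infty$ or at $-\infty$;
\item $p=a^{+}$ or $p=a^{-}$ for some $a\in\cU$.
\end{itemize}
The corresponding families $(G_t)$ are, respectively, $\{a\}$, the rays $(t,\infty)$ or $(-\infty,t)$, and the intervals $(a,a+t)$ or $(a-t,a)$ with $t>0$. These families are downward directed, every member lies in $p$, and they admit cofinal types ($\tau_\infty$, resp.\ $0^{+}$, transported to the field). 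Each $\varphi(V;b)\in p$ is a finite union of intervals and points, so it contains some $G_t$.

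In $\mathrm{pCF}$ I would use $p$-minimality together with Denef's cell decomposition. The plan is to show that a non-realized global definable one-type is of one of two forms:
\begin{itemize}
\item it is \enquote{infinitesimally close} to a point $a\in\cU$, and is then determined by $v(x-a)>\gamma$ for all $\gamma\in\Gamma$ together with a fixed class $x-a\in\lambda P_n^\times$ for every $n$;
\item it is at infinity, $v(x)<\gamma$ for all $\gamma$, again with fixed power classes.
\end{itemize}
Definability of $p$ should rule out non-realized \enquote{cuts} inside balls, since the value $v(x-a)$ is pushed to a definable type of $\Gamma$, which is a $\ZZ$-group. The relevant families are
\[G_{(t,n)}=\{x: v(x-a)>t,\ x-a\in\lambda_nP_n^\times\},\]
and their analogues at infinity. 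Directedness uses the fact that classes at level $n'$ refine those at level $n$ when $n\mid n'$. A single formula $\varphi$ only involves finitely many power predicates, so one fixed $n$ suffices for that $\varphi$, and $G_t$ can then be indexed by $t\in\Gamma$ with cofinal type $\tau_\infty$. The refinement property follows because each cell containing $p$ has exactly this shape near $a$.

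\emph{Inductive step.} Write $p=\tp(a',a_m)$ with $p'=\tp(a')$ on the first $m-1$ variables. Then $p'$ has (DR) by induction, say with families $G'_s$ and cofinal type $\nu'$.

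\begin{enumerate}
\item The fibre type $\tp(a_m/\cU a')$ is definable. By the one-variable analysis applied uniformly, together with definable Skolem functions, it falls into one of the cases above, with centre $c(a')$ for a $\cU$-definable function $c$ (and a class $\lambda(a')$ in the $p$-adic case).
\item Given $\varphi(x;y)$, I would take a uniform cell decomposition of $\varphi(x;b)$ with respect to $x_m$. For each $b$ with $\varphi(x;b)\in p$, some set in $p'$ over which $c$ is continuous and $\varphi$-fibres contain $c(x')$-neighbourhoods of radius $\ge\theta_b(x')$ should lie in $p'$. Here $\theta_b$ is a definable function, and continuity is available by o-minimal, resp.\ $p$-adic, generic continuity.
\item Set
\[G_{(s,t)}=\{(x',x_m): x'\in G'_s,\ x_m\ \text{lies in the}\ t\text{-neighbourhood of}\ c(x')\ \text{of the prescribed side/class}\},\]
indexed by $T'\times T_1$, with cofinal type $\nu'\otimes\nu_1$.
\end{enumerate}

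The main obstacle is showing that one $G_{(s,t)}$ fits inside $\varphi(V;b)$: this needs the radius $t$ to be chosen uniformly over $x'\in G'_s$, i.e.\ a lower bound for $\theta_b$ on some $G'_s$. I would first try to obtain this by pushing $p'$ forward under $\theta_b$. Since $\theta_b(a')$ is not infinitesimal, the pushforward lies in a set bounded away from $0$ (resp.\ $v<\infty$). That set is in $p'$ and is therefore refined by some $G'_s$. The realized case of the fibre type is handled by the graph of $c$ instead of a neighbourhood.
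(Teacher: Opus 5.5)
Your one-variable analysis is essentially fine. (In the $p$-adic case you also need definable spherical completeness to exclude a type with no best approximation in $\cU$, but that is minor.)

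\textbf{The gap is step~1 of the inductive step.} You assert that the fibre type $\tp(a_m/\cU a')$ of a global definable type is definable. It need not be, so it need not be realized at, infinitesimally close to, or beyond a centre $c(a')$ with $c$ $\cU$-definable. Here is a counterexample in $\mathrm{RCF}$.
\begin{itemize}
\item Let $\epsilon\models 0^{+}$, and let $\delta$ satisfy $\epsilon^{q_2}<\delta<\epsilon^{q_1}$ for all rationals $q_1<\sqrt2<q_2$.
\item For the valuation whose ring is the convex hull of $\cU$, $\epsilon$ and $\delta$ have values $1$ and $\sqrt2$. By Abhyankar's inequality the residue field of $\cU\langle\epsilon,\delta\rangle$ is $\cU$. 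So $\cU$ is Dedekind complete in $\cU\langle\epsilon,\delta\rangle$, and $\tp(\epsilon,\delta/\cU)$ is definable by Marker--Steinhorn.
\item But $\cU\langle\epsilon\rangle$ has value group $\QQ$, so $\delta$ realizes a cut of $\cU\langle\epsilon\rangle$ without supremum, which is not definable.
\item The same holds with the coordinates swapped. Since the value group $\ZZ+\ZZ\sqrt2$ is archimedean of rank $2$, no $\cU$-definable change of coordinates helps either.
\end{itemize}
The same construction works in $\mathrm{pCF}$: take $v(\epsilon)>\Gamma(\cU)$, put $v(\delta)$ in the cut $q_1v(\epsilon)<v(\delta)<q_2v(\epsilon)$, and make $\epsilon,\delta$ $n$th powers for every $n$. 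So in general there is no centre function to build $G_{(s,t)}$ around.

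\textbf{The uniform-radius step also fails,} even when the fibre type is definable. The problem is that $\theta_b(a')$ is nonzero but may be infinitesimal over $\cU$.
\begin{itemize}
\item Take $\epsilon\models 0^{+}$ and $\delta$ positive and infinitesimal over $\cU\langle\epsilon\rangle$. Then $p=\tp(\epsilon,\delta/\cU)$ is definable, with fibre $0^{+}$ and centre $0$.
\item Take $\varphi(x;b)\colon 0<x_2<bx_1$. Then $\theta_b(\epsilon)=b\epsilon$, and the pushforward of $p'$ under $\theta_b$ is $0^{+}$.
\item Every $G'_s\in p'$ contains an interval $(0,s')$. Hence no set $\{(x',x_2): x'\in G'_s,\ 0<x_2<t\}$ with $t\in\cU_{>0}$ is contained in $\varphi(\cU;b)$.
\end{itemize}
You would need radii depending on $x'$ and bounded below uniformly in $b$. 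In $\mathrm{RCF}$ this can be extracted from polynomial boundedness, since only finitely many exponents occur in a definable family, but nothing in your sketch provides it.

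The paper does not prove the lemma by hand. It quotes \cite[Lemma~2.10]{AndujarTypes} for the directed refinement and \cite[Proposition~2.17]{AndujarTypes} for the cofinal type in the real closed case. In the $p$-adic case it quotes \cite[Proposition~7.5]{AndujarJohnsonTypes}, where the refinement is indexed by $\Gamma$ and $\tau_\infty$ is cofinal. Those results are what handle the multivariable situation that your fibre-by-fibre induction cannot.
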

\begin{proof}
For real closed fields, \cite[Lemma~2.10]{AndujarTypes} gives a definable downward-directed refinement for any formula and \cite[Proposition~2.17]{AndujarTypes} gives
a definable cofinal type on the index set. For $p$-adically closed fields,
\cite[Proposition~7.5]{AndujarJohnsonTypes} gives a definable downward-directed refinement for any formula indexed by the value group, and the definable type at $\tau_\infty$ in the value group is cofinal.
\end{proof}

\begin{lemma}\label{lem:transfer-directed-refinement} Working in the theory of a $\bm$-field, suppose that every global definable type concentrating on finite tuples of residue field variables has property $\mathrm{(DR)}$. Then every global definable type orthogonal to $\Gamma$ concentrating on finite tuples of valued field variables has
property $\mathrm{(DR)}$.
\end{lemma}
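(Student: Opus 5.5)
Given $p$ orthogonal to $\Gamma$ on a finite tuple $x$ of field variables and a formula $\varphi(x;y)$, I will reduce $\varphi$ to residue-field data and then pull back a residue-side directed refinement along finitely many polynomial maps.

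\textbf{Step 1 (reduction to residue data).} By \Cref{prop:QE}, $\varphi(x;y)$ is equivalent to a Boolean combination of polynomial equations, power predicates $P_n(g(x,y))$, residue conditions $\psi(\res(g_1/h_1),\dots)$ and value-group conditions $\theta(v(g_1(x,y)),\dots)$, for finitely many polynomials $g_j(x,y)$. Fix $b$ with $\varphi(x;b)\in p$. Since $p$ is orthogonal to $\Gamma$, each $v(g_j(x,b))$ takes a fixed value $\gamma_j(b)\in\Gamma_\infty$ on $p$, and this is definable in $b$ by uniform definability of $p$. Hence the value-group conjuncts are decided by the $\gamma_j(b)$. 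Choose $c_j(b)$ of value $\gamma_j(b)$; by definable choice this can be done uniformly, e.g. using the finitely many coset representatives $c_i$ of \Cref{rmk:coordinates} times suitable powers. The remaining conditions then become conditions on the residues $\res(g_j(x,b)/c_j(b))$ and on $v(g_j(x,b)/c_j(b))=0$; the power predicates are also residue conditions by \Cref{lem:divisibilities}. So $p$ contains a formula of the form $\bigwedge_j v(g_j(x,b))=\gamma_j(b)\wedge \chi(\res(g(x,b)/c(b));e(b))$, with $\chi$ an $\RF$-formula, and this formula implies $\varphi(x;b)$.

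\textbf{Step 2 (residue side).} Let $s_b$ be the pushforward of $p$ under $x\mapsto \res(g(x,b)/c(b))$. This is a global definable type on a finite residue tuple, so by hypothesis it has (DR). The difficulty is that it depends on $b$. To handle this, I would use the full pushforward of $p$ to the residue sort $\RF$ via all such maps: by stable embeddedness (\Cref{prop:QE}), this is captured by the $\RF$-part of a germ. More precisely, I will choose a pro-definable family of residue images and apply (DR) to the residue formula $\chi(z;e)$, with a downward directed family $(G'_t)_{t\in T'}$ refining every $\chi(\RF;e)\in s$.

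\textbf{Step 3 (pulling back).} I define
$$G_{(t,\delta)}=\Bigl\{x: \bigwedge_j v(g_j(x,\cdot))\text{ fixed},\ \res(\dots)\in G'_t\Bigr\}.$$
Directedness is inherited, and the cofinal type is the pushforward of the residue cofinal type $\nu'$, tensored where needed with $\tau_\infty$ to shrink valuative neighbourhoods.

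\textbf{Main obstacle.} The hardest point is the uniformity in $b$ in Steps 2 and 3: the polynomials $g_j(x,b)$ vary with $b$, so a single residue map does not suffice. My first attempt will be to use the germ of $p$ at a valuation-independent basis of the relevant finite-dimensional space of polynomials, together with a uniform definable choice of the normalizers $c_j$. This makes the residue data a single definable type on finitely many residue coordinates determined by $\varphi$ alone, and (DR) can then be applied once.
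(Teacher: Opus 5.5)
You name the right tool in your last paragraph, but only as a ``first attempt'', and Steps 1--3 as written do not produce a (DR) family. So there is a genuine gap exactly at the obstacle you flag.

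\textbf{Why Steps 1--3 fail.} The residue map $x\mapsto\res(g(x,b)/c(b))$ in Step 1 depends on $b$, so each $b$ yields a different residue type $s_b$. But (DR) demands \emph{one} downward-directed family refining every $\varphi(V;b)$ with $\varphi(x;b)\in p$.
\begin{itemize}
\item Your Step 2 remedy, a ``pro-definable family of residue images'', cannot be fed into the hypothesis, which only concerns finite tuples of residue variables.
\item $G_{(t,\delta)}$ in Step 3 still hides $b$ in ``$v(g_j(x,\cdot))$ fixed''. The constraints $v(g_j(x,b))=\gamma_j(b)$ are valued-field conditions that no pullback of residue sets along a $b$-independent map will imply without further argument.
\item The appeal to definable choice for the normalizers $c_j(b)$ is unjustified. $\bm$-fields such as ACVF have no definable way to pick an element of prescribed valuation, and ``coset representatives times suitable powers'' is circular.
\item The $\tau_\infty$ factor is superfluous: orthogonality pins all valuations exactly.
\end{itemize}

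\textbf{The missing mechanism} is a linear reduction to one fixed residue tuple; this is how the paper argues.
\begin{itemize}
\item Take a small maximally complete $M\prec\cU$ over which $p$ and $\varphi$ are defined. Let $d$ bound the $x$-degrees of the polynomials $F_j(x;y)$ in the quantifier-free form of $\varphi$, and let $\xi\models p|_M$ with ideal $I\subseteq M[x]$.
\item Maximal completeness gives a valuation-independent basis $e_1,\dots,e_s\in M[x]$ of the degree-$\le d$ polynomials evaluated at $\xi$. Since $\Gamma(M\xi)=\Gamma(M)$, you can rescale by elements of $M$ so that $v(e_i(\xi))=0$ and $v(\sum_ic_ie_i(\xi))=\min_iv(c_i)$.
\item Since $p$ is $M$-definable and $M\prec\cU$, this identity holds for all $c\in\cU^s$ at a realization of $p$.
\item On $D=\{x\in V(I):v(e_i(x))=0\}$ one has $F_j(x;b)=\sum_ic_{ji}(b)e_i(x)$, with $c_{ji}$ polynomials over $M$. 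Let $C_j(b)$ be the coefficient of least valuation; this is a definable function of $b$, and it is what replaces your definable choice. Put $r(x)=(\res e_i(x))_i$.
\item Then $\res(F_j(x;b)/C_j(b))=L_{j,b}(r(x))$ for a linear form $L_{j,b}$ with coefficients $\res(c_{ji}(b)/C_j(b))$. Hence $v(F_j(x;b))=v(C_j(b))$ wherever $L_{j,b}(r(x))\neq0$, and $\kappa:=r_*p$ concentrates on that condition.
\item Valuation conditions, power predicates (via the finitely many power classes of $C_j(b)$ and Hensel) and residue conditions now collapse to a $b$-uniform residue formula $\theta_b(z)\in\kappa$ in the single tuple $z=r(x)$, with $D(x)\wedge\theta_b(r(x))\rightarrow\varphi(x;b)$.
\item So (DR) is applied once to $\kappa$, giving $(G'_t,\nu)$, and $G_t=D\cap r^{-1}(G'_t)$ works with the same cofinal type $\nu$.
\end{itemize}
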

\begin{proof}
Fix such a type $p(x)$ and a formula $\varphi(x;y)$. Choose a small
maximally complete elementary model $M$ containing the parameter of definition of $p$ and parameters in $\varphi$. By the quantifier elimination in \Cref{prop:QE}, the formula $\varphi$ is determined by
finitely many polynomials $F_j(x;y)$, power predicates applied
to them, and pure value group formulas involving their valuations. Let $d$ bound
the degrees in $x$ of the $F_j$ 's.

For $\xi\models p|_{M}$, put $I=\{f\in M[x]:f(\xi)=0\}$. By maximal completeness of $M$, the finite-dimensional $M$-vector space $V$
of values at $\xi$ of polynomials of degree at most $d$ in variable $x$ has a
valuation-independent basis.  Note further that $\Gamma(M\xi)=\Gamma(M)$ by orthogonality, so one can choose the basis to be $e_1,\ldots,e_s\in M[x]$ with the following:
\begin{equation}\label{eq:DR-separated}
 v(e_i(\xi))=0,\qquad
 v\left(\sum_i c_i e_i(\xi)\right)=\min_i v(c_i)
 \quad(c\in M^s).
\end{equation}
Since $p$ is definable over $M$, $p$ knows the above extend to every coefficient
tuple $c\in\cU^s$: the above each identity to holds throughout $M^s$ with a realization $\xi$, hence
throughout $\cU^s$ by elementarity for any $\xi'$ realizing $p\rest_{\mathcal{U}}$.

Note that the ideal $I$ is finitely generated. Define
\[
 D=\{x\in V(I):v(e_i(x))=0\text{ for all }i\},\qquad
 r:D\to k^s,\quad r(x)=(\res(e_i(x)))_i.
\]
Then $D$ is a definable subset of the variety $V(I)$ and $p$ concentrates on $D$.
The pushforward $\kappa=r_*p$ is a global definable type in the residue field.
There are polynomial functions $c_{ji}(y)$ over $M$ such
that, uniformly in $y$, $F_j(x;y)\equiv_I\sum_i c_{ji}(y)e_i(x).$

Fix a tuple $b$. If all $c_{ji}(b)$'s vanish, then $F_j(x;b)=0$
for every $x\in D$. Otherwise, choose one $c_{ji}(b)$ of least
valuation, denoted $C_j(b)$, and put
\[
 \alpha_{ji}(b)=\res(c_{ji}(b)/C_j(b)),\qquad
 L_{j,b}(z)=\sum_i\alpha_{ji}(b)z_i.
\]
By \eqref{eq:DR-separated}, the valuation of $F_j(x;b)/C_j(b)$ evaluated at $p$
is zero. Its residue is therefore nonzero, which gives
that $\kappa$ concentrates on $L_{j,b}(z)\ne0$ for some $j$. On the definable set $D\cap r^{-1}(L_{j,b}(z)\ne0)$, $p$ concentrats on the following:
\[
 v(F_j(x;b))=v(C_j(b)),\qquad \res(F_j(x;b)/C_j(b))=L_{j,b}(r(x)).
\]

The truth of $P_n$ of the $F_{j}$'s evaluated at $p$ is also determined by the residue field data. For each $P_n$ occurring in the relative quantifier elimination of $\varphi$, take representatives
$u_\ell\in M^\times$ of the cosets in the finite quotient
$\cU^\times/(\cU^\times)^n$. If
$C_j(b)\in u_\ell(\cU^\times)^n$ and $v(w)= 0$, then
$P_n(C_j(b)w)$ is equivalent to $P_n(u_\ell w)$, which is false when $v(u_\ell)\notin n\Gamma$. When $v(u_\ell)\notin n\Gamma$, there is
$d_\ell\in M^\times$ with $n v(d_\ell)=v(u_\ell)$; Hensel's lemma gives
$P_n(u_\ell w)$ if and only if $P_n^k\bigl(\res(u_\ell/d_\ell^n)\res(w)\bigr)$,
where $P_n^k$ denotes the $n$th-power predicate in the residue field.

It follows that, for each $b$ with $\varphi(x;b)\in p$, there is a
residue-field formula $\theta_b(z)\in\kappa$ such that
\begin{equation}\label{eq:DR-force}
 D(x)\wedge\theta_b(r(x))\quad\text{implies} \quad\varphi(x;b).
\end{equation}
The dependence of $b$ is also uniform by the above construction. So we indeed have a definable family.

The property $\mathrm{(DR)}$ in the residue field gives a directed refinement $(G'_t)$ with $G'_t\in \kappa$ and a definable cofinal type $\nu$. Then
$G_t=D\cap r^{-1}(G'_t)$
belongs to $p$, is downward directed, and refines
$\mathcal F_\varphi(p)$ by \eqref{eq:DR-force}, and $\nu$ is the cofinal type of this family by pure stable embeddedness of $\Gamma$. Thus $p$ has $\mathrm{(DR)}$.
\end{proof}

\subsubsection{From directed refinements to curve selection}\label{subsec:DR-curve-selection}

We now wish to further refine the definable sets in a directed refinement by shrinking family of open neighbourhoods of the corresponding point of $\widehat X$. We first specify the parameters used to shrink these neighbourhoods.

Put
\[
 S=\begin{cases}
 \Gamma\times\Gamma_{>0},&\Gamma\models\mathrm{DOAG},\\
 \Gamma,&\Gamma\models\mathrm{PRES}.
 \end{cases}
\]
In the divisible case write $s=(h,\varepsilon)$ and say that $s'$
\emph{refines} $s$ if $h'\geq h$ and $0<\varepsilon'\leq\varepsilon$.
In the $\ZZ$-group case $h'$ refines $h$ means $h'\geq h$.
A \emph{tail} of $S$ is a set containing all refinements of some fixed
parameter. Note that there exist a definable type $\tau$ on $S$ containing every
tail: use $\tau_{\infty}$ in the $\ZZ$-group case and a tensor product
of $\tau_{\infty}$ and $\tau_{0^+}$ in the divisible case. \\

For $b\in H_n^\ell$, let $W(b;s)$ be the product of the following neighbourhoods of $b$. At a coordinate $b_i=\mathbf0$, we take the neighborhood given by 
$\bar v(u_i)>h$. For $b_i\neq \mathbf 0$, (1)
in the case $\Gamma$ is divisible, take the neighbourhood given by the same class in $\RV^\times/(\RV^\times)^n$ as $b_i$ and
 $|\bar v(u_i)-\bar v(b_i)|<\varepsilon$; (2)
in the $\ZZ$-group case, take the neighbourhood to be $\{b_i\}$. These conditions give a
uniform definable neighbourhood basis of $b$, decreasing under refinement of
$s$. The definition is clearly uniform.

\begin{lemma}\label{lem:thicken-directed-refinements}
Assume \textup{(B)} and suppose the residue field $\RF$ is large. Let
$B\subseteq X$ be definable, with $\widehat B$ closed and bounded,
and let $a\in \widehat{B}$. Let $(G_t)_{t\in T}$ be a definable family indexed
by a definable set in a finite tuple of field variables, with $a \in G_{t}$
for every $t$. There is a definable family $(N_{t,s})_{(t,s)\in T\times S}$
of relatively definable open neighbourhoods of $a$ in $\widehat{B}$ such that,
for every relatively definable open $U\subseteq \widehat{B}$,
\[
 \widehat{B}\cap\widehat G_t\subseteq U
 \quad\text{implies} \quad
 N_{t,s}\subseteq U\text{ for all }s\text{ in some tail of }S.
\]
Particularly, for fixed $U$, $\{(s,t): N_{t,s}\subseteq U \}$ is definable.
\end{lemma}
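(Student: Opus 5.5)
The plan is to build $N_{t,s}$ from the finitely many polynomial data that cut out $G_t$, so that the family is uniform in $t$. We then derive the implication by contradiction, using definable compactness of $\widehat B$ (\Cref{thm:closed-bounded-defcompact}).

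\emph{Construction.} Let $\psi(x;t)$ be the formula defining the family $(G_t)_{t\in T}$. By \Cref{prop:QE}, $\psi(x;t)$ is equivalent to a Boolean combination of three kinds of conditions:
\begin{itemize}
\item equations $F_j(x;t)=0$;
\item power predicates $P_{n_j}(c\,F_j(x;t))$, with $c$ ranging over coset representatives from $K$;
\item value-group conditions on $v(F_1(x;t)),\dots,v(F_\ell(x;t))$,
\end{itemize}
where $F_1,\dots,F_\ell$ are finitely many polynomials. Let $n$ be a common multiple of the $n_j$, and write $\cF_t=(F_1(\cdot;t),\dots,F_\ell(\cdot;t))$. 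Put $b_t:=\trop^n_{\cF_t}(a)\in H_n^\ell$, which is definable from $t$ and the code of $a$ (\Cref{prop:uniformizing-functions}). Then set
\[
N_{t,s}:=\{p\in\widehat B:\ \trop^n_{\cF_t}(p)\in W(b_t;s)\}.
\]
Since $\trop^n_{\cF_t}$ is continuous and $W(b_t;s)$ is an open neighbourhood of $b_t$, each $N_{t,s}$ is a relatively definable open neighbourhood of $a$. The family is definable in $(t,s)$ by the uniform definability of $W(\cdot;\cdot)$. For fixed $U$, the set $\{(t,s):N_{t,s}\subseteq U\}$ is definable by strict pro-definability (\Cref{prop:strict-prodefinability}), which gives the last sentence.

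\emph{Key observation.} The level-$n$ tropical data $\trop^n_{\cF_t}(p)$ decides every atomic condition occurring in $\psi(x;t)$:
\begin{itemize}
\item vanishing of $F_j$ is the condition $\rho_n=\mathbf 0$;
\item the valuations are the first coordinates;
\item the power classes are the $\RV$-coordinates, via \Cref{lem:divisibilities}(1).
\end{itemize}
Hence if $\trop^n_{\cF_t}(c)=\trop^n_{\cF_t}(a)$ and $G_t\in a$, then $G_t\in c$, i.e.\ $c\in\widehat G_t$.

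\emph{The contradiction argument.} Suppose $\widehat B\cap\widehat G_t\subseteq U$, yet no tail of $S$ consists of good parameters. Then the definable set $\mathrm{Bad}=\{s: N_{t,s}\not\subseteq U\}$ meets every tail.
\begin{enumerate}
\item Produce a global definable type $\tau'$ on $S$ concentrating on $\mathrm{Bad}$ and containing every tail. In the divisible case this uses o-minimality of $\Gamma$ applied coordinatewise to the definable set $\mathrm{Bad}$, together with stable embeddedness. In the $\ZZ$-group case it uses the cell decomposition of \cite{CluckersPressburger}, choosing a suitable congruence class for the type at $+\infty$.
\item The projection $\{(p,s):p\in N_{t,s}\setminus U\}\to\mathrm{Bad}$ is surjective, so \Cref{prop:lifting-prodefinable} lifts $\tau'$ to a definable type on this set. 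Pushing forward gives a definable type $q$ on the closed pro-definable set $\widehat B\setminus U$.
\item By definable compactness of $\widehat B$ and \Cref{lem:defcompact-generalities}(1), $q$ has a limit $c\in\widehat B\setminus U$.
\item Because the neighbourhoods $W(b_t;s)$ shrink along tails and form a neighbourhood basis of $b_t$, \Cref{lem:defcompact-generalities}(2) and uniqueness of limits in the Hausdorff space $H_n^\ell$ give $\trop^n_{\cF_t}(c)=b_t$.
\item By the key observation, $c\in\widehat B\cap\widehat G_t\subseteq U$, contradicting $c\notin U$.
\end{enumerate}

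\emph{Main obstacle.} I expect the delicate step to be step 1: turning ``$\mathrm{Bad}$ meets every tail'' into a global definable type on $\mathrm{Bad}$ containing every tail. This is especially so in the divisible case, where tails are two-parameter (large $h$, small $\varepsilon$). I would first try to handle the two coordinates one at a time. Using o-minimality, for each large $h$ the fibre of $\mathrm{Bad}$ contains an interval $(0,\varepsilon_0(h))$. Then take the tensor product $\tau_\infty\otimes\tau_{0^+}$ restricted accordingly. A secondary check is that the zero coordinates (value $\infty$) and the power-class coordinates really pass to the limit, which is exactly the content of \Cref{lem:bounded-target-limits}.
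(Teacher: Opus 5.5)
Your proof is correct. The construction $N_{t,s}=(\trop^n_{\cF_t})^{-1}(W(b_t;s))$ on $\widehat B$, the observation that $G_t$ is a union of fibres of $\trop^n_{\cF_t}$, and the definability of $\{(t,s):N_{t,s}\subseteq U\}$ via strict pro-definability are exactly the paper's.

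The only difference is how the implication is obtained. The paper applies \Cref{lem:level-images-closed} to the closed, bounded (hence definably compact) set $C=\widehat B\setminus U$. This gives that $\trop^n_{\cF_t}(C)$ is closed and misses $b_t$, so it is disjoint from some basic $W(b_t;s_0)$, and therefore from every $W(b_t;s)$ with $s$ refining $s_0$. Your steps 2--5 (lift, take the limit in $\widehat B\setminus U$, identify its image with $b_t$) are the proof of that lemma, inlined for this particular $C$. You also use the basis $W(b_t;s)$ directly instead of the charts $\theta_{\mathbf g}$. Citing the lemma buys brevity; your version is self-contained and shows that only the closedness of this one image is needed.

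The step you flag as the main obstacle is in fact trivial, and the paper's proof uses the same fact. Since $W(b;s')\subseteq W(b;s)$ whenever $s'$ refines $s$, you have $N_{t,s'}\subseteq N_{t,s}$. So $\mathrm{Bad}$ is closed under coarsening: if it meets every tail, every $s_0$ has a bad refinement and is therefore bad itself, so $\mathrm{Bad}=S$. You can then take $\tau'=\tau$, and no o-minimality or Presburger cell decomposition is needed.
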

\begin{proof}
Apply the quantifier elimination \cref{prop:QE} to the formula defining the family $G_t$. It is equivalent to a formula with only finitely many polynomial families and power predicates appearing in it.
Choose a common multiple $n$ of their exponents and list the polynomials
appearing in the quantifier elimination as $\mathcal F_t$. The $H_n$-values of $\mathcal{F}_t$ encodes all the information needed to decide truth of the formula. In other words, for the function $f_t=(\operatorname{trop}_{\mathcal F_t}^{n})_{|\widehat{B}}:\widehat{B}\to H^\ell_n$, $G_t$ is a union of fibres of $f_t$.
Take $b_t=f_t(a)$, this means
\begin{equation}\label{eq:DR-fiber}
 f_t^{-1}(b_t)\subseteq \widehat{B}\cap\widehat G_t.
\end{equation}
The definability of $a$ makes the function $t\mapsto b_t$ definable. Set $N_{t,s}=f_t^{-1}(W(b_t;s))$.

Suppose $\widehat{B}\cap\widehat G_t\subseteq U$. The complement $C=\widehat{B}\setminus U$
is closed, bounded, and pro-definable, hence definably compact by
\Cref{thm:closed-bounded-defcompact}. Its image $f_t(C)$ is closed by
\Cref{lem:level-images-closed} and does not contain $b_t$ by \eqref{eq:DR-fiber}.
Since $W(b_t;s)$'s forms a basis around $b_t$, for some $s_0$, $W(b_t;s_0)$ is disjoint from $f_t(C)$, and so is every refinement $s$ of $s_0$. 

Finally, the condition
$N_{t,s}\subseteq U$ if and only if
 $\neg\exists z\in \widehat{B}\,(z\in N_{t,s}\ \wedge\ z\notin U)$
uses only finitely many coordinates of $z$, uniformly in $(t,s)$.
The image of $\widehat{B}$ in these coordinates is definable by strict
pro-definability. Hence, the last statement also follows.
\end{proof}

\begin{proposition}\label{prop:curve-selection-DR}
Assume \textup{(B)} and the residue field $\mathbf{k}$ is large. If
$a\in\widehat X$ has property $\mathrm{(DR)}$ and $a\in\operatorname{cl}(Y)$ for a pro-definable set $Y\subseteq\widehat X$,
then there is a global definable type on $Y$ converging to $a$.
\end{proposition}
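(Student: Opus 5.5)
Since $a\in\widehat X$ is a global definable type on $X$ with property $\mathrm{(DR)}$, the plan is to combine a directed refinement of $a$ with the shrinking neighbourhoods of \Cref{lem:thicken-directed-refinements}, and then lift a cofinal type. Everything will be reduced to a bounded piece, so that closed bounded pieces are definably compact. Choose a bounded definable $B\subseteq X$ with $\widehat B$ closed and containing $a$ in its interior, say a valuation ball $v(x_i)\ge\beta$ where $\beta$ lies below the valuations $v(x_i(a))$ (these valuations lie in $\Gamma$ by orthogonality). Then $\widehat B$ is a neighbourhood of $a$, so $a\in\operatorname{cl}(Y\cap\widehat B)$, and we may replace $Y$ by $Y\cap\widehat B$.

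The first step is to produce a single definable directed family. A basis of neighbourhoods of $a$ is given, by \Cref{rem:opens}, by sets $\widehat Z\cap\widehat B$ with $Z$ definable, and $a\in\widehat Z$ means $Z\in a$. Every such $Z$ is an instance of finitely many formulas $\varphi(x;y)$, encoding the subbasic opens at level $(\cF,n)$. However, $\mathrm{(DR)}$ gives one directed family per formula. I would first combine finitely many formulas into one by conjunction and coding of parameters; the standard trick is that any instance $\varphi_1(x;b_1)\wedge\dots$ is an instance of a single formula. For infinitely many formulas, compactness suggests working one formula at a time and assembling the resulting types over all formulas. Concretely, I expect the limit condition to be checked coordinatewise via \Cref{lem:limits-coordinatewise}, so it suffices to handle, for each pair $(f,n)$, the neighbourhoods $\{p:\rho_n(f(p))\in W\}$. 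These pieces can then be glued by the pro-definable lifting of \Cref{prop:lifting-prodefinable}, taking types on the inverse limit of the index sets.

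Next, for a fixed formula, let $(G_t)_{t\in T}$ be the downward directed refinement with cofinal type $\nu$ given by $\mathrm{(DR)}$. Each $G_t\in a$, so $a\in\widehat G_t$. Apply \Cref{lem:thicken-directed-refinements} to obtain the neighbourhoods $N_{t,s}$ of $a$ in $\widehat B$. Since $a\in\operatorname{cl}(Y)$, each $N_{t,s}$ meets $Y$. Form the pro-definable set
\[
P=\{(y,t,s)\in Y\times T\times S : y\in N_{t,s}\},
\]
whose projection to $T\times S$ is surjective. Lift the definable type $\nu\otimes\tau$ along this projection using \Cref{prop:lifting-prodefinable}, where $\tau$ contains every tail of $S$. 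Push the lift forward to $Y$ to get a global definable type $q$ on $Y$.

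To verify $\lim q=a$, take a relatively definable open neighbourhood $U$ of $a$ in $\widehat B$. By the choice of basis, $U$ contains some $\widehat{\varphi(V;b)}\cap\widehat B$ with $\varphi(x;b)\in a$. By $\mathrm{(DR)}$ this contains $\widehat G_{t_0}$, and by cofinality $\nu$ concentrates on those $t$ with $G_t\subseteq G_{t_0}$, so $\widehat B\cap\widehat G_t\subseteq U$. \Cref{lem:thicken-directed-refinements} then gives a tail of $s$ with $N_{t,s}\subseteq U$. The key point is that the set $\{(t,s):N_{t,s}\subseteq U\}$ is definable by that lemma. Since the tail depends definably on $t$, the condition lies in $\nu\otimes\tau$, provided we order the tensor product so that $s$ is realized after $t$. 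Hence $q\ni U$.

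The main obstacle is the passage from one formula to all neighbourhoods simultaneously. I would handle it by indexing over finite sets of formulas, or over levels $(\cF,n)$, and using that the neighbourhood basis at each level is captured by a single formula. Then I would take the inverse limit of the $P$'s and apply the pro-definable lifting of \Cref{prop:lifting-prodefinable}. The delicate check is that the refinements at different levels can be chosen compatibly, for instance by taking products of index sets and conjunctions of the $G_t$. Directedness is preserved under such products, and tensor products of cofinal types remain cofinal.
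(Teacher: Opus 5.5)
Your proposal is correct and follows the paper's route: you shrink to a closed bounded polydisc $\widehat B$, thicken each formula's directed refinement via \Cref{lem:thicken-directed-refinements}, and use the tensor product with $s$ realized over $t$. You then assemble all formulas by a product of index sets and lift along $P\to J$ with \Cref{prop:lifting-prodefinable}. The compatibility issue you worry about does not arise in the paper's version: it keeps the families $N_{\varphi,t,s}$ separate (no conjunctions), takes $J=\prod_\varphi(T_\varphi\times S)$ with the iterated tensor product, and simply requires $y\in N_{\varphi,t_\varphi,s_\varphi}$ for every $\varphi$, with surjectivity of $P\to J$ coming from saturation; it also reindexes each $T_\varphi$ by a field-sort set (lifting $\nu_\varphi$) so that the thickening lemma applies.
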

\begin{proof}

Fix a closed affine embedding of $X$ and choose $\beta\in\Gamma$ less than all of the valuations of the coordinates of $a$. The set
$B=\{x\in X:v(x_i)\geq\beta\text{ for all }i\}$
is the intersection of $X$ with a closed coordinate polydisc, and $a$ lives in the interior of $\widehat{B}$. We may replace $Y$ by $Y\cap \widehat B $ and we still have $a\in\operatorname{cl}(Y)$.

For each formula $\varphi(x;y)$, choose a directed refinement
$(G_{\varphi,t})_{t\in T_\varphi}$ in $a$ and a cofinal definable type
$\nu_\varphi$. We may index the family by a definable set $T_\varphi$ in the valued field sort and by \Cref{prop:lifting-definable}, we have lift of the cofinal type $\nu_\varphi$, which stays cofinal in the lifted family.
By \Cref{lem:thicken-directed-refinements}, we have
$N_{\varphi,t,s}$ as in the statement. Let $\lambda_\varphi(t,s)$ be the tensor product $\tau(s)\otimes \nu_{\varphi}(t)$.

Let $U$ be a basic open neighbourhood of $a$ in $\widehat{B}$. It has the form
$\widehat{B}\cap\widehat{\varphi(X;b)}$ for some formula and parameter tuple $b$.
Refinement and cofinality gives $\{t:G_{\varphi,t}\subseteq\varphi(X;b)\}\in\nu_\varphi.$
For each such $t$, the condition $N_{\varphi,t,s}\subseteq U$ contains
a tail in the $s$-variable. And this condition is definable, so its
$\tau$-definition holds for every such $t$. By the definition of $\lambda_\varphi$, we have
\begin{equation}\label{eq:DR-cofinalN}
 \{(t,s):N_{\varphi,t,s}\subseteq U\}\in\lambda_\varphi.
\end{equation}

 Taking iterated tensor products indexed by all the formulas gives a global definable type $\lambda$ on $J=\prod_\varphi(T_\varphi\times S)$ whose projection to each factor is $\lambda_\varphi$. Write
$j=((t_\varphi,s_\varphi))_\varphi$ and define
$ P=\{(y,j)\in Y\times J:
       y\in N_{\varphi,t_\varphi,s_\varphi}
       \text{ for every }\varphi\}.$
This is clearly pro-definable. Moreover, for a fixed $j$, every finite intersection of the specified neighbourhoods of $a$ meets $Y$. Together with the fact that $Y$ is pro-definable, these conditions form a finitely satisfiable small partial type in variable $y$. By saturation in the universal domain, it gives a point $e$ with $(e,j)\in P$, thus $P\to J$ is surjective.

By \Cref{prop:lifting-prodefinable}, there is a lift of $\lambda$ to a global definable type on $P$: let $p$ be its projection to $Y$. For every basic neighborhood $U$ of $a$, \eqref{eq:DR-cofinalN} and
the defining conditions of $P$ gives $U\in p$. Hence $\lim p=a$.
\end{proof}
\subsubsection{The algebraically closed residue-field case}
\label{subsec:stable-residue-curve}

For algebraically closed residue field and divisible value group, the
theory is $\ACVF$ and curve selection is covered by \cite[Chapter $4$]{HruLoe}.  However, we noticed that the proof of \cite[Lemma~4.2.12]{HruLoe} contains a gap, indeed the following remark shows that their statement is false. This is neverless a minor issue.

\begin{remark}\label{rmk:limit-not-definable}
Suppose $\Gamma(K)$ is such that every $n$-type over $\Gamma(K)$ admitting a limit in $\Gamma_\infty^n(K)$ is $\Gamma(K)$-definable. This property forces $\Gamma(K)$ to be Dedekind complete, that is, $\Gamma(K)\cong\RR$ or $\Gamma(K)\cong\ZZ$ as ordered groups. Let us show it fails already for $\Gamma(K)=\QQ$: 
let $a_1>\QQ$, let $s$ realize the cut of $\pi$ over $\QQ$, set $a_2=a_1+s$ and $p_0=\tp(a_1,a_2/\QQ)$. Every $K$-definable open neighbourhood $U$ of $(\infty,\infty)$ in $\Gamma_\infty^2$ contains a box $(\gamma_0,\infty]^2$ with $\gamma_0\in\Gamma(K)$, 
so $\lim p_0=(\infty,\infty)$; but $\{c\in\QQ : (x_2-x_1>c)\in p_0\}=\{c<\pi\}$, so $p_0$ is not $\QQ$-definable. This example shows that \cite[Lemma~4.2.12]{HruLoe} fails as stated. The argument below avoids this issue altogether: instead of completing convergent types, we construct definable convergent types directly from the cofinal types of \Cref{lem:cofinal-gamma-family}, in the spirit of \cite[Remark~4.2.15, Lemma~4.2.18]{HruLoe}.
\end{remark}
However, we want to emphasize that this causes minor issues on their general strategy. And indeed, their ideas heavily inspired our arguments in this section. For sake of completeness, and with the hope to clarify and clean as much as possible the literature we added a remark \cref{rmk:acvf} to explain how the case of $\mathrm{ACVF}$ is treated within our approach. However, the main structure and idea of the argument are still due to the authors of \cite{HruLoe}.

Throughout this subsection we consider the remaining case $T=\operatorname{Th}(\lau{\CC}{t})$. Thus $k$ is algebraically closed of characteristic zero and $\Gamma\models\mathrm{PRES}$. We use stable domination in the sense of \cite[Chapter~3]{HHM}, and denote by $\mathrm{St}_{A}$ the stable part of the structure of $A$-definable stable and stably embedded sets within a theory $T$ as described in \cite[Definition $3.1$]{HHM}. 

\begin{remark}\label{rmk:propertiesstabledomination} 
Let $A$ be a small set of parameters in $\mathcal{U}^{eq}$.
\begin{enumerate}
\item Definable pushforwards of stably dominated types are stably dominated. More precisely, if $\operatorname{tp}(a/A)$ is stably dominated and
$b\subseteq\operatorname{dcl}^{\mathrm{eq}}(Aa)$, then
$\operatorname{tp}(b/A)$ is stably dominated \cite[Proposition~3.32(iii)]{HHM}.
\item A stably dominated type over $A$ has a global extension definable over
$\operatorname{acl}^{\mathrm{eq}}(A)$. Moreover, if the base is already algebraically closed, then it has a unique global invariant extension
\cite[Proposition~3.13.(ii)]{HHM}. 
\item Invariant base extensions remain stably
dominated. Let $p$ be a global
$\operatorname{acl}^{\mathrm{eq}}(A)$-invariant type with
$p\rest_A$ stably dominated. For every small parameter set
$B\supseteq A$, the restriction $p\rest_B$ is stably dominated \cite[Proposition~4.1]{HHM}.\footnote{Note that this requires the going up of base change of stable domination, which was originally proved in \cite{HHM} for arbitrary theories. }
\end{enumerate}
\end{remark}
Stable domination for the structure $\lau{\CC}{t}$ was originally formulated in \cite[Theorem $16.1$,Lemma $16.6$]{HHM}. However, some gaps (specifically for the elements in the geometric sorts) where present in their proofs and therefore we give quickly some arguments around it for sake of completeness. A finer formulation of residue field domination (including the sorts internal to the residue field) is given in \cite[Definition $2.8$]{orthogonal}. The later notion was introduced because in some henselian valued fields, the stable part of the structure $\mathrm{St}_{A}$ is strictly larger than the family of sorts that are internal to the residue field.

We will require however the domination result in $\lau{\CC}{t}$ only over a maximally complete model by the sorts internal to the residue field. Since the residue field is stable and stably embedded this will guarantee that we obtain stable domination. We will use the subscripts $T$ and $\ACVF$ to distinguish the definable closure, algebraic closure or the types in $T$ or in $\mathrm{ACVF}$ respectively. 

\begin{remark}\label{rmk:stabledom} Let $M$ be a maximally complete model of $T=\mathrm{Th}(\lau{\CC}{t})$. Let $M \prec \mathcal{U} \models T$, where $\mathcal{U}$ denotes the monster model. Let $d$ be a tuple in the field sort, let $\Gamma(Md)=\Gamma\cap\operatorname{dcl}_{T}^{\mathrm{eq}}(Md)$. Then $\tp(d/ M \Gamma(Md))$ is stably dominated. 
\end{remark}

\begin{proof}
Let $L=\acl^{eq}_{T}(Md) \cap K$, then $\Gamma(L)=\Gamma(Md)=\Gamma \cap \dcl^{eq}(Md)$. Let $S=\big( \mathbb{Q} \otimes v(M(d)^{\times}\big) \cap \Gamma(\mathcal{U})$, and note that $S=\Gamma(L)$. For the left to right direction, if $n\gamma=v(a)$ with $a \in M(d)^{\times}$ and $\gamma \in \Gamma(\mathcal{U})$, since the residue field is algebraically closed by Hensel's lemma we can find $b \in \mathcal{U}$ such that $v(b)=\gamma$ and $b^{n}=a$. For the converse, $\Gamma(L) \subseteq \Gamma \cap \acl_{T}^{eq}(Md) = \Gamma \cap \dcl^{eq}(Md)$, since the value group is linearly ordered. Furthermore, $k_{L}$ is algebraically closed. Write $v_\rv$ for the map from $\RV\to \Gamma_\infty$. For each $\gamma \in S$ let $\RV_{\gamma}=v_{\mathrm{rv}}^{-1}(\gamma) \cap \mathrm{RV}$, and write $\mathcal{D}_{S}=(\mathrm{k}, (\mathrm{RV}_{\gamma})_{\gamma \in S})$. For a field $F$ whose value group contains $S$, we write $\mathcal{D}_{S}(F)$ for $(k(F), \mathrm{RV}_{\gamma \in S}(F))$. Let $A= M \cup S$, note that each fiber $\mathrm{RV}_{\gamma}$ is internally internal to the residue field $\mathrm{k}$, so these sorts are stable and stably embedded over $A$, and hence $\mathcal{D}_{S}(F) \subseteq \mathrm{St}_{A}(F)$. Let $N \preceq \mathcal{U}$ containing $A$.

\begin{claim*}
For $F=L$ or $F=N$ we have that $\acl_{\mathrm{ACVF}}(A \mathcal{D}_{\mathbb{Q}\otimes S}(F^{alg}))=\mathrm{acl}_{\mathrm{ACVF}}(A \mathcal{D}_{S}(F))$.    
\end{claim*}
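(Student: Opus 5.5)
The plan is to prove both inclusions directly. The only substantial point is that every element of $\mathcal{D}_{\mathbb Q\otimes S}(F^{alg})$ is algebraic, in $\ACVF$, over $\mathcal{D}_S(F)$. Throughout I use the $\ACVF$ structure on $F^{alg}$ and regard $\RF$ and the fibres $\RV_\gamma$ as imaginary sorts of $\ACVF$. Before starting I record that $S\subseteq\Gamma(F)$ in both cases. For $F=L$ we showed $\Gamma(L)=S$. For $F=N$ it holds because $N$ contains $A=M\cup S$ and $N\preceq\cU$, so every $\gamma\in S$ is the valuation of an element of $N$. In particular $\RV_\delta(F)\neq\emptyset$ for every $\delta\in S$.

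The inclusion $\supseteq$ is immediate. We have $\RF(F)\subseteq\RF(F^{alg})$, and for $\gamma\in S\subseteq\mathbb Q\otimes S$ we have $\RV_\gamma(F)\subseteq\RV_\gamma(F^{alg})$. Hence $\mathcal{D}_S(F)\subseteq\mathcal{D}_{\mathbb Q\otimes S}(F^{alg})$, and applying $\acl_{\ACVF}(A\,\cdot\,)$ to both sides gives the inclusion.

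For $\subseteq$, it suffices to show that each element of $\mathcal{D}_{\mathbb Q\otimes S}(F^{alg})$ lies in $\acl_{\ACVF}(A\mathcal{D}_S(F))$. I treat the two kinds of elements separately.
\begin{enumerate}
\item \emph{Residue field.} Since $F^{alg}/F$ is algebraic, $\RF(F^{alg})$ is algebraic over $\RF(F)$. This is the standard fact that the residue field of an algebraic extension is algebraic over the base residue field; in fact $\RF(F)$ is already algebraically closed in both cases. Every element of $\RF(F^{alg})$ is therefore a root of a nonzero polynomial over $\RF(F)$, so it lies in $\acl_{\ACVF}(\RF(F))$.
\item \emph{Leading terms.} Let $e\in\RV_\gamma(F^{alg})$ with $\gamma\in\mathbb Q\otimes S$. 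Choose $n\geq1$ with $\delta:=n\gamma\in S$, and pick $c\in\RV_\delta(F)$, which is nonempty by the remark above. The fibre $\RV_\delta(F^{alg})$ is a torsor under $\RF(F^{alg})^\times$, and $e^n\in\RV_\delta(F^{alg})$, so $e^n=c\,u$ for some $u\in\RF(F^{alg})^\times$. By step 1, $u\in\acl_{\ACVF}(\RF(F))$. Finally, $e$ is one of the at most $n$ solutions of $z^n=cu$ in $\RV^\times$; the $n$th roots inside a fibre form a coset of the finite group $\mu_n(\RF)$. Hence $e\in\acl_{\ACVF}(c,u)\subseteq\acl_{\ACVF}(A\mathcal{D}_S(F))$. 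The value $\gamma=\delta/n$ is itself definable from $\delta\in A$ in $\ACVF$, so it is already accounted for.
\end{enumerate}

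I do not expect a real obstacle here. The point that needs care is the nonemptiness of $\RV_\delta(F)$ for $\delta\in S$, which is why I check $S\subseteq\Gamma(F)$ separately for $L$ and for $N$. A second subtlety is that $\mathbb Q\otimes S$ may contain values outside $\Gamma(\cU)$. Such values only appear after passing to $F^{alg}$, and they are handled entirely by the $n$th-root argument in step 2.
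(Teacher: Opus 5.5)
Your proof is correct and follows the paper's argument. The reverse inclusion is immediate; for the forward inclusion you write $e^n$ as an element of $\RV_\delta(F)$ times a residue unit and conclude because $e$ is one of finitely many $n$th roots. The only differences are minor: the paper uses directly that $\RF(F)$ is algebraically closed, so that $\RF(F^{alg})=\RF(F)$ and hence $e^n\in\mathcal D_S(F)$, whereas you need only algebraicity of the residue extension; and your check that $\RV_\delta(F)\neq\emptyset$ justifies explicitly the paper's tacit choice of an element of value $\delta$.
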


\begin{proof}[Proof of the claim:]
The right to left inclusion is clear. For the converse, since $k_{F}$ is algebraically closed, $k_{F^{\mathrm{alg}}}=k_{F}$. Let $u \in \mathrm{RV}(F^{\mathrm{alg}})$ and $v_\rv(u) \in \mathbb{Q} \otimes S$. Choose $n \geq 1$, such that $n v_\rv(u)=\gamma \in S$ and choose $a \in X$ such that $v_\rv(a)=\gamma$. Then $\frac{u^{n}}{\mathrm{rv}(a)} \in k_{F^{\mathrm{alg}}}^{\times}=k_{F}^{\times}$. Therefore $u^{n} \in \mathcal{D}_{S}(F)$. Hence, the equation $z^{n}=u^{n}$ has finitely many solutions in $\mathrm{RV}$, so $u \in \acl_{\mathrm{ACVF}}(A D_{S}(F))$.
\end{proof}

Now we write $\mathrm{St}^{T}_{A}$ to denote the stable structure over $A$ within the theory $T$. Assume that, $\mathrm{St}_{A}^{T}(L) \downfree^{\mathrm{St^{T}_{A}}}_{A} \mathrm{St}_{A}^{T}(N)$, since $\mathcal{D}_{S}(F) \subseteq \mathrm{St_{A}}(F)$ then we have $\mathcal{D}_{S}(L) \downfree_{A} \mathcal{D}_{S}(N)$. By relative quantifier elimination down to $\mathrm{RV}$ and quantifier elimination of $\mathrm{RV}$ down to the residue field and the value group \cite[Proposition $2.9$]{vicariaresdom}, algebraic dependence over \(A\) on \(k\) and the fixed $\mathrm{RV}$ fibers agrees in \(T\) and $\mathrm{ACVF}$, and hence so does forking independence, since these sorts are strongly minimal.

Hence, $A\mathcal{D}_{S}(F) \downfree_{A}^{\mathrm{St_{A}^{\mathrm{ACVF}}}} A \mathcal{D}_{S}(N)$ within the stable structure $\mathrm{St}_{A}^{\mathrm{ACVF}}$ computed in $\mathrm{ACVF}$. Since forking independence is preserved by taking algebraic closure, by the claim $\mathcal{D}_{\mathbb{Q}\otimes S} (F^{\mathrm{alg}}) \downfree^{\mathrm{St}_{A}^{\mathrm{ACVF}}}_{A} \mathcal{D}_{\mathbb{Q} \otimes S}(L^{alg})$ within the stable structure $\mathrm{St}_{A}^{\mathrm{ACVF}}$. Note that since the residue field is algebraically closed,
$k^{\times}/(k^{\times})^{n}$ is trivial for every $n\geq 1$.  By \cite[Corollary $5.8$]{vicariaresdom}, we have $\tp_T(L/A\mathcal{D}_S(N))
\vdash \tp_T(L/N).$
Since $\mathcal{D}_S(N)\subseteq \mathrm{St}_A^T(N)$, it follows that
$\tp_T(L/A\mathrm{St}_A^T(N))
\vdash \tp_T(L/N)$. Consequently, $\tp_T(L/A)$ is stably dominated.
Finally, stable domination is preserved under definable images
\cite[Proposition~3.32(iii)]{HHM}. Since $d$ is a sub-tuple of an
enumeration of $L$, the type
$\tp_T(d/A)=\tp_T(d/M\Gamma(Md))$ is stably dominated.
\end{proof}

Note that formulations of residue field domination, including the geometric elements, are provided in \cite[Theorem $7.1$, Remark $7.4$]{orthogonal}.\\

\begin{lemma}
\label{lem:stable-gamma-lift}
Let $M$ be a maximally complete model, let $D$ be pro-$M$-definable, and
let $F:D\to\Gamma_\infty^I$ be pro-$M$-definable, with $I$ small.
Suppose that $q$ is a global definable type on $F(D)$, definable over
$M\cup E_0$ for some small $E_0\subseteq\Gamma$. Then $q$ has a global
definable lift $p$ to $D$ that is definable over
$\operatorname{acl}^{\mathrm{eq}}(M\cup E)$ for some small
$E\subseteq\Gamma$ containing $E_0$.
\end{lemma}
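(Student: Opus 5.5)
We first reduce to a lifting problem over a base that already contains the value-group parameters, and then use stable domination to make the lift definable over $\operatorname{acl}^{\mathrm{eq}}$ of that base.

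\textbf{Step 1: an arbitrary lift.} Since $F\colon D\to F(D)$ is surjective and pro-definable, \Cref{prop:lifting-prodefinable} gives some global definable lift $p_0$ of $q$ to $D$. The definition parameters of $p_0$ are uncontrolled, so we only use it to pick a realization. Choose a small model $N\supseteq M\cup E_0$ over which $p_0$ is defined, and let $d\models p_0\rest_N$. Put
\[
E:=E_0\cup\Gamma(Md)=E_0\cup\bigl(\Gamma\cap\operatorname{dcl}^{\mathrm{eq}}_T(Md)\bigr).
\]
Every coordinate of $F(d)$ lies in $\Gamma_\infty(Md)$, since $F$ is pro-$M$-definable. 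Hence $\tp(F(d)/M\cup E)$ is realized. The difficulty is that $E$ must be small while $q$ is global; this is repaired in Step 3.

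\textbf{Step 2: stable domination.} Apply \Cref{rmk:stabledom}: since $M$ is maximally complete, $\tp(d/M\Gamma(Md))$ is stably dominated, and by base extension (\Cref{rmk:propertiesstabledomination}(3)) so is $\tp(d/ME)$. By \Cref{rmk:propertiesstabledomination}(2) it has a global extension $p_d$ definable over $\operatorname{acl}^{\mathrm{eq}}(ME)$, and $p_d$ is the unique invariant extension. Because $F(d)$ is $ME$-definable, $F_*p_d$ is the realized type at $F(d)$. This construction is uniform in the parameter $F(d)$.

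\textbf{Step 3: integrating over $q$.} The construction of Step 2 gives a pro-definable family $\mathscr H(\gamma)$, indexed by $\gamma\in F(D)$, of global definable types on $D\cap F^{-1}(\gamma)$. Its definitions over $\operatorname{acl}^{\mathrm{eq}}(M\gamma)$ come from the uniform definability of stably dominated types over $M\gamma$. Concretely, for $\gamma=F(d')$ one takes the canonical invariant extension of $\tp(d'/M\gamma)$. Since $\Gamma(Md')$ may exceed $\gamma$, we instead choose, via \Cref{prop:lifting-definable}, a realization inside the fibre whose $\Gamma$-closure is generated by $\gamma$ together with $M$. The lift is then
\[
p:=\int_q\mathscr H
\]
in the sense of \Cref{def: integral}. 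By \Cref{rmk:propertiesintegral} it is a complete global type, definable over $\operatorname{acl}^{\mathrm{eq}}$ of $M$, $E_0$ and the parameters of $\mathscr H$, which we absorb into $E$. By construction $F_*p=\int_q\delta_\gamma=q$.

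\textbf{Main obstacle.} The hard step is ensuring uniformity and definability of $\gamma\mapsto\mathscr H(\gamma)$, specifically choosing fibre points whose $\Gamma$-closure is controlled. The fallback is to work with the type $\tp(d/M\Gamma(Md))$ with $d$ chosen realizing a generic of $p_0$ over $q$'s realization. One then uses that the germ of $\gamma\mapsto\Gamma(Md)$ is $\Gamma$-definable, by stable embeddedness of $\Gamma$, so the extra value-group parameters form a small set $E\subseteq\Gamma$. Stable domination then removes the dependence on residue-field parameters, leaving only $\operatorname{acl}^{\mathrm{eq}}(M\cup E)$.
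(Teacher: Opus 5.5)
\textbf{There is a genuine gap, in Step 3.} You want a family $\mathscr H(\gamma)$ indexed by points $\gamma\in F(D)$, and no such family exists as described.

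\emph{Why the family is ill-defined.} Take $d'$ in the fibre over $\gamma$. The type $\tp(d'/M\gamma)$ is in general not stably dominated: \Cref{rmk:stabledom} needs all of $\Gamma(Md')$ in the base, and $\Gamma(Md')$ can be strictly larger than $\Gamma\cap\operatorname{dcl}^{\mathrm{eq}}(M\gamma)$. Nor is it determined by $\gamma$, since different points of the same fibre have different types over $M\gamma$. So ``the canonical invariant extension of $\tp(d'/M\gamma)$'' is not well defined, let alone uniform in $\gamma$.

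\emph{Why the repair fails.} You propose choosing, via \Cref{prop:lifting-definable}, a fibre point whose value-group closure is generated by $M$ and $\gamma$. That proposition does not provide this; it only lifts definable types along surjections. Such points also need not exist. Let $D$ be the pro-$M$-definable set of generic types of the closed balls $B(0,\delta)$ with $\delta>\Gamma(M)$, and let $F(z)=v(1+x(z))$. Then $F$ is constant, and every point of the unique fibre has a radius outside $\Gamma(M)$ in its value-group closure. Moreover, the parameters of any such family would be imaginaries in $\operatorname{acl}^{\mathrm{eq}}(M\gamma)$, which cannot be ``absorbed into $E\subseteq\Gamma$''.

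\emph{Steps 1--2 do not feed into the construction.} The set $E=E_0\cup\Gamma(Md)$, built from one realization, plays no later role. The base-extension claim for $\tp(d/ME)$ presupposes that $d$ realizes the invariant extension over $ME$, which you have not shown. Also, \Cref{rmk:stabledom} applies to field tuples, whereas your $d$ is a point of $D$ (in the application, a code of a type in $\widehat B$).

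\textbf{The missing idea} is to index the family by the \emph{full} value-group data rather than by $F(D)$, and to lift $q$ up to that index set first. The paper proceeds as follows.
\begin{enumerate}
\item Fix $b\models q\rest_M$, a point $c\in D$ with $F(c)=b$, a field tuple $d$ with $c\in\operatorname{dcl}^{\mathrm{eq}}(Md)$, and an enumeration $\gamma$ of $\Gamma(Md)$.
\item Let $W$ be the set of realizations of $\tp(\gamma/M)$, and $g\colon W\to\Gamma_\infty^I$ the coordinate map with $g(\gamma)=b$. Then $q$ concentrates on $g(W)$, which is the set of realizations of $\tp(b/M)$.
\item Lifting in the stably embedded value group (\Cref{prop:lifting-prodefinable}) gives a definable $r$ on $W$ with $g_*r=q$. \emph{This} is where the small $E\subseteq\Gamma$ comes from.
\item Since $\tp(c/M\gamma)$ is stably dominated, it has a global extension $s$ definable over $\operatorname{acl}^{\mathrm{eq}}(M\gamma)$, with canonical parameters $\lambda$. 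Set $\mathscr H(\gamma',\lambda')=\sigma(s)$ for any $\sigma\in\operatorname{Aut}(\cU/M)$ sending $(\gamma,\lambda)$ to $(\gamma',\lambda')$. This is a pro-$M$-definable family, well defined precisely because the index includes $\lambda$, and $F_*\mathscr H(\gamma',\lambda')$ is realized at $g(\gamma')$.
\item Lift $r$ to $\widetilde r$ on the realizations of $\tp(\gamma,\lambda/M)$. Because $\lambda$ is algebraic over $\gamma$, $\widetilde r$ is $\operatorname{acl}^{\mathrm{eq}}(M\cup E)$-definable.
\item Take $p=\int_{\widetilde r}\mathscr H$.
\end{enumerate}
Your ``fallback'' gestures at the first lift. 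It constructs neither the conjugation family nor the lift through the algebraic parameters $\lambda$, and that last lift is what produces the $\operatorname{acl}^{\mathrm{eq}}$ in the statement.
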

\begin{proof}
Choose $b\in F(D)$ realizing $q|_{M}$ and $c\in D$ with $F(c)=b$.
Choose a small field tuple $d$ representing the coordinates of $c$, so
that $c\in\operatorname{dcl}^{\mathrm{eq}}(Md)$, and let
$\gamma=(\gamma_j)_{j\in J}$ enumerate $\Gamma(Md)$.
Let $W=\{\gamma'\in\Gamma^J \ | \ \tp(\gamma'/M)=\tp(\gamma/M)\}.$
Note that this set is pro-$M$-definable set, it can be identified with the inverse limit of the formulas of
$\tp(\gamma/M)$ where all its coordinates all lie within $\Gamma$.

For each $i\in I$, we have 
$b_i=F_i(c)\in\operatorname{dcl}^{\mathrm{eq}}(Mc) \subseteq\operatorname{dcl}^{\mathrm{eq}}(Md).$
If $b_i\ne\infty$,then it is an entry $\gamma_{j(i)}$ of
$\gamma$. Otherwise it is the distinguished point $\infty$. Taking projections on the coordinates and constant coordinates when the entry is $\infty$ we obtain a 
pro-$M$-definable map $g:W\to\Gamma_\infty^I$ with $g(\gamma)=b$.
By homogeneity, 
$ g(W)=\{b' \  | \ \tp(b'/M)=\tp(b/M)\}.$
Since $b\models q|_{M}$, every formula presenting this image belongs to
$q$. Thus $q$ concentrates on $g(W)$.
By \cref{rmk:stabledom}, $\tp(d/M\gamma)$ is stably dominated. By \cref{rmk:propertiesstabledomination}.(1) its definable pushforward $\tp(c/M\gamma)$ is also stably dominated.
By \cref{rmk:propertiesstabledomination}.(2) there is a global stably dominated extension $s$ definable over
$\operatorname{acl}^{\mathrm{eq}}(M\gamma)$. Let $\lambda$ be a small
tuple of canonical parameters of its definitions. Then
$\lambda\subseteq\operatorname{acl}^{\mathrm{eq}}(M\gamma)$.
Let
\[\widetilde W=\{(\gamma',\lambda'):
       \tp(\gamma',\lambda'/M)=\tp(\gamma,\lambda/M)\}.
\]
The projection $\pi:\widetilde W\to W$ is surjective by homogeneity.

For $w=(\gamma',\lambda')\in\widetilde W$, choose
$\sigma\in\operatorname{Aut}(\cU/M)$ taking $(\gamma,\lambda)$ to $w$
and define $\mathscr H(w)=\sigma(s)$. This is independent of the choice
of $\sigma$, since fixing $\lambda$ fixes every definition of $s$.
Note that $\mathscr H$ a pro-$M$-definable family of global
stably dominated types on $D$. Its pro $M$-definability follows by identify each type with its canonical base $\lambda$, and the every finite tuple of such canonical set belongs to an interpretable set by uniform definability of types. (\emph{c.f.} \cref{prop:strict-prodefinability}).\\
The global type $s$ contains the formula
$F_i(x)=g_i(\gamma)$ for every $i$. Hence, its conjugates satisfy
\begin{equation}\label{eq:stable-family-image}
 F_*\mathscr H(\gamma',\lambda')
 =\tp(g(\gamma')/\cU).
\end{equation}

Since the value group is stably embedded, we can view $W$, $g(W)$, and $g$ as pro-definable
objects in the pure value-group structure over $\Gamma(M)$.
Thus we can apply \Cref{prop:lifting-prodefinable}, and we obtain
a global definable type $r$ on $W$ with $g_*r=q$. Enlarge a small
$E\subseteq\Gamma$ so that it contains $E_0$ and the parameters defining
$r$, and let $C=M\cup E$ and $A=\operatorname{acl}^{\mathrm{eq}}(C)$.
By \Cref{prop:lifting-prodefinable} applied to $\pi$, we obtain a global
definable lift $\widetilde r$ of $r$ to $\widetilde W$.

\begin{claim*} $\widetilde{r}$ is $A$-definable.
\end{claim*}

\begin{proof}[Proof of the claim:]
For any finite tuple $\lambda_0$ of coordinates of
$\lambda$, choose a finite tuple $\gamma_0$ from $\gamma$ such that
$\lambda_0\in\operatorname{acl}^{\mathrm{eq}}(M\gamma_0)$. An $M$-definable relation witnessing this algebraicity has
uniformly finite fibers after restricting its domain by a formula of
$\tp(\gamma_0/M)$. Hence the projection of $r$ to the $\gamma_0$-variables
has only finitely many complete global lifts compatible with
$\tp(\gamma_0,\lambda_0/M)$. Since $r$ is  $C$-invariant, the projection
of $\widetilde r$ to $(\gamma_0,\lambda_0)$ has only finitely many
conjugates over $C$.  It is definable, so the canonical parameters of
its definitions have finite orbit over $C$ and belong to $A$.
Applying this to every finite set of coordinates proves the claim. 
\end{proof}

Let $p=\int_{\widetilde r}\mathscr H$, as in \cref{def: integral}. It is a global $A$-definable type on $D$. To compute its pushforward,
take a small elementary model $N$ containing $A$, choose
$w^*=(\gamma^*,\lambda^*)\models\widetilde r\rest_{N}$, and then choose
$c^*\models\mathscr H(w^*)\rest_{Nw^*}$. By \cref{rmk:propertiesintegral} we have $p\res_{N}=\tp(c^*/N)$. By \eqref{eq:stable-family-image},
$F(c^*)=g(\gamma^*)$, and this tuple realizes $q\rest_{N}$ because
$\gamma^*\models r\rest_{N}$ and $g_*r=q$. Thus $F_*p=q$.
\end{proof}

We have now enough tools to show curve selection in $\lau{\CC}{t}$.

\begin{theorem}\label{thm:stable-residue-curve}
Let $X$ be an affine variety,
let $Y\subseteq\widehat X$ be pro-definable, and let
$a\in\operatorname{cl}(Y)$. There is a global definable type $p$ on $Y$
with $\lim p=a$.
\end{theorem}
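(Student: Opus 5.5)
The plan is to reduce curve selection to the value group: tropicalize $Y$ and $a$ into a product of copies of $\Gamma_\infty$, use \Cref{lem:gamma-curve-selection} there, and lift back with \Cref{lem:stable-gamma-lift}. In the present setting ($\RF$ algebraically closed, $\Gamma\models\mathrm{PRES}$) every $G_n^\times$ is trivial. Hence $H_n=\Gamma_\infty$ for all $n$, and the topology of $\widehat X$ is the initial topology for the valuation maps $p\mapsto v(f(p))$, $f\in\ccO_X$.

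\textbf{Step 1: reduce to a maximally complete base.} Choose a small maximally complete model $M\prec\cU$ over which $X$, $Y$ and $a$ are defined. Let $I$ be a small set of regular functions, namely the $M$-polynomial families appearing in the presentation of \Cref{prop:explicit-presentation}. Define the pro-$M$-definable map
\[
F\colon \widehat X\to\Gamma_\infty^{I},\qquad F(p)=(v(f(p)))_{f\in I}.
\]
Two points need care. First, the topology uses parameters from all of $\cU$. Second, evaluation is uniform: $v(f_b(p))$ is given by the definable maps $g^n_{\mathcal F}$ of \Cref{prop:uniformizing-functions}. So I take the index set to be pairs $(P,b)$ with $P$ a polynomial family over $\ZZ$ and $b$ ranging over $\cU$. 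Equivalently, I work with the family of maps $p\mapsto g_{\mathcal F}(c_p;b)$. By \Cref{lem:limits-coordinatewise}, $\lim p=a$ holds iff $\lim v(f(\cdot))_*p=v(f(a))$ for every $f$. I would first prove the statement with $I$ consisting of the polynomials over $M$. I would then argue that a type converging on all $M$-evaluations also converges on all $\cU$-evaluations. This is not automatic, and I expect it to be the main obstacle (see the last paragraph).

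\textbf{Step 2: curve selection in $\Gamma_\infty^I$.} Put $V:=F(Y)$ and $b:=F(a)$. By continuity of $F$, $b\in\operatorname{cl}(V)$ in the product topology. Since $\Gamma$ is stably embedded, $V$ is pro-definable in the pure value-group structure over $\Gamma(M)$ together with finitely many imaginary codes. \Cref{lem:gamma-curve-selection} then gives a global definable type $q$ on $V$ with $\lim q=b$, definable over $M\cup E_0$ for a small $E_0\subseteq\Gamma$.

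\textbf{Step 3: lift.} Apply \Cref{lem:stable-gamma-lift} with $D:=Y$ (pro-$M$-definable) and $F|_Y$. This yields a global definable type $p$ on $Y$ with $F_*p=q$, definable over $\operatorname{acl}^{\mathrm{eq}}(M\cup E)$. Then for each $f\in I$, $(v\circ f)_*p$ is the $f$-coordinate of $q$, which converges to $v(f(a))$. Since the topology is generated by these valuation evaluations, $\lim p=a$ follows.

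\textbf{Main obstacle: passing from $M$-evaluations to $\cU$-evaluations.} To get convergence for evaluations with arbitrary parameters $b\in\cU$, I would include the parameter as a variable. For a family $P(x,y)$, consider the definable map $(p,b)\mapsto v(P(p,b))$ and require uniform convergence in $b$. This can be encoded in Step 2 by replacing each coordinate $f$ by the whole function $b\mapsto v(P(p,b))$. Concretely, I take $I$ to be the set of pairs $(P,b)$ with $b$ in a small model $N\supseteq M$ that contains the parameters of the eventual type. Then I redo the lifting over $N$, where $N$ must also be maximally complete, which is possible as explained in \Cref{subsec:mt-prelim}. The difficulty is circular: $N$ must contain the parameters of $p$, and $p$ depends on $N$. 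I would break the circularity by noting two facts. First, limits in $\Gamma_\infty$ of a definable type on a bounded set are determined by its definition scheme (\Cref{fact:one-types-gamma}). Second, each evaluation $v(P(\cdot,b))$ with $b\in\cU$ is a definable function of $v$-values of $M$-polynomials, via a valuation-independent basis over a maximally complete $M$, as in the proof of \Cref{lem:transfer-directed-refinement}. Together these reduce $\cU$-evaluations to finitely many $M$-evaluations, uniformly.
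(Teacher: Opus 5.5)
Your Steps 1--3 match the paper's set-up: a maximally complete $M$, the map $F_M$ given by the valuations of $M[X]$, curve selection in $\Gamma_\infty^I$, and the lift of \Cref{lem:stable-gamma-lift}. The gap is the final inference ``$F_*p=q$, hence $\lim p=a$''. It is false, and your patch does not repair it, because the $M$-evaluations do not separate points of $\widehat X$.

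Take $X=\mathbb A^1$, let $a$ be the generic type of $\cO$, and take $c\in\cU$ with $v(c)=0$ and $\res(c)\notin\RF(M)$. Then $v(f(c))=v(f(a))$ for every $f\in M[x]$, both being the minimal valuation of the coefficients. Since $\RF$ is algebraically closed, power predicates are determined by valuations, so $c\models a\rest_M$. Yet the simple point $\tp(c/\cU)$ differs from $a$, since $v(x-c)$ is $\infty$ on one and $0$ on the other. Let $Y=\{\tp(d/\cU):d\in\cO\}$; it is pro-definable and $a\in\operatorname{cl}(Y)$. Since finite points of $\Gamma$ are isolated, the only type on $V$ converging to $b=F_M(a)$ is the realized type at $b$. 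The realized type at $\tp(c/\cU)$ is a definable type on $Y$ lifting it, and its limit is not $a$.

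The same example refutes your second ``fact''. The valuation-independent basis of \Cref{lem:transfer-directed-refinement} controls $v(P(\cdot,b))$ for $b\in\cU$ only at $M$-definable points such as $a$. That is exactly where $M$-definability passes from $M$-coefficients to $\cU$-coefficients. It says nothing at the points of $Y$ over which $p$ ranges. Your elementarity idea would work if $p$ were definable over a model whose evaluations you had already controlled. But the lift is only definable over $\operatorname{acl}^{\mathrm{eq}}(M\cup E)$ with fresh value-group parameters $E$, so the circularity remains.

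The missing idea is to identify the limit by stable domination, rather than by controlling $\cU$-evaluations directly.
\begin{enumerate}
\item Replace $Y$ by $Y\cap\widehat B$ for a closed coordinate polydisc $B$ with $a$ in the interior of $\widehat B$, a step you omit. Then \Cref{prop:bounded-limits} yields a limit $z=\lim p$.
\item Limits are unique and $p$ is definable over $A=\operatorname{acl}^{\mathrm{eq}}(M\cup E)$, so $z$ is $A$-invariant.
\item Continuity gives $F_M(z)=F_M(a)$, hence $z\rest_M=a\rest_M$ by quantifier elimination. Orthogonality to $\Gamma$ and the $(M\cup E)$-invariance of $a$ propagate this to $z\rest_A=a\rest_A$.
\item The type $a\rest_M$ is stably dominated (\Cref{rmk:stabledom}) and remains so over $A$. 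Over the $\operatorname{acl}^{\mathrm{eq}}$-closed base $A$ it has a unique global $A$-invariant extension (\Cref{rmk:propertiesstabledomination}). Both $a$ and $z$ are such extensions, so $z=a$.
\end{enumerate}
This is where the definability clause of \Cref{lem:stable-gamma-lift} does the work; you quote it but never use it. It is what excludes lifts like the realized type at $\tp(c/\cU)$ above.
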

\begin{proof}
Fix an affine embedding of $X$. Intersect $X$ with a closed coordinate
polydisc large enough that the resulting set $B$ has $a$ in the interior
of $\widehat B$. Replacing $Y$ by $Y\cap\widehat B$, we may assume
$Y\subseteq\widehat B$.

Let $M$ be a model of $T=\mathrm{Th}(\lau{\CC}{t})$ that is maximally complete and contains the parameters defining $X$, $B$, and $Y$, and the  canonical base of $a$. In particular, $a$ is a global $M$-definable field type. Enumerate
$M[X]$ as $(f_i)_{i\in I}$ and let
\[
 F_M:\widehat B\to \Gamma_\infty^I,\qquad
 F_M(z)=(v(f_i(z)))_{i\in I},\qquad
 V=F_M(Y),\quad b=F_M(a).
\]
The map is continuous and pro-$M$-definable, and
$b\in\Gamma(M)_\infty^I$. Its image $V$ is pro-definable. Since the value group is stably embedded, we might take the pro-definable definition of $V$ using formulas and parameters within the value group. Since $F_{M}$ is a continuous map, $b\in\operatorname{cl}(V)$.
By \Cref{lem:cofinal-gamma-family}, there is a global definable type
$q$ on $V$ converging to $b$, whose definitions use only value-group
parameters in addition to $M$.

Apply \Cref{lem:stable-gamma-lift} to $F_M\rest_{Y}$. We obtain a global definable type $p$ on $Y$ and a small set $E\subseteq\Gamma$ such that $(F_M)_*p=q$,
 $p$ is definable over $A=\operatorname{acl}^{\mathrm{eq}}(M\cup E)$. By \Cref{prop:bounded-limits}, $p$ has a limit $z\in\widehat X$. Since $\widehat{B}$ is closed, then $z \in \widehat{B}$. Moreover, since the topology on $\widehat{X}$ is Hausdorff, so it is any subspace, and therefore limits are unique. Every automorphism over $A$ fixes $p$ and its limit;
therefore the global field type $z$ is $A$-invariant. Since $F_{M}$ is a continuous map, 
$F_M(z)=\lim(F_M)_*p=\lim q=b=F_M(a).$

\begin{claim*}
$z \rest_{C}=a\rest_{C}$. 
\end{claim*}
\begin{proof}[Proof of the claim:]
Note that for $u\ne0$, $P_n(u)$ is equivalent to $v(u)\in n\Gamma$.
By \Cref{prop:QE}, the valuations of all regular functions over $M$
therefore determine the field type over $M$, and consequently $z\rest_{M}=a\rest_{M}$. Let $C=M\cup E$. Note that for every polynomial $P(x) \in M[X]$ there is some $\gamma \in \Gamma(M)_{\infty}$ such that $v(P(x))=\gamma \in a\rest_{M}$, as the type is orthogonal to $\Gamma$ and $M$ contains the canonical base of $A$. By quantifier-elimination and stable embeddedness of the value group, the additional value-group parameters in $E$ impose conditions
only on these fixed values, specifically they determine some order type of the elements of $e \in E$ relative to $\Gamma_{\infty}(M)$ and the equivalence classes of the terms generated by $E$ and $\Gamma_{\infty}(M)$. In particular, $a\rest_{M}$ has a unique extension to $C$, as it is orthogonal to $\Gamma$. Consequently, 
we must have $z\rest_{C}=a\rest_{C}$. This concludes the proof of the Claim. 
\end{proof}

We now show that the type $z \rest_{C}=a \rest_{C}$ has a unique extension to
$A=\operatorname{acl}^{\mathrm{eq}}(C)$. Let $e$ be a
finite tuple from $A$ and let $\delta(w)$ over $C$ isolate $\tp(e/C)$, such formula exists since the type is algebraic.  Its realizations form a finite orbit over $C$. The global type $a$ is
$C$-invariant, since it is $M$-definable, as $M$ contains the canonical base of $a$.  Furthermore, $a$ decides $\varphi(x;e')$
uniformly over the orbit of $e$ over $C$, since the type is $C$-invariant. Taking the finite conjunction shows that
$a\rest_{C}$ contains either $\forall w\,(\delta(w)\rightarrow\varphi(x;w))$ of the formula 
$ \forall w\,(\delta(w)\rightarrow\neg\varphi(x;w)).$ 
Every extension of $a\rest_{C}$ must respect that decision. Hence $z\rest_{A}=a\rest_{A}$.\\
Finally, if $\xi\models a\rest_{M}$,by orthogonality of $a$ we obtain that $\Gamma(M\xi)=\Gamma(M)$. By  \cref{rmk:stabledom} 
$a\rest_{M}$ is stably dominated over $M$. By \cref{rmk:propertiesstabledomination}.(3) its invariant base extension $a\rest_{A}$ is still stably dominated over $A$. Since $A$ is
algebraically closed in imaginary sorts, this type has a unique global
$A$-invariant extension by \cref{rmk:propertiesstabledomination}.(2).
Both $a$ and $z$ are such extensions, so $z=a$ and $\lim p=a$.
\end{proof}

\begin{remark}\label{rmk:acvf}
The preceding argument also proves curve selection in $\ACVF$,
without using \cite[Lemma~4.2.12]{HruLoe}. We recycle the notation $M$, $F_M$, $V=F_M(Y)$, and $b=F_M(a)$
from the proof of \Cref{thm:stable-residue-curve}. Here
$\Gamma\models\mathrm{DOAG}$. Let
$S=\Gamma\times\Gamma_{>0}$. Consider the pro-definable relation
\[
 R=\left\{(u,h,\varepsilon)\in V\times S:
 \begin{array}{ll}
 b_i-\varepsilon<u_i<b_i+\varepsilon
     &\text{if }b_i\ne\infty,\\
 u_i>h&\text{if }b_i=\infty
 \end{array}
 \text{ for every }i\right\}.
\]
Since $b\in\operatorname{cl}(V)$, every finite collection of these conditions is satisfiable for each fixed $(h,\varepsilon)$. Hence $R\to S$ is surjective. Choose a global definable type $\tau$ on $S$ with $h\to+\infty$ and $\varepsilon\to0^+$. Surjectivity transfer in the value group lifts $\tau$ to a global definable type on $R$; its pushforward to $V$ is a global definable type $q$ converging to $b$. Moreover, \Cref{lem:gamma-curve-selection} and \Cref{lem:stable-gamma-lift} holds in $\mathrm{ACVF}$, which gives the final part of the argument.
\end{remark}

\subsection{The definable Heine--Borel theorem}

\begin{theorem}\label[theorem]{thm:curve-selection}
Assume \textup{(B)} and suppose the residue field is algebraically closed,
real closed, or $p$-adically closed. Let $Y\subseteq\widehat X$ be
pro-definable and let $a\in \mathrm{cl}(Y)$. There is a global definable type
$p$ concentrating on $Y$ with $\lim p=a$.
\end{theorem}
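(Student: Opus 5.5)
The proof will split according to the residue field and assemble results already established in the paper. The standing assumptions are that (B) holds and that $\Gamma$ is divisible or a $\ZZ$-group. In all three cases the residue field is large, so \Cref{prop:curve-selection-DR} and \Cref{thm:stable-residue-curve} (together with \Cref{rmk:acvf}) are available.

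\emph{Real closed or $p$-adically closed residue field.} \Cref{lem:residue-directed-refinement} shows that every global definable type in finitely many residue-field variables has property $\mathrm{(DR)}$. \Cref{lem:transfer-directed-refinement} transfers $\mathrm{(DR)}$ to every global definable type orthogonal to $\Gamma$ in finitely many valued-field variables. Since $a\in\widehat X$ is exactly such a type, $a$ has $\mathrm{(DR)}$. \Cref{prop:curve-selection-DR} then produces a global definable type on $Y$ with limit $a$. The only thing to check is that the hypotheses of \Cref{prop:curve-selection-DR} are met: (B) is assumed, and largeness holds by the remark preceding \Cref{rmk:babyfactlargeness}.

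\emph{Algebraically closed residue field.} There are two sub-cases.
\begin{itemize}
\item If $\Gamma$ is divisible, then $T=\ACVF$, and the argument of \Cref{rmk:acvf} applies. One takes $M$ maximally complete containing the relevant parameters, pushes $Y$ forward along $F_M$, and selects a definable type $q$ on $V=F_M(Y)$ converging to $b=F_M(a)$ by surjectivity transfer along $R\to S$. This type is lifted through \Cref{lem:stable-gamma-lift}, and the uniqueness argument of \Cref{thm:stable-residue-curve}, via stable domination and unique $A$-invariant extensions, identifies the limit with $a$.
\item If $\Gamma$ is a $\ZZ$-group, the residue field is an algebraically closed field of characteristic $0$. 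Then $T$ is elementarily equivalent to $\mathrm{Th}(\lau{\CC}{t})$ after the usual Ax--Kochen--Ershov reduction: the residue fields are elementarily equivalent to $\CC$ up to the relevant uncountable/characteristic data, and the value groups are elementarily equivalent to $\ZZ$. So \Cref{thm:stable-residue-curve} applies directly.
\end{itemize}

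The main obstacle is verifying that \Cref{thm:stable-residue-curve} really covers every theory with algebraically closed residue field and $\ZZ$-group value group, not just $\mathrm{Th}(\lau{\CC}{t})$ literally. My approach is to observe that AKE makes $T$ determined by $\mathrm{Th}(\RF)=\mathrm{ACF}_0$ and $\mathrm{Th}(\Gamma)=\mathrm{PRES}$, so $T=\mathrm{Th}(\lau{\CC}{t})$ exactly. A secondary point is that \Cref{lem:transfer-directed-refinement} requires passing to a maximally complete model containing the parameters. This exists by the preliminaries, so no new input is needed there.
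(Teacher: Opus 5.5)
Your proposal is correct and follows the paper's proof. You obtain (DR) from \Cref{lem:residue-directed-refinement} and \Cref{lem:transfer-directed-refinement} and feed it into \Cref{prop:curve-selection-DR} for real closed and $p$-adically closed residue fields, use \Cref{rmk:acvf} for $\ACVF$, and use \Cref{thm:stable-residue-curve} for the $\ZZ$-group case, which is $\mathrm{Th}(\lau{\CC}{t})$ by Ax--Kochen--Ershov. Your hedge about ``uncountable/characteristic data'' is unnecessary, since completeness of $\mathrm{ACF}_0$ already gives $\RF\equiv\CC$.
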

\begin{proof}
If the residue field is real closed or $p$-adically closed,
\Cref{lem:residue-directed-refinement,lem:transfer-directed-refinement}
give property $\mathrm{(DR)}$ for $a$, and
\Cref{prop:curve-selection-DR} applies, since these residue fields are
large. For algebraically closed residue field and divisible value group,
the theory is $\ACVF$, and curve selection follows from
\cite[Section~4.2]{HruLoe} (in our approach \cref{rmk:acvf}). For a $\ZZ$-group, this is the theory of $\lau{\CC}{t}$ and the statement follows by  \cref{thm:stable-residue-curve}
\end{proof}

\begin{theorem}\label{thm:heine-borel-vhat}
Assume \textup{(B)}, suppose the residue field is algebraically closed,
real closed, or $p$-adically closed, and suppose $\Gamma$ is divisible
or a $\ZZ$-group. Let $X$ be an affine variety over the small base $K$,
and let $Y\subseteq\widehat X$ be pro-definable. Then:
\[
Y\text{ is definably compact}
\quad\text{if and only if}\quad
Y\text{ is closed and bounded}.
\]
\end{theorem}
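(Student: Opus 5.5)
The plan is to assemble the theorem from the two directions already proved in this section.

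\emph{Closed and bounded implies definably compact.} This is \Cref{thm:closed-bounded-defcompact}. It needs (B), which we assume, and that $\RF$ is large. Algebraically closed, real closed and $p$-adically closed fields are large, as recalled at the start of \Cref{subsec:standard-part-new}. The restriction on $\Gamma$ (divisible or a $\ZZ$-group) is exactly what guarantees that bounded definable one-types on $\Gamma_\infty$ have limits (\Cref{fact:one-types-gamma}), and \Cref{prop:bounded-limits} uses this. So given a global definable type $q$ on $Y$, we take $Y\subseteq\widehat D$ with $D$ bounded. \Cref{prop:bounded-limits} then produces $\lim q\in\widehat X$, and \Cref{lem:defcompact-generalities}(1) places it in $Y$.

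\emph{Definably compact implies closed and bounded.} Boundedness is \Cref{prop:defcompact-bounded}: if some coordinate valuation were unbounded below, lifting $\tau_{-\infty}$ would give a type with no limit.

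For closedness, let $a\in\operatorname{cl}(Y)$. By \Cref{thm:curve-selection}, which is where the hypothesis on the residue field enters, there is a global definable type $p$ concentrating on $Y$ with $\lim p=a$. Definable compactness of $Y$ gives a limit $z=\lim p\in Y$. The topology on $\widehat X$ is Hausdorff, being induced by the injection $\Theta$ into $X(H)$, and $X(H)$ is Hausdorff by \Cref{lem:hausdorff} and \Cref{prop:H-hyperfield}. Hence limits of types are unique, so $a=z\in Y$ and $Y$ is closed.

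The main obstacle is uniqueness of limits in this setting, since limits are tested only against relatively definable open neighbourhoods with parameters from $\cU$. We would check that Hausdorffness can be witnessed by such neighbourhoods. Two distinct points are separated by some evaluation $\rho_n(f(\cdot))$ by \Cref{cor:type-determination}, and the corresponding subbasic opens of \Cref{rem:opens} are relatively definable and disjoint. A type cannot contain two disjoint sets, so the two limits coincide. Everything else is a direct citation of the earlier results.
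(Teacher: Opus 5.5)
Your proposal is correct and follows the paper's proof exactly. Closed and bounded implies definably compact by \Cref{thm:closed-bounded-defcompact}, using largeness of these residue fields. Conversely, boundedness comes from \Cref{prop:defcompact-bounded}, and closedness from the curve selection of \Cref{thm:curve-selection} together with uniqueness of limits in the Hausdorff space $\widehat X$. Your extra check is a sound justification of a step the paper states in one line, namely that limits are unique because the space is Hausdorff: distinct points are separated by disjoint relatively definable opens, obtained as preimages of definable opens of $H_n$ under some $\rho_n f$.
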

\begin{proof}
Since these residue fields are large, closed and bounded implies
definably compact by \Cref{thm:closed-bounded-defcompact}.
Conversely, let $Y$ be definably compact. It is bounded by
\Cref{prop:defcompact-bounded}. For $a\in\mathrm{cl}(Y)$,
\Cref{thm:curve-selection} gives a global definable type $p$ on $Y$
converging to $a$. Since $Y$ is definably compact, then $p$ has a limit $\mathrm{lim}(p) \in Y$. Since the space is Hausdorff, limits are unique and therefore $\mathrm{lim}(p)=a$. Thus $Y$ is closed.
\end{proof}

\appendix\label{appendix}

\section{A characterization of formally $p$-adic fields via hyperfield morphisms}
We use $\pCF$ is the theory of $p$-adically closed fields of $p$-rank $1$ in
$\mathcal L_{\mathrm{Mac}} =\{+,-,\cdot,0,1,(P_n)_{n\ge2}\}$.  The results in this appendix can be generalised to any $p$-adically closed fields of $p$ rank $r$ for any $r$. We decide to stay in this level of generality since it is the primary example we study.
\begin{definition}
For a $p$-adically closed field $K$ of $p$-rank $1$, put
\[
 G_n^\times(K)=K^\times/(K^\times)^n,\qquad
 G_n(K)=G_n^\times(K)\sqcup\{\mathbf0\},
\]
and
\[
 G^\times(K)=\varprojlim_{n\vert m}G_n^\times(K),\qquad
 G(K)=G^\times(K)\sqcup\{\mathbf0\}.
\]
Each $G_n(K)$ is a hyperfield as a Krasner quotient, and hyperaddition on $G(K)$ is defined coordinatewise. Write $\iota_K:K\to G(K)$ for the canonical map. When $K=\QQ_p$,  we simply write $\iota$ for $\iota_{\QQ_p}$ and $G$ for $G(K)$. We denote hyperaddition on these hyperfields by $\boxplus$.
\end{definition}

\begin{definition}
Let $K\models\pCF$ and let $F\supseteq K$ be a field extension.
We say that $F$ is \emph{formally $p$-adic over $K$} if it admits an
$\Lmac$-expansion which is a model of $\pCF_\forall$ and for which
$K\subseteq F$ is an $\Lmac$-embedding. Note that the predicates in this
expansion need not be the actual power predicates of $F$. We say a valuation is a $p$-valuation if the value group is discrete with $v(p)$ its minimal positive element and the residue field is $\mathbb{F}_p$. 
\end{definition}

\begin{theorem}\label{thm:formally-p-adic}
Let $K$ be $p$-adically closed of $p$-rank $1$ and let $F\supseteq K$
be a field extension. The following are equivalent.
\begin{enumerate}[label=\textnormal{(\arabic*)}]
\item There is a hyperfield morphism $\varphi:F\to G(K)$ such that
      $\varphi|_K=\iota_K$.
\item $F$ is formally $p$-adic over $K$.
\end{enumerate}
\end{theorem}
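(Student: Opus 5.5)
The plan is to prove both directions separately, with $(2)\Rightarrow(1)$ the easy one.

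For $(2)\Rightarrow(1)$, fix an $\Lmac$-expansion $F^*\models\pCF_\forall$ with $K\subseteq F^*$ an $\Lmac$-embedding. Since $\pCF$ is model complete and is the model companion of $\pCF_\forall$, $F^*$ embeds over $K$ into some $L\models\pCF$, and the embedding $K\to L$ is elementary. Put $\varphi(a):=\iota_L(a)$ for $a\in F$, identifying $G(L)$ with $G(K)$. This identification holds because $K\preceq L$ and each $G_n^\times(K)$ is finite with representatives $c_1,\dots,c_s\in K$. Those representatives still form a complete irredundant list in $L$, since the statement ``the $c_i$ represent all classes, pairwise distinctly'' is first order. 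Hyperaddition is also preserved: for classes $c_i,c_j,c_l$, the statement $\exists x,y\,(P_n(x)\wedge P_n(y)\wedge c_ix^n+c_jy^n\in c_lP_n)$ is first order (allowing $x,y$ to be $n$th powers, i.e.\ $x=u^n$). Hence $G_n(K)\cong G_n(L)$ as hyperfields, compatibly in $n$. Finally, $\iota_L$ is a strict hyperfield morphism $L\to G(L)$ (Krasner), so its restriction to $F$ is a hyperfield morphism, and it extends $\iota_K$ because $K\to L$ preserves $P_n$.

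For $(1)\Rightarrow(2)$, given $\varphi$, define the expansion by
\[
P_n^{F}(a)\ :\Longleftrightarrow\ a=0\ \text{or}\ \varphi_n(a)=\bar 1\in G_n(K).
\]
This restricts correctly to $K$ since $\varphi|_K=\iota_K$. The remaining task is to check that $(F,P_n^F)\models\pCF_\forall$. My preferred route is to avoid axiomatizing $\pCF_\forall$ directly and instead produce a $p$-valuation on $F$ compatible with the predicates. The natural candidate is
\[
\mathcal O_F:=\{a\in F:\ 1+pa^{N}\ \text{is an $N$th power in the $\varphi$-sense for suitable $N$}\},
\]
mirroring the Macintyre-type definition of the valuation ring of a $p$-adically closed field: $v(a)\ge0$ iff $1+p a^2\in P_2$ (for $p\neq2$; use a cube and $p^3$ when $p=2$). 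The universal $\Lmac$-sentences of $\pCF$ that encode this are sent by $\varphi$ into statements about the finite hyperfields $G_n(K)$, which hold because they hold in $K$. Note that this is exactly how Remark~\ref{rmk:FLT} and the counterexample use hyperaddition. The point is that a hyperfield morphism sends every positive identity ``$x\in P_n c_i,\ y\in P_n c_j\Rightarrow x+y\in\bigcup_l P_nc_l$'' to the corresponding membership in $G_n(K)$.

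I then verify the following, each by a finite computation in some $G_n(K)$ transferred via $\varphi$: that $\mathcal O_F$ is a valuation ring; that $\varphi(p)$ and multiplicativity force $v(p)$ to be minimal positive; that the residue field is $\mathbb F_p$ (using that $\varphi_n$ identifies $1+\m_F$ with $\bar1$ for large $n$); and that $P_n^F$ coincides with ``$n$th power in the henselization''. Once $(F,\mathcal O_F)$ is a $p$-valued field of $p$-rank $1$ and $P_n^F$ agrees with the power predicates of its $p$-adic closure $L$, the field $F$ is an $\Lmac$-substructure of $L\models\pCF$, hence a model of $\pCF_\forall$. The inclusion $K\subseteq F$ is an $\Lmac$-embedding by construction.

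The main obstacle is this last verification: showing that the $\varphi$-pulled-back predicates are exactly the restrictions of genuine power predicates of a $p$-adic closure. This requires the henselization step, namely that $P_n^F(a)$ iff $a\in (F^h)^{\times n}$. I would try first to show that $\varphi$ factors through $\rv$ of $\mathcal O_F$ (as done for (\ref{eq:normalized-label})), reducing the claim to the residue/value-group description of $K^\times/(K^\times)^n$ given in Lemma~\ref{lem:divisibilities}.
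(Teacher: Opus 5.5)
Your direction $(2)\Rightarrow(1)$ is fine and is the paper's argument.

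The direction $(1)\Rightarrow(2)$ has a genuine gap exactly where the content lies. The principle you invoke is that universal $\Lmac$-sentences true in $K$ are ``sent by $\varphi$'' to statements about the $G_n(K)$, and therefore hold for $(F,P_n^\varphi)$. This is not valid. A hyperfield morphism constrains only one binary sum at a time. The hypersums in $G(K)$ are multivalued, and $\varphi(F)$ may contain classes outside $\iota_K(K)$. So a sentence about composite expressions such as $1+p(a+b)^2$ or $1+pa^4$ has no reason to survive. Indeed your argument uses nothing specific to $p$-adically closed fields, and its analogue is false for $\lau{\CC}{s}$. In \Cref{prop:counterexample}, $k'=\CC(\!(s^\Lambda)\!)$ carries a hyperfield morphism to $G(\lau{\CC}{s})$ extending $\iota$, through the identification $G_n(\lau{\CC}{s})\cong G_n(k')$ proved there. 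The element $z=s^\alpha$ satisfies $P_2^\varphi(1+sz^2)$ but not $P_2^\varphi(1+sz^4)$. This violates the universal sentence $\forall z\,(P_2(1+sz^2)\to P_2(1+sz^4))$ of $\lau{\CC}{s}$. In other words, the exact analogue $\{z:P_2^\varphi(1+sz^2)\}$ of your $\mathcal O_F$ is not even closed under squaring. So showing that $\mathcal O_F$ is a valuation ring is not a routine finite computation, and you supply none.

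The missing idea is the rigidity of hyperaddition in the inverse limit over $\QQ_p$ (\Cref{lem:scalar-addition}): $h\boxplus c\subseteq\{h,c,h+c\}$ for $c\in\QQ_p^\times$ and $h\neq-c$.
\begin{itemize}
\item Its proof combines injectivity of $\iota$ on $\QQ_p$ with compactness of $\mathbb P^1(\QQ_p)$, applied to witnesses chosen at the levels $j!$. It is a statement about the limit, not about any single $G_n$, and both ingredients fail for $\lau{\CC}{s}$.
\item For general $K$ one first passes to a saturated extension $(F^*,K^*,(P_n^*))$ of $(F,K,(P_n^\varphi))$ with a copy of $\QQ_p$ inside $K^*$.
\item The rigidity then yields the translation patterns and a place $r\colon F^*\to\QQ_p\cup\{\infty\}$ with $\chi=\iota\circ r$ on $u$-units (\Cref{lem:translation-patterns,lem:place}).
\item The $p$-valuation $w$ is the composite of $u$ with $v_p$, and a posteriori it is your $\mathcal O_F$.
\end{itemize}

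Two further steps of your plan would fail as stated.
\begin{itemize}
\item $\varphi_n$ does not kill $1+\m_F$: $1+p$ lies in $1+\m_F$ but is not a $p$th power in $K$. The factorization through leading terms holds for the coarse valuation $u$ (residue field $\QQ_p$), not for $w$. For $w$, \Cref{lem:divisibilities} is unavailable because the residue characteristic divides $n=p$.
\item Your final target $P_n^\varphi(a)\Leftrightarrow a\in(F^h)^{\times n}$ is false. Send $T\in F=\QQ_p(T)$ to some $\tau$ in a saturated $L\succ\QQ_p$ with $v(\tau)>\ZZ$ and $\tau\in\bigcap_n(L^\times)^n$. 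Then $P_n^\varphi(T)$ holds for every $n$. But the induced $p$-valuation has $w(F^\times)=\ZZ\oplus\ZZ\,w(T)$, so $T$ is not a square in the henselization.
\end{itemize}
The predicates are not determined by the valuation. What has to be verified are B\'elair's universal axioms for $\pCF_\forall$, notably $P_n(x)\Rightarrow w(z^nx)\le0$ or $w(z^nx)\ge n\delta$ with $\delta=w(p)$. B\'elair's embedding theorem then concludes, as in \Cref{lem:residue-realization}.
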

Note that the implication \textnormal{(2)}$\implies$\textnormal{(1)} is trival. So we will only prove  the other direction. 
 As we will see in the proof, if \textnormal{(1)} holds, the interpetation of $P_n$ making $F$ formally $p$-adic according to $\varphi$ can be described by:
\[
 P^\varphi_n(x)\quad\Longleftrightarrow\quad
 x=0\ \text{or}\ \varphi_n(x)=1,
\]
where $\varphi_n$ is the composition of $\varphi$ with the projection to $G_n$. 
In other words, in this case, there are an elementary extension $K'\succeq K$
and a $K$-embedding $j:F\to K'$ such that
\[
 \varphi_n(x)=[a]_n
 \quad\Longleftrightarrow\quad
 j(x)/a\in(K'^\times)^n
 \qquad(x\in F^\times,\ a\in K^\times,\ n\ge2).
\]
To prove this, we need to understand the structure of the hyperfield $G$ and how the data of a hyperfield morphism to $G$ determines a $p$-valuation. Let's start with the first.
\begin{lemma}\label{lem:scalar-addition}
The map $\iota:\QQ_p\to G$ is injective. Identify $\QQ_p$ with its
image, which is closed under multiplication. If $h\in G^\times$,
$c\in\QQ_p^\times$, and $h\ne-c$, then
\[
 h\boxplus c\subseteq
 \begin{cases}
 \{h,c\},&h\notin\QQ_p^\times,\\
 \{h,c,h+c\},&h\in\QQ_p^\times.
 \end{cases}
\]
Here $+$ denotes addition in $\QQ_p$ as a field.
\end{lemma}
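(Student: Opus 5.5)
We will work inside $G=\varprojlim_n G_n^\times\sqcup\{\mathbf 0\}$ through the standard description of $G^\times$. We have $\QQ_p^\times\cong\ZZ\times\ZZ_p^\times$ via $x=p^{v(x)}u$. For every $n$ the subgroup $(\QQ_p^\times)^n$ contains $n\ZZ\times(1+p^{N(n)}\ZZ_p)$ for some $N(n)$, by Hensel's lemma, and it is contained in $n\ZZ\times(\ZZ_p^\times)^n$. Passing to the limit gives an identification $G^\times\cong\widehat\ZZ\times\ZZ_p^\times$ (using that $\ZZ_p^\times$ is already profinite), and $\iota$ becomes the inclusion $\ZZ\times\ZZ_p^\times\hookrightarrow\widehat\ZZ\times\ZZ_p^\times$.

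\emph{Injectivity of $\iota$.} Suppose $x\in\bigcap_n(\QQ_p^\times)^n$. Then $v(x)$ is divisible by every $n$, so $v(x)=0$. The unit part of $x$ lies in $\mu_{p-1}\times(1+p\ZZ_p)$; taking $n=p-1$ kills the torsion part, and $(1+p\ZZ_p)^{p^k}\subseteq 1+p^{k+1}\ZZ_p$ forces the $(1+p\ZZ_p)$-part to be $1$. Hence $\iota(a)=\iota(b)$ implies $a=b$. In the identification above, $h\in\QQ_p^\times$ exactly when its $\widehat\ZZ$-component $\alpha$ lies in $\ZZ$.

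\emph{Hyperaddition.} Multiplying by $c^{-1}$ reduces to the case $c=1$ and $h\neq-1$. Let $w\in h\boxplus 1$. For each level $m$, the coordinate $w_m$ is a class $[x+y]_m$ with $x\in h_m$ and $y\in(\QQ_p^\times)^m$. Fix a target level $n$ and let $N=N(n)$. The ultrametric gives the key estimate: if $|v(x)-v(y)|>N$, then $x+y$ lies in $x(1+p^N\ZZ_p)$ or in $y(1+p^N\ZZ_p)$. Projecting to level $n$, $w_n$ is then $h_n$ or $1_n$.

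\emph{Case $h\notin\QQ_p^\times$.} Here $\alpha\notin\ZZ$. A compactness argument in $\widehat\ZZ$ shows that for $m$ divisible enough, no integer in $[-N,N]$ is congruent to $\alpha$ modulo $m$. Since $v(x)-v(y)\equiv\alpha\pmod m$, the large-gap estimate always applies, so $w_n\in\{h_n,1_n\}$ for every $n$. It remains to see that $w$ is globally $h$ or $1$. Because $h\neq1$, we have $h_n\neq 1_n$ for all $n$ past some $n_0$. Work along the cofinal chain $n_k=k!$. If $w_{n_k}=h_{n_k}$, then compatibility forces $w_{n_j}=h_{n_j}$ for $n_0\le j\le k$, and the same holds with $1$ in place of $h$. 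Hence the choice is eventually constant along the chain, and $w\in\{h,1\}$.

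\emph{Case $h\in\QQ_p^\times$.} Write $x=hs^m$ and $y=t^m$. If the valuation gap exceeds $N$, the previous estimate applies. Otherwise $m\mid v(x)-v(y)-v(h)$ while $|v(x)-v(y)|\le N$; for $m$ large this forces $v(s^m)=v(t^m)$, say both equal $mj$. Take $m$ divisible by $(p-1)p^{k}$ with $k\gg N,\ v(h+1)$. Then the unit parts of $s^m$ and $t^m$ lie in $1+p^{k+1}\ZZ_p$. It follows that
\[
x+y\in p^{mj}(h+1)\bigl(1+p^{k+1-v(h+1)}\ZZ_p\bigr),
\]
and here $h\ne-1$ is exactly what guarantees $h+1\neq0$. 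So the level-$n$ class of $x+y$ is $[h+1]_n$, and $w_n\in\{h_n,1_n,(h+1)_n\}$. The same eventual-constancy argument along $n_k=k!$ then places $w$ in $\{h,1,h+1\}$: the three elements are distinct only after discarding the finitely many low levels where some of them coincide, and if two of them are equal in $G$ the claim only gets easier.

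The step I expect to be most delicate is this last one: choosing $m$ uniformly so that both the valuation alignment and the closeness of unit parts to $1$ hold simultaneously, and then verifying that coherence of the thread forces a single one of the candidates.
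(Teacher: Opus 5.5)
Your proof is correct, but it takes a different route from the paper's.

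\textbf{The paper's argument.} The paper never computes $G$ explicitly. After reducing to $c=1$, it picks for each $j$ a witness $x_j\in\QQ_p$ with $[x_j]_{j!}=h_{j!}$ and $[1+x_j]_{j!}=z_{j!}$. Compactness of $\mathbb P^1(\QQ_p)$ then gives a subsequence $x_j\to a\in\QQ_p\cup\{\infty\}$. Openness of $(\QQ_p^\times)^n$ lets one read off $h$ and $z$ at every level from $a$:
\begin{itemize}
\item $a\in\QQ_p\setminus\{0,-1\}$ gives $h=\iota(a)$ and $z=\iota(1+a)$;
\item $a=0$ gives $z=1$;
\item $a=\infty$ gives $z=h$, via $(1+x_j)/x_j\to 1$;
\item $a=-1$ would force $h=-1$.
\end{itemize}
A single convergent sequence controls all levels simultaneously. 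So the case split ``$h\in\QQ_p^\times$ or not'' falls out of where the limit lands, and no separate coherence argument along $k!$ is needed. The argument is soft: it uses only that the power subgroups are open with trivial intersection and that $\mathbb P^1$ is compact.

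\textbf{Your argument.} You instead identify $G^\times\cong\widehat\ZZ\times\ZZ_p^\times$ and argue level by level with explicit ultrametric estimates. You then glue the levels by eventual constancy along $n_k=k!$. This costs more bookkeeping: choosing $m$ in terms of $n$, $N(n)$ and $k$, and the final gluing step. In exchange it is quantitative and shows concretely why $h\notin\QQ_p^\times$ excludes a third outcome. The $\widehat\ZZ$-component of such an $h$ is not an integer, so representatives of $h$ and of $1$ at a suitable level $m$ are forced to have valuations differing by more than $N(n)$, and cancellation is impossible. Note that you genuinely need surjectivity of $\ZZ_p^\times\to\varprojlim_n\ZZ_p^\times/(\ZZ_p^\times)^n$ here, to get that $\alpha\in\ZZ$ implies $h\in\QQ_p^\times$. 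Your remark that the $(\ZZ_p^\times)^n$ form a basis of open subgroups of the compact group $\ZZ_p^\times$ supplies exactly this.

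\textbf{One small correction.} The inclusion $x+y\in p^{mj}(h+1)\bigl(1+p^{k+1-v(h+1)}\ZZ_p\bigr)$ is wrong when $v(h)<0$. Writing $u_s^m=1+\varepsilon_1$, the error term $h\varepsilon_1$ only has valuation at least $v(h)+k+1$. The correct exponent is $k+1+\min(v(h),0)-v(h+1)$, which equals $k+1$ when $v(h)<0$. So you should also take $k$ large relative to $|v(h)|$; this does not affect the conclusion.

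It is also worth saying explicitly that your estimates give $x+y\neq 0$ in every case. This disposes of the possibility $w=\mathbf 0$.
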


\begin{proof}
Since $\bigcap_{n\ge2}(\QQ_p^\times)^n=\{1\}$, this implies
injectivity of $\iota$.
After multiplying by $c^{-1}$, we may assume $c=1$. Let $z\in1\boxplus h$. If $z=\mathbf0$, then $h=-1$ at every level,
contradicting the hypothesis. For each $j\ge2$, the quotient
hyperaddition in $G_{j!}$ gives some $x_j\in\QQ_p$ with
\[
 [x_j]_{j!}=h_{j!},\qquad [1+x_j]_{j!}=z_{j!}.
\]

Topological compactness of $\mathbb P^1(\QQ_p)$ in the $p$-adic topology gives a subsequence converging to some $a\in\QQ_p\cup\{\infty\}$.

Fix $n\ge2$. For all sufficiently large $j$ in this subsequence,
$n\vert j!$, so
\[
 h_n=[x_j]_n,\qquad z_n=[1+x_j]_n.
\]
Since the subgroup $(\QQ_p^\times)^n$ is open,  for $a\in\QQ_p\setminus\{0,-1\}$ and large enough $j$ in the subsequence, 
$x_j/a$ and $(1+x_j)/(1+a)$ are $n$th powers. Thus $h_n=[a]_n$ and $z_n=[1+a]_n$ for every $n$, giving $h=\iota(a)$ and $z=\iota(1+a)$.
If $a=0$, the same argument applied to $1+x_j$ gives $z=1$.
If $a=\infty$, apply it to $(1+x_j)/x_j$ to obtain $z=h$.
Finally, $a=-1$ would give $h=-1$, which is impossible since $h\neq -c$. 
\end{proof}

\begin{remark}
The two properties used in this proof fail for $\lau{\CC}{t}$. Every unit in $\CC[[t]]$ is an $n$th power for every $n\ge2$, so the canonical map to the inverse limit of power quotients is not injective. Moreover, $\mathbb P^1(\lau{\CC}{t})$ is not compact in the $t$-adic topology. See also \Cref{prop:counterexample}. 
\end{remark}

\begin{lemma}\label{lem:translation-patterns}
Let $F\supseteq\QQ_p$ be a field and let $\chi:F\to G$ be a
hyperfield morphism extending $\iota$. For each $x\in F$, exactly one
of the following holds:
\begin{align}
 &\chi(x+c)=\chi(x)\quad\text{for every }c\in\QQ_p;
 \label{eq:infinite-pattern}\\
 &\chi(x+c)=a+c\quad\text{for some }a\in\QQ_p \quad \text{for every }c\in\QQ_p\setminus\{-a\}.
 \label{eq:finite-pattern}
\end{align}
In the second case, $a$ is unique.
\end{lemma}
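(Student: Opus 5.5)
The proof will use two things: \Cref{lem:scalar-addition} to control every translate $\chi(x+c)\in\chi(x)\boxplus c$, and reversibility of hyperaddition ($z\in u\boxplus w$ implies $u\in z\boxplus(-w)$) to move between different base points $x+c_0$.

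\emph{Trivial case.} First I dispose of $x\in\QQ_p$. Then $\chi(x+c)=x+c$, because $\chi$ extends $\iota$. So \eqref{eq:finite-pattern} holds with $a=x$, and \eqref{eq:infinite-pattern} fails, because $\iota$ is injective by \Cref{lem:scalar-addition}.

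\emph{Setup for $x\notin\QQ_p$.} From now on $x\notin\QQ_p$, so $x+c\neq0$ and hence $\chi(x+c)\neq\mathbf0$ for every $c$. Write $h_c:=\chi(x+c)$. The relation $x+c'=(x+c)+(c'-c)$ gives
\[
h_{c'}\in h_c\boxplus(c'-c)\qquad\text{for all } c\ne c'.
\]
The hypothesis $h_c\neq-(c'-c)$ of \Cref{lem:scalar-addition} is automatic whenever $h_c\notin\QQ_p^\times$. When $h_c\in\QQ_p^\times$, it fails only for the single value $c'=c-h_c$.

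\emph{Case 1: no $h_c$ lies in $\QQ_p^\times$.} \Cref{lem:scalar-addition} gives $h_{c'}\in\{h_c,\,c'-c\}$. Since $h_{c'}\notin\QQ_p$, this forces $h_{c'}=h_c$. So the translates are constant, which is pattern \eqref{eq:infinite-pattern}.

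\emph{Case 2: some $h_{c_0}=b\in\QQ_p^\times$.} Put $a:=b-c_0$; the goal is $h_c=a+c$ for all $c\neq-a$. For $c\neq c_0,-a$, \Cref{lem:scalar-addition} gives $h_c\in\{b,\;c-c_0,\;a+c\}\subseteq\QQ_p^\times$. So in this case every translate takes a scalar value.
\begin{itemize}
\item To exclude $h_c=b$: reversibility gives $b\in h_c\boxplus(c_0-c)=b\boxplus(c_0-c)$, which is uninformative. So I will bring in a third constant $c_1$ and compare the two base points $c$ and $c_1$. This yields a system of membership constraints among finitely many scalars, and I expect it to be inconsistent with $h_c\neq a+c$ once $c_1$ is chosen generic (distinct from the finitely many excluded values). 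The intended mechanism is that the constraint sets $\{\cdot,\cdot,\cdot+\cdot\}$ coming from different base points intersect only in the "affine'' value.
\item To exclude $h_c=c-c_0$: use the same trick.
\end{itemize}
If this is too weak, the fallback is multiplicativity. I would apply the argument to $\lambda x$, using $\chi(\lambda x+\lambda c)=\lambda h_c$, and to $1/(x+c)$, to get more constraints. Failing that, I would return to the inverse-limit levels $G_{j!}$ and reuse the compactness of $\mathbb P^1(\QQ_p)$ from the proof of \Cref{lem:scalar-addition}, showing that the representatives $x_j$ converge to $a$ uniformly in $c$.

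\emph{Exclusivity and uniqueness.} Pattern \eqref{eq:infinite-pattern} makes $h_c$ constant in $c$. Pattern \eqref{eq:finite-pattern} makes $h_c=a+c$, which by injectivity of $\iota$ takes infinitely many distinct values. So the two patterns cannot both hold. The same injectivity shows that $a$ is determined by a single value $h_c$, so $a$ is unique.

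The main obstacle is the exclusion step in Case 2: showing that once one translate is a scalar, the translates follow the affine pattern $a+c$ rather than mixing the three options that \Cref{lem:scalar-addition} allows.
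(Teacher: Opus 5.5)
Your trivial case, your Case~1 and the exclusivity/uniqueness argument are correct. Case~2, however, aims at a false statement, so the two ``exclusion'' steps cannot be carried out. Knowing that one translate $h_{c_0}=b$ is a scalar does \emph{not} give pattern \eqref{eq:finite-pattern} with $a=b-c_0$. Each of the three options $b$, $c-c_0$, $a+c$ allowed by \Cref{lem:scalar-addition} genuinely occurs, simultaneously for all $c$.

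To see this, take a saturated $K'\succeq\QQ_p$ and let $\chi\colon K'\to G(K')\cong G$ be the canonical map. It is a hyperfield morphism extending $\iota$, as in the proof of \Cref{thm:formally-p-adic}. Choose $y\in K'$ with $v(y)>\ZZ$ and $y\in(K'^\times)^n$ for every $n$. By Hensel's lemma, $1+z$ is an $n$th power for all $n$ whenever $v(z)>\ZZ$.
\begin{itemize}
\item For $x=1/y$ one gets $h_c=\chi(1+cy)\chi(y)^{-1}=1$ for every $c\in\QQ_p$. All translates equal the scalar $b=1$, so pattern \eqref{eq:infinite-pattern} holds inside your Case~2, and $h_c=b$ can never be excluded.
\item For $x=y$ and $c_0=0$ one has $b=h_0=1$, so your $a$ equals $1$. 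Yet $h_c=\chi\bigl(c(1+y/c)\bigr)=c$ for all $c\neq0$. So the option $c-c_0$ occurs, the true parameter is $a=0=-c_0$, and $h_{c_0}$ was just the unconstrained value at the exceptional point.
\end{itemize}

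What Case~2 actually requires is a \emph{no-mixing} statement. After translating and rescaling so that $h_0=1$, one has $h_c\in\{1,c,c+1\}$ for $c\neq0,-1$. One must show that $h_c$ follows one and the same option for all such $c$. The three outcomes are pattern \eqref{eq:infinite-pattern}, pattern \eqref{eq:finite-pattern} with $a=0$, or pattern \eqref{eq:finite-pattern} with $a=1$. Your proposal gives no argument for this (``I expect it to be inconsistent''), and the listed fallbacks are not developed into one.

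The paper obtains it by first deciding $h_1\in\{1,2\}$ and, when $h_1=1$, $h_2\in\{1,2\}$. It then intersects the constraint set from base point $0$ with those from base points $1$ and $2$, which fixes the behaviour for all but finitely many $c$. A second intersection, using $x+c=(x+d)+(c-d)$ for cofinitely many $d$, upgrades this to every $c\neq-a$ (resp.\ every $c$).
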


\begin{proof}

The two patterns are clearly exclusive. Comparing the second case for two possible values of $a$ proves uniqueness of $a$ immediately, so it suffices to show existence.
For $x\in\QQ_p$, \eqref{eq:finite-pattern} holds with $a=x$.
Suppose $x\notin\QQ_p$, and let $r_c=\chi(x+c)$.
Note that every $r_c$ is not $\mathbf{0}$.

If $r_0\notin\QQ_p^\times$, Lemma~\ref{lem:scalar-addition} gives
$r_c\in\{h,c\}$ for $c\ne0$. If $r_c=r_0$ and $r_d=d$ for
nonzero $c,d$, then
\[
 c-d\in r_0\boxplus(-d)\subseteq\{r_0,-d\}.
\]
Since $r_0\notin\QQ_p^\times$, this forces $c=0$, a contradiction.
Thus either $r_c=r_0$ for every $c$, or $r_c=c$ for every $c\ne0$.
These are \eqref{eq:infinite-pattern} and
\eqref{eq:finite-pattern} with $a=0$, respectively.

Suppose $r_0\in\QQ_p^\times$. Write $r_0=\iota(d)$ with
$d\in\QQ_p^\times$. Replacing $x$ by $x/d$, we may assume
$r_0=1$. Lemma~\ref{lem:scalar-addition} gives $r_1\in\{1,2\}$.
Applying the same lemma to $x+c$ and to $(x+1)+(c-1)$ gives,
for all but finitely many $c\in\QQ_p$,
\[
 r_c\in\{1,c,c+1\}\cap\{r_1,c-1,r_1+c-1\}.
\]
If $r_1=2$, this intersection is $\{c+1\}$ for all but finitely
many $c$. If $r_1=1$, then $r_2\in\{1,2\}$, and comparison with
$x+c=(x+2)+(c-2)$ gives, again outside a finite set,
\[
 r_c\in\{1,c\}\cap\{r_2,c-2,r_2+c-2\}
 =
 \begin{cases}
 \{1\},&r_2=1,\\
 \{c\},&r_2=2.
 \end{cases}
\]
Here the finitely many excluded values are those for which \Cref{lem:scalar-addition} does not apply or two distinct expressions in the displayed sets are equal.

Actually, these identities for all but finitely many $c$'s extend to the same identities for any $c\neq -a$. Indeed, suppose $r_d=a+d$ for all but finitely many $d$.
For fixed $c\ne-a$, comparison with $x+c=(x+d)+(c-d)$ gives
\[
 r_c\in\{a+d,c-d,a+c\}
\]
for all but finitely many $d$. Any element other than $a+c$ belongs
to at most two of these sets, so $r_c=a+c$.
If instead $r_d=1$ cofinitely, then
\[
 r_c\in\{1,c-d,1+c-d\}
\]
for all but finitely many $d$, and the same argument gives $r_c=1$
for every $c$. 
\end{proof}

Let $F\supseteq\QQ_p$ be a field with $\chi:F\to G$ a
hyperfield morphism extending $\iota$, we wish to cook up a $p$-valuation on $F$. We do this by defining a place. 
Define
\[
r:F\longrightarrow\QQ_p\cup\{\infty\},\quad r(x)=
\begin{cases}
\infty,&\text{if \eqref{eq:infinite-pattern} in \Cref{lem:translation-patterns} holds},\\
a,&\text{if \eqref{eq:finite-pattern} in \Cref{lem:translation-patterns} holds with parameter }a.
\end{cases}
\]
Put $O=\{x\in F:r(x)\ne\infty\}$.

\begin{lemma}\label{lem:place}
Let $F\supseteq\QQ_p$ be a field and let $\chi:F\to G$ be a
hyperfield morphism extending $\iota$. Then $O$ above is a valuation ring
with residue field $\QQ_p$, and $r|_O$ is its residue map.
Writing $u$ for the associated valuation, one has
\[
 \chi(y)=\iota(r(y))\quad(\forall y\in\cO_u^\times).
\]
In particular, $\chi(1+\m_u)=1$.
\end{lemma}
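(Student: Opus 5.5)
The plan is to verify the place axioms directly from the two translation patterns of \Cref{lem:translation-patterns}. Then we identify $O$ with the valuation ring of the place $r$, and finally read off $\chi$ on units.

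\textbf{Step 1: $O$ is a ring and $r|_O$ is additive.} If $x\in O$ with parameter $a$, then $x+c$ has pattern \eqref{eq:finite-pattern} with parameter $a+c$, because $\chi((x+c)+c')=a+c+c'$. Hence $r(x+c)=r(x)+c$ for $c\in\QQ_p$, and likewise $r(\lambda x)=\lambda r(x)$ for $\lambda\in\QQ_p^\times$ by multiplicativity of $\chi$. We then record the key characterization: $r(x)=0$ if and only if $\chi(x+c)=c$ for all $c\neq 0$.

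For $x,y\in O$ we translate so that $r(x)=r(y)=0$. For $c\ne0$ we have $\chi(x+y+c)\in\chi(x+c/2)\boxplus\chi(y+c/2)=c/2\boxplus c/2=\{c\}$ by \Cref{lem:scalar-addition} (in $\QQ_p$-image the sum is forced). So $x+y\in O$ with $r=0$, which gives additivity.

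For products we show $r(xy)=0$ when $r(x)=0$ and $y\in O$. Write $xy+c$ and use $\chi(xy)=\chi(x)\chi(y)$. If $r(y)=b\ne0$, then $\chi(y)=b$, and since $\chi(x)\notin\{\text{pattern }\infty\}$ we have $\chi(x+d)=d$. Pushing $y$ through the scalar case $\chi(x(y-b)+bx)$ is the step I expect to be delicate. The cleanest route is to first prove that $O$ is closed under inverses of elements with $r\neq 0$ and that $\mathfrak m:=r^{-1}(0)$ is closed under multiplication by $O$, via the identity $x\in\mathfrak m\Rightarrow \chi(x)\neq$ any fixed $c$ infinitely often.

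\textbf{Step 2: $O$ is a valuation ring.} If $x\notin O$, then $\chi(x+c)=\chi(x)$ for all $c$. We show $x^{-1}\in\mathfrak m$: for $c\ne0$,
\[
\chi(x^{-1}+c)=\chi(x^{-1})\chi(1+cx)=\chi(x)^{-1}\chi(c)\chi(x+c^{-1})=\chi(x)^{-1}c\,\chi(x)=c .
\]
So $r(x^{-1})=0$. This step is clean and is the heart of the dichotomy. Conversely, if $x\in O$ with $r(x)\neq0$, then $\chi(x^{-1}+c)=\chi(x)^{-1}\chi(1+cx)=a^{-1}\chi(c)\chi(x+c^{-1})=a^{-1}c(a+c^{-1})=a^{-1}+c$. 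Hence $x^{-1}\in O$ with $r(x^{-1})=r(x)^{-1}$, so $r$ is multiplicative on units. The same computation style (expanding $\chi(xy+c)=\chi(x)\chi(y+c/x)$ when $x\notin\mathfrak m$) yields multiplicativity in general. Thus $O$ is a valuation ring, $\mathfrak m$ is its maximal ideal, $r|_O$ is a surjective ring homomorphism onto $\QQ_p$ with kernel $\mathfrak m$, and the residue field is $\QQ_p$.

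\textbf{Step 3: $\chi$ on units.} For $y\in\cO_u^\times$ with $r(y)=a\ne0$, apply pattern \eqref{eq:finite-pattern} with $c=0$: $\chi(y)=a+0=\iota(a)$. In particular $\chi(1+\m_u)=\iota(1)=1$. The main obstacle is Step 1's multiplicative closure of $\mathfrak m$ under $O$. My fallback there is to show first that $\chi$ on $\mathfrak m\setminus\{0\}$ is constant under translation by $\mathfrak m$-elements of larger "size", using the inverse trick of Step 2 applied to $x^{-1}\notin O$.
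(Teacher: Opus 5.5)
Your overall plan is the paper's: derive the place axioms from the two translation patterns, then read off $\chi$ on units. Your inverse computations in Step~2 and the conclusion of Step~3 agree with it. Two steps, however, do not go through as written.

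\textbf{Additivity.} The identity $c/2\boxplus c/2=\{c\}$ is false. \Cref{lem:scalar-addition} only gives $c/2\boxplus c/2\subseteq\{c/2,c\}$, and $c/2$ really occurs. For every $n$, Hensel's lemma makes $1+(p^k)^n$ an $n$th power for $k\gg0$, so $[1]_n\in[1]_n\boxplus_n[1]_n$ at every level, hence $1\in 1\boxplus 1$ in $G$. Hyperaddition on $\iota(\QQ_p)$ is therefore not single-valued, and your single splitting only yields $\chi(x+y+c)\in\{c/2,c\}$. The missing idea is to vary the splitting, as the paper does. For $d\notin\{0,c\}$, write $x+y+c=(x+d)+(y+(c-d))$. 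This gives $\chi(x+y+c)\in d\boxplus(c-d)\subseteq\{d,c-d,c\}$. Since $\iota$ is injective, a value other than $c$ lies in at most two of these sets. Already $d=c/2$ and $d=c/3$ force $\chi(x+y+c)=c$.

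\textbf{Multiplicativity.} Your expansion $\chi(xy+c)=\chi(x)\chi(y+cx^{-1})$ is correct when $r(x)=a\neq0$. Step~2 gives $r(x^{-1})=a^{-1}$, and the repaired additivity gives $r(y+cx^{-1})=r(y)+c/a$. Hence $\chi(xy+c)=a\,r(y)+c$ for $c\neq -a\,r(y)$, so $xy\in O$ with $r(xy)=r(x)r(y)$.

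It says nothing, however, when $x,y\in\mathfrak m$. That is exactly the case you flag as the main obstacle, and your fallbacks are not arguments: a ``size'' on $\mathfrak m$ presupposes the valuation being constructed. What is missing is a one-line shift. For $x\in\mathfrak m$, $r(x+1)=1\neq0$, so the unit case gives $(x+1)y\in O$ with $r=r(y)$. Then $xy=(x+1)y-y\in O$ with $r(xy)=0$. More generally, use $xy=(x+c)y-cy$ with $r(x)+c\neq0$.

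With these two repairs your route is valid and slightly more direct than the paper's. The paper instead obtains $x^2\in O$ from $x^2=(x^{-1}-(x+1)^{-1})^{-1}-x$ when $r(x)\notin\{0,-1\}$, shifts to reach arbitrary $x\in O$, and polarizes $xy=\tfrac12\big((x+y)^2-x^2-y^2\big)$.
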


\begin{proof}
The definition of $r$ show that it is identity on $\QQ_p$ and it satisfies
$r(x+c)=r(x)+c$ and $r(dx)=d\,r(x)$ for $c\in\QQ_p$ and
$d\in\QQ_p^\times$, with $\infty+c=d\infty=\infty$.

Let $r(x)=a$ and $r(y)=b$ with $a,b\neq \infty$. For $c\ne a+b$, the identity
\[
 x+y-c=(x-d)+(y-(c-d))
\]
and Lemma~\ref{lem:scalar-addition} give
\[
 \chi(x+y-c)\in\{a-d,b-c+d,a+b-c\}
\]
for all but finitely many $d\in\QQ_p$. Intersecting over different $d$'s
gives $\chi(x+y-c)=a+b-c$. Thus $x+y\in O$ and
$r(x+y)=a+b$. Moreover, since $r$ fixes $\QQ_p$, we have $0\in O$. $r(dx)=dr(x)$ for $d\in\mathbb{Q}_p^\times$ gives $r(-x)=-r(x)$. So $O$ is closed under additive
inverses. This shows that $O$ is an additive subgroup of $F$ and that $r|_O$ respects addition. 

For $x\ne0$ and $c\in\QQ_p^\times$, multiplicativity of $\chi$ gives
 $\chi(x^{-1}-c)=(-c)\chi(x-c^{-1})\chi(x)^{-1}$.
If $r(x)=a\in\QQ_p^\times$, then $\chi(x)=a$ and hence
\[
 \chi(x^{-1}-c)=(-c)(a-c^{-1})a^{-1}=a^{-1}-c
 \qquad(c\ne0,a^{-1}).
\]
For $c=0$ by multiplicativity of $\chi$ implies$
\chi(x^{-1})\,a =\chi(x^{-1})\chi(x) =\chi(x^{-1}x) =\chi(1)=1$. Therefore
$r(x^{-1})=a^{-1}$. 

If $r(x)=0$, we have
\[
 \chi(x^{-1}-c)=(-c)(-c^{-1})\chi(x)^{-1} =\chi(x^{-1})\qquad(c\ne0),
\]
so $r(x^{-1})=\infty$. 

Finally, if $r(x)=\infty$, it gives
\[
 \chi(x^{-1}-c)=(-c)\chi(x)\chi(x)^{-1}=-c
 \qquad(c\ne0),
\]
so $r(x^{-1})=0$. Consequently, every nonzero $x\in F$ satisfies
$x\in O$ or $x^{-1}\in O$.

It remains to show that $O$ is closed under multiplication. First let $x\in O$ with
$a=r(x)\notin\{0,-1\}$. 
Note that
\[
 x^2=\bigl(x^{-1}-(x+1)^{-1}\bigr)^{-1}-x.
\]
Since
$r(x^{-1}-(x+1)^{-1})=a^{-1}-(a+1)^{-1}\ne0$, the preceding formulas for inverses and addition gives $x^2\in O$ and $r(x^2)=a^2$.
For arbitrary $x\in O$, take $c\in\QQ_p$ with $r(x)+c\notin\{0,-1\}$. Applying the preceding case to $x+c$ and using $x^2=(x+c)^2-2cx-c^2$ gives that $x^2\in O$. Now, $O$ is closed under multiplication because of the following:
\[
 xy=\frac{(x+y)^2-x^2-y^2}{2}\in O,
 \quad r(xy)=r(x)r(y)
 \quad(\forall x,y\in O).
\]
Thus $O$ is a valuation ring with
residue field is $\QQ_p$ and $r|_O:O\to\QQ_p$ is a surjective place. For $y\in O^\times$, one has $r(y)\ne0$;
taking $c=0$ in \eqref{eq:finite-pattern} \Cref{lem:translation-patterns} gives $\chi(y)=\iota(r(y))$.
\end{proof}

\begin{lemma}\label{lem:residue-realization}
Let $F\supseteq\QQ_p$ be a field and let $\chi:F\to G$ be a
hyperfield morphism extending $\iota$, there are an
elementary extension $k'\succeq\QQ_p$ and an embedding
$j:F\hookrightarrow k'$ over $\QQ_p$ such that
\[
 \chi_n(x)=[a]_n
 \quad\Longleftrightarrow\quad
 j(x)/a\in(k'^\times)^n
 \qquad(x\in F^\times,\ a\in\QQ_p^\times,\ n\ge2).
\]
\end{lemma}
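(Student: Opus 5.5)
Let $u$ be the valuation from \Cref{lem:place}, with valuation ring $O$, residue map $r$, residue field $\QQ_p$ and value group $\Gamma_u$. The plan is to upgrade $u$ to a valuation $w$ on $F$ that is a $p$-valuation relative to the power-class data, to show that $(F,w)$ satisfies the universal part of $\pCF$ with $P_n$ read off from $\chi$, and then to embed $F$ into a model of $\pCF$ by model completeness.

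First we read off power classes. For $x\in F^\times$ and each $n$, we have $\chi_n(x)\in\QQ_p^\times/(\QQ_p^\times)^n$. Define $P_n^\chi(x)$ to mean $x=0$ or $\chi_n(x)=1$. By \Cref{lem:place}, $\chi$ is trivial on $1+\m_u$. On units it is $\iota\circ r$. Since $\chi$ is multiplicative, $\chi$ induces a group homomorphism $\RV_u^\times\to G^\times$ that extends $\iota$ on $\QQ_p^\times=\RF_u^\times$.

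Next we build the composite valuation. Compose $u$ with the $p$-adic valuation $v_p$ on the residue field $\QQ_p$. This gives a valuation $w$ on $F$ whose value group is an extension of $\Gamma_u$ by $\ZZ$, with $w(p)$ the least positive element and residue field $\FF_p$. So $w$ is a $p$-valuation, and it is henselian-compatible with $u$ on units.

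We then check that each $P_n^\chi$ behaves like $n$th powers, namely that
\[
P_n^\chi(x)\iff\ \text{some } \lambda\in\QQ_p^\times \text{ of the right class makes } \lambda x \text{ a } w\text{-unit square class},
\]
and that the index $[F^\times:P_n^{\chi\times}]$ equals $[\QQ_p^\times:(\QQ_p^\times)^n]$, with representatives taken from $\QQ_p$. The hyperaddition axiom is what makes the sets $\{\chi_n=g\}$ cosets that are compatible with sums. Here \Cref{lem:scalar-addition} is used as in \Cref{lem:place} to control $x+c$.

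The cleanest route to the embedding is through the henselization. Let $F^h$ be the henselization of $(F,u)$. Then $u$ is henselian with residue field $\QQ_p$, which is henselian, so the composite $w$ on $F^h$ is henselian. The map $\chi$ extends to $F^h$ because $F^h$ is immediate over $F$ and $\chi$ factors through $\RV_u$. Now form a $\ZZ$-group-like extension. The character $\Gamma_u\to G^\times/\iota(\QQ_p^\times)$ coming from $\chi$ on $\RV_u^\times$ is handled by adjoining $n$th roots: pass to an algebraic extension $F'$ of $F^h$ in which every $x$ with $\chi_n(x)=1$ becomes an $n$th power. Consistency of this follows from the same coset compatibility, together with the fact that $\QQ_p$-classes stay distinct because $\iota$ is injective. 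Then $F'$ is a henselian field with a $p$-valuation and residue data $\QQ_p$.

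Finally, we embed $F'$ into a $p$-adically closed field $k'$ by the Ax--Kochen--Ershov principle for the henselian valuation with residue field $\QQ_p$. We may enlarge the value group to a divisible hull inside an elementary extension, adjusting so that power classes are preserved. This gives $\QQ_p\preceq k'$ and the required equivalence with $\chi_n$.

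The main obstacle is the middle step. We must show that the value-group part of $\chi$, the homomorphism $\Gamma_u\to\varprojlim\QQ_p^\times/(\QQ_p^\times)^n$ modulo units, can be realized by actual $n$th roots without creating new power classes. I would first try to show directly that $\chi_n(x)=1$ forces $u(x)\in n\Gamma_u$ up to a unit class, by playing $x+c$ against \Cref{lem:scalar-addition}, and then split the value group accordingly.
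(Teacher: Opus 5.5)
Your first steps agree with the paper: reading off $P_n^\chi$, factoring $\chi$ through $\RV_u$ via \Cref{lem:place}, and forming the composite $p$-valuation $w$. Everything after that, however, rests on the step you yourself flag as the main obstacle, and that step is not carried out. The sentences ``consistency of this follows from the same coset compatibility'' and ``enlarge the value group to a divisible hull \dots\ adjusting so that power classes are preserved'' restate the content of the lemma rather than prove it.

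Once the $u$-value group is divisible, the $n$th-power class of $x$ is $[\res(x/y^n)]_n$ for any $y$ with $nu(y)=u(x)$. So the $n$th roots you adjoin must be chosen coherently across all $x$ and $n$; otherwise roots of unity not in $\QQ_p$ enter the residue field, or distinct classes merge. The roots must also reproduce the threads $\chi(x)\in G^\times$, and these need not lie in $\iota(\QQ_p^\times)$, e.g.\ when $\chi$ comes from a nonstandard model of $\pCF$.

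The heuristic you propose for closing the gap is false. Take $F=\QQ_p(t)$, and let $\chi$ be induced by the power classes of the $p$-adically closed field $\Lau{\QQ_p}{\QQ}$. Then $u$ is the $t$-adic valuation and $\chi_n(t)=1$ for every $n$, although $u(t)=1\notin n\ZZ$. Separately, extending $\chi$ multiplicatively to $F^h$ does not by itself show it is still a hyperfield morphism there.

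The paper never builds the extension by hand. It keeps $F$, equips it with $w$ and $P_n^\chi$, and verifies B\'elair's universal axioms for $\pCF_\forall$. The substantive ones are:
\begin{itemize}
\item the unit test: on $w$-units, $P_n^\chi(x)\iff\bigvee_{i\in R_n}w(x-i)\ge N\delta$ with $N=2v_p(n)+1$, using $\chi=\iota\circ r$ on $u$-units and Hensel's lemma;
\item the ramification axiom: $P_n^\chi(x)\Rightarrow w(z^nx)\le 0$ or $w(z^nx)\ge n\delta$, which follows from convexity of $\ZZ\delta$.
\end{itemize}
B\'elair's embedding theorem then produces $k'$ preserving and reflecting the predicates, and Macintyre's quantifier elimination gives $\QQ_p\preceq k'$.

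If you want to keep your Ax--Kochen--Ershov route, the missing step can be supplied as follows.
\begin{itemize}
\item $\chi$ induces a homomorphism $c\colon\RV_u(F)^\times\to G^\times$ extending $\iota$.
\item For an $\aleph_1$-saturated $k_0\succeq\QQ_p$, the thread map $k_0^\times\to G^\times$ is onto. Its kernel $\bigcap_n(k_0^\times)^n$ is divisible, because the torsion of $G^\times$ is $\iota(\mu(\QQ_p))$.
\item Hence $c$ lifts to a homomorphism $s\colon\RV_u(F)^\times\to k_0^\times$ that is the identity on $\QQ_p^\times$.
\item Let $\Delta$ be the divisible hull of $\Gamma_u$. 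Then $\xi\mapsto(s(\xi),\bar v(\xi))$ is an $\RV$-embedding into $k_0^\times\times\Delta=\RV(k_0(\!(t^{\Delta})\!))^\times$.
\item The equicharacteristic-$0$ embedding lemma lifts this, over $\QQ_p$, to a valued-field embedding into a saturated elementary extension. There the power classes are read off from $\rv$, and they agree with $\chi_n$.
\end{itemize}
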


\begin{proof}
Interpret predicates on $F$ by
\[
 P_n^\chi(x)\quad\Longleftrightarrow\quad
 x=0\ \text{or}\ \chi_n(x)=1.
\]
Compose the valuation $u$ of Lemma~\ref{lem:place} with the $p$-adic valuation on its residue field. The resulting valuation $w$ has residue field $\FF_p$. Its value group contains a convex subgroup $\ZZ\delta$, where $\delta=w(p)$, and the quotient is $u(F)$. In particular, $\delta$ is the least positive value and $w$ is a $p$-valuation of $p$-rank $1$.

We verify the axiomatization of $\mathrm{pCF}_\forall$ given by B\'elair in \cite[Section~2]{Belair} for $(F,w)$. Axioms 1 and 2 in \cite{Belair} are granted since $w$ is a $p$-valuation.  

The predicates $P_n^\chi$ contain actual $n$th powers, are multiplicatively closed, and respect inversion
and the transition maps. They also satisfy $P_n^\chi(x)\Longrightarrow P_{mn}^\chi(x^m)$. For $x\ne0$, represent $\chi_{mn}(x)$ by $a\in\QQ_p^\times$. If $\chi_n(x)=1$, then $a=b^n$ for some $b\in\QQ_p$, and
$a^m=b^{mn}$. The case $x=0$ is immediate.
The usual finite coset representatives in $\QQ_p^\times$ also
represent the cosets of $P_n^\chi\setminus\{0\}$ in $F^\times$. This shows \cite[Axiom 4]{Belair} holds. 

For \cite[Axiom 3]{Belair}, put $N=2v_p(n)+1$ and
\[
 R_n=\{i\in\ZZ:1\le i<p^N,\quad
       i\equiv j^n\pmod{p^N}
       \text{ for some }j\in\ZZ\text{ with }p\nmid j\}.
\]
If $w(x)=0$, then $x$ is a unit in the $u$-valuation ring and
$\chi(x)=\iota(\res_u(x))$, with $\res_u(x)\in\ZZ_p^\times$.
Hensel's lemma therefore gives
\[
 P_n^\chi(x)\quad\Longleftrightarrow\quad
 \bigvee_{i\in R_n}w(x-i)\ge N\delta.
\]
Indeed, this is the usual $n$th-power test for $\res_u(x)$ modulo
$p^N$. If $u(x-i)>0$, then $w(x-i)>m\delta$ for every integer $m$;
otherwise $w(x-i)=v_p(\res_u(x)-i)\delta$.

Finally, for \cite[Axiom 5]{Belair}, for every $z\in F$,
\begin{equation}\label{eq:ramification}
 P_n^\chi(x)\quad\Longrightarrow\quad
 w(z^nx)\le0\ \text{or}\ w(z^nx)\ge n\delta.
\end{equation}
If this failed, convexity of $\ZZ\delta$ would give
$w(z^nx)=j\delta$ for some integer $0<j<n$. Then $z^nx$ would be
a $u$-unit whose residue has $p$-adic valuation $j$, so its
$n$th-power class would be nontrivial. This contradicts
$\chi_n(z^nx)=\chi_n(z)^n\chi_n(x)=1$.

Thus $(F,w,(P_n^\chi)_{n\ge2})\models T_1$.
By \cite[Theorem~2.4]{Belair}, it embeds in a $p$-adically closed
field $k'$ of $p$-rank $1$, preserving and reflecting all the
predicates. The restriction to $\QQ_p$ is elementary by Macintyre
quantifier elimination. Applying predicate preservation to $x/a$
proves the assertion.
\end{proof}
Now we can prove \Cref{thm:formally-p-adic}.
\begin{proof}[Proof of Theorem~\ref{thm:formally-p-adic}]

As explained earlier, \textnormal{(2)}$\implies$\textnormal{(1)} is immediate: Since models of $\pCF_\forall$ embed in a model $L\models\pCF$. The
restriction of this embedding to $K$ is elementary by quantifier
elimination, so we may identify $F$ with its image and assume
$K\preceq L$. For each $n$, the natural map $G_n(K)\longrightarrow G_n(L)$
is an isomorphism. These isomorphisms commute with the
transition maps. Composing the canonical map $F\to G(L)$ with
its inverse gives the required morphism $F\to G(K)$.

Suppose first that \textnormal{(1)} holds, and equip $F$ with the
predicates $P_n^\varphi$ in the statement. For each $n$, choose a
finite set of representatives in $\mathbb{Q}$ of
$\QQ_p^\times/(\QQ_p^\times)^n$. Such representatives exist
because the subgroups $(\mathbb{Q}_p^\times)^n$ are open and $\QQ$ is dense in
$\QQ_p$. Since $K\equiv\QQ_p$ and $\QQ_p^\times/(\QQ_p^\times)^n$ is finite, 
$\QQ_p^\times/(\QQ_p^\times)^n\cong K^\times/(K^\times)^n$ as hyperfields, and the transition maps are compatiable. These identifications identify $G(K)$ with $G(\mathbb{Q}_p)$.

Consider $(F,K,(P_n^\varphi)_{n\ge2})$ as a $\mathcal{L}_{\mathrm{Mac}}\cup\{P\}$-structure with a
predicate $P$ for the subfield $K$, and take a sufficiently saturated elementary extension $(F^*,K^*,(P_n^*)_{n\ge2})$.
The field $K^*$ contains an copy of $\QQ_p$ as an elementary substructure.
Note that on $F$, for each $n$, the nonzero fibers of $\varphi_n$ are the sets
defined by $x\ne0\wedge P_n^\varphi(x/a)$, where $a$ ranges over the chosen representatives of $\QQ_p^\times/(\QQ_p^\times)^n$ in $\mathbb{Q}$. Hence the hyperfield structure on $G_n(F)$ and the transition maps are definable in $\mathcal{L}_\mathrm{Mac}$. Thus their definition extend to $F^*$ and gives a hyperfield morphism $\chi:F^*\to G(\mathbb{Q}_p)$. On $K^*$ the predicates $P_n^*$ are the actual power predicates, so $\chi$ agrees to the canonical map $\iota$ on the chosen copy of $\QQ_p$.

Lemma~\ref{lem:residue-realization} embeds $F^*$ in a model
$L\models\pCF$, preserving and reflecting every $P_n^*$.
Restricting to $F$ gives  the desired embedding $j:F\to K'\succeq K$. In other words,
$(F,(P_n^\varphi)_{n\ge2})\models\pCF_\forall$.
\end{proof}

\bibliographystyle{alpha}
\bibliography{biblio}

\begin{thebibliography}{CKRKV24}

\bibitem[ACGZ22]{acgz}
Matthias Aschenbrenner, Artem Chernikov, Allen Gehret, and Martin Ziegler.
\newblock Distality in valued fields and related structures.
\newblock {\em Transactions of the American Mathematical Society},
  375(7):4641--4710, 2022.
\newblock arXiv:2008.09889.

\bibitem[AG21]{AndujarTypes}
Pablo And{\'u}jar~Guerrero.
\newblock Types, transversals and definable compactness in o-minimal
  structures.
\newblock arXiv:2111.03802, 2021.

\bibitem[AGJ24]{AndujarJohnsonTypes}
Pablo And{\'u}jar~Guerrero and Will Johnson.
\newblock Around definable types in {$p$}-adically closed fields.
\newblock {\em Ann. Pure Appl. Logic}, 175(10):103484, 2024.

\bibitem[B{\'e}l88]{Belair}
Luc B{\'e}lair.
\newblock Substructures and uniform elimination for {$p$}-adic fields.
\newblock {\em Ann. Pure Appl. Logic}, 39(1):1--17, 1988.

\bibitem[Bel02]{poly}
Oleg Belegradek.
\newblock Poly-regular ordered abelian groups.
\newblock {\em Contemporary Mathematics}, 302:101--112, 2002.

\bibitem[CKHY26]{beautiful}
Pablo Cubides~Kovacsics, Martin Hils, and Jinhe Ye.
\newblock Beautiful pairs.
\newblock {\em Journal f\"ur die reine und angewandte Mathematik (Crelles
  Journal)}, 2026(833):235--283, 2026.

\bibitem[CKKR20]{cubi-kuh-rze}
Pablo Cubides~Kovacsics, Franz-Viktor Kuhlmann, and Anna Rzepka.
\newblock On valuation independence and defectless extensions of valued fields.
\newblock {\em J. Algebra}, 555:69--95, 2020.

\bibitem[CKRKV24]{orthogonal}
Pablo Cubides~Kovacsics, Silvain Rideau-Kikuchi, and Mariana Vicaría.
\newblock On orthogonal types to the value group and residual domination, 2024.

\bibitem[Clu03]{CluckersPressburger}
Raf Cluckers.
\newblock Presburger sets and p-minimal fields.
\newblock {\em The Journal of Symbolic Logic}, 68(1):153--162, 2003.

\bibitem[CS19]{inpminimal}
Artem Chernikov and Pierre Simon.
\newblock Henselian valued fields and inp-minimality.
\newblock {\em The Journal of Symbolic Logic}, 84(4):pp. 1510--1526, 2019.

\bibitem[DHLY24]{tropical-function}
Antoine Ducros, Ehud Hrushovski, François Loeser, and Jinhe Ye.
\newblock Tropical functions on a skeleton.
\newblock {\em J. \'Ec. polytech. Math.}, 11:613--654, 2024.

\bibitem[DWY25]{field-top-2}
Philip Dittmann, Erik Walsberg, and Jinhe Ye.
\newblock When is the {\'e}tale open topology a field topology?
\newblock {\em Israel Journal of Mathematics}, 269:67--102, 2025.

\bibitem[Fle11]{flenner}
Joseph Flenner.
\newblock Relative decidability and definability in henselian valued fields.
\newblock {\em The Journal of Symbolic Logic}, 76(4):1240--1260, 2011.
\newblock arXiv:0910.2682.

\bibitem[HHM08]{HHM}
Deirdre Haskell, Ehud Hrushovski, and Hugh~Dugald Macpherson.
\newblock {\em Stable domination and independence in algebraically closed
  valued fields}, volume~30 of {\em Lect. Notes Log.}
\newblock Cambridge Univ. Press, {A}ssoc. {S}ymbol. {L}ogic edition, 2008.

\bibitem[HL16]{HruLoe}
Ehud Hrushovski and Fran{\c{c}}ois Loeser.
\newblock {\em Non-{Archimedean} tame topology and stably dominated types},
  volume 192 of {\em Ann. Math. Stud.}
\newblock Princeton, NJ: Princeton University Press, 2016.

\bibitem[JSW17]{dpminimal}
Franziska Jahnke, Pierre Simon, and Erik Walsberg.
\newblock Dp-minimal valued fields.
\newblock {\em The Journal of Symbolic Logic}, 82(1):151--165, 2017.

\bibitem[JSY22]{jell}
Philipp Jell, Claus Scheiderer, and Josephine Yu.
\newblock Real tropicalization and analytification of semialgebraic sets.
\newblock {\em International Mathematics Research Notices}, 2022(2):928--958,
  2022.

\bibitem[JTWY24]{firstpaper}
Will Johnson, Chieu-Minh Tran, Erik Walsberg, and Jinhe Ye.
\newblock The \'etale-open topology and the stable fields conjecture.
\newblock {\em J. Eur. Math. Soc. (JEMS)}, 26(10):4033--4070, 2024.

\bibitem[JTWY25]{LargeHenselian}
Will Johnson, Chieu-Minh Tran, Erik Walsberg, and Jinhe Ye.
\newblock Large implies henselian.
\newblock arXiv:2508.10886, 2025.
\newblock Version 2, revised March 2026,
  \url{https://arxiv.org/abs/2508.10886v2}.

\bibitem[Jun21]{Jun}
Jaiung Jun.
\newblock Geometry of hyperfields.
\newblock {\em Journal of Algebra}, 569:220--257, 2021.

\bibitem[JWY23]{field-top-1}
Will Johnson, Erik Walsberg, and Jinhe Ye.
\newblock The \'{e}tale open topology over the fraction field of a {H}enselian
  local domain.
\newblock {\em Math. Nachr.}, 296(5):1928--1937, 2023.

\bibitem[Kam07]{kamensky}
Moshe Kamensky.
\newblock Ind- and pro- definable sets.
\newblock {\em Annals of Pure and Applied Logic}, 147(3):180--186, 2007.

\bibitem[Kra83]{Krasner1983}
Marc Krasner.
\newblock A class of hyperrings and hyperfields.
\newblock {\em International Journal of Mathematics and Mathematical Sciences},
  6(2):307--311, 1983.

\bibitem[Kuh04]{kuhlmann2004}
Franz-Viktor Kuhlmann.
\newblock Places of algebraic function fields in arbitrary characteristic.
\newblock {\em Adv. Math.}, 188:399--424, 2004.

\bibitem[Pay09]{Payne}
Sam Payne.
\newblock Analytification is the limit of all tropicalizations.
\newblock {\em Mathematical Research Letters}, 16(3):543--556, 2009.

\bibitem[Pop10]{henselianlarge}
Florian Pop.
\newblock Henselian implies large.
\newblock {\em Ann. of Math. (2)}, 172(3):2183--2195, 2010.

\bibitem[Pop14]{pop-large}
Florian Pop.
\newblock Little survey on large fields --- old \& new.
\newblock In {\em Valuation theory in interaction}, EMS Ser. Congr. Rep., pages
  432--463. Eur. Math. Soc., Z\"urich, 2014.

\bibitem[Vic26]{vicariaresdom}
Mariana Vicar{\'\i}a.
\newblock Residue field domination.
\newblock {\em Israel Journal of Mathematics}, 272(1):371--405, 2026.

\end{thebibliography}

\end{document}